\newcommand{\R}{\mathbb{R}}
\newcommand{\C}{\mathbb{C}}
\newcommand{\T}{\mathbb{T}}
\newcommand{\Z}{\mathbb{Z}}
\newcommand{\N}{\mathbb{N}}
\newcommand{\K}{\mathbb{K}}
\newcommand{\g}{\Gamma}
\newcommand{\Pp}{\mathbb{P}}
\newcommand{\cF}{\mathcal{F}}
\newcommand{\cP}{\mathcal{P}}
\newcommand{\cS}{\mathcal{S}}
\newcommand{\cD}{\mathcal{D}}
\newcommand{\cH}{\mathcal{H}}
\newcommand{\cJ}{\mathcal{J}}
\newcommand{\cL}{\mathcal{L}}
\newcommand{\cA}{\mathcal{A}}
\DeclareMathOperator{\id}{Id}
\DeclareMathOperator{\Log}{Log}
\DeclareMathOperator{\val}{val}
\DeclareMathOperator{\area}{Area}
\DeclareMathOperator{\vol}{Vol}
\DeclareMathOperator{\Sec}{Sec}
\DeclareMathOperator{\inj}{inj}
\DeclareMathOperator{\im}{im}
\DeclareMathOperator{\Skel}{Skel}
\DeclareMathOperator{\supp}{supp}
\DeclareMathOperator{\osc}{osc}
\setlist[enumerate,1]{label={(\alph*)}}
\setlist[enumerate,2]{label={(\roman*)}}
\newtheorem{tm}{Theorem}[section]
\newtheorem*{sa*}{Standing Assumption}
\newtheorem{lm}[tm]{Lemma}
\newtheorem{cy}[tm]{Corollary}
\newtheorem*{tm*}{Theorem}
\theoremstyle{definition}
\newtheorem{df}[tm]{Definition}
\theoremstyle{remark}
\newtheorem{rem}[tm]{Remark}
\newtheorem{ex}[tm]{Example}
\title{Floer theory and reduced cohomology on open manifolds}
\author{Yoel Groman}
\begin{document}

\maketitle
\begin{abstract}
We construct Hamiltonian Floer complexes associated to continuous, and even lower semi-continuous, time dependent exhaustion functions on geometrically bounded symplectic manifolds. We further construct functorial continuation maps associated to monotone homotopies between them, and operations which give rise to a product and unit. The work rests on novel techniques for energy confinement of Floer solutions as well as on methods of non-Archimedean analysis. The definition for general Hamiltonians utilizes the notion of reduced cohomology familiar from Riemannian geometry, and the continuity properties of Floer cohomology. This gives rise in particular to local Floer theory. We discuss various functorial properties as well as some applications to existence of periodic orbits and to displaceability.
\end{abstract}

\tableofcontents

\section{Introduction}
Floer theory is a machine that associates algebraic structures to objects of symplectic geometry. Over the years, it has come to play central role in all aspects of the field, from mirror symmetry to quantitative symplectic topology. In this paper we extend the range of applicability of Floer theory, focusing on Hamiltonian Floer theory, to open symplectic manifolds which are geometrically bounded.

There are a number of reasons why one would be interested in studying Floer theory on open manifolds. First and foremost, many symplectic manifolds which arise naturally are open. Among these we count the cotangent bundle, the magnetic cotangent bundle, affine varieties, coadjoint orbits of non-compact groups, Hitchin moduli spaces, and many more. More specifically related to Floer theory, there are phenomena which only become apparent on open manifolds. For example, on a closed manifold the Hamiltonian Floer cohomology reduces as an Abelian group to the singular cohomology, and is thus often too coarse to see much of the symplectic topology. On open manifolds, new invariants, such as  symplectic cohomology, make their appearance which encode purely symplectic phenomena \cite{CielFloerHofer,Viterbo99, Oancea04,Seidel08}.

There is a vast literature studying these invariants and their structural properties. For example, symplectic cohomology of a Liouville domain has been shown to play a key role in homological mirror symmetry by encoding the Hochschild homology of the Fukaya category \cite{Seidel02, abouzaid2010, ganatra2013, ganatra2019}. In another related direction, there are numerous results relating Floer theory of a symplectic manifold with that of the Floer theory of embedded local models, \cite{Seidel02, CieliebakOancea2018, Varolgunes2018, GPS2019}. But the existing literature focuses mostly on examples which are convex at infinity, a condition which does not cover, e.g., most of the examples mentioned in the previous paragraph. As another example, in the study of the Fukaya category by the method of localization away from a divisor \cite{Seidel02} one does not necessarily wish to restrict attention to ample divisors \cite{AurouxSYZ,Auroux2007,DaemiFukaya2018,Groman2018}. Studying Floer theory in more general settings would contribute to our understanding of mirror symmetry, the geometric Langlands program, and many branches of symplectic topology.

The class of geometrically bounded manifolds contains those that are convex at infinity, but is much larger. Geometric boundededness has appeared as a relevant condition already in Gromov's seminal paper \cite{Gr} and in numerous works since. A couple of early ones are \cite{ALP94,Sikorav94}. Geometrically bounded symplectic manifolds are the most general setting in which holomorphic curve theory is known to work without resorting to the methods of symplectic field theory.

The novelty in the present paper is twofold. One is showing how to apply geometric boundedness of the underlying manifold to carry out Floer theoretic constructions beyond $J$-holomorphic curves. Such constructions are central, e.g., to the notion of wrapped Floer theory. The other is showing that the invariants constructed by choosing a  geometrically bounded metric at infinity are independent of the choice. The results here provide a unified and flexible framework which incorporates the various constructions in the literature (e.g.,\cite{CielFloerHofer,Viterbo99,Oancea06,Ritter10}), works in full generality, and has transparent symplectic invariance properties.

It should be emphasized that while we do not mention the Fukaya category elsewhere in this paper, the difficulties posed by  non-compactness are virtually the same for the Hamiltonian version of Floer theory as for its Lagrangian intersection version. Thus, this paper sets the stage for the study of the (wrapped) Fukaya category on open manifolds such as those mentioned above insofar as one can overcome the usual difficulties already present in the closed case.

We shall assume familiarity with the basic machinery of Hamiltonian Floer theory and symplectic cohomology such as can be acquired from the first three lectures in \cite{Salamon1999} together with \cite{Oancea04}. For the discussion of the product structure we shall assume also some familiarity with treatments such as \cite{Abouzaid2013} or \cite{Ritter13}. The latter is not necessary for most of the novel ideas in this paper.
\subsection{The main result}
A symplectic manifold $(M,\omega)$ is said to be \textbf{geometrically bounded} if there is an $\omega$-compatible almost complex structure $J$, a constant $C>1$, and  a complete Riemannian metric $g$ with sectional curvature bounded from above and injectivity radius bounded away from $0$, such that
\[
\frac1{C}g(v,v)\leq \omega(v,Jv)\leq Cg(v,v)
\]
for all tangent vectors $v$. Note that the almost complex structure $J$ is \textit{not} part of the data.
Examples include closed symplectic manifolds, cotangent bundles of arbitrary smooth manifolds, manifolds whose end is modeled after the convex half of the symplectization of a compact contact manifold \cite{Sikorav94}, twisted cotangent bundles \cite{CielGinzKer}, and many more. The class of geometrically bounded symplectic manifolds is closed under products and coverings.

It should be emphasized at the outset that an open symplectic manifold of finite volume, such as the unit ball in $\C^n,$ cannot be endowed with a metric that is at once complete and satisfies the above bounds on sectional curvature and radius of injectivity and thus is \textit{not} geometrically bounded. Floer theory for open finite volume symplectic manifolds, such as Liouville domains, will be discussed, in the context of local Floer theory, when they are embedded in a geometrically bounded symplectic manifold.

Recall that $(M,\omega)$ is said to be \emph{semi-monotone} if there exists a constant $\tau\geq 0$ such that for any $A\in\pi_2(M)$ we have $\omega(A)=\tau c_1(A)$ where $c_1$ is the first Chern class. $(M,\omega)$ is said to be \emph{Calabi-Yau} if $c_1(A)=0$ for every $A\in\pi_2(M)$.  \textit{Henceforth, $(M,\omega)$ is a geometrically bounded symplectic manifold which is either semi-monotone or Calabi-Yau.}  In particular, for any class $A\in\pi_2(M)$ we have
 \[
    c_1(A)<0\qquad\Rightarrow \qquad \omega(A)\leq 0.
 \]
 We hasten to emphasize that this is assumption is made for definiteness only. The methods introduced herein are orthogonal to the usual questions of transversality and can be adapted to any regularization scheme.

Fix a field $R$ and denote by $\Lambda_R$ the universal Novikov field and by $\Lambda_{R,\omega}$ the Novikov field associated with $\omega$ (See \S\ref{SecDefHam}). We shall use the notation $\K$ to denote either $\Lambda_R$ or $\Lambda_{R,\omega}$. Note that $\K$ is a graded field. That is, a commutative even graded field in which every non-zero homogeneous element is invertible. \textit{In the entire text, $\Lambda_R$ coefficients should be assumed by default whenever the coefficient field is not indicated in the notation.}

Denote by $\cJ$ the space of smooth $\R/\Z$ parameterized families of almost complex structures which are compatible with $\omega$. Denote by $\cH_{sm}$ the space of smooth functions on $\R/\Z\times M$ which are proper and bounded from below. Consider the category $\cF$ of Floer data whose objects are elements $(H,J)\in\cH_{sm}\times\cJ$ and there is a single morphism from an object $(H_1,J_1)\to (H_2,J_2)$ whenever the order relation defined by
\[
(H_1,J_1)\leq (H_2,J_2)\quad\Leftrightarrow\quad H_{1,t}(x)\leq H_{2,t}(x),\quad\forall (t,x)\in\R/\Z\times M
\]
is satisfied.

The main contribution of this paper is summarised in the following theorem.
\begin{tm}\label{mainTmA}

There exists a full subcategory $\cF_{d,reg}\subset\cF$, referred to as the \textit{regular dissipative Floer data}, for which the Floer cohomology
\[
(H,J)\mapsto HF^*(H,J;\K),
\]
is well defined as a functor to $\Z$-graded non-Archimedean (semi-)normed $\K$-modules. Namely, there is a functorial norm preserving continuation map
\[
HF^*(H_1,J_1;\K)\to HF^*(H_2,J_2;\K),
\]
whenever $H_1\leq H_2\in \cF_{d,reg}$. The subcategory $\cF_{d,reg}$ satisfies the following
\begin{enumerate}
\item It is invariant under the action of the symplectomorphism group
    \[
    \psi\cdot(H,J)\mapsto (H\circ\psi,\psi^*J).
    \]
\item It is final and cofinal in $\cF$.
\item \label{mainTmA:itC} It contains all pairs $(H,J)$ for which $J$ is geometrically bounded and $H$ has sufficiently small Lipschitz constant and is (nearly) time independent near infinity.
\item The continuation map $HF^*(H_1,J_1;\K)\to HF^*(H_2,J_2;\K)$ is an isomorphism if $H_2-H_1$ is bounded on $M$.
\end{enumerate}
\end{tm}

Theorem \ref{mainTmA} relies on the dissipative method introduced herein for controlling compactness of various Floer moduli spaces. This is done by systematically replacing the more conventional reliance on maximum principles by a combination of the monotonicity inequality for $J$-holomorphic curves and a certain quantitative non degeneracy condition to control the ends. A more detailed discussion of this method is given in \S \ref{secOvrview}. This method should be of independent interest for researchers wishing to apply Floer theory methods in any way to open symplectic manifolds. We emphasize that the Floer data which are typically used in the literature on symplectic cohomology mostly fit into the dissipative framework. For a discussion of the case of Liouville domains see Example \ref{ConvEndDiss} below.

The true power of the dissipative method is revealed when considering the functoriality statement in Theorem \ref{mainTmA} which is one of the main contributions of this paper. To demonstrate this we first remark that a particular consequence of the functoriality is the independence of Floer cohomology of a dissipative $(H,J)$ on the choice of $J$. This is new even for the case $H=0$, i.e., for $J$-holomorphic curves. We use this in Theorem~\ref{TmSympInvGWSH} below to show that in a number of contexts where invariants on open manifolds are defined using a geometrically bounded almost complex structures $J$, the resulting invariants do not depend on the choice of such a $J$. The difficulty in proving this is that given two geometrically bounded compatible almost complex structures which are not metrically equivalent it is not likely they can be homotoped to one another through geometrically bounded almost complex structures. Our solution is to introduce the notion of intermittent boundedness, or, i-boundedness, which requires boundedness only on an appropriate infinite sequence of hypersurfaces. We then show that any two such almost complex structures can be homotoped to one another through intermittently bounded almost complex structures. For the rest of the introduction we thus drop $J$ from the notation and consider dissipativity as a property of a Hamiltonian function.

As an illustration of the use of functoriality we indicate an easy proof of the Kunneth formula in symplectic cohomology of Liouville domains (cf. \cite{Oancea06}). This requires comparing the direct limit of Floer cohomologies over a sequence of linear Hamiltonians on the smoothing of a product of Liouville domains to the direct limit over a sequence of linear split Hamiltonians on the product itself, all with slope going to infinity. Since one can squeeze a sequence of linear Hamiltonians between a sequence of split linear ones, the Kunneth formula follows from Theorem \ref{mainTmA} as soon as one establishes that both linear and split linear Hamiltonians are dissipative. The latter is immediately implied by examples \ref{ExLiouvTame} and \ref{exLinLiou}. The line of argument can be shown to extend to Liouville domains with arbitrary corners.

\subsection{Reduced Floer cohomology for general Hamiltonians}

Our next theorem combines the result of Theorem \ref{mainTmA} with certain continuity properties of Floer cohomology to extend the definition of Floer cohomology to more arbitrary Floer data. Namely, we extend a certain version of Floer cohomology as a functor on the category $(\cH_{d,reg},\leq)$ of regular dissipative Hamiltonians to the category $(\cH_{s.c.},\leq)$ of all generalized lower semi-continuous functions $\R/\Z\times M\to\R\cup{\infty}$ which are proper and bounded from below.  

Before proceeding we introduce the concept of \textit{reduced Floer cohomology} $\overline{HF}^*(H)$ of a non-degenerate dissipative Hamiltonian $H$. The ordinary Floer cohomology $HF^*(H)$ is the homology of a chain complex which is complete with respect to a non-Archimedean norm. Thus the group $HF^*(H;\K)$ is naturally semi-normed. However, in general, the differential needn't have closed image. In such a case $HF^*(H;\K)$ contains non-trivial elements of norm $0$. The reduced Floer cohomology $\overline{HF}^*(H)$ is the quotient of $HF^*(H)$ by the elements of norm $0$. A similar construction is familiar from Riemannian geometry in the context of  $L^2$-cohomology \cite{CGMP1982,Dai2011,Luck2013}. The precise statement of the following result requires some further preparation. We therefore present first an informal statement. Theorem \ref{tmUniExtFl} below is a more precise restatement.
\begin{tm}\label{TmA}
The reduced Floer cohomology functor  $H\mapsto \overline{HF}^*(H;\K)$ extends in a natural way from a functor on the category $\cH_d$ of dissipative Hamiltonians to a functor on the category $\cH_{s.c.}$ of all lower semi-continuous exhaustion functions.  Moreover, $\overline{HF}^*(H;\K)$ arises as the reduced cohomology of a certain non-Archimedean Banach chain complex which is associated to $H$ up to an appropriate notion of quasi-isomorphism, and which specializes to the Floer chain complex for dissipative Hamiltonians.
\end{tm}

Theorem \ref{TmA} employs the method of Floer theory by approximation. This is explained in more detail in section \S\ref{SubsecFlByApp}. Among other things, Theorem \ref{TmA} can be interpreted as saying that at least if one is concerned with reduced cohomology, one needn't worry about the question of whether a given Floer datum is dissipative or not. In a forthcoming note joint with U. Varolgunes we show that Theorem \ref{TmA} actually holds for the unreduced version of Floer cohomology. For a more extensive discussion of this see comment \ref{Commentb} right after the statement of Theorem \ref{tmUniExtFl}.

\subsection{The product structure}
To state the final main theorem we introduce the notion of symplectic cohomology on an arbitrary geometrically bounded symplectic manifold. Let $\cH\subset\cH_{s.c.}$ be a subset consisting of time-independent Hamiltonians such that for any $H_1,H_2\in\cH$ we have that $2\max\{H_1,H_2\}\in\cH$. We call $\cH$ a \textit{monoidal indexing set}. For each monoidal  indexing set $\cH$ we define a group
\[
SH^*(M;\cH):=\varinjlim_{H\in\cH}\overline{HF}^*(M).
\]
The set of monoidal indexing sets is partially ordered by the relation $\cH_1\preceq\cH_2$ which is defined to hold if and only if for any $H\in\cH_1$ there is a constant $C$ and an $H_2\in\cH_2$ such that $H_1\leq H_2+C$. Before proceeding to the statement of the following Theorem we note that there is a decomposition 
\[
SH^*(M;\cH)=\oplus_{\alpha\in[S^1,M]}SH^{*,\alpha}(M;\cH),
\]
where for a free homotopy class $\alpha$, the group $SH^{*,\alpha}(M;\cH)$ arises from the periodic orbits in the homotopy class $\alpha$. In particular, in the following theorem we refer to the subgroups $SH^{*,0}(M;\cH)$ which arise from considering only contractible periodic orbits.
\begin{tm}\label{tmFloerProduct}
\begin{enumerate}
\item For any monoidal indexing set $\cH$, there is a product structure
    \[
    *:SH^{*,0}(M;\cH)\otimes SH^{*,0}(M;\cH)\to SH^{*,0}(M;\cH),
    \]
    which is associative, and supercommutative.
\item The small quantum product on $QH^*(M;\K):=H^*(M;\K)$ is well defined and for any monoidal indexing set $\cH$ there is a natural PSS homomorphism
    \[
    QH^*(M;\K)\to SH^{*,0}(M;\cH),
    \]
    so that the image of $1\in QH^*(M;\K)$ acts as the unit in $SH^{*,0}(M;\cH)$.
    \item
    Given monoidal indexing sets $\cH_1\preceq\cH_2$, the natural continuation maps $SH^{*,0}(M;\cH_1)\to SH^{*,0}(M;\cH_2)$ is a unital algebra homomorphism.
\end{enumerate}
\end{tm}

The proof of Theorem~\ref{tmFloerProduct} is carried out in \S\ref{SecSHuniv}. The restriction to contractible periodic orbits is done for the sake of brevity in the proof. See Remark \ref{remProdContr} below for an extended discussion. Theorem \ref{tmFloerProduct} allows the construction of various flavors of symplectic cohomology as a unital algebra. Essentially the same proof can be used to construct operations associated with any family of nodal Riemann surfaces and parameterized Floer data. Moreover, it is possible to carry out a Lagrangian intersection variant of the results of this paper. Thus, the TQFT structure presented for the case of Liouville domains in \cite{Ritter13} can be transferred in its entirety to the present setting.

\subsection{Organization of the paper}
The paper is organized as follows. Section \S\ref{SecSympCoh} discusses various notions of symplectic cohomology and gives some applications of the main theorems. Section \S\ref{secOvrview} provides an overview of the techniques going into the proof of Theorem~\ref{mainTmA} while Section \S\ref{SubsecFlByApp} is devoted to explaining Theorem \ref{TmA}. Sections \S\ref{Sec1} through \S\ref{Sec5} are devoted to constructing the dissipative Floer data featuring in Theorem~\ref{mainTmA}. The proof of the latter is carried out in \S\ref{Sec6}. In \S\ref{SecHamFloer} we prove Theorems \ref{TmA} (restated as Theorem~\ref{tmUniExtFl}). In \S\ref{subsecFloerProd} we prove Theorem~\ref{tmFloerProduct}. In \S\ref{Sec7} we carry out the proofs of the properties and applications mentioned in  \S\ref{SecSympCoh}.
\subsection{Acknowledgements}
This work subsumes part of the author's PhD thesis carried out in the Hebrew University of Jerusalem.  The author is grateful to the Azrieli foundation for granting him an Azrieli fellowship at that time.

Various stages of the work were carried out at the math departments of Hebrew University of Jerusalem, the ETH of Zurich, and Columbia University. They were supported by the ERC Starting Grant 337560, ISF Grant 1747/13, Swiss National
Science Foundation (grant number 200021$\_$156000) and the Simons Foundation/SFARI ($\#$385571,M.A.).

The author would like to thank his thesis advisor Jake P. Solomon for his guidance throughout his graduate work and for a great number of discussions and suggestions. The author is grateful to Mohammed Abouzaid for suggesting some key ideas and to Paul Biran, Lev Buhovski, Sheel Ganatra, Alex Oancea, Dietmar Salamon, Ran Tessler, Amitai Zernik, Umut Varolgunes and Sara Venkatesh, for useful input and enlightening discussions. In preparing \S\ref{subsecFloerProd} of the current version, the author is greatly indebted to M. Abouzaid and U. Varolgunes for discussions on how to set up Floer theoretic operations for non-commuting Hamiltonians. The discussion in previous versions was erroneous on this count.

Finally, the author would like to thank the anonymous referee for his thorough and detailed review, and numerous helpful suggestions. These have helped catch numerous gaps and have no doubt greatly improved the exposition.

\section{Symplectic cohomology}\label{SecSympCoh}
In the following subsections we use Theorems~\ref{TmA} and~\ref{tmFloerProduct} to construct symplectic cohomology rings and discuss some of their functorial properties and applications. One of the main lessons is that there are two different notions of symplectic cohomology associated with two different topologies one can consider on the colimit of a sequence of Banach chain complexes. The first of these involves completing at the chain level so as to obtain a Banach space. The details at the chain level are described in \S\ref{subsecChainLev}, or, at the cohomology level, in eq. \eqref{eqDfFlRedSec}.  The Banach topology gives rise to local invariants and corresponds under mirror symmetry to locally defined analytic poly-vector fields. Similar constructions have been carried independently in \cite{Venkatesh2017, Varolgunes2018} but the construction has roots back in \cite{CielFloerHofer}. We refer to this as \textit{local symplectic cohomology}.

A second topology one may consider is one in which no completion is applied to the direct limit. As a topological space we consider the direct limit with the \textit{final topology} described in \S\ref{SecSHTopCof}. We refer to this as \textit{global symplectic cohomology}. Global symplectic cohomology is a generalization of the construction in \cite{Viterbo99} (see also \cite[\S3e]{Seidel08} which is more explicit in this regard). It gives rise to global invariants and can be thought to correspond under mirror symmetry to the ring of algebraic poly-vector fields. While this distinction, referred to in \cite{Seidel08} as quantitative vs qualitative symplectic cohomology, has been previously known, its significance appears to have been masked to a large extent in the literature so far due to the emphasis on Liouville domains with trivially valued coefficient fields where various different invariants coincide. In general, different topologies may give rise to completely different vector spaces. For an example of this phenomenon see \cite{Venkatesh2017}.

\subsection{Local symplectic cohomology}
Let $K\subset M$ be a compact set. Let
\[
H_K(x):=\begin{cases} 0, & x\in K,\\ \infty, & x\in M\setminus K.\end{cases}
\]
The \textbf{local symplectic cohomology} at $K$ is defined by
\[
SH^*(M|K;\K):=\overline{HF}^*(H_K;\K).
\]
The following theorem lists the basic properties of $SH^*(M|K;\K)$ which can be almost readily read off Theorems~\ref{TmA} and~\ref{tmFloerProduct}. As before, there is a decomposition
\[
SH^*(M;\cH)=\oplus_{\alpha\in[S^1,M]}SH^{*,\alpha}(M;\cH),
\]
and we denote by $0$ the class of the contractible loops. 
\begin{tm}\label{tmLocFlHoProp}
\begin{enumerate}
\item The map $K\mapsto SH^*(M|K;\K)$ is contravariantly functorial with respect to inclusions.
\item Any symplectomorphism $\psi:M\to M$ induces an isometry
    \[
    \psi_*:SH^*(M|K;\K)\to SH^*(M|\psi(K);\K).
    \]
\item The group $SH^{*,0}(M|K;\K)$ is a unital $\K$-algebra with respect to the operation $*$ induced from the identification $SH^{*,0}(M|K;\K)=SH^{*,0}(M;\{H_K\})$.
\item We have a commutative triangle of $\K$-algebras
\begin{equation}\label{eqLocComTri}
\xymatrix{
H^*(M;\K)\ar[d]\ar[dr]& \\
SH^{*,0}(M|K_2;\K) \ar[r] &SH^{*,0}(M|K_1;\K)
}
\end{equation}
\item \label{tmLocFlHoPropPe} For any $H\in\mathcal{H}$ which is bounded on $K$ we have a continuous map and functorial map
\[
\overline{HF}^*(H;\K)\to SH^*(M|K;\K),
\]
which increases the valuation\footnote{By definition, the valuation is $\val:=\log\|\cdot\|.$} by at most $c=\sup_KH$.
\end{enumerate}
\end{tm}
The proof of Theorem \ref{tmLocFlHoProp} appears at the end of \S\ref{subsecFloerProd}.
\begin{rem}
Suppose $M$ is symplectically aspherical and for a pair of compact sets $K=K_1,K_2$, we have that $H_K$ can be approximated by Hamiltonians whose non-trivial periodic orbits have action positive and bounded away from $0$. Then the commutative triangle \eqref{eqLocComTri} can be refined to a commutative square
\[
\xymatrix{
H^*(K_2;\K)\ar[d]\ar[r]&H^*(K_1;\K)\ar[d]& \\
SH^*(M|K_2;\K) \ar[r] &SH^*(M|K_1;\K).
}
\]
Combined with \eqref{eqLocLiouvDom} below, this reproduces Viterbo's commutative square for Liouville domains\cite{Viterbo99}.
\end{rem}
\begin{rem}
We comment on the name local symplectic cohomology. Assume $M$ is symplectically aspherical and the boundary of $K$ is stable Hamiltonian. Then it can be shown that elements of $SH^*(M|K)$ are represented by linear combinations of constant periodic orbits inside $K$ and the Reeb orbits of $\partial K$. If the boundary is not stable Hamiltonian, these can be represented by, in addition to constant orbits inside, periodic orbits lying arbitrarily close to $\partial K$.  Thus, at least when $M$ is aspherical, $SH^*(M|K)$ can be thought of as symplectic cohomology relative to the complement of $K$. That is, as localized at $K$. When $M$ is not aspherical, this type of locality is far from clear. This question is taken up in forthcoming work. Theorem~\ref{lmNearbyEx}  below can be seen as a particular manifestation of locality in the general case.
\end{rem}

\begin{tm}\label{lmNearbyEx}
Let $H$ be a smooth Hamiltonian such that $H^{-1}(0)=\partial K$.
\begin{enumerate}

\item
Suppose $\alpha$ is a non-trivial free homotopy class of loops. Suppose $SH^{*,\alpha}(M|K;\K)\neq 0$. Then there is a sequence $a_n>0$ converging to $0$ such that $H^{-1}(a_n)$ has a periodic orbit representing $\alpha$.
\item\label{lmNearbyExPartB}
If $SH^{*,0}(M|K;\K)\neq H^*(K;\K)$ then there is a sequence $a_n>0$ converging to $0$ such that $H^{-1}(a_n)$ has a contractible periodic orbit.
\end{enumerate}
\end{tm}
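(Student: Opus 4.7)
The strategy is to realize $SH^*(M|K;\K)$ as a limit of ordinary Floer cohomologies for a specific cofinal family of smooth Hamiltonians approximating $H_K$, and to read off the existence of periodic orbits of $X_H$ from non-vanishing of the limit. Choose an increasing cofinal family $H_n \nearrow H_K$ of smooth Hamiltonians which, on a neighborhood of $\partial K$, have the form $H_n = f_n \circ H$, where $f_n : \R \to \R$ is smooth, non-decreasing, identically zero on $(-\infty, 0]$, grows from $0$ to a large value $C_n \to \infty$ on $[0, \varepsilon_n]$ with $\varepsilon_n \to 0^+$, and is constant thereafter; outside this neighborhood $H_n \equiv C_n$. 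The $1$-periodic orbits of $H_n$ are (i) constants in $\{H \leq 0\}$, which via a standard Morse-Floer identification after a small autonomous perturbation generate $H^*(K;\K)$; (ii) constants in $\{H \geq \varepsilon_n\}$, pushed off to infinite action as $C_n \to \infty$; and (iii) non-constant orbits, each lying on a level set $H^{-1}(c)$ with $c \in (0, \varepsilon_n)$ such that $f_n'(c)$ equals a period of some closed orbit of $X_H$ on $H^{-1}(c)$. Since $\varepsilon_n \to 0^+$, any such non-constant orbit automatically lies on a level set with $c \to 0^+$.

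For part (a), the Floer differential and the continuation maps preserve the decomposition by free homotopy class, so $SH^{*,\alpha}(M|K;\K)$ is the corresponding limit of class-$\alpha$ sectors of $HF^*(H_n;\K)$. By the continuity/universality statement of Theorem~\ref{tmUniExtFl}, the hypothesis $SH^{*,\alpha}(M|K;\K) \neq 0$ forces the class-$\alpha$ sector of $HF^*(H_n;\K)$ to be non-zero for arbitrarily large $n$. Since $\alpha \neq 0$ admits no constant representative, each such $n$ must then have a non-constant $1$-periodic orbit of $H_n$ in class $\alpha$, corresponding to a closed orbit of $X_H$ on $H^{-1}(a_n)$ with $a_n \in (0, \varepsilon_n)$ representing $\alpha$; by construction $a_n \to 0^+$.

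For part (b), under $c_1(M) = 0$ the complexes carry an absolute $\Z$-grading and the contractible sector is well-defined. Argue by contradiction: if $X_H$ had no contractible closed orbit on $H^{-1}(c)$ for any $c \in (0, \delta)$, then for $n$ with $\varepsilon_n < \delta$ the contractible sector of $HF^*(H_n;\K)$ would consist only of the constants in $\{H < 0\}$, hence would be identified with $H^*(K;\K)$. Passing to the limit, the contractible sector of $SH^*(M|K;\K)$ equals $H^*(K;\K)$; one then invokes the product structure of Theorem~\ref{tmFloerProduct} (the unit is contractible) together with the $\Z$-grading to propagate this equality to the full $SH^*(M|K;\K)$, contradicting the hypothesis.

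The main obstacle is two-fold. First, one must carefully transfer non-vanishing of a reduced-cohomology class in the non-Archimedean Banach limit to non-vanishing at a finite stage, which requires the precise continuity and universality statements of Theorems~\ref{mainTmA} and~\ref{tmUniExtFl}, ensuring that every non-zero class in $SH^*(M|K;\K)$ is detected by the image of some continuation map from a finite $HF^*(H_n;\K)$. Second, the argument for part (b) is genuinely more delicate than for (a): one has to show, under $c_1(M) = 0$, that the absence of non-constant contractible orbits near $\partial K$ forces $SH^*(M|K;\K)$ to equal $H^*(K;\K)$ in its entirety, i.e.\ to control the non-contractible sectors as well, presumably via the module structure over $H^*(M;\K)$ and the unit of the pair-of-pants product. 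This step is where one uses the full force of the grading afforded by $c_1 = 0$ together with Theorem~\ref{tmFloerProduct}.
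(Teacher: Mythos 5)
Your part (a) is essentially the paper's argument, presented in contrapositive form rather than by contradiction. Both use a cofinal family $H_n=f_n\circ H$ (the paper's $h_n\circ H$) which is nearly zero on $K$, ramps up steeply on a shrinking shell $H^{-1}(0,\varepsilon_n)$, and then grows to infinity, and both observe that the class-$\alpha$ part of the Floer complex of $f_n\circ H$ is generated exclusively by non-constant orbits of $X_H$ lying on levels arbitrarily close to $0$; the paper then simply notes that under the contrapositive hypothesis this complex is identically zero.

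For part (b), the proposal has a genuine gap. The final ``propagation'' step---passing from the equality of contractible sectors to $SH^*(M|K;\K)=H^*(K;\K)$ by invoking the pair-of-pants product and the $\Z$-grading---is not an argument. Multiplication by the unit, $1*a=a$, is the identity on every free homotopy sector and therefore kills nothing, and a $\Z$-grading by itself places no constraint on the non-contractible sectors; you flag this as the ``more delicate'' step but it is not clear any such deduction exists. The paper does not take this route at all. After perturbing $H$ to have small Hessian so that its only $1$-periodic orbits are its critical points, it observes (under the contrapositive hypothesis) that $H_n$ likewise has only critical points as periodic orbits, and then argues directly at the chain level: choosing time-independent generic data so that the relevant Floer trajectories become Morse gradient lines of $H$, it shows that (i) any class supported on critical points outside $K$ is killed once the continuation map has dropped its action by more than $\val(x)-a$, because the gradient trajectories from outside critical points remain outside $K$, and (ii) classes supported on critical points inside $K$ map to themselves under the obvious identification. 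This identifies the whole of $SH^*(M|K;\K)$ with the Morse cohomology of $K$ in one stroke, with no separate treatment of the contractible versus non-contractible sectors. Your first step also glosses over a point the paper handles carefully: at a fixed stage $n$ the contractible sector of $HF^*(H_n)$ does contain the constants in $\{H\geq\varepsilon_n\}$, and one has to verify that the continuation maps actually send these to zero in the limit rather than mixing them into the complex of inside constants; the chain-level gradient-like and action-drop estimates in the paper are exactly what establish this.
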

Theorem \ref{lmNearbyEx} is proven in \S \ref{subsecNearby}.
Some applications of local Floer cohomology to embedding and displaceability problems are given in \S \ref{subsecLiouIntro} below.

We conclude with some comments on the relation of these groups with similar work of others.
\begin{enumerate}[wide, labelwidth=!, labelindent=10pt]
\item When $M$ is symplectically aspherical and $K$ is the closure of an open set $U$, the  groups $SH^*_{[a,b)}(M|K)$ coincide with the corresponding symplectic cohomology groups of $U$ as defined in \cite{CielFloerHofer} using Hamiltonians which are constant at infinity.
\item In \cite{Venkatesh2017} the notion of completed symplectic cohomology is introduced and studied for Liouville cobordisms $\mathcal{W}$ inside monotone symplectic manifolds. The computations in \cite{Venkatesh2017} show that the local symplectic cohomology groups depend non trivially on $K$. The choice of Floer data in \cite{Venkatesh2017} is such that the Floer chain complexes have finite boundary depth (see Remark \ref{remBoundaryDepth}). In particular ordinary and reduced Floer cohomology coincide for these Floer data.  A consequence of Theorem~\ref{tmUniExtFl} is that the invariant of \cite{Venkatesh2017} is the local Floer cohomology as defined here.
\item In \cite{Varolgunes2018} an invariant which is closely related to local symplectic cohomology as defined here is studied and is shown to fit into a local to global spectral sequence when the compact sets involved are invariant sets of an involutive system of Hamiltonians.
\end{enumerate}
\subsection{Global symplectic cohomology}

Consider the set $\cH_{univ}\subset\mathcal{H}_{s.c.}$ of \textit{smooth} time independent exhaustion functions on $M$. Then $\cH_{univ}$ is a monoidal indexing set. By Theorem \ref{tmFloerProduct} we thus obtain for any geometrically bounded symplectic manifold a  $k$-algebra which is a symplectic invariant.
\begin{df}
The \textit{universal symplectic cohomology} is defined by
\[
SH^*_{univ}(M):=SH^*(M;\mathcal{H}_{univ}).
\]
\end{df}

For any choice of $\cH$ the algebra $SH(M;\cH)$ carries a topology, called the \textit{final topology} as a direct limit of topological vector spaces. This topology is not guaranteed a priori to be Hausdorff and its Hausdorff completion is not guaranteed to be metrizable. However, in the few cases where something is known about it, $SH^*_{univ}$ turns out to be a reasonably well behaved object. Example \ref{exSUniv} below should give some sense of what universal symplectic cohomology is like in nice cases.

Note that for a compact set $K$ we have $\cH_{univ}\preceq\cH_K$. Thus there is a natural unital map $SH^*_{univ}(M)\to SH^*(M|K)$ for any compact set. One way to utilize it is if one finds a monoidal indexing set $\cH\subset\cH_{univ}$ for which $SH^*(M;\cH)=0$, it then follows that $SH^*(M|K)=0$ for all compact $K\subset M$. Observe that since we do not complete after taking the direct limit, the algebra $SH^*(M;\cH)$ is unsensitive to behavior on compact sets. Indeed, define an equivalence relation $\cH_1\sim \cH_2$ by $\cH_1\preceq\cH_2$ and $\cH_2\preceq 1$, then the associated symplectic cohomologies are canonically isomorphic. On the other hand, if $\cH$ consists of continuous functions, the $\sim$-equivalence class of $\cH$ is unaffected by any alterations on any compact set. Thus, for $\cH\subset\cH_{univ}$ the algebra $SH^*(M;\cH)$ is only sensitive to the growth at infinity. For this reason we refer to this type of symplectic cohomology as global $SH$.

Before applying $SH^*_{univ}(M)$ we discuss some settings where something can be said about it.

Let $(M,\omega)$ be a compact symplectic manifold and let $\psi:M\to M$ be a symplectomorphism. Denote by $\tilde{M}_\psi$ the associated symplectic mapping torus. See \S\ref{subSecMappingTorus} for the definition. Denote by $HF^*(M,\psi)$ the fixed point Floer homology of $\psi$ as introduced in \cite{DS94}. The following theorem allows to distinguish mapping tori by fixed point Floer homology. $\tilde{M}_\psi$ carries a distinguished closed $1$-form $dt$ pulled back from the natural map $\tilde{M}_\psi\to S^1$. The $1$-form $dt$ induces a grading on $SH^*_{univ}$ since continuation maps are homotopies. We denote by $SH^{*,k}_{univ}(\tilde{M}_\psi)$ the $k$th graded piece.

\begin{tm}[Cf.  \cite{Fabert10}]\label{tmMappFloFixed}
There is a map
\[
\oplus_{k\in\Z }HF^*(M,\psi^k)\to {SH}^{*}_{univ}(\tilde{M}_\psi),
\]
which is injective and dense. Moreover, for each $k \in\Z$ there is a natural isomorphism
\[
HF^*(M,\psi^k)=SH^{*,k}_{univ}(M).
\]

In particular, let $\psi_i:M\to M$ be a symplectomorphism for $i=0,1$. Suppose there exists a symplectomorphism
\[
 \phi:\tilde{M}_{\psi_1}\to\tilde{M}_{\psi_2}
 \]
 which preserves the class of $dt$. Then $\phi$ induces an isomorphism $HF^*(M,\psi_1)=HF^*(M,\psi_2)$.
\end{tm}
Theorem \ref{tmMappFloFixed} is proven in \S \ref{subSecMappingTorus}.

\begin{tm}[\textbf{Kunneth formula}]\label{tmKunneth}
Let $M_1$ and $M_2$ be geometrically bounded symplectic manifolds. Then there is a natural map
\begin{equation}\label{eqKunnethSH}
SH_{univ}^*(M_1)\otimes SH_{univ}^*(M_2)\to SH_{univ}^*(M_1\times M_2)
\end{equation}
which is injective with dense image. A similar claim holds if one restricts to $SH^{*,0}_{univ}$.
\end{tm}
Theorem \ref{tmKunneth} is proven in \S \ref{subsecKunnethSH}.

The following Theorem refers to the additional grading on $SH_{univ}^{*}(M)$ by free homotopy classes of loops  as dicussed in the paragraph preceding Theorem \ref{tmFloerProduct}.

\begin{tm}[\textbf{Nearby existence}]\label{tmNearbyEx0}
\begin{enumerate}

\item\label{NearbyEx1}
Suppose $SH_{univ}^{*,0}(M)=0$. Then for any Hamiltonian $H:M\to\R$ which is proper and bounded from below, the subset of levels containing a contractible periodic orbit is dense in $H(M)\subset\R$. The claim holds also if we merely assume $SH_{univ}^{*,0}(M)=\overline{\{0\}}$, the closure of $0\in SH^{*,0}_{univ}$ with respect to the final topology on $SH^{*,0}_{univ}$.
\item\label{NearbyEx2} Suppose $\alpha$ is a non-trivial free homotopy class of loops. Suppose $SH^{*,\alpha}(M)\neq\{0\}$. Then there is a compact $K\subset M$ such that for any smooth proper and bounded below $H:M\to\R$ and any $a\in \R$ for which  $H(K)\subset(-\infty,a]$ the set of $x\in[a,\infty)$ for which $H^{-1}(x)$ has a periodic orbit representing $\alpha$ is dense in $[a,\infty)$.
\end{enumerate}
\end{tm}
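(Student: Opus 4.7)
The plan is to reduce both parts to the local nearby existence result Theorem~\ref{lmNearbyEx} via a contradiction argument, using the unital algebra structure and functoriality of localized Floer cohomology recorded in Theorem~\ref{tmLocFlHoProp} to transfer information between the universal and localized invariants.

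For part~\ref{NearbyEx1}, suppose to the contrary that some open interval $(a_0,b_0)\subset H(M)$ carries no contractible periodic orbit of $H$ on any of its levels. By Sard's theorem pick a regular value $c\in(a_0,b_0)$ and set $K:=H^{-1}((-\infty,c])$, a nonempty compact manifold with $\partial K=H^{-1}(c)$. Applying Theorem~\ref{lmNearbyEx}\ref{lmNearbyExPartB} to the shifted Hamiltonian $\tilde H:=H-c$, which satisfies $\tilde H^{-1}(0)=\partial K$, the absence of contractible orbits on $\tilde H^{-1}(\epsilon)$ for $\epsilon\in(0,b_0-c)$ rules out its conclusion, so by contrapositive $SH^*(M|K;\Lambda_R)=H^*(K;\Lambda_R)\neq 0$. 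On the other hand, Theorem~\ref{tmFloerProduct}(1) combined with the commutative triangle of Theorem~\ref{tmLocFlHoProp}(d) furnishes a unital $\Lambda_R$-algebra homomorphism $H^*(M;\Lambda_R)\to\widehat{SH}^*_{univ}(M)\to SH^*(M|K;\Lambda_R)$ sending $1$ to $1$. The hypothesis $\widehat{SH}^*_{univ}(M)=0$ forces $1=0$ in $SH^*(M|K;\Lambda_R)$, a contradiction with nontriviality.

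For part~\ref{NearbyEx2}, extract from $\widehat{SH}^{*,\alpha}(M)\neq\{0\}$ some $H_0\in\mathcal{H}$ and class $[x_0]\in\overline{HF}^{*,\alpha}(H_0;\Lambda_R)$ whose image in $\widehat{SH}^{*,\alpha}(M)$ is nonzero, and take $K$ to be a sufficiently large compact sublevel set of $H_0$ so that $[x_0]$ already has nonzero image in $SH^{*,\alpha}(M|K;\Lambda_R)$ under the continuation map of Theorem~\ref{tmLocFlHoProp}(e). Now fix any proper bounded-below Hamiltonian $H$ with $H(K)\subset(-\infty,a]$, and suppose toward contradiction that some open $(b,c)\subset[a,\infty)$ contains no level with an $\alpha$-orbit of $H$. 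Pick a regular value $c'\in(b,c)$ and put $K'':=H^{-1}((-\infty,c'])$, so that $K\subset K''$. Part~(a) of Theorem~\ref{lmNearbyEx} applied to $H-c'$ forces $SH^{*,\alpha}(M|K'';\Lambda_R)=0$. After shifting $H_0$ down by a sufficiently large constant (harmless over $\Lambda_R$ since it merely rescales $[x_0]$ by an invertible Novikov element) we may arrange $H_0\leq 0$ on $K''$, and then the continuation $\overline{HF}^{*,\alpha}(H_0)\to SH^{*,\alpha}(M|K)$ factors as $\overline{HF}^{*,\alpha}(H_0)\to SH^{*,\alpha}(M|K'')\to SH^{*,\alpha}(M|K)$ through the contravariant restriction of Theorem~\ref{tmLocFlHoProp}(a). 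The middle group vanishes, so $[x_0]$ maps to zero in $SH^{*,\alpha}(M|K)$, contradicting the choice of $K$.

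The main obstacle is the detection claim in part~\ref{NearbyEx2}: that nontriviality of a class in the Hausdorff completion $\widehat{SH}^{*,\alpha}(M)$ of a locally convex direct limit can be witnessed on a fixed compact piece $SH^{*,\alpha}(M|K;\Lambda_R)$. Because a class may well be nonzero in the completion while dying in every individual $\overline{HF}^{*,\alpha}(H)$ along the direct system, this step requires combining the continuity of reduced Floer cohomology from Theorem~\ref{TmA} with the interplay between the Banach norms on the individual groups and the locally convex topology on $\widehat{SH}^{*,\alpha}(M)$ flagged in the introduction, plus an energy-based compactness argument to confine the supporting orbits of $[x_0]$ to a compact region. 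The subsequent factorization through $SH^{*,\alpha}(M|K'')$ is a standard monotone-homotopy bookkeeping once the action shift on $H_0$ is in place.
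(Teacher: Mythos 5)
Your proof of part~(\ref{NearbyEx1}) is correct and follows essentially the same route as the paper's: both reduce to Theorem~\ref{lmNearbyEx}\ref{lmNearbyExPartB} on the sublevel set $K$ created by the putative gap, and both derive a contradiction from the composite $H^*(M)\to\widehat{SH}^*_{univ}(M)\to SH^*(M|K)$, using the unitality of the localized cohomology to see that the unit cannot die. (The paper phrases this slightly differently via the commutative triangle with $HF^*(H_n)$, but the substance is the same.)

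Your proof of part~(\ref{NearbyEx2}) takes a genuinely different route from the paper's and has a real gap, which you yourself flag as ``the main obstacle'' but do not actually close. The problematic step is the detection claim: that since $[x_0]\in\overline{HF}^{*,\alpha}(H_0)$ has nonzero image in $\widehat{SH}^{*,\alpha}(M)$, some sufficiently large compact sublevel set $K$ has $[x_0]\mapsto SH^{*,\alpha}(M|K)$ nonzero. This does not follow from anything established in the paper. The space $\widehat{SH}^{*,\alpha}(M)$ is the Hausdorff completion of a locally convex direct limit, while each $SH^{*,\alpha}(M|K)$ is the reduced Floer cohomology of the indicator $H_K$, a Banach space. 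Vanishing in the latter is governed by an action-filtration condition (Theorem~\ref{lmReduceHomInv}), whereas vanishing in the former is governed by the locally convex topology on the direct limit, and since $\mathcal{H}$ does not admit a $\preceq$-cofinal sequence, not even Lemma~\ref{lmCharVanHaus} applies to characterize it. There is no a priori reason the diagonal map $\widehat{SH}^{*,\alpha}(M)\to\prod_K SH^{*,\alpha}(M|K)$ should be injective, and your sketch of ``combining continuity with energy-based compactness'' does not produce an argument.

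The paper sidesteps this entirely. Instead of trying to localize a nonzero class, it negates the conclusion to build, via Lemma~\ref{lmNearbyEx2}, a global $J$-proper Hamiltonian $H$ whose orbit-free gaps repeat at arbitrarily large levels, and then uses those gaps to manufacture a $\preceq$-cofinal family $\{h\circ H\}$ of dissipative Hamiltonians carrying \emph{no} $\alpha$-orbits at all. That directly forces $\widehat{SH}^{*,\alpha}(M)=0$ from the universal property of the (completed) direct limit, contradicting the hypothesis. This is a construction at the level of the Floer data feeding into the limit, not a detection argument in a localized invariant, and it is what makes the argument go through. If you want to salvage the localized approach you would need to prove the detection claim, which is a separate and nontrivial statement about the relationship between the two invariants; as written, your part~(\ref{NearbyEx2}) is incomplete.
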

Theorem \ref{tmNearbyEx0} is proven in \S\ref{subsecNearby}.

\begin{rem}
Examples satisfying the hypotheses of the first part of Theorem \ref{tmNearbyEx0} are complete toric varieties $M$ such that $c_1(M)=0$. This follows from the vanishing criterion of Theorem \ref{tmVanishing}. See Example \ref{exTCy}. There are manifolds in this class of examples which unlike $\C$ contain non-displaceable sets. Examples are the canonical bundles over $\Pp^2$ and over $\Pp^1\times \Pp^1$. By the Kunneth formula, the product of such a manifold with any geometrically bounded symplectic manifold of vanishing Chern class will again satisfy the hypothesis.

An example of an $M$ and $\alpha$ satisfying the hypotheses the second part of Theorem \ref{tmNearbyEx0} is given by the cotangent bundle of the torus and any non-trivial homotopy class $\alpha$. This can be deduced from Theorems \ref{tmMappFloFixed} and \ref{tmKunneth}. From this  we obtain many examples by taking the product with an arbitrary compact manifold or with a geometrically bounded one for which symplectic cohomology does not vanish, and considering homotopy classes pulled from the cotangent factor.
\end{rem}

We can also use the methods of this paper to produce periodic orbits with prescribed action. Namely, for a dissipative Hamiltonian $H$ call a class $a\in \overline{HF}^*(H)$ \textbf{essential} if it maps to a non-zero class in $SH_{univ}^*(M)$. Suppose $M$ is symplectically aspherical. If $H_1\leq H_2$ are dissipative then for any essential class $a$ in $HF^*(H_1)$ there is a periodic orbit of $H_2$  in the same homotopy class with action bounded by $\val(a)$. Indeed, the map $HF^*(H_1)\to SH^*_{univ}(M)$  factors through $HF^*(H_2)$ by the continuation map which is action decreasing.

\begin{ex}
On a Liouville domain, for any function $H$ which is convex at infinity, all non-zero classes in $HF^*(H)$ are essential. This follows from Theorem \ref{tmVitrUniv} below. The same holds for the product of a Liouville domain with a compact a spherical manifold. These claims require working over $R$ instead of over $\Lambda_R$, but this is not problematic in this restricted setting since the action spectrum is bounded below and so the topology is discrete.
\end{ex}

\subsection{Liouville domains and displaceability}\label{subsecLiouIntro}
Let $M$ be the completion of a Liouville domain $U$. Denote by $SH^*_{Viterbo}(U;\K)$ the symplectic cohomology as defined in \cite{Viterbo99} by taking a direct limit of the Floer cohomology groups $HF^*(H,J)$ over all $(H,J)$ where $H$ is linear at infinity and $J$ is of contact type.  See \S\ref{subSecLiouville} for notation and definitions. Denote by $\mathcal{L}\subset\mathcal{H}$ the set of Hamiltonians which are linear at infinity. Then $\cL$ is a monoidal indexing set. We have
\[
SH^*_{Viterbo}(U;\K)=SH^*(M;\mathcal{L},\K),
\]
and therefore a natural map
\[
f:SH^*_{Viterbo}(U;\K)\to SH^*_{univ}(M;\K).
\]
We prove in Theorem~\ref{VitUnivEmb} below
\begin{tm}\label{tmVitrUniv}
The map $f$ is an isomorphism for $\K=R$ coefficients.
\end{tm}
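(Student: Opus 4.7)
The plan is to show that $\mathcal{L}$ is cofinal in $\mathcal{H}$ as a directed system and then verify that the Hausdorff completion appearing in the definition of $\widehat{SH}^*_{univ}$ is a no-op in the Liouville setting over a trivially valued field. Given any $H\in\mathcal{H}$, the properness and lower boundedness of $H$ imply that on the cylindrical end $[1,\infty)\times\partial U$ with radial coordinate $r$, there is a function $\phi(r)\to\infty$ dominating $H$. Choosing a linear profile $L(r)=ar+b$ with $a$ exceeding the growth rate of $\phi$ (and $a$ generic to avoid the Reeb spectrum of $\partial U$) gives $L\in\mathcal{L}$ with $L\geq H$ everywhere. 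By Theorem~\ref{TmA}, there is a functorial continuation map $\overline{HF}^*(H;R)\to \overline{HF}^*(L;R)$, and by construction such $L$ are cofinal in $\mathcal{H}$.

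From cofinality, the natural map on direct limits $\varinjlim_{L\in\mathcal{L}}\overline{HF}^*(L;R)\to \varinjlim_{H\in\mathcal{H}}\overline{HF}^*(H;R)$ is an isomorphism of abstract graded $R$-vector spaces: surjectivity follows because any class in the right-hand limit is represented at some $H$ and then pushed to some dominating $L\in\mathcal{L}$; injectivity follows because any equality witnessed through a chain of $H_i\in\mathcal{H}$ can be re-witnessed through a dominating $L\in\mathcal{L}$, using commutativity of continuation maps. This reduces the theorem to comparing the two inductive-limit topologies and showing that completion does nothing.

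The crucial role of the hypothesis $\Lambda_{R,\omega}=R$ enters here. Because $\omega$ is exact on $M$, each $CF^*(L;R)$ for $L\in\mathcal{L}$ with admissible slope is finite-dimensional over $R$ (generated by the interior critical points of the Morse part together with the Reeb orbits on $\partial U$ of period at most the slope), and the action filtration on it is discrete and bounded below. Thus each $\overline{HF}^*(L;R)$ is finite-dimensional with trivial (discrete) valuation, and the directed system over $\mathcal{L}$ is a system of finite-dimensional $R$-vector spaces with injective continuation maps in each degree up to any prescribed action window. Its inductive-limit locally convex topology is therefore already Hausdorff, so $\widehat{SH}^*(M;\mathcal{L},R)=SH^*(M;\mathcal{L},R)=SH^*_{Viterbo}(U;R)$, and cofinality then identifies this with $\widehat{SH}^*_{univ}(M;R)$.

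The main obstacle I anticipate is the last step: verifying rigorously that the inductive-limit topology on $SH^*(M;\mathcal{L},R)$ is Hausdorff, so that the completion in the definition of $\widehat{SH}^*_{univ}$ is redundant. This requires using that over trivially valued $R$, the norms arising from action filtrations on each $\overline{HF}^*(L;R)$ interact nicely with the continuation maps induced by $L\leq L'$ — in particular, that the transition maps preserve filtrations in a way compatible with the colimit being separated. The rest of the argument is essentially formal once cofinality and this topological comparison are in hand.
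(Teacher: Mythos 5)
Your first step claims that $\mathcal{L}$ is cofinal in $\mathcal{H}$, i.e.\ that for any continuous exhaustion $H$ one can find a Hamiltonian $L\in\mathcal{L}$ (linear at infinity) with $L\geq H$. This is false: on the cylindrical end, an exhaustion function $H$ may grow superlinearly in $e^r$ (e.g.\ $H=e^{2r}$), and no function that is linear in $e^r$ at infinity can dominate it. Your phrase ``a linear profile with $a$ exceeding the growth rate of $\phi$'' presumes $\phi$ has a finite growth rate, which a general exhaustion does not. Since $\mathcal{L}$ is not cofinal, the reduction of $\widehat{SH}^*_{univ}(M)$ to a direct limit over $\mathcal{L}$ cannot be done at the level of index sets, and the remainder of your argument — which is essentially formal once cofinality is granted — does not get off the ground.

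The paper's proof addresses precisely this difficulty, and the idea it uses is the one genuinely missing from your outline. One takes instead the cofinal family of all convex functions $h(e^r)$ — these \emph{do} dominate every exhaustion. For a fixed such $H=h(e^r)$, one approximates $h$ from below by convex $h_i$ that are \emph{linear} at infinity with slope avoiding the period spectrum, and one must show $\overline{HF}^*(H) = \varinjlim_i HF^*(H_i)$. This is where the hypothesis $\Lambda_{R,\omega}=R$ is crucially used: for convex Hamiltonians of this form, the action spectrum is nonnegative, hence $HF^*_{[k,\infty)}(H_i)=HF^*(H_i)$ for all $k\leq 0$, so the inverse limit over $k$ in the formula $\overline{HF}^*=\varprojlim_k\varinjlim_i HF^*_{[k,\infty)}$ stabilizes and equals $\varinjlim_i HF^*(H_i)$. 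Finally, the approximating family $\{H_i\}$ is $\preceq$-cofinal in $\mathcal{L}$, and continuation maps between different convex $H$'s commute with the corresponding approximations. Your observations about the trivial valuation giving a discrete topology and about finite-dimensionality forcing the colimit to be Hausdorff (the role played by Lemma~\ref{lmFinDimLim} in the paper) are correct and relevant, but they belong to the final bookkeeping step, not the heart of the argument; the central issue is the non-cofinality of $\mathcal{L}$ and the passage through $\overline{HF}^*$ of a superlinear convex Hamiltonian.
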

\begin{cy}
For a Liouville manifold $M$ of finite type, $SH_{Viterbo}^*(M;R)$ is independent of the choice of primitive of $\omega$.
\end{cy}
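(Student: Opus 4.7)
The plan is to read the corollary as an immediate consequence of Theorem~\ref{tmVitrUniv}, the content being that the target $\widehat{SH}^*_{univ}(M;R)$ of the comparison map $f$ depends only on the symplectic manifold $(M,\omega)$ and not on any choice of Liouville primitive. Indeed, the directed set $\mathcal{H}$ of continuous time-dependent exhaustion functions is defined purely in terms of the topology of $M$, and the Floer complexes $\overline{HF}^*(H;R)$ producing $\widehat{SH}^*_{univ}(M;R)$ depend only on $(M,\omega)$, with auxiliary choices (almost complex structures, etc.) entering only up to the symplectic-invariance assertions of Theorem~\ref{TmA}.

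Concretely, let $\lambda_0$ and $\lambda_1$ be two primitives of $\omega$, each presenting $M$ as a Liouville manifold of finite type, and write $SH^*_{Viterbo,\lambda_i}(M;R)$ for the Viterbo symplectic cohomology associated with $\lambda_i$, defined by taking the direct limit over the subset $\mathcal{L}_{\lambda_i}\subset\mathcal{H}$ of Hamiltonians linear at infinity with respect to $\lambda_i$. Applying Theorem~\ref{tmVitrUniv} to each Liouville structure separately I obtain isomorphisms
\[
f_i\colon SH^*_{Viterbo,\lambda_i}(M;R)\xrightarrow{\ \sim\ }\widehat{SH}^*_{univ}(M;R),\qquad i=0,1,
\]
and composing $f_1^{-1}\circ f_0$ yields the desired canonical isomorphism between the two Viterbo cohomologies.

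The only points requiring genuine verification are that both Liouville structures fit inside the framework of the paper, i.e.\ that (i) each $(M,\lambda_i)$ is geometrically bounded, which is standard since a finite-type Liouville end is symplectomorphic to the positive half of a symplectization and such ends are well known to admit complete, sectional-curvature-bounded, positive-injectivity-radius $\omega$-tame metrics in the sense of the introduction; and (ii) each $\mathcal{L}_{\lambda_i}$ is a directed sub-semigroup of $\mathcal{H}_{s.c.}$ satisfying the cofinality and growth hypotheses implicit in the statement of Theorem~\ref{tmVitrUniv}. Both checks are essentially automatic for finite-type Liouville manifolds, so no real obstacle arises. The reason this apparent triviality is informative is precisely that Theorem~\ref{tmVitrUniv} does the serious work of identifying a structure-dependent invariant (Viterbo's $SH^*$, built from Liouville-adapted Floer data) with the purely symplectic invariant $\widehat{SH}^*_{univ}$; the corollary is the logical residue of that identification, together with the fact that the universal construction sees only $(M,\omega)$.
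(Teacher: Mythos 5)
Your argument is exactly the intended one: since $\widehat{SH}^*_{univ}(M;R)$ is built from the directed system $\mathcal{H}$ of all continuous exhaustions, which depends only on $(M,\omega)$ and not on a Liouville primitive, Theorem~\ref{tmVitrUniv} applied to each primitive $\lambda_0,\lambda_1$ produces isomorphisms $f_0,f_1$ onto the same target, and $f_1^{-1}\circ f_0$ gives the comparison. The paper leaves this reasoning implicit (the corollary is stated without a separate proof immediately after the theorem), and your write-up fills it in faithfully; the two "verification points" you flag are indeed routine for finite-type Liouville manifolds.
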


\begin{rem}
Theorem \ref{tmVitrUniv} can generally \textit{not} be expected to be true over a non-trivially valued field. See Remark \ref{rmTrivNonTriv} for an explanation on this point.
\end{rem}
It is also not hard to show that for any Liouville sub-domain $V\subset M$ we have a natural isomorphism of vector spaces
\begin{equation}\label{eqLocLiouvDom}
SH^*(M|V;R)=SH^*_{Viterbo}(V;R).
\end{equation}
Note however that the left hand side of \eqref{eqLocLiouvDom} is naturally a normed vector space while the right hand side is not. The equation will thus cease to be  true over a non-trivially valued field. The generalization of \eqref{eqLocLiouvDom} for the non-trivially valued case is the following excision principle
\begin{equation}\label{eqLocLiouvDomNT}
SH^*(M|V;\Lambda_R)=SH^*(\hat{V}|V;\Lambda_R)
\end{equation}
whenever $M$ is a Liouville manifold and $V$ is a Liouville subdomain with $\hat{V}$ its completion. This follows by the no escape Lemma near the concave boundary of $M\setminus V$. See \cite{Ritter13}. We now formulate a theorem showing that this independence of the ambient manifold holds under more general conditions for skeleta of Liouville domains. In the following, we denote by $SH^{*,0}(M|V)$ the subgroup consisting of periodic orbits that are contractible in $M$. The proofs of the following Theorems as well as some pertinent definitions are given in \S \ref{subSecLiouville}.
\begin{tm}\label{tmSkellVitFunc}
Let $M$ be symplectically aspherical and let $U$ be a Liouville domain with Liouville field $Z$. Let $i:U\to M$ be an embedding with the property that $i_*:H_1(U;\R)\to H_1(M;\R)$ is injective. Then, denoting  by $\Skel(U,Z)$ the skeleton of $U$ with respect to $Z$, 
\[
SH^{*,0}(M|\Skel(U,Z);\mathbb{K})=SH^{*,0}(U|\Skel(U,Z);\mathbb{K}).
\]
\end{tm}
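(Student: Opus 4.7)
The plan is to build matching cofinal approximations to the indicator Hamiltonian $H_{\Skel(U)}$ on $M$ and on the Liouville completion $\hat U$, compare the resulting Floer complexes by a no-escape argument at $\partial U$, and restrict to the contractible sector using the $H_1$-injectivity hypothesis together with the a-sphericity of $M$.

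First I would construct sequences $\{H_n\}\subset\mathcal{H}(M)$ and $\{\tilde H_n\}\subset\mathcal{H}(\hat U)$ with the following features: (i) each pair agrees on all of $U$, where both are radial functions of the Liouville coordinate vanishing on $\Skel(U)$ and attaining a common slope $\lambda_n\to\infty$ on $\partial U$; (ii) on $\hat U\setminus U$, $\tilde H_n$ is linear at infinity of slope $\lambda_n$, while on $M\setminus U$, $H_n$ is extended to be dissipative in the sense of Theorem~\ref{mainTmA}, rising to a large plateau in a collar and then controlled outside; (iii) both sequences are cofinal in the directed systems defining $SH^{*,0}(M|\Skel(U);\K)$ and $SH^{*,0}(U|\Skel(U);\K)$ respectively. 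Cofinality on the $\hat U$ side is standard for indicator approximations on Liouville manifolds; on the $M$ side it follows from monotonicity and Theorem~\ref{TmA}.

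The heart of the argument is a no-escape lemma at the concave boundary of $M\setminus U$, applied as in the proof of the excision formula \eqref{eqLocLiouvDomNT}: any Floer cylinder of $H_n$, and any continuation cylinder between $H_n$ and $H_{n+1}$ under a monotone homotopy compatible with the Liouville structure, whose asymptotes lie in $U$ must be entirely contained in $U$. This gives a canonical identification between the subcomplex of $CF^*(H_n)$ generated by orbits in $U$ and $CF^*(\tilde H_n)$, compatible with the continuation maps of the cofinal system. Orbits of $H_n$ created outside $U$ by the dissipative extension do exist, but the dissipative design forces their actions to diverge as $n\to\infty$, so their contribution to the valuation-completed limit $\overline{HF}^*(H_{\Skel(U)};\K)$ is annihilated.

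It remains to pass to the contractible sector. Since $M$ is symplectically a-spherical the action functional on contractible orbits is well-defined independently of capping, and the hypothesis that $i_*\colon H_1(U;\R)\to H_1(M;\R)$ is injective ensures that for any orbit $\gamma$ contained in $U$, the class $[\gamma]$ vanishes in $H_1(M;\R)$ if and only if it vanishes in $H_1(U;\R)$. Constants on $\Skel(U)$ are tautologically contractible in both $U$ and $M$; the remaining orbits of $\tilde H_n$ appear as short Reeb-like orbits near $\partial U$, whose free homotopy classes in $U$ inject into those in $M$ via $i_*$, so a match of contractibility holds sector-by-sector. Taking reduced cohomology and direct limit in $n$ then gives the stated isomorphism.

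The main obstacle I anticipate is implementing the no-escape bookkeeping uniformly across the telescope model computing the reduced direct limit, in particular ensuring it survives the passage to continuation maps and to the valuation completion. A secondary technical point is making precise, in the a-spherical regime, the reduction of the free-homotopy matching of orbits to the $H_1$-level injectivity hypothesis, which is what keeps the homological hypothesis strong enough to yield the claimed equality of contractible-orbit sectors.
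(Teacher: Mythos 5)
Your outline is close in spirit, but the paper's actual argument is organized around a different engine and, more importantly, the step you wave past is the hard one. The paper does not invoke a no-escape lemma at $\partial U$. Indeed, the excision principle \eqref{eqLocLiouvDomNT} that you cite is stated only for $M$ a Liouville manifold, and the paper presents Theorem \ref{tmSkellVitFunc} precisely as a generalization to non-exact ambient manifolds where that argument is unavailable: to run the Stokes computation behind no-escape you need a primitive $\lambda$ of $\omega$ on a region of $M\setminus U$ that the curve is confined to, and for a merely aspherical $M$ you have no such primitive away from a thin collar. Instead, the paper builds $S$-shaped Hamiltonians $H_{c,\epsilon}$ that are $0$ on $U(\epsilon)$, steeply linear up to $U(1/2)$, and have small gradient and Hessian beyond $U(1/2)$, and then uses the dissipativity-based diameter estimate of Theorem~\ref{tmFloSolDiamEst} to show that a Floer trajectory of energy $\le \epsilon_i c_i$ cannot cross the buffer $U(1)\setminus U(1/2)$. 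This is a quantitative confinement statement that replaces the maximum-principle reasoning you propose, and is more in keeping with the paper's stated robustness philosophy (the remark at the end of \S\ref{subsecRedFlCo} explicitly cautions against reliance on maximum principles). The role of $H_1$-injectivity is also somewhat different from what you describe: its main use in the proof is to guarantee that the symplectic action of a loop in $U$ that is contractible in $M$ agrees, up to constant, with the Liouville action $\int\gamma^*\lambda - \int H$, which is what makes the action filtration on $CF^*(H_{c,\epsilon})$ comparable to the Viterbo filtration.

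The genuine gap is your dismissal of the orbits outside $U$ as simply being ``annihilated'' because their actions diverge. This is exactly what is \emph{not} enough: the paper emphasizes that as the slope $c$ increases, new negative-action periodic orbits near $\partial U$ keep appearing with action close to $0$, so there is no single Hamiltonian in the approximating sequence for which the truncated complex $CF^*_{[a,0)}$ vanishes. Showing that $SH^*_{(-\infty,0)}(M|\Skel(U))=0$ therefore requires the explicit continuation-map argument the paper gives: one tracks the period $T^i$ of the Reeb orbit representing a given class under $f_{0,t}$ and shows it is non-increasing, hence the action of any fixed class eventually drops out of $[a,0)$ as $c(t)\to\infty$; and separately one shows the constant orbits outside $U(1/2)$ lie in the closure of the image of the boundary after completion. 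Without this analysis, the conclusion does not follow from cofinality and valuation-completion alone, and your proposal as written would also need to establish that all this survives after tensoring up from $R$ to $\Lambda_R$, which the paper notes explicitly. Your secondary technical worry (free-homotopy matching) is real, but the paper addresses it at the level of the action functional rather than by matching $\pi_1$-sectors, and even remarks that extending beyond the contractible sector would require the stronger a-toroidality hypothesis.
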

\begin{rem}
The restriction to contractible periodic orbits in Theorem \ref{tmSkellVitFunc} can be removed by adding the assumption that $M$ is symplectically atoroidal.
\end{rem}
Theorem \ref{tmSkellVitFunc} implies
\begin{tm}\label{tmLiouSubNondisp}
Let $U,Z$ and $M$ be as in Theorem \ref{tmSkellVitFunc} and suppose
\begin{equation}\label{tmNonDispNeq0}
SH^*_{Viterbo}(U)\neq 0.
\end{equation}
Then $\Skel(U,Z)$ is not displaceable.  
\end{tm}

Taking $U$ the cotangent disc bundle, this is a well known theorem by Gromov. Namely, $\C^n$ contains no simply connected Lagrangians. The particular case $M=\hat{U}$, the completion of $U$, is a theorem by \cite{kang2014}. We remark that Theorem \ref{tmLiouSubNondisp} follows from Theorem \ref{tmSkellVitFunc} by a general vanishing principle for the localized Floer cohomology of a displaceable set. We prove this for $M$ aspherical. In \cite{Varolgunes2018} this is proven without the asphericity assumption. Note however that the asphericity assumption in the last two theorems cannot be removed. Indeed, there are examples of displaceable Lagrangian spheres \cite{Abouzaid2012,Pabiniak2015}. However, there are quantitative counterparts which should hold assuming essentially only geometric boundedness.
\begin{tm}\label{tmEpsLiouEmb}
Let $M$ be monotone or Calabi-Yau and let $U\hookrightarrow M$ be a Liouville domain. Then there is a $\delta>0$ for which $SH^{*,0}_{Viterbo}(U;R)$ embeds into $SH^{*,0}_{[0,\delta)}(M|\Skel(U,Z);\K)$ with valuation $0$ as an $R$-subspace.
\end{tm}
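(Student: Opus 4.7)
The strategy is to realize both sides through a common cofinal family of dissipative Hamiltonians on $M$ whose restriction to $U$ generates Viterbo's symplectic cohomology, and then identify the low-action Floer data via a no-escape argument in the Liouville collar of $U$. Choose $\delta>0$ strictly smaller than both the minimal Reeb period $T_0$ of $\partial U$ and the Gromov energy threshold $\epsilon_0>0$ for non-constant pseudoholomorphic spheres in $M$, which is guaranteed positive by geometric boundedness. For each slope $\tau$, pick a Viterbo-admissible Hamiltonian $H_\tau$ on $\hat U$ rescaled so that all its one-periodic orbits have action in $[0,\delta)$, and extend $H_\tau$ to a dissipative Hamiltonian $\tilde H_\tau$ on $M$ that agrees with $H_\tau$ on $U$, blows up outside a narrow collar of $\bar U$, and whose orbits created outside $U$ all have action at least $\delta$. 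The family $\{\tilde H_\tau\}$ is then cofinal among dissipative Hamiltonians bounded above by $H_{\Skel(U)}$, so it computes $SH^*(M|\Skel(U);\K)$ by Theorem~\ref{mainTmA}.

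Next, apply the no-escape lemma in the Liouville collar of $U$, combined with the dissipative Floer data machinery of Theorem~\ref{mainTmA}, to confine all Floer trajectories and continuation cylinders of $\{\tilde H_\tau\}$ whose asymptotes both have action in $[0,\delta)$ to a neighborhood of $U$, where they coincide with their Viterbo counterparts in $\hat U$. Since such Floer solutions have energy bounded by $\delta<\epsilon_0$, no sphere bubbling can occur, so the $[0,\delta)$-filtered Floer complex of $\tilde H_\tau$ on $M$ is canonically isomorphic, at valuation $0$, to the Viterbo Floer complex of $H_\tau$ on $U$, and the identification is compatible with continuation maps. Passing to the direct limit over $\tau$ and invoking the reduction procedure of Theorem~\ref{TmA}, the Viterbo classes descend to valuation $0$ classes in $SH^{*,0}_{[0,\delta)}(M|\Skel(U);\K)$, and the map is $R$-linear via the inclusion $R\hookrightarrow\K$ as the valuation-$0$ part; the restriction to contractible orbits in $M$ gives the superscript $0$. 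Injectivity follows from the chain-level identification, since an interior Viterbo cycle which bounds on $M$ must, by the confinement, already bound within $U$.

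The main obstacle is the confinement of Floer solutions on $M$ itself: in the absence of global convexity outside $U$, this requires the full strength of the dissipative Floer data machinery (Theorem~\ref{mainTmA}) together with a careful construction of the extension $\tilde H_\tau$ designed so that the Liouville collar acts as an effective barrier to Floer trajectories in the prescribed action window. A secondary technical issue is the rescaling of $H_\tau$ needed to fit the growing Viterbo spectrum into the fixed window $[0,\delta)$ while keeping the extended family cofinal; the rescaling factor is absorbed into the Novikov parameter, which is exactly the source of the valuation $0$ condition in the statement.
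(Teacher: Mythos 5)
Your overall plan --- restrict to a narrow action window $[0,\delta)$, identify the low-action generators with the Viterbo complex, match the differentials --- is the paper's plan, but your confinement mechanism is not the paper's and leaves an unverified hypothesis. The paper reuses the $S$-shaped Hamiltonians $H_{c_i,\epsilon_i}$ from the proof of Theorem~\ref{tmSkellVitFunc} and chooses $\delta$ so that, by the diameter estimate of Theorem~\ref{tmFloSolDiamEst}, any Floer trajectory of energy at most $\delta$ which meets $U(1/2)$ is already contained in $U(1)$. You instead invoke a no-escape lemma in a Liouville collar. But the paper's Hamiltonians have small gradient and Hessian --- not radial form --- outside $U(1/2)$, and $J$ is not constrained to be of contact type there, so the hypotheses of any such lemma are simply not in force. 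If you impose them, you reintroduce exactly the dependence on special Floer data that the dissipative framework (and the robustness discussion at the end of \S\ref{subsecRedFlCo}) is designed to eliminate, and you then owe a separate argument that the truncated groups do not depend on that special choice.

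Three concrete errors in the details. First, ``rescale $H_\tau$ so that all its one-periodic orbits have action in $[0,\delta)$'' cannot be accomplished by scaling $H$ by a constant, since that changes the dynamics; what actually achieves the compression is pushing the inner corner of the $S$-shape to $e^r=\epsilon_i\to 0$, which multiplies the actions of the collar orbits by roughly $\epsilon_i$ while fixing the Reeb slope spectrum. Second, ``the rescaling factor is absorbed into the Novikov parameter, which is the source of the valuation $0$ condition'' is not how the valuation claim works; it follows from the fact that a contractible-in-$U$ orbit $x$ has a canonical lift $[x,A]$ with $A$ a nullhomotopy inside $U(1)$, whose action equals the Liouville action of $x$, and these actions tend to $0$ as $\epsilon_i\to 0$, so the resulting $R$-subspace has trivial valuation. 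Third, with the paper's convention $\mathcal{A}_H([\gamma,A])=-\omega(A)-\int H$, the constants outside $U(1/2)$ have action $-H(p)\ll 0$, not $\geq\delta$; they are excluded from $[0,\delta)$ from below, not above. Your $\delta$ (minimal Reeb period, Gromov sphere threshold) is also not the $\delta$ of the theorem, which is the energy cost of crossing $U(1)\setminus U(1/2)$ supplied by Theorem~\ref{tmFloSolDiamEst}.
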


\begin{tm}\label{tmPosDis}
Let $M$ be aspherical and let $U\hookrightarrow M$ be a Liouville domain satisfying $SH^*_{Viterbo}(U)\neq 0$. Then $\Skel(U,Z)$ has positive displacement energy.
\end{tm}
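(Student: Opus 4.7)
The plan is to combine Theorem~\ref{tmEpsLiouEmb} with a quantitative vanishing principle for the localized Floer cohomology of displaceable sets. By Theorem~\ref{tmEpsLiouEmb}, applied to the aspherical (hence semipositive) $M$ and the Liouville domain $U$ with $SH^*_{Viterbo}(U)\neq 0$, there is a fixed $\delta>0$ and a non-zero class $a\in SH^{*,0}_{[0,\delta)}(M|\Skel(U);\K)$ of valuation $0$. Assuming the quantitative vanishing principle stated below, it then suffices to show that if $\Skel(U)$ is displaced by a compactly supported Hamiltonian isotopy of Hofer norm $e<\delta$, then every class in $SH^{*,0}_{[0,e)}(M|\Skel(U);\K)$ vanishes, which would force $a=0$, a contradiction. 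Thus the displacement energy of $\Skel(U)$ is at least $\delta>0$.

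The main step is the quantitative vanishing principle: if $K\subset M$ admits a displacing Hamiltonian isotopy generated by $F$ of Hofer norm $e$, then the unit of $H^*(M;\K)$ maps to zero in $SH^{*,0}_{[0,e)}(M|K;\K)$, whence the whole group vanishes since it is a unital $\K$-algebra by Theorem~\ref{tmLocFlHoProp}(c) acted on through Theorem~\ref{tmLocFlHoProp}(d). To prove this, I would adapt the standard Floer--Hofer strategy to the dissipative framework of this paper. Namely, I would concatenate the continuation maps associated to $F$ and $-F\circ\psi^1_F$ into a pair-of-pants configuration; the displacement of $K$ by $F$ ensures that all resulting intersection-type broken trajectories have positive energy, and an explicit energy estimate shows that the resulting chain homotopy from the identity to zero increases valuation by at most $e$. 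The asphericity of $M$ rules out sphere bubbles and gives a well-defined real action, so only the Hofer norm of $F$ contributes to the valuation shift.

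Putting the pieces together, for $e<\delta$ the quantitative vanishing principle kills $a$ in $SH^{*,0}_{[0,e)}(M|\Skel(U);\K)$, while Theorem~\ref{tmEpsLiouEmb} says $a\neq 0$ there; this contradiction yields the positive lower bound on the displacement energy.

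The hard part is the quantitative vanishing principle itself. The difficulty lies less in the formal scheme, which is classical, than in realizing the displacing isotopy by Floer data that remain dissipative throughout the homotopy, so that the machinery of Theorems~\ref{TmA} and~\ref{tmFloerProduct} applies and the pair-of-pants product converges; and in tracking the energy through continuation maps with sufficient precision to identify the valuation shift with $e$ rather than merely establishing some positive shift. Once these technicalities are in place, the rest of the argument is a formal consequence of Theorems~\ref{tmEpsLiouEmb} and~\ref{tmLocFlHoProp}.
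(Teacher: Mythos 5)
The quantitative core of your proposal — that a displacing isotopy of Hofer norm $e$ yields a chain nullhomotopy raising valuation by at most $e$, which is then played off against the $\delta$ from Theorem~\ref{tmEpsLiouEmb} — is indeed the engine of the paper's proof, and you correctly identify Theorem~\ref{tmEpsLiouEmb} as the second ingredient. But the packaging has two genuine problems. First, the step ``the unit maps to zero in $SH^{*,0}_{[0,e)}(M|\Skel(U);\K)$, whence the whole group vanishes since it is a unital $\K$-algebra'' does not go through as stated: by Theorem~\ref{tmDisProd} the product sends $\overline{HF}^*_{[a_1,b_1)}\hat\otimes\overline{HF}^*_{[a_2,b_2)}$ into $\overline{HF}^*_{[\max\{a_1+b_2,a_2+b_1\},\,b_1+b_2)}$, so the action-truncated group $SH^*_{[0,e)}(M|K)$ is \emph{not} a $\K$-algebra, and Theorem~\ref{tmLocFlHoProp}(c) only gives the algebra structure on the full $SH^*(M|K)$. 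You would need a separate module-style argument tracking how windows shift under the product, which you do not supply. Second, ``concatenate the continuation maps\ldots into a pair-of-pants configuration'' conflates two different constructions: the relevant chain homotopy from $\id$ to the composed continuation map comes from a one-parameter family of continuation data on a \emph{cylinder}, not from a pair-of-pants, and dragging the product into this argument is both unnecessary and an extra obstacle given the delicate admissibility conditions the paper imposes to define it.

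The paper's proof avoids both issues by working purely at the chain level and never invoking the product. It reuses the Hamiltonian sequences $H_i+F$ and $(H_i+F)\#K$ from the proof of Theorem~\ref{tmLiouSubNondisp}, forms the continuation maps $f\colon SC^*(\{H_i+F\})\to SC^*(\{(H_i+F)\#K\})$ and $g$ the other way, and takes the chain homotopy $\mathfrak{H}$ between $\id$ and $g\circ f$ furnished by Lemma~\ref{lmChaLevCont}. Since the action spectrum of $(H_i+F)\#K$ escapes to $-\infty$, the composite $g\circ f$ vanishes modulo any fixed action level once $i$ is large, so $\id=\mathfrak{H}\circ d+d\circ\mathfrak{H}$ modulo that level. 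As $\mathfrak{H}$ raises valuation by at most the Hofer norm of the displacing Hamiltonian, this yields a boundary-depth bound: every cycle becomes a boundary over a window narrower than the displacement energy. Theorem~\ref{tmEpsLiouEmb} exhibits a class persisting over a window of width $\delta$, forcing the displacement energy to exceed $\delta>0$. If you want to salvage your route, replace the ``unit vanishes $\Rightarrow$ group vanishes'' step with this boundary-depth estimate, and drop the pair-of-pants altogether.
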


\begin{rem}
It should not be hard to remove the asphericity assumption. Once this is done and taking $U$ to be the cotangent disk bundle we recover a classical theorem by Chekanov \cite{Chekanov1998} stating that Lagrangian submanifolds have positive displacement energy.
\end{rem}

\section{Overview}
\subsection{Diameter control of Floer trajectories}\label{secOvrview}
In the next couple of sections we wish to investigate the conditions under which a Floer datum $F\in\mathcal{F}$ gives rise to Floer homology groups. What it comes down to are conditions under which Gromov compactness holds. To sketch an outline of what is to come, let us first discuss how compactness might fail. Fix the coordinates $(s,t)$ on $\R\times \R/\Z$. Let $u_n: \R\times \R/\Z$ be a sequence of solutions to Floer's equation
\begin{equation}\label{eqFloer}
\partial_su+J(\partial_tu-X_{H})=0
\end{equation}
satisfying for some positive number $E$ and some compact set $K\subset M$,
\[
E(u):= \frac1{2}\int\|\partial_su\|^2\leq E,\qquad u(\R\times \R/\Z)\cap K\neq\emptyset.
\]
In general there are two ways in which such a sequence may diverge. First there might be (after possibly reparametrizing) a fixed value $s$ and a compact set $K'$ such that $u_n(s,\cdot)$ intersects $K'$ but the diameter of $u_n(s,\cdot)$ is not bounded uniformly in $n$. We refer to this as a divergence of type $1$. See the left side of Figure \ref{figTypeDiv}. Second, there might be a sequence $s_n\to\infty$ such that $u_n(s_n,\cdot)$ converges to infinity. This is referred to as type $2$ divergence on the right of Figure \ref{figTypeDiv}.
\begin{figure}
\includegraphics[scale=0.6, trim=0 500 0 100,clip]{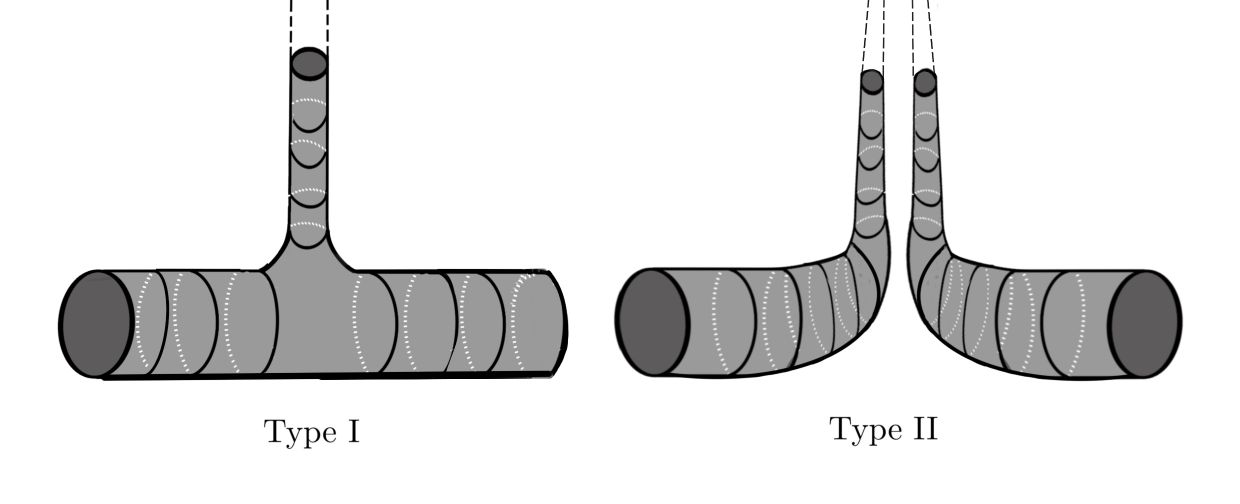}
\caption{Two types of divergence}\label{figTypeDiv}
\end{figure}

In the text below we introduce two conditions, one for ruling out each type of divergence. For the first type of divergence we introduce the condition of intermittent boundedness, or, i-boundedness. It involves bounds on the geometry of an associated metric on the Hamiltonian mapping torus which are required to hold on a sufficiently large subset of $M$. This condition is introduced first for the case where $H=0$ in \S \ref{Sec1} where we show that it provides diameter control for pseudo holomorphic curves. The condition of i-boundedness is framed so as to allow homotopies between any two elements, as well as higher homotopies, for which the diameter estimate continues to hold. This is the content of Theorem \ref{tmWeakTameCont}. Note that it is not reasonable to expect that any two almost complex structures which induce a geometrically bounded metric are connected by a path of the same kind of almost complex structures. Figure \ref{figZigZag} illustrates the kind of homotopy intermittent boundedness allows.

So far, the discussion only pertains to pseudo-holomorphic curves. In \S \ref{Sec3} we discuss a trick which allows us to obtain the same diameter control for $H$ non-zero provided we restrict attention to fixed compact sets of the domain. When $H$ is non-zero, we are considering a geometry which is determined not just by $J$ but also by $H$. Most of \S\ref{Sec3} is devoted to studying the geometry of this metric.

To rule out the divergence of type $2$ we introduce a condition called loopwise dissipativity. It is a variant of the Palais-Smale condition which has played a role in early variational arguments for existence of symplectic capacities \cite[Ch. 12]{MS1}. This condition is not contractible, but this is not a problem since it only needs to be satisfied on the ends. In this it is similar to the non-degeneracy condition which is usually required in Floer theory. Note that unlike the property of i-boundedness, the property of loopwise dissipativity is not readily verifiable on non-exact submanifolds for Hamiltonians that do not have a small Lipschitz constant. In those cases it requires some understanding of the Hamiltonian flow.

Floer data satisfying these conditions are called \textbf{dissipative}. Theorem \ref{tmFloSolDiamEst} states that dissipative Floer data satisfy a priori $C^0$ estimates. A variant which works under a slightly weaker condition on exact symplectic manifolds is given in Theorem \ref{tmPalaiSmaleDiss}.

We discuss three classes of examples of dissipative $(H,J)$.
\begin{enumerate}
\item $H$ is Lipschitz with respect to $g_J$ with sufficiently small Lipschitz constant outside of a compact set. More generally, mainly to allow a cofinal set, we  require the Lipschitz condition only on a sufficiently large subset of $M$. This class of examples is sufficient for all the theoretical constructions of this paper.
\item $M$ is exact and the action functional satisfies the Palais-Smale condition. For the details see \S\ref{SubSec52}. Strictly speaking, as noted in the beginning of \S\ref{SubSec52}, the Palais-Smale condition is slightly weaker than the dissipativity condition. Nevertheless it fits into the general dissipative framework.    
\item The Hamiltonian flow of $H$ is sufficiently close to being invariant with respect to a radial parameter. See \S\ref{SubsecNonex}.
\end{enumerate}
\subsection{Floer theory by approximation}\label{SubsecFlByApp}
This subsection is devoted to clarifying the statement of Theorem \ref{TmA} and its underlying philosophy.
We first discuss the notion of reduced Floer cohomology. The Floer cohomology associated by Theorem \ref{mainTmA} to a dissipative Floer datum $(H,J)$ is the homology of the Floer complex $CF^*(H,J)$ constructed in \S\ref{SecDefHam}. The complexes $CF^*(H,J)$ can be considered as non-Archimedean Banach spaces over $\Lambda_R$ as we explain momentarily. The chain complex $CF^*(H,J)$ is generated by an appropriate Novikov covering of the space of $1$-periodic orbits of $H$. Thus Floer cohomology can be considered as the Morse cohomology of a single valued action functional $\cA_H$. Our conventions are set up so that action decreases along gradient lines. See \S\ref{SecFloerGrom} for precise definitions. Thus our chain complexes carry a decreasing filtration by $\cA_H$. Moreover, the continuation maps of Theorem \ref{TmA} are induced by certain chain maps which preserve this filtration.

$CF^*(H,J)$ is thus normed with norm given by \eqref{NonArchNormDef}. The fact that the differential and continuation maps are action decreasing means they are bounded with respect to this norm, and, in particular, continuous. On an open manifold, $CF^*(H,J)$ will typically not be finitely generated over the Novikov ring. For the differential and continuation maps to be well defined we need to consider the completed complex $\widehat{CF}^*(H,J).$  Moreover, the differential can generally not be expected to have a closed image.
\begin{df}
Let $(C^*,d)$ be a normed complex. The \textbf{reduced cohomology} of $C^*$ is
\[
\overline{H}^*(C^*,d):=\widehat{\ker {d^{*}}}/\overline{\im d^{*-1}},
\]
with the hat denoting completion with respect to the norm, and the overline denoting the closure inside the completion. For a dissipative $H$, we denote the reduced Floer cohomology by $\overline{HF}^*(H)$\footnote{We suppress $J$ in the notation since the homology is independent of $J$ as a consequence of part~\ref{mainTmA:itC}  of Theorem~\ref{mainTmA}.}.
\end{df}

\begin{rem}\label{remBoundaryDepth}
When the Floer complex is finitely generated over a field, the differential has closed image. So, in that case, reduced Floer cohomology coincides with ordinary Floer cohomology. The same is true whenever the Floer complex has finite boundary depth, meaning that the differential has a bounded right inverse \cite{Usher2011}. For Liouville domains, the Floer differential for a strictly convex Hamiltonian has a closed image if one is working over $R$, but not necessarily when working over $\Lambda_R$.
\end{rem}

Denote by $\cH^{\N,\geq}_{d,reg}$ the set of sequences $\{H_i\}$ of regular dissipative Hamiltonians satisfying $H_i(x)\leq H_{i+1}(x)$ for all $i$ and for all $x\in\R/\Z\times M$. The set $\cH^{\N,\geq}_{d,reg}$
carries a natural order relation. Namely, $\{H^1_i\}\leq \{H^2_i\}$ is defined to hold if and only if for any $i$ there is a $j$ such that $H^1_i\leq H^2_j$. General nonsense about filtered complexes leads to a certain extension of the functor $\overline{HF}^*$ to the category $\cH^{\N,\geq}_{d,reg}$ as follows. For a dissipative Floer datum $(H,J)$ and an interval $[a,b)\subset\R$ we can consider the \emph{action truncated Floer cohomology} $HF^*_{[a,b)}(H)$. See \eqref{eqFiltCF} for its definition. Given intervals $[a,b), [a',b')$ such that $a'\leq a$ and $b'\leq b$ there is a natural map  $HF^*_{[a,b)}(H)\to HF^*_{[a',b')}(H) $ which behaves functorially with respect to continuation maps. We then define 
\begin{equation}\label{eqDfFlRedSec0}
\overline{HF}^*(\{H_i\}):=\varprojlim_a\varinjlim_{b,i}HF^*_{[a,b)}(H_i).
\end{equation}
The motivation behind this definition will be clarified in Theorem \ref{tmUniExtFl} and the comments following it. 

On the other hand, by Dini's Theorem from basic calculus, there exists a functor, that is, an ordered map,
\[
\sup:(\cH^{\N,\geq}_{d,reg},\leq)\to (\cH_{s.c},\leq),
\]
which takes $\{H_i\}$ to the function $x\mapsto \sup_iH_i(x)$.

\begin{tm}\label{tmUniExtFl}
The map $\sup$ is surjective. Moreover, if
$\sup(\{H_i^1\})=\sup(\{H_i^2\})$, there is a natural isomorphism
\begin{equation}\label{eqEquivRedIso}
\overline{HF}^*(\{H_i^1\})=\overline{HF}^*(\{H_i^2\}).
\end{equation}
In particular, there is an induced Floer cohomology functor $\overline{HF}^*$ from the category $\cH_{s.c}$ to the category of $\Z$-graded non-Archimedean Banach spaces over $\K$. This definition of $\overline{HF}^*$  coincides on the subcategory $\cH_{d,reg}\subset\cH_{s.c}$ with the previous definition which is implied by Theorem \ref{mainTmA}.
\end{tm}

Theorem~\ref{tmUniExtFl} is proved towards the end of subsection \S\ref{SubsecLscFC} right before Lemma \ref{lmOverHFindep}.

Let us unpack the meaning of  Theorem~\ref{tmUniExtFl}.
\begin{enumerate}[wide, labelwidth=!, labelindent=10pt]

\item \label{comments7} Theorem~\ref{tmUniExtFl} allows one to talk about reduced Floer cohomology of a smooth proper exhaustion Hamiltonian $H$ \emph{without first establishing that $H$ is dissipative}. The further extension to lower semi-continuous functions is of interest since the characteristic function of an open set is lower semi-continuous. This is used in the discussion of local Floer cohomology of compact sets (defined as Floer cohomology of the characteristic function of the complement).
\item  The heart of the proof of the isomorphism \eqref{eqEquivRedIso} is Theorem \ref{lmConstApp} which can be interpreted as saying the \emph{truncated} Floer cohomology is continuous with respect to \textit{convergence on compact sets}. 
This continuity is a consequence of the quantitative nature of our main $C^0$ estimate Theorem \ref{tmFloSolDiamEst}. Namely, Floer trajectories connecting regions that are far apart must have high energy. Thus, for fixed action truncation, regions that are sufficiently far apart don't interact Floer theoretically.

 This continuity statement is not true for the \emph{reduced} Floer cohomology $\overline{HF}^*(H)$. Indeed, it is easy to construct examples of a monotone sequence $H_i$ of regular dissipative Hamiltonians converging  on compact sets to a regular dissipative Hamiltonian $H$ for which
    \[
    \lim_i\overline{HF}^*(H_i)\neq\overline{HF}^*(H)=\overline{HF}(\{H_i\}).
    \]
The discrepancy between the leftmost side and rightmost side in the last equation arises because of the interchange of direct and inverse limits. 
    For example, on a Liouville domain, $H$ can be taken to be a quadratic Hamiltonian while the sequence $H_i$  can be taken to consist be of Hamiltonians whose slope near infinity is constant and less than the smallest period of a Reeb orbit. This can be done so  that for each compact set the sequence still converges uniformly to $H$. Then it can be shown that for each $i$ we have $\overline{HF}^*(H_i)=H^*(M)$. This is so since, up to isomorphism, the Floer cohomology $\overline{HF}^*(H_i)=H^*(M)$ depends only on the slope at infinity.  Thus the left hand side is isomorphic to singular cohomology whereas the right hand side not generally isomorphic to singular cohomology. The reason for the discrepancy is that the Hamiltonians $H_i$ will have many periodic orbits whose action is arbitrarily close to $-\infty$. These cancel the contribution to $\overline{HF}^*(H_i)$ coming from the high action non-trivial periodic orbits. However, when truncating below at any fixed value as in the procedure described by \eqref{eqDfFlRedSec0}, the contribution of the high action periodic orbits  remains uncanceled.  
\item Continuity of truncated Floer cohomology with respect to \textit{uniform} convergence, and hence an extension of the definition of truncated Floer homology to $C^0$ Hamiltonians, has to the authors' knowledge first been observed in \cite{Viterbo99}. 
\item \label{Commentb}In the text a much stronger statement than the isomorphism of \eqref{eqEquivRedIso} is proven. Namely, it is shown that to each of $j=1,2$ one can associate a complete filtered chain complex $\widehat{CF}^*(H^j_i)$ after making some additional choices such that $\overline{HF}^*(\{H^j_i\})$ is the reduced cohomology of $\widehat{CF}^*(H^j_i)$. It is then shown that these complexes are filtered quasi-isomorphic. See \ref{dfFiltQuasIs} for the definition. Filtered quasi-isomorphism is an equivalence relation which implies isomorphism of the reduced Floer cohomology. In a forthcoming note joint with U. Varolgunes we show that filtered quasi-isomorphism in fact implies quasi-isomorphism in the usual sense. Thus Theorem \ref{tmUniExtFl} can be strengthened to a statement concerning unreduced Floer cohomology. This will be a great advantage as it will allow the application of tools from homological algebra. The notion of reduced cohomology is still central however to our construction of the product in symplectic cohomology as it is purely cohomological. Chain level constructions involving colimits are generally extremely  involved as one needs to keep track of higher homotopical data. The construction at the chain level is carried out in  Theorem~\ref{tmOverHFChainLv} by taking an appropriate kind of chain level limit which takes the Banach topology of the complexes $CF^*(H_i^j)$ into account. This builds on a construction from \cite{abouzaidSeidel2010} and has also been utilized in \cite{Venkatesh2017, Varolgunes2018}.

\item Theorem \ref{tmUniExtFl} relies on the possibility to approximate any element in $\cH_{s.c.}$ from below by a sequence of functions which have small Lipschitz constant and are thus dissipative by Theorem \ref{mainTmA}. Proper functions which are not bounded below would require considering, in addition, inverse limits. We do not pursue this here.
\item Remark \ref{comments7}  allows one to adapt Floer theoretic constructions to the geometry of the specific setting one is interested in without having to worry about complicated compactness questions. For an example of this, see the  derivation of the Kunneth formula in Hamiltonian Floer cohomology in \S\ref{SubsecKunnethHam}. Two cautionary remarks are in order however:
\begin{enumerate}[wide, labelwidth=!, labelindent=10pt]
\item For there to be a relation between the reduced Floer cohomology and periodic orbits of the Hamiltonian we are investigating, we must at least rule out divergence of the second type described in \S\ref{secOvrview} below. Namely, we need to establish loopwise dissipativity, or some related property. In the geometrically interesting settings the author is aware of this is straightforward, but it would be interesting to have a better understanding of this property.
\item It is theoretically possible for there to exist Floer data $(H,J)$ which are not dissipative, but for which, due to some accident, all the Floer moduli spaces are compact and, moreover, give rise to reduced Floer homologies differing from $\overline{HF}^*(H,J)$ as stipulated by Theorem~\ref{tmUniExtFl}. This cannot happen for $(H,J)$ which satisfy the following robustness property enjoyed by dissipative Floer data. \textit{\label{prop}The set of Floer solutions intersecting a given compact set $K$ and having energy at most $E$ does not change if the Floer datum is changed outside of a sufficiently large ball around $K$.} Note that the usually employed maximum principles are global in nature and so do not imply this property.
    \end{enumerate}
\end{enumerate}

The following sections \S\ref{Sec1} through \S\ref{Sec5} are devoted to the construction of dissipative Floer data. They are organized as follows. Sections \S\ref{Sec1} and \S\ref{Sec3} are concerned with ruling out type $1$ divergence. In \S\ref{Sec1} we introduce the notion of i-boundedness, establish its  contractibility and derive various versions of diameter estimate it implies. In \S\ref{Sec3} we introduce the Floer equation and the Gromov metric. We introduce the notions of i-bounded and geometrically dissipative almost Floer data. Finally, we study the geometry of the Gromov metric for translation invariant Floer data. Section \S\ref{Sec5} is concerned with ruling out type $2$ divergence. In it we introduce and study the property of loopwise dissipativity, and establish a diameter estimate as well as some effective criteria.

\section{i-bounded almost complex structures}\label{Sec1}
For a Riemannian metric $g$ on a manifold $M$ and a point $p\in M$ we denote by $\inj_g(p)$ the radius of injectivity and by $\Sec_g(p)$ the maximal sectional curvature at $p$. We drop $g$ from the notation when it is clear from the context.

\begin{df}\label{dfIntBounded}
Let $(M,g)$ be a complete Riemannian manifold. For $a>0$, the metric $g$ is said to be \textbf{$a$-bounded} at a point $p\in M$ if $\inj(x)\geq\frac1a$ and $|\Sec(x)|\leq a^2$ for all $x\in B_{1/a}(p)$.

We say that $g$ is \textbf{intermittently bounded}, abbreviated \textbf{i-bounded}, if there is an exhaustion $K_1\subset K_2\subset \dots$ of $M$ by precompact sets and a sequence $\{a_i\}_{i\geq 1}$ of positive numbers such that the following holds.
\begin{enumerate}
    \item $d( K_i,\partial K_{i+1})> \frac1{a_i}+\frac1{a_{i+1}}.$
    \item $g$ is $a_i$-bounded on $\partial K_i$.
    \item
        \begin{equation}\label{Eqtame}
            \sum_{i=1}^{\infty}\frac1{{a_i}^2}=\infty.
        \end{equation}
\end{enumerate}
The data $\{K_i,a_i\}_{i\geq 1}$ is called taming data for $(M,g)$.

More generally we allow a slight weakening in the definition and say that a Riemannian metric $g$ is i-bounded if there exists a metric $g'$ that is i-bounded as above with taming data $(K_i,a_i)$ and a sequence of constants $C_i$ such that
\begin{enumerate}
    \item
        \begin{equation}\label{Eqtame}
            \sum_{i=1}^{\infty}\frac1{{(C_ia_i)}^2}=\infty.
        \end{equation}
    \item
        $g$ is $C_i$-quasi-isometric to $g'$ on $B\left(\partial K_i,\frac1{a_i}\right)$. Namely,
        \[
            \frac1{C_i}\|X\|_g \leq \|X\|_{g'}\leq C_i\|X\|_g
        \]
        on $B\left(\partial K_i,\frac1{a_i}\right)$.
\end{enumerate}
In this case we will refer to the sequence $(K_i,a_i,C_i)$ as the taming data of $g$.

For a symplectic manifold $(M,\omega)$, an $\omega$-compatible almost complex structure $J$ is called \textbf{i-bounded} if $g_J$ is i-bounded. The symplectic form $\omega$ is said to be \textbf{i-bounded} if it admits an i-bounded almost complex structure. For an i-bounded $(M,\omega)$ denote by $\mathcal{J}_{i.b.}(M,\omega)$ the space of i-bounded almost complex structures.

A $k$-parameter family $(g_t)_{t\in[0,1]^k}$ of i-bounded Riemannian metrics on $M$ is said to be \textbf{uniformly i-bounded}, or \textbf{u.i.b.}, if there is an $\epsilon>0$ such that for each $t_0\in [0,1]^k$ the taming data $\{K_i,a_i,C_i\}$ can be chosen fixed on the $\epsilon$ neighborhood of $t_0$.
A family $\{J_t\}$ of almost complex structures is called \textbf{u.i.b.} if the corresponding family $\{g_{J_t}\}$ of Riemannian metrics is uniformly i-bounded.
\end{df}
\begin{ex}
If $J$ is geometrically bounded, meaning that $g_J$ is $a$-bounded everywhere for some $a$, it is i-bounded. In this case, we can take the taming data to be $\{K_i=B_{3i/a}(p),a_i=a\}$ for some arbitrary point $p\in M$.
\end{ex}
\begin{ex}\label{exIboundTam}
Suppose now $f:M\to\R$ is the distance from some point $p\in M$ and that at each point $x\in M$ the metric $g_J$ is $f(p)$-bounded, then $g_J$ is still $i$-bounded. For this case consider the sequence of real numbers $b_i$ obtained from the set $\cup_{n=1}^{\infty}\{n+k/n|0\leq k<n\}\subset\R$ with its standard order.  Then the sequence $(K_i=f^{-1}(0,b_{3i}), a_i=\lceil b_{3i}\rceil)$ constitutes taming data for $g_J$. Indeed, by assumption, the metric is $a_i$ bounded on $K_i$ and the series $\sum\frac1{a_i^2}$ is readily seen to diverge.

\end{ex}
\begin{rem}\label{rmGeoBCont}
The condition of uniform i-boundedness is framed so that it simultaneously guarantees the conclusions of Theorems~\ref{tmWeakTameCont} and ~\ref{tmDiamEst} below. Namely, on the one hand, the condition is contractible in the sense that any two homotopies satisfying the condition are connected by a homotopy satisfying the same condition. On the other hand, it still allows a priori control of the diameters of $J$-holomorphic curves. If we were to require boundedness everywhere, not just near $\partial K_i$, it appears unlikely to get a contractible condition as required in invariance proofs\footnote{As evidence for this consider that one can show using the result of \cite{Nab96} that the space of complete Riemannian metrics inducing a given volume form and having bounded geometry is disconnected. In fact, it has infinitely many connected components.}.
\end{rem}
 \begin{rem}
 Theorem \ref{tmWeakTameCont} below will remain true if we impose more stringent requirements on the numbers $a_i$, say, that they be bounded by a given constant. The reason we allow the numbers $a_i$ to diverge (subject to~\eqref{Eqtame}) is that in the context of Floer theory some times there naturally arise almost complex structures with associated metrics that do not have uniformly bounded sectional curvature. Example are the Sasaki metric on the cotangent bundle and the induced metric on the mapping torus of a quadratic Hamiltonian on the completion of a Liouville domain.
\end{rem}
\begin{rem}
Note that  if $J$ is i-bounded and $J'$ is such that $\|J-J'\|_{g_J}$ is bounded, then $J'$ is i-bounded.
\end{rem}

\begin{tm}\label{tmWeakTameCont}
The space $\mathcal{J}_{i.b.}(M,\omega)$ is connected. Moreover, any two elements can be connected by a u.i.b. family. Similarly, any two u.i.b. $k$-parameter families can be connected by a u.i.b. $k+1$-parameter family.
\end{tm}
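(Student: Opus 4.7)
The plan is to connect $J_0$ and $J_1$ by a zig-zag procedure that pushes successive compactly supported modifications out to infinity, exploiting the fact that the u.i.b.\ condition is local in the parameter and that the divergence $\sum 1/a_i^2 = \infty$ is governed by the tail of the sum. The first step is a \emph{compact modification lemma}: if $J_0, J_1 \in \mathcal{J}_{i.b.}(M,\omega)$ happen to agree outside a compact set $K$, then the canonical Sevennec-type path $J_t = \exp(t\log(-J_1 J_0^{-1}))J_0$ between them is a u.i.b.\ family. Indeed, outside $K$ the family is constant in $t$, so the taming data of $J_0$ is preserved; inside $K$ only finitely many shells of any chosen exhaustion can meet $K$, and the quasi-isometry constants along those shells depend continuously on the compact parameter $t \in [0,1]$, hence are uniformly bounded.

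Given arbitrary $J_0,J_1 \in \mathcal{J}_{i.b.}$ with taming exhaustions $\{K^0_i\}$ and $\{K^1_j\}$, I would pass to a common refinement $\{L_n\}$ and build approximants $J^{(n)}$ that equal $J_1$ on $L_n$, equal $J_0$ outside $L_{n+1}$, and Sevennec-interpolate on the compact annulus $L_{n+1}\setminus L_n$. Each $J^{(n)}$ is i-bounded using the "skip exhaustion" consisting of the shells of $J_1$ contained in $L_n$ together with the shells of $J_0$ contained in $M\setminus L_{n+1}$; the divergence condition survives because deleting finitely many terms from $\sum 1/a_i^2$ leaves its tail divergent. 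Consecutive $J^{(n)}$ and $J^{(n+1)}$ agree outside $L_{n+2}$ (both equal $J_0$) and inside $L_n$ (both equal $J_1$), so by the modification lemma they are connected by a u.i.b.\ path, which I would parametrize over the dyadic interval $[1-2^{-n}, 1-2^{-n-1}]$. The concatenated family $J_t$ on $[0,1)$ extends continuously to $J_1$ at $t=1$ in $C^{\infty}_{\mathrm{loc}}$, since $J_t = J_1$ on $L_N$ for every $t \geq 1 - 2^{-N}$.

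The main obstacle is verifying uniform i-boundedness near the accumulation slice $t=1$. For a parameter $t_0$ with $1 - 2^{-N} < t_0 \leq 1$, the proposal is the hybrid taming data consisting of the shells $\partial K^1_i \subset L_N$ of $J_1$ together with the tail of shells $\partial K^0_j$ of $J_0$ disjoint from $L_{N+2}$. For all $t$ in a neighborhood $(t_0 - \epsilon, t_0 + \epsilon)$ with $\epsilon < 2^{-N-2}$, every approximant $J^{(m)}$ involved has $J^{(m)} = J_1$ on $L_N$ and $J^{(m)} = J_0$ outside $L_{N+2}$, so the same is true of the Sevennec-interpolated $J_t$; thus on each prescribed shell $J_t$ coincides with either $J_0$ or $J_1$, yielding the required quasi-isometry bounds uniformly. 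The divergence $\sum 1/a_i^2 = \infty$ is inherited from the retained $J_0$-tail. A single $\epsilon > 0$ works across all $t_0 \in [0,1]$: for interior $t_0 \in [1-2^{-N_0}, 1-2^{-N_0-1}]$ at most two or three adjacent dyadic pieces intersect $(t_0-\epsilon, t_0+\epsilon)$, their combined transition annuli are contained in a fixed $L_{N_0+3}\setminus L_{N_0-1}$, and the analogous hybrid data works by the same argument.

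For the parametric statement, given u.i.b.\ $k$-families $(J^0_\tau)$ and $(J^1_\tau)$ with common uniformity radius $\epsilon_0$, the construction is carried out fiberwise in $\tau$: one refines to a common exhaustion $\{L_n\}$ chosen so that both families' taming shells pass through it uniformly in $\tau$ (possible because of the u.i.b.\ hypothesis), builds the approximants $J^{(n)}_\tau$, and Sevennec-interpolates over a new parameter $s \in [0,1]$. The hybrid taming data near $s = 1$ is chosen as above and now works in a product neighborhood in $(s,\tau)$ because the u.i.b.\ constants of the $k$-families are uniform in $\tau$. The resulting $(k+1)$-family is u.i.b.\ with uniformity radius $\min(\epsilon_0, 2^{-N-2})$.
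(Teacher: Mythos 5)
Your compact modification lemma is sound, and your observation that the divergence $\sum 1/a_i^2 = \infty$ survives deletion of finitely many terms is the same key insight the paper exploits. But the overall architecture of your argument --- a countable chain of dyadic pieces $[1-2^{-n},1-2^{-n-1}]$ accumulating at $t=1$ --- has a genuine gap precisely at the accumulation slice, and your own bookkeeping shows why. You take $\epsilon < 2^{-N-2}$ for $t_0$ with $1-2^{-N}<t_0\leq 1$; as $t_0\to 1$ the integer $N$ needed to make the rest of your claim true is forced upward, so your $\epsilon$ tends to $0$. You cannot then assert at the end that ``a single $\epsilon>0$ works across all $t_0\in[0,1]$.'' More concretely, fix any $\epsilon_0>0$ and take $t_0=1$. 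The neighborhood $(1-\epsilon_0,1]$ meets every dyadic piece from some $m_0$ onward, hence involves $J^{(m)}$ for arbitrarily large $m$. But for $m>N+1$ the approximant $J^{(m)}$ equals $J_1$, not $J_0$, on the region between $L_{N+2}$ and $L_m$ --- your statement ``$J^{(m)}=J_0$ outside $L_{N+2}$'' is simply false there. Equivalently: as $t$ sweeps through $(1-\epsilon_0,1)$, the interpolation annulus $L_{m+2}\setminus L_m$ carrying $J_t$ migrates out to infinity, so the union over the neighborhood of the regions where $J_t$ is not yet locked down covers all of $M$ outside a fixed compact set. There is no room to place an exhaustion of shells on which the family is constant over the whole neighborhood, and no reason for the places where $J_0$ is bounded to coincide with the places where $J_1$ is bounded.

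The paper circumvents this by avoiding any accumulation point altogether. Instead of successively converting one annulus at a time, it selects, in advance, two \emph{disjoint} sparse families of shells --- one drawn from $J_0$'s taming data and one from $J_1$'s --- each retaining a divergent tail of $\sum 1/a_i^2$ (item (b) in the paper's proof, which groups shells so each group contributes $\geq 1/n$). A single smooth homotopy on $[0,1]$ is then constrained to equal $J_0$ on the $V^0_n$ shells for $s\in[0,2/3]$ and $J_1$ on the $V^1_n$ shells for $s\in[1/3,1]$; on the overlap $[1/3,2/3]$ both shell families are available, so $\epsilon=1/6$ works uniformly. The crucial structural difference from your construction is that the two fixed-shell regimes overlap in parameter space rather than being stitched end to end; that overlap is exactly what makes a uniform $\epsilon$ possible, and it has no analogue in a concatenation that accumulates at an endpoint. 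If you want to rescue your approach, you would have to replace the dyadic chain by something with this overlapping-regime structure, at which point you have essentially rederived the paper's zig-zag homotopy.
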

\begin{rem}
The idea of the proof is very similar to the that of Proposition 11.22 in~\cite{CielEli}.
\end{rem}
\begin{proof}
Let $J_0,J_1\in\mathcal{J}_{i.b.}$. Given taming  data $\{K^i_n,a^i_n,C^i_n\}_{n\geq 1}$ for $J_i,i=0,1$. For the rest of the proof we assume $C^i_n=1$, the adjustment to the general case being trivial. Let $(c^i_n,d^i_n)_{n\geq 1}$ be sequences of positive integers constructed inductively such that the following holds.
\begin{enumerate}
\item\label{itDisja}
\[
\overline{K^0}_{d^0_n+c^0_n}\subset K^1_{d^1_n},\qquad \overline{K^1}_{d^1_n+c^1_n}\subset K^0_{d^0_{n+1}},\qquad\forall n.
\]
\item\label{itDisjb}
\[
\sum_{k=d^i_n+1}^{d^i_{n+c_n-1}}\left(\frac1{a^i_k}\right)^2\geq\frac1{n},\qquad i=0,1.
\]
\end{enumerate}
Write $V^i_n:=\overline{K^i}_{d^i_n+c^i_n}\setminus K^i_{d^i_n}$ for $i=0,1$. The sets $V^i_n$ are all disjoint by~\ref{itDisja}.  Let $\{J_s\})_{s\in[0,1]}$ be a smooth homotopy connecting $J_0$ and $J_1$ which is fixed and equal to $J_1$ on the subsets $V^0_n$ for all $s\in[0,2/3]$ and to $J_1$ on the subsets $V^1_n$ for all $s\in[1/3,1].$ We refer to such a homotopy as a zig-zag homotopy. See Figure \ref{figZigZag}. Let
\[
A^i:=\cup_n[d^i_n+1,d^i_n+c^i_n-1]\cap\N
\]
for $i=0,1$.  By~\ref{itDisja} and~\ref{itDisjb}, the data
\[
\left\{K^i_{n^i_k},a^i_{n^i_k}\right\}_{n^i_k\in A^i},\qquad i=0,1,
\]
constitute taming data for $J_s$ on the intervals $[0,2/3]$ and $[1/3,1]$ respectively. Moreover, for each $s\in[0,1]$ the metric $g_{J_s}$ is complete. Indeed, the  distance of  $\partial K^i_{n_k}$ from any fixed point goes to $\infty$ for $i=0$ and $s\in[0,2/3],$ and for $i=1$ and $s\in [1/3,1]$. We have thus connected $J_0$ and $J_1$ in a uniformly i-bounded way.
\begin{figure}[h]
\begin{tikzpicture}[scale=1.5]
\foreach \x in {0,0.5,1,1.5}{
\shade[left color=white,right color=white](1,1+2*\x) rectangle (3,1.5+2*\x);
\shade[left color=white,right color=black] (3,1+2*\x) rectangle (7,1.5+2*\x);
\shade[left color=white,right color=black] (1,1.5+2*\x) rectangle (4,2+2*\x);
\shade[left color=black,right color=black](4,1.5+2*\x) rectangle (7,2+2*\x);
}
\draw[thick,->] (0.9,0.9) -- (7.1,0.9);
\node at (4.2,0.5){$J$};
\draw[thick,->] (0.9,0.9) -- (0.9,5.2);
\node at (0.5,3.2){$M$};
\end{tikzpicture}
\caption{A Zig-Zag homotopy from $J_0$ (light) to $J_1$ (dark). }
\label{figZigZag}
\end{figure}
We now generalize to the $k$-parameter case. Let $\{J_{i,t}\}_{t\in[0,1]^k}$ for $i=0,1$ be two smooth $k$-parameter families.  Let $C$ be an open cover of the cube $[0,1]^k$ by open cubes of side length $\epsilon$ for $\epsilon$ so small that the taming data for both families can be chosen fixed on each such cube. For each $c\in C$ and $i\in \{0,1\}$ we construct precompact open subsets $\{V^{i,c}_n\}$ in such a way that
\begin{enumerate}
\item
$V^{i_1,c_1}_{n_1}$ is disjoint from $V^{i_2,c_2}_{n_2}$ whenever $(i_1,c_1,n_1)\neq (i_2,c_2,n_2)$
\item
There is taming data supported in $\cup_{n=1}^{\infty}V^{i,c}_n$ for $\{J_{i,t}\}_{t\in c}$ where we say that the taming data $\{K_i,a_i\}_{i\geq 1}$ is supported in an open set $V\subset M$ if $V$ contains all the balls $B_{1/a_i}(\partial K_i)$.
\end{enumerate}
Such sets can be constructed inductively along the same lines as in the $0$-parameter case. We can then take any smooth homotopy
\[
\{J_{s,t}\}_{(s,t)\in[0,1]\times[0,1]^k}
\]
which is fixed on all the subsets $\{V^{0,c}_n\}$ for $s\in[0,2/3]$ and on all the subsets $\{V^{1,c}_n\}$ for $s\in [1/3,1]$.
\end{proof}
For a $J$ holomorphic curve $u:S\to M$ denote by $E(u;S)$ the energy
\[
\int_Su^*\omega
\]
of $u$ on $S$. We drop $S$ from the notation when it is clear from the context.

The following theorem is taken from \cite{Sikorav94}.
\begin{tm}\label{tmMonontonicity}[\textbf{Monotonicity}]
Let $g_J$ be $a$-bounded\footnote{As the proof shows, we only need an estimate from above on the sectional curvature. The  stronger requirement is needed later in \S\ref{subsubinj}.} at $p\in M$. Let $\Sigma$ be a compact Riemann surface with boundary and let $u:\Sigma\to M$ be $J$-holomorphic such that $p$ is in the image of $u$ and such that
\[
u(\partial \Sigma)\cap B_{1/a}(p)=\emptyset.
\]
Then there is a universal constant $c$ such that
\[
E\left(u;u^{-1}(B_{1/a}(p))\right)\geq\frac c{a^2}.
\]
If $g_J$ is quasi-isometric to an $a$-bounded metric with quasi-isometry constant $A$, the same inequality holds but with $c$ replaced by $\frac{c}{A^2}$.
\end{tm}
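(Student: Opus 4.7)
The approach I would take is the classical monotonicity formula for calibrated $2$-cycles, adapted from minimal surface theory. The starting observation is that because $u$ is $J$-holomorphic and $J$ is $\omega$-compatible, one has pointwise $u^*\omega=\tfrac12|du|^2_{g_J}\,d\mathrm{vol}_\Sigma$, so $E(u;V)$ equals the $g_J$-area of $u(V)$ counted with multiplicity, and the closed form $\omega$ calibrates the image of $u$.

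The plan is to bound the function $A(r):=E(u;u^{-1}(B_r(p)))$ from below by $cr^2$ for $0<r\le 1/a$, then to specialize to $r=1/a$. First I would construct a primitive $\alpha$ of $\omega$ on the geodesic ball $B_{1/a}(p)$, which is diffeomorphic to a Euclidean ball since $1/a\le \inj(p)$. Using the radial Poincar\'e homotopy operator together with Jacobi field comparison, which uses only the upper bound $\Sec\le a^2$ on $B_{1/a}(p)$, one obtains the pointwise estimate $|\alpha(x)|_{g_J}\le \tfrac12(1+\epsilon(ar))\,d(x,p)$, where $\epsilon(t)\to 0$ as $t\to 0$.

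Next, since $u(\partial\Sigma)\cap B_{1/a}(p)=\emptyset$, the boundary of the chain $u^{-1}(B_r(p))$ is exactly $u^{-1}(\partial B_r(p))$, and Stokes' theorem yields
\[
A(r)=\int_{u^{-1}(B_r(p))}u^*\omega=\int_{u^{-1}(\partial B_r(p))}u^*\alpha\le \tfrac12(1+\epsilon(ar))\,r\,L(r),
\]
where $L(r)$ is the length of $u^{-1}(\partial B_r(p))$ in the pulled-back metric. The coarea formula applied to the function $d(u(\cdot),p)$ on $\Sigma$, whose gradient has norm at most $|du|_{g_J}$, yields $A(r)=\int_0^r L(s)\,ds$, so $A'(r)=L(r)$ almost everywhere. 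Combining these gives the differential inequality $A(r)\le\tfrac12(1+\epsilon(ar))\,r\,A'(r)$, i.e.\ $(\log A)'(r)\ge (2-\epsilon(ar))/r$. Integrating from a small $r_0$ up to $1/a$ and using that $A(r_0)/r_0^2\to \pi$ as $r_0\to 0$ (since $p$ is a smooth point of the image of $u$, so the tangent cone is a linear disk), one obtains $A(1/a)\ge c/a^2$ for a universal constant $c$. The quasi-isometric case follows by straightforward comparison: a $g_J$-ball of radius $1/a$ contains a $g$-ball of radius $1/(Aa)$, and $g_J$-energies dominate $g$-energies by at most a factor of $A^2$, so applying the result for $g$ on the smaller ball gives the constant $c/A^2$.

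The main technical point is establishing the sharp coefficient $\tfrac12$ in the bound on $|\alpha|_{g_J}$, since this coefficient is exactly what produces the exponent $2$ in the integrated differential inequality and hence the desired $r^2$ lower bound on $A(r)$. Getting it cleanly requires using $\omega$-compatibility of $J$ at $p$, where $g_J$ and $\omega$ coincide up to the standard complex-linear identification on $T_pM$, together with curvature-controlled Jacobi field estimates along the radial geodesics emanating from $p$; the lower curvature bound plays no role, matching the footnote to the statement.
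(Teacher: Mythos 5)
Your proof is essentially correct and establishes the statement, but it takes a somewhat different route from the paper's. The paper's proof of Theorem~\ref{tmMonontonicity} is a citation to Sikorav's Proposition~4.3.1(ii), whose engine is the isoperimetric inequality (the paper supplements the citation only with Lemma~\ref{lmIsopCurv}: loops in $B_{1/a}(p)$ of length $\ell$ bound disks of area at most $\tfrac12\ell^2$); combined with the coarea inequality $A'(r)\ge L(r)$ this gives $A(r)\le\tfrac12 A'(r)^2$, i.e.\ $(\sqrt{A})'\ge 1/\sqrt{2}$, which integrates directly from $A(0)=0$ without any anchoring step. You instead give a self-contained Allard-type argument: you build a primitive $\alpha$ of $\omega$ on the geodesic ball with $|\alpha(x)|_{g_J}\le\tfrac12(1+\epsilon(ar))\,d(x,p)$ via the Poincar\'e homotopy operator and Rauch comparison, deduce $A(r)\le\tfrac12(1+\epsilon)rA'(r)$, and integrate $(\log A)'\ge(2-\epsilon)/r$ against the tangent-cone anchor $A(r_0)/r_0^2\to\pi$. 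Both routes use only the upper sectional curvature bound, consistent with the paper's footnote, and both produce the correct $1/a^2$ scaling; your version is the standard minimal-surface monotonicity of $A(r)/r^2$, which is a bit more machinery (it needs the extra step that $p$ is a smooth or at worst branch point of the image, hence $\liminf A(r_0)/r_0^2\ge\pi$), whereas the isoperimetric route in Sikorav avoids the tangent-cone analysis entirely. Two small imprecisions worth flagging: the coarea formula gives $A'(r)\ge L(r)$ rather than equality (the coarea weight is $|\nabla\rho|\le 1$), but the inequality goes the right way so nothing breaks; and your treatment of the quasi-isometric case is a bit quick --- the isoperimetric constant and the coarea weight each degrade under quasi-isometry, so one should track carefully that the final exponent of $A$ comes out as claimed, though since the constant $c$ is not sharp this is a bookkeeping point rather than a gap.
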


\begin{proof}
This just a reformulation of the monotonicity inequality in \cite{Sikorav94}. See Proposition 4.3.1(ii) and the comment right after Definition 4.1.1 there. For completeness, we add a statement and proof of that comment in Lemma~\ref{lmIsopCurv} as we didn't find a proof of it in the literature.
\end{proof}

\begin{lm}\label{lmIsopCurv}
Let $g$ be Riemannian metric which is $a$-bounded at $p\in M$. Then any loop $\gamma:S^1\to B_{1/(2a)}(p)$  bounds a disk of area less than $\frac12\ell^2(\gamma).$
\end{lm}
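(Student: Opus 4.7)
The plan is to pull $\gamma$ back to the Euclidean tangent space $T_pM$ via $\exp_p^{-1}$, fill it by a straight-line cone from a point on it, push the resulting disk forward to $M$ by $\exp_p$, and control every distortion by Rauch comparison. Since $\inj(p)\ge 1/a$, the exponential map restricts to a diffeomorphism $\exp_p\colon B_{1/a}(0)\to B_{1/a}(p)$, where $B_{1/a}(0)\subset(T_pM,g_p)$ is the flat ball; set $\tilde\gamma:=\exp_p^{-1}\circ\gamma$. Since the flat ball is convex, $\tilde\gamma$ is capped by the straight-line cone $\tilde D(r,\theta):=(1-r)\tilde\gamma(0)+r\tilde\gamma(\theta)$ without leaving $B_{1/a}(0)$, and $D:=\exp_p\circ\tilde D$ provides an honest disk in $M$ with $\partial D=\gamma$.

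First I would carry out the Euclidean area estimate for $\tilde D$. From $|\partial_r\tilde D\wedge\partial_\theta\tilde D|=r\,|(\tilde\gamma(\theta)-\tilde\gamma(0))\wedge\tilde\gamma'(\theta)|$, reparametrizing by $g_p$-arc-length $s\in[0,L]$ with $L:=\ell_{g_p}(\tilde\gamma)$, and using the basic inequality $|\tilde\gamma(s)-\tilde\gamma(0)|\le\min(s,L-s)$, one obtains
\[
\area_{g_p}(\tilde D)\le\tfrac12\int_0^L\min(s,L-s)\,ds=\tfrac{L^2}{8}.
\]

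The main work lies in comparing the $g$- and $g_p$-geometries. Invoking Rauch's theorem together with the hypothesis $|\Sec|\le a^2$ on $B_{1/a}(p)$, at every $x\in B_{1/a}(0)$ the singular values of $d\exp_p|_x$ are $1$ in the radial direction and lie in the interval $[\sin(a|x|)/(a|x|),\sinh(a|x|)/(a|x|)]\subset[\sin 1,\sinh 1]$ in every transverse direction. The transverse lower bound yields $|\tilde\gamma'|_{g_p}\le|\gamma'|_g/\sin(1)$, so $L\le\ell_g(\gamma)/\sin(1)$, while the transverse upper bound gives $\area_g\le\sinh^2(1)\,\area_{g_p}$ on $2$-surfaces since any two singular values of $d\exp_p|_x$ multiply to at most $\sinh^2(1)$. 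Chaining the three bounds,
\[
\area_g(D)\le\frac{\sinh^2(1)}{8\sin^2(1)}\,\ell_g(\gamma)^2,
\]
and the lemma reduces to the elementary inequality $\sinh(1)<2\sin(1)$ (numerically $1.175<1.683$), giving a constant $\approx 0.245<1/2$. I expect the Rauch distortion step to be the part requiring the most care: the radius $1/a$ is precisely what keeps the ratio $\sinh(r)/\sin(r)$ strictly below $2$, and that is exactly what the factor $1/2$ in the statement demands.
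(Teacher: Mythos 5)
Your proof is correct and takes a genuinely different route from the paper's, yielding a strictly sharper constant. The paper cones directly in $M$ from the center $p$: it sets $u(r,\theta)=\exp_p(r\tilde\gamma(\theta))$, bounds $\|\partial_r u\|=\|\tilde\gamma(\theta)\|$ by $\tfrac12\ell(\gamma)$, and applies the Rauch monotonicity of $\|J\|/\sin(a\,\cdot)$ to the Jacobi field $J=\partial_\theta u$ along each radial geodesic. You instead pull the loop back to $T_pM$, cone Euclidean-linearly from the \emph{boundary} point $\tilde\gamma(0)$, prove the sharp flat estimate $\area_{g_p}(\tilde D)\le L^2/8$ via the chord--arc inequality $|\tilde\gamma(s)-\tilde\gamma(0)|\le\min(s,L-s)$, and only then transfer length and area to $M$ through uniform singular-value bounds for $d\exp_p$ on the ball. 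Both arguments rest on the same ingredient (Rauch comparison and the fact that a cone is an efficient filling), but you apply Rauch once globally to control $d\exp_p$, whereas the paper applies it locally along rays. Two remarks in your favor: your constant $\sinh^2(1)/(8\sin^2(1))\approx 0.244$ beats $\tfrac12$, and, more substantively, the paper's step $\|\tilde\gamma(\theta)\|=d(\gamma(0),\gamma(\theta))$ implicitly takes $p=\gamma(0)$, i.e.\ requires the loop to pass through the base point; since you cone from a point of the loop itself, your argument covers arbitrary loops in $B_{1/a}(p)$ exactly as the lemma is stated, without that hidden assumption.
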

Our proof is taken from \cite{MS2}, the only addition being the precise dependence on the curvature.
\begin{proof}
Write $\gamma(0)=q$.  Let $\tilde{\gamma}:S^1 \to T_qM$ be the unique path such that $\exp_q\tilde{\gamma}(\theta)=\gamma(\theta)$.  Consider the disk $u(t,\theta)=\exp_qt\tilde{\gamma}(\theta)$. Using the  triangle inequality one shows that  $u$ maps into the ball $B_{1/a}(p)$. Since the geodesics emanating from $q$ minimize distance within $B_{1/a}(q)$, we have
\begin{equation}\label{eqlmIsopCurv1}
\|\partial_tu\|=\left\|\tilde{\gamma}(\theta)\right\|= d(q,\gamma(\theta))\leq\frac12\ell(\gamma).
\end{equation}

We need to estimate the Jacobi field $J(t,\theta):=\partial_\theta u(t,\theta)$. More precisely, we need to estimate the component $J^{\perp}$ which is perpendicular to $\tilde{\gamma}(\theta)$. For this we use the generalized Rauch estimate \cite[1.8.2]{Karcher} according to which the function
\[
f(t)=\frac{\|J^{\perp}(t,\theta)\|}{\sin t}
\]
is nondecreasing on the interval $(0,\pi)$ \footnote{Observe the coordinate $t$ is related to the coordinate $r$ of  \cite[1.8.2]{Karcher} by $r={\|\tilde{\gamma}\|}t$ and that $\|\tilde{\gamma}\|\leq a$ where $a^2$ plays the role of $\Delta$ in \cite[1.8.2]{Karcher}.}. Observe that $\gamma'(\theta)= J(1,\theta)$. So,
\[
\|J^{\perp}(t,\theta)\|\leq\frac{\sin t}{\sin 1}\|\gamma'(\theta)\|,\quad t\leq 1.
\]

Applying the last estimate and eq. \eqref{eqlmIsopCurv1} we get,
\begin{align}
\area(u)&=\int_0^1\int_0^{2\pi}\|\partial_\theta u\|\|\partial_tu\|\sin(\angle(\partial_\theta u,\partial_t u))d\theta dt\notag\\
&=\int_0^1\int_0^{2\pi}\|\partial_\theta u^{\perp}\|\|\partial_tu\|\sin(\angle(\partial_\theta u^{\perp},\partial_t u))d\theta dt\notag\\
&\leq\int_0^1\int_0^{2\pi}\|\gamma'(\theta)\| \frac12\ell(\gamma)d\theta dt\notag\\
&=\frac12\ell^2(\gamma).\notag
\end{align}

\end{proof}

The following theorem is fundamental for all that follows. It gives a priori control over the diameter of a $J$ holomorphic curve $u: \Sigma\to M$ with free boundary in terms of its energy.
\begin{tm}\label{tmDiamEst}
Let $J\in \mathcal{J}_{i.b.}$.
\begin{enumerate}
\item\label{tmDiamEstc1} For any compact set $K\subset M$ and $E\in\R_+$ there exists an $R>0$ such that for any connected compact Riemann surface $\Sigma$ with boundary and any $J$-holomorphic map
\[
u:(\Sigma,\partial\Sigma)\to (M,K)
\]
satisfying $E(u;\Sigma)\leq E,$ we have $u(\Sigma)\subset B_R(K)$. Moreover, if the geometry is uniformly bounded, $R$ can be taken independent of $K$. In fact, it can be taken proportional to $E$.
\item\label{tmDiamEstc2}
  Let $\Sigma$ be a connected compact Riemann surface with boundary. For any compact set $K\subset M$, any compact subset $S$ of the interior of $\Sigma$ and any $E\in\R_+,$ there exists an $R$ such that for any $J$-holomorphic map
\[
u:\Sigma\to M
\]
satisfying $E(u;\Sigma)\leq E$ and $u(S)\cap K\neq\emptyset$ we have $u(S)\subset B_R(K)$.

 \end{enumerate}
In both cases, besides the dependence on $E$ and on $S$, $R$ depends only on taming data of $J$ inside $B_R(K)$. That is, given $J'$ which has the same taming data as $J$ on $B_R(K)$, the claim will hold with the same $R$ for $J'$-holomorphic curves with energy at most $E$.
\end{tm}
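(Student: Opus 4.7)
The plan is to derive both statements from the Monotonicity Theorem~\ref{tmMonontonicity}, exploiting the divergent series $\sum 1/(C_i a_i)^2 = \infty$ built into the definition of i-boundedness: each crossing of a taming shell $\partial K_j$ by $u$ should cost at least $\sim 1/(C_j a_j)^2$ of energy, so the fixed energy budget $E$ caps the total number of shells that can be crossed.

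For part~\ref{tmDiamEstc1}, my plan is to fix $i_0$ with $K\subset K_{i_0}$ and argue by contradiction. If $u(\Sigma)$ reaches beyond $K_{i_0+N}$ then, since $\Sigma$ is connected and $u(\partial\Sigma)\subset K_{i_0}$, the image meets every intermediate shell $\partial K_j$ for $i_0<j\leq i_0+N$; pick $p_j$ in each such intersection. The separation hypothesis $d(K_i,\partial K_{i+1})>1/a_i+1/a_{i+1}$ on the taming data is exactly what makes the balls $B_{1/a_j}(p_j)$ pairwise disjoint and each disjoint from $u(\partial\Sigma)$, so the Monotonicity Theorem applies at each $p_j$ (via the $a_j$-boundedness of $g$ on $\partial K_j$ together with the $C_j$-quasi-isometry with $g_J$), yielding
\[
E \;\geq\; c \sum_{j=i_0+1}^{i_0+N}\frac{1}{(C_j a_j)^2}.
\]
Divergence of the series then bounds $N$ in terms of $E$ and the taming data encountered, producing the desired $R$. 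The localization-of-dependence clause at the end of the theorem is automatic, since only the taming data for shells lying within the resulting ball $B_R(K)$ enters the sum.

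Part~\ref{tmDiamEstc2} is the main obstacle, because $u(\partial\Sigma)$ is no longer tied to $K$ and the boundary hypothesis of the Monotonicity Theorem can fail at a given shell. My plan is to localize in the domain: for a putative crossing at $p_j\in u(S)\cap\partial K_j$, consider the connected component $\Omega_j$ of $u^{-1}(B_{1/a_j}(p_j))$ that meets $S$. In the \emph{good} case $\Omega_j\Subset\mathrm{int}(\Sigma)$, one has $u(\partial\Omega_j)\subset\partial B_{1/a_j}(p_j)$ and monotonicity applied to $u|_{\Omega_j}$ contributes the usual $\sim 1/(C_j a_j)^2$ to $E$. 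In the \emph{bad} case $\Omega_j\cap\partial\Sigma\neq\emptyset$, the component $\Omega_j$ contains a $\Sigma$-path from $S$ to $\partial\Sigma$ of length bounded below by the positive separation $L:=d^\Sigma(S,\partial\Sigma)$, and the plan is to show via a standard mean-value / $\epsilon$-regularity estimate for $J$-holomorphic curves on intrinsic disks of radius $L$ that this situation can occur only for boundedly many $j$. Summing the contributions from both cases caps $N$ just as in part~\ref{tmDiamEstc1} and gives an $R$ depending additionally on $S$ and $\Sigma$. The bookkeeping in the bad case, namely translating target-side monotonicity into an estimate that survives the absence of control on $u(\partial\Sigma)$, is the step I expect to require the most care.
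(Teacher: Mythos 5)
Your part~\ref{tmDiamEstc1} is the same argument as the paper: pick one point $x_j$ on each crossed shell $\partial K_{N+j}$, use the separation hypothesis on the taming data to get disjoint balls disjoint from $u(\partial\Sigma)$, apply Theorem~\ref{tmMonontonicity} at each, and let the divergent series $\sum 1/(C_j a_j)^2$ cap the number of crossed shells.

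For part~\ref{tmDiamEstc2} the two approaches genuinely diverge, and yours has a gap at exactly the step you flag as delicate. In your ``bad case'' --- $\Omega_j$ meeting $\partial\Sigma$ --- you propose that an $\epsilon$-regularity/mean-value estimate on domain disks of radius $L=d^\Sigma(S,\partial\Sigma)$ bounds the number of bad shells. This does not follow. The bad $\Omega_j$ are disjoint open subsets of $\Sigma$, but there is no a priori bound on how many disjoint thin ``strips'' can run from $S$ to $\partial\Sigma$, and each can have arbitrarily small domain area. Neither does holomorphicity give a per-shell energy cost here: in the bad case $u(\partial\Omega_j)$ is not forced to $\partial B_{1/a_j}(p_j)$, which is precisely the hypothesis Theorem~\ref{tmMonontonicity} needs, so there is no lower bound on $E(u;\Omega_j)$ from the target side. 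The quantity that is not being used is that $S$ has positive \emph{domain} distance to $\partial\Sigma$, and domain distance has no direct meaning for $u|_{\Omega_j}$ as a map into $M$.

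The paper resolves this by Gromov's trick rather than by a good/bad dichotomy. Fix an area form $\omega_\Sigma$ with $\lvert\Sec\rvert\le 1$, set $A=\int_\Sigma\omega_\Sigma$ and $\epsilon=d^\Sigma(S,\partial\Sigma)$, and pass to the graph $\tilde u=\mathrm{id}\times u:\Sigma\to\Sigma\times M$, which is holomorphic for the product $\tilde J$ and has $E(\tilde u)=E(u)+A$. In the product, $a$-boundedness of $g_J$ at $p$ gives $a$-boundedness of $g_{\tilde J}$ at $(x,p)$ for $a\ge 1$, and --- the key point --- the ball of radius $\min\{1/a_{N+j},\epsilon\}$ about $\tilde u(x_j)=(x_j,u(x_j))$ with $x_j\in S$ misses $\tilde u(\partial\Sigma)$ \emph{automatically}, because the $\Sigma$-factor of any point of $\tilde u(\partial\Sigma)$ is at distance $\ge\epsilon$ from $x_j$. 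Thus Theorem~\ref{tmMonontonicity} applies at every crossed shell with no case distinction, giving
\[
E(u;\Sigma)+A \ \ge\ \sum_{j} c\min\left\{\frac{1}{a_{N+j}^2},\,\epsilon^2\right\},
\]
and divergence of the series bounds the number of crossings as in part~\ref{tmDiamEstc1}. This is why $R$ in part~\ref{tmDiamEstc2} depends also on $S$ (through $\epsilon$ and $A$), which your argument would need to account for as well. I would replace the bad-case plan with the graph trick; your good-case computation is then subsumed by it.
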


\begin{rem}
The reader should be careful to note that in case~\ref{tmDiamEstc2} where there is no control over the image of the boundary, to control the diameter of $u(S)$  we need control of the energy in the larger surface $\Sigma$.
\end{rem}
\begin{rem}\label{remDiamEstDep}
A remark is in order concerning the dependence of $R$ on the geometry in case~\ref{tmDiamEstc2}. In addition to the dependence on the taming data and on $E$, $R$ depends on an estimate from below the distance $d( S,\partial\Sigma)$ and from above on the area and curvature of $\Sigma$ all with respect to an arbitrarily chosen conformal metric.
\end{rem}

\begin{proof}
Let $\{K_i,a_i,C_i\}$ be taming data for $J$. The argument will be given for the case $C_i=1,\forall i\in\N$. Let $N\in\Z$ be such that $K\subset K_N$. Let $i_0>0$ and $x_{i_0}\in\Sigma$ be such that $u(x_{i_0})\in\partial K_{i_0+N}$. If no such $i_0$ and $x_{i_0}$ exist we take $R=d(K,K_{N+1})$ and we are done. Otherwise, there is a sequence $x_i\in\Sigma$ such that $u(x_i)\in\partial K_{N+i}$ for $0<i\leq i_0$.
In case~\ref{tmDiamEstc1} we argue as follows. For each $1\leq i\leq i_0$, we have $B_{1/a_{N+i}}(u(x_i))\cap u(\partial \Sigma)=\emptyset$. Also,
\[
d(u(x_i),u(x_j))>\frac1{a_{N+i}}+\frac1{a_{N+j}},
\]
whenever $i\neq j$. By Theorem \ref{tmMonontonicity} we obtain
\[
E(u;\Sigma)\geq\sum_{i=1}^{i_0}E\left(u;u^{-1}(B_{1/a_{N+i}}(u(x_i))\right)\geq \sum_{i=1}^{i_0}\frac{c}{a^2_{i+N}}.
\]
By \eqref{Eqtame} this implies an a priori upper bound on $i_0$. Let $i_0$ be the largest possible such. The claim then holds with $R=d(K,K_{N+i_0+1})$.

In case~\ref{tmDiamEstc2} we argue as follows. Pick an area form $\omega_{\Sigma}$ on $\Sigma$ which together with $j_\Sigma$ determines a metric whose sectional curvature is bounded in absolute value by $1$. Let $A=\int_{\Sigma}\omega_{\Sigma}$ and let $\epsilon:=d(S,\partial\Sigma)$. Let
\[
\tilde{u}:=id\times u:\Sigma\to \Sigma\times M
\]
be the graph of $u$, and let $\tilde{J}$ be the product almost complex structure on $\Sigma\times M$. Then $\tilde{u}$ is $J$-holomorphic and $E(\tilde{u})= E(u)+A$. For any $x\in\Sigma$, any $p\in M$ and any $a\geq 1$ such that $(M,g_J)$ is $a$-bounded at $p$ we have that $(\Sigma\times M,g_{\tilde{J}})$ is $a$-bounded at $(x,p)$. Moreover, defining $x_i$ as before for points $x_i\in S$, the ball of radius $\min\left\{\frac1{a_{i+N}},\epsilon\right\}$ around $\tilde{u}(x_i)=(x_i,u(x_i))$ does not meet $\tilde{u}(\partial\Sigma)$. Thus, arguing as before, we have
\[
E(u;\Sigma)+A=E(\tilde{u};\Sigma)\geq\sum_{i=1}^{i_0}c\min\left\{\frac1{a^2_{i+N}},\epsilon^2\right\}.
\]
The claim follows as before.

\end{proof}
The final ingredient we shall need is the following elementary observation whose proof we leave for the reader.
\begin{tm}\label{tmPullbackTame}
The pullback of a u.i.b. family by a uniformly continuous map is u.i.b.
\end{tm}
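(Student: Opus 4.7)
The plan is to unwind the definition of uniform i-boundedness and use uniform continuity to transport the local constancy of taming data. Suppose $\phi : [0,1]^m \to [0,1]^k$ is a uniformly continuous map and $\{J_t\}_{t \in [0,1]^k}$ is a u.i.b. family with uniformity parameter $\epsilon > 0$. The goal is to produce a $\delta > 0$ such that on every $\delta$-ball in $[0,1]^m$ the pulled-back family $\{J_{\phi(s)}\}$ admits a single fixed choice of taming data $\{K_i, a_i\}$.

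First I would invoke uniform continuity of $\phi$ to choose $\delta > 0$ so that $|s_1 - s_2| < \delta$ implies $|\phi(s_1) - \phi(s_2)| < \epsilon$. For any basepoint $s_0 \in [0,1]^m$, the $\delta$-neighborhood $U_{s_0}$ of $s_0$ then maps under $\phi$ into the $\epsilon$-neighborhood of $\phi(s_0)$. By hypothesis there is a single family of taming data $\{K_i^{s_0}, a_i^{s_0}\}$ that serves as taming data for $J_t$ for every $t$ in this $\epsilon$-neighborhood. Consequently the same $\{K_i^{s_0}, a_i^{s_0}\}$ is taming data for $J_{\phi(s)}$ for every $s \in U_{s_0}$, verifying the u.i.b. condition for the pullback family with uniformity parameter $\delta$.

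There is no real obstacle here beyond being careful that the quasi-isometry constants $C_i$ and the curvature/injectivity bounds on $\partial K_i^{s_0}$ depend only on $t = \phi(s)$ lying in the chosen $\epsilon$-neighborhood, which is exactly what the definition of uniform i-boundedness provides. The Riemannian-metric version (for $\{g_t\}$) is identical, and the almost complex structure version then follows by applying it to $g_{J_{\phi(s)}}$.
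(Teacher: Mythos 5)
Your proof is correct and is exactly the elementary argument the paper has in mind (the paper leaves the proof to the reader, calling it an "elementary observation"). You unwind the definition of u.i.b., use uniform continuity of $\phi$ to convert the uniformity parameter $\epsilon$ for the original family into a uniformity parameter $\delta$ for the pullback, and observe that the fixed taming data near each $\phi(s_0)$ serves as fixed taming data for the pulled-back family near $s_0$. Your closing remark about the constants $C_i$ and the curvature and injectivity-radius bounds being part of the fixed taming data, and about deducing the almost complex structure case from the Riemannian one via $g_{J_{\phi(s)}}$, correctly addresses the only places one might worry a dependence could slip in. Nothing is missing.
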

Theorems \ref{tmWeakTameCont} and~\ref{tmDiamEst} have consequences for symplectic invariants on open manifolds which we state as the following theorem.
\begin{tm}\label{TmSympInvGWSH}
The following invariants whose definition requires fixing geometrically bounded almost complex structure $J$ are independent of the choice of such $J$.
\begin{enumerate}
\item
The Gromov-Witten theory on geometrically bounded manifolds studied in \cite{Lu06}.
\item
Symplectic homology of relatively compact open sets studied in \cite{CielGinzKer}.
\item
Rabinowitz Floer homology of tame stable Hamiltonian hypersurfaces in geometrically bounded manifolds \cite{cieliebak2010}\footnote{ See Remark 3.3 in \cite{CielGinzKer} and likewise the beginning of \S 4.5 in \cite{cieliebak2010} where this question is raised.}.
\end{enumerate}
\end{tm}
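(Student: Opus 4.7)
The plan is to reduce each of the three invariance claims to the standard continuation-map argument, with the one nonstandard step being that the homotopy of almost complex structures must be chosen in the enlarged class of i-bounded (rather than geometrically bounded) structures. Given two geometrically bounded $J_0$ and $J_1$, each of which is a fortiori i-bounded, Theorem~\ref{tmWeakTameCont} supplies a u.i.b. homotopy $\{J_s\}_{s\in[0,1]}$ between them, and Theorem~\ref{tmPullbackTame} lets us lift this to a u.i.b. family of Floer data on the relevant parametrized domain. The crucial compactness input for the parametrized moduli space is Theorem~\ref{tmDiamEst}: because its diameter bound depends only on the taming data on a sufficiently large compact set, and the u.i.b. property provides this taming data uniformly in~$s$, the resulting a priori diameter bounds are uniform along the homotopy. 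Once the moduli spaces sit in a fixed compact region, the usual Gromov compactness and gluing machinery apply verbatim.

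Concretely, for (a) one applies this to the parametrized moduli space of closed $J_s$-holomorphic curves representing a fixed homology class $A$ with fixed marked points and energy $\omega(A)$; Theorem~\ref{tmDiamEst} (with a tiny disk removed, or equivalently via the underlying monotonicity inequality from \cite{Sikorav94}) confines them to a fixed compact set, so the standard cobordism argument identifies the Gromov--Witten invariants computed with $J_0$ and with $J_1$. For (b) one takes compactly supported Hamiltonians on the relatively compact open set inside a tame manifold and forms the standard Floer continuation map built from $\{J_s\}$; case~(b) of Theorem~\ref{tmDiamEst} yields diameter control for the parametrized Floer cylinders, and the $k$-parameter strengthening in Theorem~\ref{tmWeakTameCont} (applied with $k=1$) provides a u.i.b. homotopy-of-homotopies, hence a chain homotopy showing the induced map on homology is canonical. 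Part (c) follows the same template for the Rabinowitz action functional on a tame stable Hamiltonian hypersurface; the presence of the Lagrange multiplier does not affect the diameter estimate, only the action estimate, which is orthogonal to our considerations.

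The main obstacle is the one highlighted in Remark~\ref{rmGeoBCont}: the space of geometrically bounded $\omega$-tame almost complex structures is generally not path connected, so the naive continuation argument would require a path of geometrically bounded $J$'s from $J_0$ to $J_1$, which may not exist. This is precisely why the independence questions raised in the three references cited in the theorem statement remained open. The resolution is that the zig-zag homotopy produced by Theorem~\ref{tmWeakTameCont} is generally \emph{not} geometrically bounded along the way, but the i-boundedness it does maintain is, by Theorem~\ref{tmDiamEst}, already sufficient to control the parametrized moduli spaces. All remaining ingredients of the three invariance proofs, including any virtual perturbation scheme used in \cite{Lu06} and the specific features of Rabinowitz Floer theory in \cite{cieliebak2010}, are unaffected by this substitution.
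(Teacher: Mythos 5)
Your proposal is correct and follows the paper's own argument: connect $J_0$ to $J_1$ by a u.i.b.\ path via Theorem~\ref{tmWeakTameCont}, use Theorem~\ref{tmDiamEst} to confine the parametrized moduli spaces of curves of bounded energy to a fixed compact set, and then invoke the standard cobordism or continuation machinery. The paper's proof is terser but uses exactly these same two ingredients, and your elaboration of why the zig-zag homotopy is needed (Remark~\ref{rmGeoBCont}) matches the paper's intent.
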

\begin{proof}
By Theorem~\ref{tmWeakTameCont} we can connect any two such almost complex structures $J_0$ and $J_1$ via a uniformly i-bounded path $J_s$ of compatible almost complex structures. We outline how to prove invariance in each case separately.
\begin{enumerate}

\item As a particular consequence of Theorem \ref{tmDiamEst}, the $J_s$-holomorphic curves representing a given homology class and intersecting non-trivially a compactly supported cohomology class are all contained in a fixed compact set $K$. Thus the moduli space of such spheres with $s$ varying from $0$ to $1$ generically gives rise to a cobordism between the moduli spaces associated with $J_0$ and $J_1$.
    \item The symplectic homology is defined by considering compactly supported Hamiltonians. In that setting, geometric boundedness gives rise to $C^0$ estimates as follows. Suppose $u$ is  a Floer trajectory connecting periodic orbits inside some open set $U$ where some Hamiltonian $H$ is supported. Then the intersection of the image of $u$ with $M\setminus U$ is $J$-holomorphic. Moreover, the symplectic energy is bounded a priori in terms of the action difference across $u$. It is clear by Theorem \ref{tmDiamEst} that i-boundedness is sufficient to obtain the same type of $C^0$ estimate.  We show that the continuation map  associated with the $1$-parameter family $J_s$ fixing $H$ also satisfies a $C^0$ estimate. Note that we cannot directly appeal to Theorem \ref{tmDiamEst}, since we are now considering a domain dependent $J$. To overcome this difficulty we apply the Gromov trick. Namely, we consider the graph
        \[
        \tilde{u}:\R\times\R/\Z\to\R\times\R/\Z\times M
        \]
        for a continuation map $u$. Let $\tilde{J}:=j\times J$ where $j$ is the standard complex structure on the cylinder. Then $\tilde{u}$ is $\tilde{J}$-holomorphic outside of $U$.  Consider the area form on the cylinder obtained by identifying it with the twice punctured sphere. Then the associated metric $g_{\tilde{J}}$ is i-bounded. Moreover the energy of the part of $\tilde{u}$ mapping outside of $U$ is still bounded a priori in terms of the periodic orbits connected by $u$\footnote{Later, when considering a non-zero Hamiltonian we will generally have i-boundedness only if we consider the cylindrical metric, which has infinite area. For this reason we will need to complement i-boundedness with the additional requirement of loopwise dissipativity.}. Appealing to Theorem \ref{tmDiamEst} the path $J_s$ gives rise to a chain homotopy between the Floer homologies of any fixed Hamiltonian with respect to the two choices of $J$. In the same way, given $H\geq K$, and a homotopy $H_s$ of Hamiltonians, the concatenations $(K,J_s)\#(H_s,J_0)$ and $(H_s,J_1)\#(H,J_s)$ can be interpolated by a homotopy $(H_{s,\tau},J_{s,\tau})$ such that $H_{s,\tau}$ is compactly supported and $J_{s,\tau}$ is uniformly i-bounded. A $C^0$ estimate for the homotopy is immediate from Theorem \ref{tmDiamEst}. Thus the continuation maps in the directed system defining symplectic homology also coincide generically  for different choices of $J$. It follows that the two invariants coincide.
        \item  Rabinowitz Floer homology for a stable Hamiltonian hypersurface $\Sigma$ in a geometrically bounded manifold is considered with Hamiltonian vector fields that are supported on some compact set $K$ containing $\Sigma$. A gradient trajectory in Rabinowitz Floer homology consists of a pair $(v,\eta)$ where $v:\R\times\R/\Z\to M$ and $\eta:\R\to\R$ satisfying a certain equation. The part of $V$ which maps out of $K$ is $J$-holomorphic with a priori bounded energy for a gradient connecting critical points. Compactness of the space of gradient trajectories consists in first establishing a $C^0$ estimate on $V$ and once $V$ is confined to a compact region, deriving an estimate on $\eta$ and appealing to Gromov compactness. As in the previous part, i-boundedness is sufficient for the $C^0$ estimates on $V$. This holds as well for $s$ dependent $J$. The argument for invariance now follows as before.
\end{enumerate}

\end{proof}
\begin{rem}\label{rmGW}
The question of what kind of deformation of the symplectic structure preserves which of these invariants appears to be more subtle and is not studied here. In \cite{GromanMerry2018}, the question is taken up for particular type of deformation on Liouville domains.
\end{rem}
\begin{rem}
It is not known to the author whether the class of i-bounded symplectic manifolds is strictly larger than the class of geometrically bounded symplectic manifolds. It appears likely that it might be easier to characterize the class of i-bounded symplectic manifolds in terms of the topology of $\omega$. It is easy to see that a punctured Riemann surface cannot be assigned an i-bounded compatible metric even though there is a compatible complete metric of bounded curvature. This motivates the following question. Suppose $M$ satisfies that for any disconnecting compact hypersurfaces $\Sigma$, a component of $M\setminus \Sigma$ which has finite volume is precompact in $M$. Are there any obstructions to finding a compatible i-bounded $J$?

It is also an interesting question whether finiteness of the total volume is an obstruction to weak boundedness, as it is to boundedness. In dimension $2$ the answer is positive as remarked above, but in higher dimension this is not clear to the author. If the answer is negative, it is possible that there are contact manifolds whose symplectization admit i-bounded almost complex structures allowing to define Floer theoretic invariants on them without recourse to symplectic field theory. This remark is due to A. Oancea.
\end{rem} 

\section{Floer solutions and the Gromov metric}\label{Sec3}
\subsection{Floer's equation}\label{SecFloerGrom}
Let $(M,\omega)$ be a symplectic manifold. Denote by $\mathcal{L}M$ the free loop space $C^{\infty}(\R/\Z,M)$. For a smooth function $H\in C^{\infty}(\R/\Z\times M)$ and for any $t\in \R/\Z$ denote by $X_{H_t}$ its Hamiltonian vector field. This is the unique vector field satisfying $dH_t(\cdot)=\omega(X_{H_t},\cdot).$
For each component $\mathcal{L}M_a$ of $\mathcal{L}M$ pick a base loop $\gamma_a$ and define a (multi-valued) functional $\mathcal{A}_H:\mathcal{L}M\to \R$ by
\[
\mathcal{A}_{H}(\gamma):=-\int\omega-\int_0^{2\pi}H(\gamma(t))dt,
\]
where the integral of $\omega$ is taken over a path in loop-space from $\gamma_a$ to $\gamma$. Later, in \S\ref{SecDefHam} we will consider $\cA_H$ as a single valued functional on an appropriate cover of the loop-space.

Denote by $\mathcal{P}(H)\subset\mathcal{L}M$ the set of $1$-periodic orbits of $X_H$. 
This is the same as the critical point set of $\mathcal{A}_H$.
Given an $\R/\Z$ parameterized family of almost complex structures $J_t$ on $M$, the gradient of $\mathcal{A}_{H}(\gamma)$ at $\gamma$ is the vector field
\[
\nabla\mathcal{A}_{H}(\gamma)(t):=J_t(\dot{\gamma}(t)-X_{H_t}(\gamma(t)))
\]
along $\gamma$. Note that the gradient field is independent of the choice of base paths and is single valued.  A gradient trajectory is a path in (a covering of) loop space whose tangent vector at each point is the negative gradient at that point. Explicitly a gradient trajectory is a map
\[
u:\R\times \R/\Z\to M,
\]
satisfying Floer's equation
\begin{equation}\label{eqFloTranInv}
\partial_su+J_t(\partial_tu-X_{H_t}\circ u)=0.
\end{equation}
We refer to such solutions as \textbf{Floer trajectories}. A Floer trajectory is \textbf{nontrivial} if there is a point such that $\partial_tu\neq X_H$.

More generally, let $\Sigma$ be a finite type Riemann surface with cylindrical ends. This means that $\Sigma$ is obtained from a compact Riemann surface $\overline{\Sigma}$ by removing a finite number of punctures. Moreover, near each puncture we fix a conformal coordinate system $(s,t):(a,b)\times \R/\Z\to\Sigma$ such that either $(a,b)=(-\infty,0)$ or $(a,b)=(0,\infty)$. In the first case we call the puncture negative and in the second, positive. Let $\mathfrak{H}\in\Omega^1(\Sigma,C^{\infty}(M))$ be a $1$-form with values in smooth Hamiltonians such that near each puncture there is an $H\in C^{\infty}(\R/\Z\times M)$ for which $\mathfrak{H}=Hdt$ in the cylindrical coordinates. We denote by $X_\mathfrak{H}$ the corresponding $1$-form with values in Hamiltonian vector fields. Let $J\in C^{\infty}(\Sigma,\mathcal{J}(\omega))$ and suppose $J$ is independent of the coordinate $s$ on the cylindrical ends. The datum $(\mathfrak{H},J)$ is called a \textbf{domain dependent Floer datum}.

Let $u:\Sigma\to M$ be smooth. For a $1$-form $\gamma$ on $\Sigma$ with values in $u^*TM$ write
\[
\gamma^{0,1}:=\frac12\left(\gamma+J\circ\gamma\circ j_\Sigma\right).
\]
A \textbf{Floer solution} on $\Sigma$ is a map $u:\Sigma\to M$ satisfying Floer's equation
\begin{equation} \label{eqFloGen}
(du-X_\mathfrak{H}(u))^{0,1}=0.
\end{equation}
Note that Equation~\eqref{eqFloTranInv} is equivalent to a special case of Equation \eqref{eqFloGen}. We refer to $J$ and $\mathfrak{H}$ as the Floer data of $u$. The \textbf{geometric energy} of $u$ on a subset $S\subset\Sigma$ is defined as
\begin{equation}
E_{\mathfrak{H},J}(u;S):=\frac1{2}\int_S\|du-X_\mathfrak{H}\|_J^2dvol_\Sigma.
\end{equation}
We omit any one of $\mathfrak{H}$, $J$ or $S$ from the notation when they are clear from the context. We define the topological energy $E_{top}(u)$ of a Floer solution $u$ as follows. Consider $\mathfrak{H}$ as a $1$-form on $\Sigma\times M$ and let $\tilde{u}:\Sigma\to\Sigma\times M$ be the product map $\tilde{u}=\id\times u$. Then
\begin{equation}
E_{top}(u):=\int u^*\omega+d\tilde{u}^*\mathfrak{H}.
\end{equation}

Floer's equation reduces to the nonlinear Cauchy Riemann equation when $\mathfrak{H}(v)=\gamma\otimes Const$ for $\gamma$ a $1$-form on $\Sigma$. In this case the two definitions of the energy coincide. Namely, we have the identity
\begin{equation}\label{eqEnId}
\frac1{2}\int_S\|du-X_\mathfrak{H}\|^2=\frac1{2}\int_S\|du\|^2=\int_Su^*\omega.
\end{equation}
\subsection{The Gromov metric}\label{SecGrom}
Let $u:\Sigma\to M$ be a Floer solution for the Floer data $F=(\mathfrak{H},J)$. Define an almost complex structure $J_F$ on $\Sigma\times M$ by
\[
J_F(z,x):=\left(\begin{matrix} j_{\Sigma}(z) & 0  \\ X_\mathfrak{H}(z,x)\circ j_\Sigma(z)-J(z,x)\circ X_\mathfrak{H}(z,x) & J(x) \end{matrix}\right).
\]
Let
\[
\tilde{u}=(id,u):\Sigma\to\Sigma\times M.
\]
Then $\tilde{u}$ is $J_F$-holomorphic. This construction is known as Gromov's trick. See, e.g., \cite[Ch. 8.1]{MS2}.
\textit{Henceforth, given a Riemann surface $\Sigma$ with cylindrical ends, we shall assume that it comes equipped with an area form which is compatible with the complex structure and coincides with the standard one $ds\wedge dt$ on the ends.}
Note that $J_F$ is not generally tamed by the product symplectic structure $\omega_{\tilde{M}}=\pi_1^*\omega_{\Sigma}+\pi_2^*\omega_M$. However, we have the following lemma.
\begin{lm}\label{lmAssocSymp}
Suppose $\{\mathfrak{H},\mathfrak{H}\}=0$. Namely,  for any $z\in\Sigma$ and any pair $v_1,v_2\in T_z\Sigma$ we have
 \begin{equation}\label{EqPoisson}
 \{\mathfrak{H}(v_1),\mathfrak{H}(v_2)\}=0.
 \end{equation}
Now consider $\mathfrak{H}$ as a $1$-form on $\Sigma\times M$ which is trivial in the directions tangent to $M$. Assume that for each $(z,x)\in\Sigma\times M$ we have
\begin{equation}\label{EqMonHom}
d\mathfrak{H}(z,x)|_{T_z\Sigma}\geq 0.
\end{equation}
That is, it is positive with respect to the orientation determined by the complex structure $j_\Sigma$. Then the $2$-form
\[
\omega_\mathfrak{H}:=\pi_1^*\omega_\Sigma+\pi_2^*\omega_M+d\mathfrak{H},
\]
is a symplectic form on $\Sigma\times M$ which is compatible with $J_F$.
\end{lm}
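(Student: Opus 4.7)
The plan is to verify separately closedness, nondegeneracy, and $\omega_\mathfrak{H}$-compatibility of $J_F$ (equivalently, that $\omega_\mathfrak{H}(\cdot, J_F \cdot)$ is a symmetric positive-definite bilinear form). Closedness is immediate: $\omega_\Sigma \times \omega_M$ is the product symplectic form and $d\mathfrak{H}$ is exact. Nondegeneracy will fall out of positive definiteness, so the real content is the pointwise identity showing that $\omega_\mathfrak{H}(\cdot, J_F \cdot)$ is a Riemannian metric.

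The starting point is to unpack $d\mathfrak{H}$ as a $2$-form on $\Sigma \times M$ in the splitting $T(\Sigma \times M) = T\Sigma \oplus TM$. Since $\mathfrak{H}$ is a horizontal $1$-form, the $TM \otimes TM$ block vanishes. The $T\Sigma \otimes T\Sigma$ block is $d\mathfrak{H}|_{T\Sigma}$, which by \eqref{EqMonHom} is a nonnegative multiple of $\omega_\Sigma$. The mixed block comes from differentiating the function $\mathfrak{H}(v_\Sigma) \in C^\infty(M)$ along $v_M \in TM$: a direct computation yields $d\mathfrak{H}(v_\Sigma, v_M) = -\omega_M(X_\mathfrak{H}(v_\Sigma), v_M)$.

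Next I would compute $\omega_\mathfrak{H}(V, J_F W)$ for $V = v_\Sigma + v_M$, $W = w_\Sigma + w_M$ by expanding with the explicit formula for $J_F$. The purely $\Sigma$-diagonal contribution is $g_\Sigma(v_\Sigma, w_\Sigma) + d\mathfrak{H}|_{T\Sigma}(v_\Sigma, j_\Sigma w_\Sigma)$. In the $M$-direction three simplifications occur. The term $\omega_M(X_\mathfrak{H}(v_\Sigma), X_\mathfrak{H}(j_\Sigma w_\Sigma)) = \{\mathfrak{H}(v_\Sigma), \mathfrak{H}(j_\Sigma w_\Sigma)\}$ is killed by \eqref{EqPoisson}. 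The two occurrences of $\omega_M(v_M, X_\mathfrak{H}(j_\Sigma w_\Sigma))$ (one from $\omega_\Sigma \times \omega_M$, one from the mixed block of $d\mathfrak{H}$) cancel against each other. What survives combines, using $\omega_M(a, Jb) = g_M(a,b)$, into the perfect square $g_M(v_M - X_\mathfrak{H}(v_\Sigma),\, w_M - X_\mathfrak{H}(w_\Sigma))$. Thus
\[
\omega_\mathfrak{H}(V, J_F W) = g_\Sigma(v_\Sigma, w_\Sigma) + d\mathfrak{H}|_{T\Sigma}(v_\Sigma, j_\Sigma w_\Sigma) + g_M\bigl(v_M - X_\mathfrak{H}(v_\Sigma),\, w_M - X_\mathfrak{H}(w_\Sigma)\bigr).
\]
Since on a surface any $2$-form is a multiple of the area form, the middle term equals $f(z,x)\, g_\Sigma(v_\Sigma, w_\Sigma)$ with $f \geq 0$, so the right-hand side is manifestly symmetric in $V, W$. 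Symmetry forces $J_F$ to preserve $\omega_\mathfrak{H}$, while setting $V = W$ exhibits a sum of three nonnegative terms vanishing simultaneously only when $v_\Sigma = 0$ and $v_M = X_\mathfrak{H}(v_\Sigma) = 0$, i.e.\ $V = 0$.

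The main obstacle is purely bookkeeping: the off-diagonal block of $J_F$ (engineered by Gromov's trick) must conspire with the mixed block of $d\mathfrak{H}$ to produce the perfect square. Both hypotheses enter essentially. Without \eqref{EqPoisson} there is a leftover Poisson residue $\{\mathfrak{H}(v_\Sigma), \mathfrak{H}(j_\Sigma w_\Sigma)\}$, which is antisymmetric in the roles of $V, W$ and simultaneously obstructs positivity on the diagonal; without \eqref{EqMonHom} the middle term could be negative and dominate $g_\Sigma$, destroying positive definiteness.
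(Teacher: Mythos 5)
Your proof is correct and in fact establishes more than the paper's: the published proof only verifies non-degeneracy of $\omega_{\mathfrak{H}}$ (explicitly deferring compatibility by saying ``we only show that $\omega_{\mathfrak{H}}$ is a symplectic form''), whereas you prove the full compatibility claim in the lemma.

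The two arguments take genuinely different routes. The paper works directly with the kernel: it writes $\mathfrak{H}=Hdt+Gds$ in local coordinates, assumes $\iota_v\omega_{\mathfrak{H}}=0$, deduces from the $M$-component that $v_M = aX_H+bX_G$, invokes $\{H,G\}=0$ to kill the cross-term $\iota_{v_M}(d'H\wedge dt + d'G\wedge ds)$, and then uses \eqref{EqMonHom} to conclude $v_\Sigma=0$ from $\iota_{v_\Sigma}\left(\omega_\Sigma+(\partial_sH-\partial_tG)ds\wedge dt\right)=0$. Your approach instead computes $\omega_{\mathfrak{H}}(V,J_FW)$ directly and exhibits it as
\[
g_\Sigma(v_\Sigma,w_\Sigma)+d\mathfrak{H}|_{T\Sigma}(v_\Sigma,j_\Sigma w_\Sigma)+g_M\bigl(v_M-X_{\mathfrak{H}}(v_\Sigma),\,w_M-X_{\mathfrak{H}}(w_\Sigma)\bigr),
\]
which is manifestly symmetric and, via $V=W$, positive definite; compatibility then follows from the general fact that symmetry and positivity of $\omega(\cdot,J\cdot)$ (for an almost complex $J$) are equivalent to compatibility. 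Both hypotheses enter in structurally the same places: \eqref{EqPoisson} kills the Poisson residue $\{\mathfrak{H}(v_\Sigma),\mathfrak{H}(j_\Sigma w_\Sigma)\}$ and \eqref{EqMonHom} keeps the surface term nonnegative. Your argument has the added benefit of producing the explicit formula for the Gromov metric $g_{J_F}$, which is used elsewhere in the paper (cf.\ Example \ref{ExTransInv}) but not derived in the lemma's proof.
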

\begin{proof}
We only show that $\omega_\mathfrak{H}$ is a symplectic form. Closedness is clear, so we only need to show non-degeneracy. In local coordinates on $\Sigma$ write
\[
\mathfrak{H}=Hdt+Gds.
\]
Then
\[
d\mathfrak{H}=dH\wedge dt+dG\wedge ds +(\partial_sH-\partial_tG)ds\wedge dt.
\]
Suppose there is a vector $v=(v_1,v_2)\in T(\Sigma\times M)$ for which $\iota_v\omega_\mathfrak{H}=0$. Then, in particular, the restrictions of $\iota_v\tilde{\omega}$ to the fibers of $\pi_2$ vanish,  giving
\[
-\iota_{v_2}\omega_M=dt(v_1)dH+ds(v_1)dG.
\]
So, $v_2=aX_H+bX_G$ for appropriate constants $a,b\in\R.$ Since $\{H,G\}=0$ it follows that $\iota_{v_2}(dH\wedge dt+dG\wedge ds)=0$. Thus,
\[
\iota_{v_1}(\omega_\Sigma+(\partial_sH-\partial_tG)ds\wedge dt)=0.
\]
Our assumption is that the coefficient of $ds\wedge dt$ in nonnegative. It follows that $v_1=0$ which in turn implies $v_2=0$.
\end{proof}
\begin{rem}
More generally, if we replace the estimate~\eqref{EqMonHom} by
\begin{equation}\label{EqMonHom1}
d\mathfrak{H}(z,x)|_{T_z\Sigma}\geq -ads\wedge dt,
\end{equation}
for some constant $a$, we have that the form
\[
\omega_{\mathfrak{H},a}:=\omega_\mathfrak{H}+ads\wedge dt,
\]
is symplectic.

The Poisson bracket condition \eqref{EqPoisson} may also be weakened to the requirement that for any point $z\in\sigma$ and vectors $v_1,v_2\in T_z\Sigma$ we have
\begin{equation}\label{eqWeakPoisson}
|\{\mathfrak{H}(v_1),\mathfrak{H}(v_2)\}(x)|<a\|v_1\|\|v_2\|,\quad\forall x\in M.
\end{equation}
In that case, the form $\omega_{\mathfrak{H},a}$ will again be a symplectic form.
\end{rem}

\begin{lm}\label{lmTopGeoEnEst}
Let $\Sigma$ be a Riemann surface with cylindrical ends and let $(\mathfrak{H},J)$ be a domain dependent Floer datum on $\Sigma$. For any $(\mathfrak{H},J)$-Floer solution $u:\Sigma\to M$ satisfying \eqref{EqPoisson} and \eqref{EqMonHom} and for any Borel subset $A\subset \Sigma$, we have
\begin{equation}
E(u;A):=\int_A\|du-X_\mathfrak{H}\|^2dvol_\Sigma\leq E_{top}(u;A).
\end{equation}
\end{lm}

\begin{proof}
Write in local coordinates $\mathfrak{H}=Hdt+Gds$. Then using the Floer equation and denoting by $d'$ the exterior derivative in the $M$ direction,
\begin{align}
\|du-X_\mathfrak{H}\|^2ds\wedge dt&=\omega(\partial_tu-X_H,X_G-\partial_su)ds\wedge dt\notag\\
&=u^*\omega+(d'H(\partial_su)+d'G(\partial_tu))ds\wedge dt\notag\\
&=u^*\omega+d\mathfrak{H}-(\partial_sH\circ u-\partial_tG\circ u))ds\wedge dt\notag\\
&\leq u^*\omega+d\mathfrak{H}.\notag
\end{align}
We have used the conformal invariance of energy
\[
\|du-X_\mathfrak{H}\|^2dvol_\Sigma=\|du-X_\mathfrak{H}\|^2ds\wedge dt.
\]
\end{proof}
Henceforth, we shall denote by $g_{J_F}$ the Riemannian metric determined by $\omega_\mathfrak{H}$ and $J_F$ and refer to it as the \textbf{Gromov metric.} When $\mathfrak{H}=Hdt$ we will also use the notation $J_H$ and $g_{J_H}$.

\begin{ex}\label{ExTransInv}
Let $\mathfrak{H}=Hdt$, where $H:M\to\R$ is smooth. Then one finds by a straightforward computation that
\begin{equation}\label{eqExTransInv}
g_{J_H}=\pi_1^*g_j + \pi_2^*g_J+g_{mixed},
\end{equation}
where $\pi_i$ are the natural projections and
\begin{equation}
g_{mixed}=-g_J(X_H,\cdot)dt-dt g_J(X_H,\cdot)+\|X_H\|^2dt^2.
\end{equation}
\end{ex}

In order to define the notion of $i$-boundedness for Floer data we need a relative notion of intermittent boundedness.
\begin{df}

Let $\Sigma$ be a Riemann surface with cylindrical ends. A Riemannian metric on $\Sigma\times M$  is said to be \textbf{intermittently bounded relative to the projection $\pi:\Sigma\times M\to M$} if there is an exhaustion of $\Sigma\times M$ by a sequence of open sets $K_i$ such that for any pre-compact open $U\subset \Sigma$ the sets $\pi^{-1}(U\cap K_i)$ are pre-compact, and such that the rest of Definition \ref{dfIntBounded} holds for these $K_i$. Let $\tilde{\omega}$ be a symplectic form on $\Sigma\times M$. An $\tilde{\omega}$ compatible almost complex structure $J$ on $\Sigma\times M$  is said to be intermittently bounded relative to $\pi$ if the associated metric $g_J$ is. We denote the set of al these by $\mathcal{J}_{i.b.}(\Sigma\times M, \tilde{\omega},\pi)$. For an open set $U\subset\Sigma$ we denote by  $\mathcal{J}_{i.b.}(U\times M, \tilde{\omega})$ the set of $\tilde{\omega}$-compatible almost complex structures on $U\times M$ that are restrictions of a $J\in \mathcal{J}_{i.b.}(\Sigma\times M, \tilde{\omega},\pi)$. 
\end{df}
The following Lemma is an obvious variant of Theorem \ref{tmDiamEst}\ref{tmDiamEstc2} the only difference being the need to restrict to $J$-holomorphic sections. 
\begin{lm}\label{lmDiamEstSec}
Let $U\subset \Sigma$ be an open pre-compact subset. Let $J\in\mathcal{J}_{i.b.}(U\times M, \tilde{\omega},\pi)$. Suppose $\|d\pi\|$ is uniformly bounded from above with respect to some fixed conformal metric on $\Sigma$. For any compact set $K\subset M$, any compact subset $S\subset U$, and any $E\in\R_+,$ there exists an $R$ such that for any $J$-holomorphic section
\[
u:U\to U\times  M
\]
satisfying $E(u;U)\leq E$ and $u(S)\cap K\neq\emptyset$ we have $u(S)\subset B_R(S\times K)$. 
\end{lm} 
\begin{proof}
The assumption on $\|d\pi\|$ guarantees that for any $z\in\Sigma$ we have $B_{\epsilon}(u(z))\subset u(B_{\epsilon}(z)) $. The argument is then word for word that of Theorem \ref{tmDiamEst}\ref{tmDiamEstc2} .
\end{proof}
\begin{rem}
The dependence of $R$ on $U$ and $S$ is spelled out in Remark \ref{remDiamEstDep}.
\end{rem}
\begin{lm}
For the Gromov metric $g_{J_F}$ associated with $F$ any Floer datum satisfying \eqref{EqPoisson} and~\eqref{EqMonHom} we have $\|d\pi\|\leq 1$. 
\end{lm}
\begin{proof}
For any vector $v$ tangent to $\Sigma\times M$ we have $\|v\|=\pi^*\omega_{\Sigma}(v,J_Fv)+\omega_F(v,J_F v)$. The second term is non-negative by Lemma \ref{lmAssocSymp} since  Lemma \ref{lmAssocSymp} holds for \emph{any} choice of $\omega_{\Sigma}$. The first term can be equals  
$\omega_{\Sigma}(\pi_*v,j_{\Sigma}\pi_*v)$ by holomorphicity of $\pi$. 
\end{proof}
\begin{df}\label{dfFloDataWbounded}
Let $\Sigma$ be a Riemann surface with cylindrical ends. A domain dependent Floer datum $(\mathfrak{H},J)$ on $\Sigma$ is called \textbf{i-bounded} if
\begin{enumerate}
\item $\mathfrak{H}$ satisfies \eqref{EqPoisson} and~\eqref{EqMonHom} (or, more generally, inequalities~\eqref{eqWeakPoisson} and~\eqref{EqMonHom1}).
\item There exists a finite open cover $C$ of $\Sigma$ such that for each $U\in C$ we have $J_{\mathfrak{H}}|_{U\times M}\in \mathcal{J}_{i.b.}(U\times M, \omega_\mathfrak{H})$ (or, more generally, $J_{\mathfrak{H}}\in\mathcal{J}_{i.b.}(U\times M, \omega_{\mathfrak{H},a})).$ 
 \end{enumerate}
\end{df}

\begin{df}\label{dfFloDataWboundedFamily}
Let $\mathcal{S}$ be a compact manifold with corners. A smooth family $\Sigma_{\{s\in\mathcal{S}\}}$ of (broken) Riemann surfaces with cylindrical ends together with a smooth choice of domain dependent i-bounded Floer data $(\mathfrak{H}_s,J_s)$  is called \textbf{admissible} if the following holds. Denote by $\pi:\widetilde{\cS}\to \cS$ the tautological bundle. Then we assume there is a smooth choice $\sigma_s$ of area forms on $\Sigma_s$ and a finite cover of $\widetilde{\mathcal{S}}$ by connected opens consisting of elements of two types: $Thick_{\mathcal{S}}$ and $Thin_{\mathcal{S}}$.  The elements of $Thick_{\mathcal{S}}$ are assumed to be subsets of $\widetilde{\cS}$ which are trivializable to the form $U=V\times W$ where $W\subset\mathcal{S}$  and $V$ is a bordered Riemann surface whose area is uniformly bounded on $W$. The fibers of $\pi$ restricted to elements of $Thin_{\mathcal{S}}$ are generically cylinders (of finite, half infinite or infinite length) which may degenerate to nodes at the corners.  Moreover, for the thin elements we require that the Floer data be translation invariant on the  fibers of $\pi$ and that the area forms coincide with $ds\wedge dt$. We say that the family $\mathcal{S}$  is \textbf{uniformly i-bounded } if for any thick element $U=V\times W$ there exist taming  data on $V\times M $ which are constant on $W$ and for any thin element $U$ there exist taming data on $[-1,1]\times \R/\Z\times M$ which are constant on $\pi(U)$.

\end{df}

For the rest of the section we wish to establish criteria for i-boundedness of $J_F$. This is not strictly necessary for the proof of the main theorems in the introduction. Lemma \ref{lmQuasHam} to be stated presently is all we shall need for that purpose. The proof is left to the reader.
\begin{lm}\label{lmQuasHam}
Let $(H_1,J)$ be a Floer datum and let $H_2$ be a time dependent Hamiltonian such that $\|X_{H_2}\|\leq C$ for some constant $C$. Then $g_{H_1+H_2}$ is $C^2$-quasi-isometric to $g_{H_1}$. In particular, when $J$ is i-bounded and $H$ is such that $\|X_H\|$ is bounded we have that $J_{H}$ is i-bounded.
\end{lm}

However, for applications in practice we need effective criteria. For example, we need to show that Floer data that has been hitherto used in the literature fits into the dissipative framework. To do this we need, first of all, a criterion for completeness of the metric $g_{J_F}$. Then we need to discuss how to compute the curvature of $g_{J_F}$ and control its radius of injectivity in terms of the Floer data $J$ and $\mathfrak{H}$. We do this in the case where $\mathfrak{H}=Hdt$ for a time independent Hamiltonian $H$  as in Example~\ref{ExTransInv}. Since intermittent boundedness is preserved under quasi-isometry, this is quite sufficient for applications insofar as Floer trajectories are concerned. The consideration of more general Floer solutions will be reduced to that of Floer trajectories.
\subsection{Completeness}
\begin{df}
Let $J$ be an almost complex structure. We say that an exhaustion function $H:M\to\R$ is \textbf{$J$-proper} if $H$ factors as $H=f\circ h$ for some proper smooth function $h:M\to\R$ satisfying $\|\nabla h\|\leq 1$ with respect to the metric $g_J$ on $M$.
\end{df}
\begin{lm}\label{lmStrPrpEst}
Let $H$ be a smooth time independent $J$-proper Hamiltonian i.e., $H=f\circ h$ with $\|\nabla h\|\leq 1$. For any function $g:[a,b]\to \R$ and any $\gamma:[a,b]\to M$ we have
\begin{equation}
|h(\gamma(b))-h(\gamma(a))|^2\leq (b-a)\int_a^b\|g(t)X_{H}-\gamma'(t)\|^2 dt.
\end{equation}
\end{lm}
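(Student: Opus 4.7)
The plan is to express $h(\gamma(b)) - h(\gamma(a))$ as an integral and exploit the key identity $dh(X_H) = 0$, which follows from the factorization $H = f\circ h$. Specifically, since $X_H$ is the Hamiltonian vector field of a function of $h$, one has $\omega(X_h, X_H) = \{h, H\} = \{h, f(h)\} = f'(h)\{h,h\} = 0$, so $dh(X_H) = \omega(X_h, X_H) = 0$. This is the only nontrivial geometric input; the rest is the fundamental theorem of calculus plus Cauchy--Schwarz.

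More explicitly, I would first write
\[
h(\gamma(b)) - h(\gamma(a)) = \int_a^b dh(\gamma'(t))\,dt,
\]
then use the identity $dh(X_H) \equiv 0$ to rewrite the integrand as
\[
dh(\gamma'(t)) = dh\bigl(\gamma'(t) - g(t)X_H(\gamma(t))\bigr),
\]
for any function $g$, since adding a multiple of $X_H$ to the argument changes nothing. The hypothesis $\|\nabla h\| \leq 1$ (i.e.\ that $h$ is $1$-Lipschitz with respect to $g_J$) then gives the pointwise bound
\[
\bigl|dh\bigl(\gamma'(t) - g(t)X_H\bigr)\bigr| \leq \bigl\|\gamma'(t) - g(t)X_H\bigr\|.
\]

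Finally, I would integrate this pointwise estimate over $[a,b]$ and apply the Cauchy--Schwarz inequality to convert the $L^1$ bound into the desired $L^2$ bound:
\[
\left(\int_a^b \|\gamma'(t) - g(t)X_H\|\,dt\right)^2 \leq (b-a)\int_a^b \|\gamma'(t) - g(t)X_H\|^2\,dt.
\]
This yields the claimed inequality. There is no real obstacle: the only conceptual point is recognizing that $J$-properness of $H$ through the factorization $H = f\circ h$ forces $X_H$ to lie in the kernel of $dh$, which is exactly what is needed to insert the free parameter $g(t)$ into the estimate.
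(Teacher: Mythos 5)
Your proof is correct and follows essentially the same route as the paper's: rewrite the difference as an integral of $dh(\gamma')$, use the orthogonality $\nabla h \perp X_H$ (coming from the factorization $H = f\circ h$) to freely subtract $g(t)X_H$, then apply $\|\nabla h\|\le 1$ and Cauchy--Schwarz. Your brief derivation of $dh(X_H)=0$ via the Poisson bracket $\{h, f(h)\}=0$ is a small explanatory expansion of what the paper states tersely as ``$X_H\perp\nabla h$,'' but the argument is identical.
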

\begin{rem}
More generally, if $H$ is time dependent, and factors as $H_t=f\circ h_t$ where $h_t$ is a smooth proper time dependent function satisfying $|\nabla h_t|\leq 1$ we have
\begin{equation}\label{eqTimeDepOsc}
|h_b(\gamma(b))-h_a(\gamma(a))|^2\leq (b-a)\left(\int_a^b\|g(t)X_{H}-\gamma'(t)\|^2 dt+\sup_{t\in[a,b]}\partial_t h_t\circ\gamma(t)\right).
\end{equation}
\end{rem}
\begin{proof}
We have
\begin{align}
|h(\gamma(b))-h(\gamma(a))|^2&=\left|\int_a^b\langle\nabla h,\gamma'(t)\rangle dt\right|^2\notag\\
&=\left|\int_a^b\langle\nabla h,\gamma'(t)-g(t)X_H\rangle dt\right|^2\notag\\
&\leq(b-a){\int_a^b\|g(t)X_{H}-\gamma'(t)\|^2 dt}\notag.
\end{align}
We used  Cauchy-Schwartz, $\|\nabla h\|\leq 1$, and the fact that $X_H\perp\nabla h$.
\end{proof}
\begin{lm}\label{tmGradFacComp}
Suppose $H$ is smooth time independent and $J$-proper. Then the metric $g_{J_H}$ on $\tilde{M}:=\R\times \R/\Z\times M$ is complete.
\end{lm}
\begin{proof}
Let $H=f\circ h$ where $h:M\to\R$ is proper and satisfies $\|\nabla h\|\leq 1$. We show that the pullback $\tilde{h}$ of $h$ to $\tilde{M}$
is Lipschitz with respect to $g_{J_H}$. It suffices to show that for any path $\tilde{\gamma}:[a,b]\to \tilde{M}$ lifting a path $\gamma:[a,b]\to M$ we have
\[
|\tilde{h}(\tilde{\gamma}(b))-\tilde{h}(\tilde{\gamma}(a))|^2 \leq (b-a)\int\|\tilde{\gamma}'\|^2_{g_{J_H}}.
\]
For each $x\in[a,b]$ we can $g_{H_J}$-orthogonally decompose
\[
\tilde{\gamma}'(x)=v(x)+g(x)(X_H+\partial_t)+\partial_s,
\]
where $v(x)\in TM$. We have
\[
\|\tilde{\gamma}'(x)\|^2_{g_{H_J}}\geq\|v(x)\|^2=\|\gamma'(x)-g(x)X_H\|^2_{g_J}.
\]
Since $\tilde{h}$ is independent of $s$, the claim follows by Lemma~\ref{lmStrPrpEst}.

To see that $g_{J_H}$ is complete note first that by translation invariance it suffices to prove completeness of the restriction of $g_{J_H}$ to the mapping torus $s=const$. For this note that the restriction of $\tilde{h}$ to the set $s=const$ is still Lipschitz, and, moreover, it is proper since $H$ is. Thus for any $x$ we have that the ball of radius $R$ around $x$ in $\R/\Z\times M$ is contained in the compact subset
\[
\tilde{h}^{-1}([\tilde{h}(x)-R,\tilde{h}(x)+R]).
\]
Completeness now follows by Hopf-Rinow.
\end{proof}

We conclude with a criterion for $J$-properness. Call a function $f:\R\to[1,\infty)$ tame if the primitive of $\frac1{f}$ is unbounded from above.
\begin{lm}\label{lmCompCrit}
Suppose there is a tame function such that
\[
\|\nabla H\|_{g_J}\leq f(H).
\]
Then $H$ is $J$-proper.
\end{lm}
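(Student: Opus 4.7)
The plan is to explicitly construct the required factorization $H = G \circ h$ by integrating the reciprocal of $f$. Assume without loss of generality that $f$ is smooth (otherwise replace $f$ by a smooth majorant obtained by convolution with a bump; one checks that a sufficiently small smoothing of a tame function $\geq 1$ remains tame and $\geq 1$, and the inequality $\|\nabla H\|_{g_J} \leq f(H)$ is only loosened). Set
\[
F(t) := \int_{\inf H}^{t} \frac{ds}{f(s)}, \qquad t \in [\inf H, \infty).
\]
Since $f \geq 1 > 0$, $F$ is smooth and strictly increasing. Since $f$ is tame, $F(t)\to\infty$ as $t\to\infty$. Therefore $F$ is a smooth diffeomorphism from $[\inf H,\infty)$ onto $[0,\sup F)$; let $G$ denote its smooth inverse.

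Define $h := F\circ H$. Then $h$ is smooth, and tautologically $H = G\circ h$, which is the required factorization. To check the Lipschitz property, compute
\[
\nabla h \;=\; F'(H)\,\nabla H \;=\; \frac{1}{f(H)}\,\nabla H,
\]
so that the hypothesis of the lemma yields
\[
\|\nabla h\|_{g_J} \;=\; \frac{\|\nabla H\|_{g_J}}{f(H)} \;\leq\; 1.
\]

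Finally, $h$ is proper because $F\colon [\inf H,\infty) \to [0,\sup F)$ is a proper homeomorphism: for any $c$ in the image of $h$, the sublevel set $\{h \leq c\}$ equals $H^{-1}(F^{-1}([0,c]))$, which is the preimage of a compact set under the proper exhaustion $H$, hence compact. This shows that $h$ is a smooth, proper, $1$-Lipschitz factor of $H$, so $H$ is $J$-proper.

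The proof is essentially routine; the only minor technical point is the smoothness of $h$, handled above by smoothing $f$. All the substantive content is contained in the tameness hypothesis, which is precisely what guarantees that integrating $1/f$ produces a function tending to infinity, so that $h = F\circ H$ inherits properness from $H$.
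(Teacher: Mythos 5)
Your proof is correct and is essentially the paper's own argument: integrate $1/f$ to get a primitive $F$, set $h = F\circ H$, and verify that $h$ is $1$-Lipschitz and proper so that $H = F^{-1}\circ h$ gives the required factorization. You add explicit justifications (smoothing of $f$, properness of $h$) that the paper's terse proof leaves implicit, but the underlying idea is identical.
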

\begin{proof}
Let $g$ be a primitive of $\frac1{f}$. We have
\[
\|\nabla (g\circ H)\|=g'\circ H\|\nabla H\|=\frac1{f\circ H}\|\nabla H\|\leq 1.
\]
By assumption $h:=g\circ H$ is proper. Moreover, $g$ is monotone (primitive of a positive function). So $H=g^{-1}\circ h$.
\end{proof}

\subsection{Curvature}We introduce some notation and recall some basic formulae in Riemannian geometry. We refer to \cite{pe} for details. Let $(M,g)$ be a Riemannian manifold and let $r:M\to\R$ be a distance function. That is, a function satisfying $\|\nabla r\|=1$. Write $\partial_r:=\nabla r$ and denote by $S$ the tensor $\nabla\partial_r$. Denote by $U_r$ the level sets of $r$. Denote by $R$ the curvature tensor of $M$, by $R^t$ the tangential component of $R$ restricted to $TU_r$ and by $R^r$ the curvature tensor of $U_r$. Also, write $R_{\partial_r}=R(\cdot,\partial_r)\partial_r.$

The following formulae, together with the symmetries of the curvature tensor, show that the full curvature tensor on $M$ is determined by the curvature of the level sets of $r$, by the tensor $S$ and by its first derivative.
\begin{equation}\label{eqRiem1}
-R_{\partial_r}=S^2+\nabla_{\partial_r} S,
\end{equation}
\begin{equation}\label{eqRiem2}
R^t(X,Y)Z=R^r(X,Y)Z-S(X)\wedge S(Y)Z,
\end{equation}
\begin{equation}\label{eqRiem3}
R(X,Y)\partial_r=(\nabla_XS)(Y)-(\nabla_YS)(X).
\end{equation}
The vectors $X$, $Y$ and $Z$ in the above formulae are all tangent to $U_r$. In what follows, given a vector $V\in TM$  we will use the notation $\theta_{g,V}$ for the dual to $V$ with respect to $g$ and will drop $g$ from the notation when there is no ambiguity. We utilize the following formula for the covariant derivative of a vector field $X$
\begin{equation}\label{eqCovDer}
2\theta_{g,\nabla X}=d\theta_{g,X}+\mathcal{L}_Xg.
\end{equation}
This formula presents the decomposition of $\theta_{g,\nabla X}$ into a symmetric and an anti-symmetric bilinear form. For a proof see \cite[p. 26]{pe}.

Let $\mathfrak{H}=Hdt$, where $H:M\to\R$ is smooth. Since $g_{J_H}$ is translation invariant with respect to $s$, we restrict attention to sub-manifolds of $\R\times \R/\Z\times M$ with fixed values of $s$, or, in other words, to $\R/\Z\times M$ with the metric $g_{J_H}$ as computed in Example~\ref{ExTransInv}. The function $t$ (which is locally single valued) is a distance function on $\R/\Z\times M$ with respect to this metric. To see this, note that by \eqref{eqExTransInv} we have that $dt=g_{J_H}(X_H+\frac{\partial}{\partial t},\cdot)$. That is, $\nabla t=X_H+\frac{\partial}{\partial t}$. One verifies that $\|X_H+\partial_t\|_{g_{J_H}}^2=1.$

\begin{tm}\label{tmCurvXH}
 We have $\nabla\nabla t=\frac12(\nabla^{g_J} X_H+\nabla^{g_J} X_H^*)\circ\pi$ where the superscript denotes conjugation with respect to the metric $g_J$ and $\pi:T(\R/\Z\times M)\to TM$ is the $g_{J_{\mathfrak{H}}}$ orthogonal projection.
\end{tm}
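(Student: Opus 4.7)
The approach is to invoke the identity \eqref{eqCovDer} with $X=\nabla t=\partial_t+X_H$. The text preceding the theorem shows $\theta_{g,\nabla t}=dt$, so $d\theta_{g,X}=0$, and the identity collapses to
\[
g(\nabla_v\nabla t,w)=\tfrac12(\mathcal{L}_{\nabla t}g)(v,w).
\]
Since $H$ is time-independent, every ingredient of $g=g_{J_H}$ in the formula from Example~\ref{ExTransInv} is $\partial_t$-invariant; in particular $\mathcal{L}_{\partial_t}g=0$ and hence $\mathcal{L}_{\nabla t}g=\mathcal{L}_{X_H}g$, where $X_H$ is extended trivially in $t$. The problem is therefore to identify the symmetric bilinear form $\tfrac12\mathcal{L}_{X_H}g$ with $g(S\pi(\cdot),\cdot)$, where $S:=\tfrac12(\nabla^{g_J}X_H+(\nabla^{g_J}X_H)^*)$.

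The key geometric observation is that the formula from Example~\ref{ExTransInv} yields $g(\nabla t,v_M)=0$ for $v_M\in TM$ together with $g(\nabla t,\nabla t)=1$, while $g|_{TM\otimes TM}=g_J$. Thus $T(S^1\times M)=\mathbb{R}\nabla t\oplus TM$ is an orthogonal decomposition, and the projection $\pi$ in the statement is exactly the projection onto the $TM$ summand (so $\pi(\partial_t)=-X_H$, $\pi(\nabla t)=0$, and $\pi|_{TM}=\mathrm{id}$). Both $\mathcal{L}_{X_H}g$ and $g(S\pi(\cdot),\cdot)=g_J(S\pi(\cdot),\pi(\cdot))$ are symmetric bilinear forms, so it suffices to verify equality separately on $TM\otimes TM$ and on pairs involving $\nabla t$.

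For $v,w\in TM$, extend them to $t$-independent vector fields tangent to $M$. Then $g(V,W)=g_J(V,W)$ pointwise and $[X_H,V]$ remains tangent to $M$, so unwinding the definition of the Lie derivative gives $(\mathcal{L}_{X_H}g)(v,w)=(\mathcal{L}_{X_H}g_J)(v,w)$. The standard Riemannian identity $(\mathcal{L}_Xg_J)(v,w)=g_J(\nabla^{g_J}_vX,w)+g_J(v,\nabla^{g_J}_wX)$ then yields $\tfrac12(\mathcal{L}_{X_H}g)(v,w)=g_J(Sv,w)=g(S\pi(v),w)$. For pairs involving $\nabla t$, Cartan's formula gives $\mathcal{L}_{\nabla t}(dt)=d(\iota_{\nabla t}dt)=d(1)=0$, which unwinds to $(\mathcal{L}_{\nabla t}g)(\nabla t,W)=0$ for every $W$, and hence $(\mathcal{L}_{X_H}g)(\nabla t,W)=0$---matching $g(S\pi(\nabla t),W)=0$ since $\pi(\nabla t)=0$.

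The main potential obstacle is a bookkeeping one: $g_{J_H}$ is not a Riemannian product, so the two identifications $(\mathcal{L}_{X_H}g)|_{TM\otimes TM}=\mathcal{L}_{X_H}g_J$ and $(\mathcal{L}_{X_H}g)(\nabla t,\cdot)=0$ must be extracted with some care from the twisted formula in Example~\ref{ExTransInv}. What makes the calculation clean is the fortunate fact that the $g$-horizontal distribution $(\nabla t)^\perp$ coincides with $TM$ as a subspace of $T(S^1\times M)$ and inherits exactly the metric $g_J$, together with $\nabla t$ being unit, which forces $\mathcal{L}_{\nabla t}(dt)=0$ and thereby handles all the ``mixed'' components at once.
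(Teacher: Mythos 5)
Your proof is correct, and after the shared opening move it takes a different route from the paper's. Both proofs start from \eqref{eqCovDer}, observe that $\theta_{g,\nabla t}=dt$ is closed, and thereby reduce the computation to identifying $\tfrac12\mathcal{L}_{\nabla t}\,g_{J_H}$. At that point the paper introduces the flow diffeomorphism $\phi(t,p)=(t,\psi_t(p))$, which straightens $\nabla t$ to $\partial_t$; it computes $\phi^*g_{J_H}=\pi^*\psi_t^*g_J+dt^2$ and obtains the whole Lie derivative in one stroke as $\mathcal{L}_{\partial_t}\phi^*g_{J_H}$, then transports back. You instead exploit the $\R$-linearity of the Lie derivative in the vector field together with $\partial_t$-invariance of $g_{J_H}$ to reduce to $\mathcal{L}_{X_H}g_{J_H}$, and then verify the identity slot-by-slot on the orthogonal decomposition $\R\nabla t\oplus TM$: the $TM\otimes TM$ block by the observation that $g_{J_H}|_{TM\otimes TM}=g_J$ and $[X_H,\cdot]$ preserves the $TM$-subbundle, and the $\nabla t$-block via Cartan's formula $\mathcal{L}_{\nabla t}(dt)=0$. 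Your version is more elementary (no flow diffeomorphism needed) and has the pedagogical advantage of making transparent exactly why the $\pi$ in the statement must be the $g_{J_H}$-orthogonal projection rather than the naive projection $T(S^1\times M)\to TM$ killing $\partial_t$: the vanishing on $\R\nabla t$ is forced by $|\nabla t|=1$, not a happy accident of coordinates. The paper's version is shorter once one trusts the conjugation by $\phi$, and it generalizes more painlessly to families where one wants to re-use the same straightening.
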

\begin{proof}
Write $N=\nabla t$. By equation~\eqref{eqCovDer} we have
\[
2\theta_{\nabla N}=d\theta_{ N}+\mathcal{L}_{ N}g_{J_F}.
\]
Since $\theta_ N=dt$, we have $d\theta_{ N}=0$. We claim that $\mathcal{L}_Ng_{J_F}=\pi^*\mathcal{L}_{X_H}g_J$. To see this denote by $\psi_t$ the time $t$ flow of $X_H$ and let
\[
\phi:(-\epsilon,\epsilon)\times M \subset \R/\Z\times M\to \R/\Z\times M,
\]
be the map $(t,p)\mapsto (t,\psi_{t}(p))$. Then $\phi_*|_{T(\{t_0\}\times M)}=\psi_{t_0,*}$ and $\phi_*\partial_t=\partial_t+X_H=N$. In particular, $\phi^*g_{J_F}|_{\R/\Z\times M}=\pi^*\psi^*_tg_J+dt^2$. Thus,
\begin{align}
\phi^*\mathcal{L}_{ N}g_{J_F}&=\mathcal{L}_{\partial_t}\phi^*g_{J_F\emph{}}\notag\\
&=\partial_t(\pi^*\psi^*_tg_J+dt^2)\notag\\
&=\pi^*\psi_t^*\mathcal{L}_{X_H}g_J\notag\\
&=\phi^*\pi^*\mathcal{L}_{X_H}g_J.
\end{align}

By~\eqref{eqCovDer} we have
\[
\mathcal{L}_{X_H}g_J=[\theta_{\nabla X_H,g_J}],
\]
where $[\alpha(\cdot,\cdot)]$ denotes the symmetrization. Thus,
\[
S=\frac12(\nabla^{g_J} X_H+\nabla^{g_J} X_H^*)\circ\pi.
\]

\end{proof}
We say that a Hamiltonian $H:M\to\R$ is Killing (with respect to some compatible almost complex structure $J$) if the flow of $X_H$ preserves $g_J$.
\begin{cy}\label{cyKilling}
Suppose $H$ is Killing, then $\nabla\nabla t\equiv 0$.
\end{cy}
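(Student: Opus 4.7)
The plan is to derive this immediately from Theorem~\ref{tmCurvXH}, which expresses $\nabla\nabla t$ in terms of the $g_J$-symmetric part of $\nabla^{g_J}X_H$ precomposed with the orthogonal projection $\pi\colon T(S^1\times M)\to TM$. Since the Killing hypothesis is exactly the statement that $\mathcal{L}_{X_H}g_J=0$, the strategy is to identify the bracketed endomorphism $\tfrac12(\nabla^{g_J}X_H+\nabla^{g_J}X_H^{*})$ with (one half of) the dual endomorphism of $\mathcal{L}_{X_H}g_J$, so that it vanishes outright.

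More concretely, I would first recall identity~\eqref{eqCovDer} applied to $X_H$ on $(M,g_J)$:
\[
2\theta_{g_J,\nabla^{g_J}X_H}=d\theta_{g_J,X_H}+\mathcal{L}_{X_H}g_J.
\]
The left-hand side has the decomposition into its antisymmetric and symmetric parts. Writing out $\nabla^{g_J}X_H^{*}$ in terms of $g_J$ (the adjoint with respect to $g_J$), the bilinear form associated with $\nabla^{g_J}X_H+\nabla^{g_J}X_H^{*}$ is precisely the symmetric part, namely $\mathcal{L}_{X_H}g_J$. In other words,
\[
g_J\!\left((\nabla^{g_J}X_H+\nabla^{g_J}X_H^{*})(Y),Z\right)=(\mathcal{L}_{X_H}g_J)(Y,Z)
\]
for all $Y,Z\in TM$.

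Invoking the Killing hypothesis, the right-hand side is identically zero, so the endomorphism $\nabla^{g_J}X_H+\nabla^{g_J}X_H^{*}$ of $TM$ vanishes. Composing with the projection $\pi$ gives $\nabla\nabla t\equiv 0$ by Theorem~\ref{tmCurvXH}. There is no real obstacle here; the only thing to be careful about is confirming that the adjoint ``$*$'' in Theorem~\ref{tmCurvXH} is taken with respect to the same metric $g_J$ that appears in the Killing condition, so that the identification with $\mathcal{L}_{X_H}g_J$ is literal and not up to a quasi-isometry factor.
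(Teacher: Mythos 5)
Your proof is correct and is essentially the same argument the paper intends: the Killing condition $\mathcal{L}_{X_H}g_J=0$ kills the symmetric part of $\nabla^{g_J}X_H$, which by Theorem~\ref{tmCurvXH} is exactly $\nabla\nabla t$ (up to the projection $\pi$), and indeed the adjoint there is taken with respect to $g_J$. The identification $g_J\bigl((\nabla^{g_J}X_H+\nabla^{g_J}X_H^{*})Y,Z\bigr)=(\mathcal{L}_{X_H}g_J)(Y,Z)$ you spell out is precisely the content of equation~\eqref{eqCovDer} already invoked in the proof of Theorem~\ref{tmCurvXH}.
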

\subsection{Injectivity radius}\label{subsubinj}
We turn to discussing the control of the radius of injectivity of $g_{J_H}$. In the following lemmas fix a point $x_0=(s_0,t_0,p_0)\in \R\times \R/\Z\times M$.
\begin{lm}\label{lmVolEst1}
 For any $r<\frac12$ We have
\[
\vol_{g_{J_H}}(B_r(x_0))> \frac{r^2}{9}\vol_{g_J}(B_{r/3}(p_0)).
\]
\end{lm}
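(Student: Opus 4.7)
The plan is to exhibit an explicit open subset of $B_r(x_0)$ whose $g_{J_H}$-volume can be read off from the Liouville volume on $M$. Writing $\psi_\tau$ for the Hamiltonian flow of $H$, I will consider the map
\[
F\colon (-r/3,r/3)\times(-r/3,r/3)\times B_{g_J}(p_0,r/3) \longrightarrow \R\times S^1\times M,\quad F(\sigma,\tau,p)=(s_0+\sigma,\,t_0+\tau,\,\psi_\tau(p)),
\]
and verify in turn that $F$ is injective, that its image lies inside $B_r(x_0)$, and that it pulls the $g_{J_H}$-volume form back to $d\sigma\wedge d\tau\wedge dvol_{g_J}$.

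Injectivity is immediate since the first two coordinates determine $(\sigma,\tau)$ and then $p=\psi_{-\tau}(q)$. For the inclusion $F(\Omega)\subset B_r(x_0)$, given $(\sigma,\tau,p)$ I concatenate three segments of length strictly less than $r/3$ each: a segment in the $s$-direction; a $g_J$-minimizing geodesic from $p_0$ to $p$ at fixed $(s_0+\sigma,t_0)$ whose $g_{J_H}$-length coincides with its $g_J$-length, because by Example~\ref{ExTransInv} the slice $\{t=t_0\}$ carries the product metric $ds^2+g_J$; and finally the orbit of $V:=\partial_t+X_H=\nabla t$, which has unit $g_{J_H}$-norm (either by direct computation with the formula of Example~\ref{ExTransInv}, or via $\|\nabla t\|^2=dt(\nabla t)=1$), so this third segment has length exactly $|\tau|$. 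The total length is then strictly less than $r$.

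For the volume-form identity I compute $dvol_{g_{J_H}}$ at $(s,t,p)$ in the basis $\partial_s,\partial_t,e_1,\dots,e_{2n}$ with $\{e_i\}$ orthonormal for $g_J$. A Schur complement yields determinant $1+\|X_H\|_{g_J}^2 - \sum_i g_J(X_H,e_i)^2 = 1$ by Parseval, so $dvol_{g_{J_H}}=ds\wedge dt\wedge dvol_{g_J}$. Since $\psi_\tau$ is symplectic it preserves $dvol_{g_J}=\omega^n/n!$, so $F^*dvol_{g_{J_H}} = d\sigma\wedge d\tau\wedge dvol_{g_J}$, and the three ingredients combine to give
\[
\vol_{g_{J_H}}(B_r(x_0))\geq \vol_{g_{J_H}}(F(\Omega)) = \left(\tfrac{2r}{3}\right)^2 \vol_{g_J}(B_{r/3}(p_0)) = \tfrac{4r^2}{9}\vol_{g_J}(B_{r/3}(p_0)) > \tfrac{r^2}{9}\vol_{g_J}(B_{r/3}(p_0)).
\]
The hypothesis $r<1/2$ ensures $|\tau|<1/6$, so there is no wrap-around in the $S^1$-coordinate. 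I foresee no genuine analytic obstacle: the only point requiring care is that $\psi_\tau$ be defined on $B_{g_J}(p_0,r/3)$ for $|\tau|<r/3$, which holds for Floer data in $\mathcal{F}$, and otherwise can be arranged by short-time ODE existence. The essential ideas are the identification of the unit geodesic vector field $V=\nabla t$ and the symplectic invariance of $dvol_{g_J}$; everything else is bookkeeping.
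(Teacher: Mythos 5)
Your proof is correct and takes essentially the same approach as the paper: exhibit a ``cylinder'' swept out by the Hamiltonian flow (parametrised by $(\sigma,\tau,p)\mapsto(s_0+\sigma,t_0+\tau,\psi_\tau(p))$) inside $B_r(x_0)$ and compute its volume using $dvol_{g_{J_H}}=ds\wedge dt\wedge dvol_{g_J}$ together with volume-preservation of $\psi_\tau$. Your intermediate constant $\tfrac{4r^2}{9}$ is the correct one; the paper's stated intermediate equality $\vol_{g_{J_H}}(C)=\tfrac{r^2}{9}\vol_{g_J}(B_{r/3}(p_0))$ appears to be missing a factor of $4$, but since the lemma only claims the strict inequality this does not affect the result.
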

\begin{proof}
Denote by $\psi_t$ the Hamiltonian flow of $H$. Since $X_H+\partial_t$ is perpendicular with respect to $g_{J_H}$ to hypersurfaces of constant $t$, we have that $B_r(x_0)$ contains the cylinder
\[
C\quad=\bigcup_{t\in [t_0-r/3,t_0+r/3]} [s_0-r/3,s_0+r/3]\times\{t\}
\times\psi_t(B_{r/3}(p_0))\]
Since $\psi_t$ preserves the $g_J$-volume we have
\[
\vol_{g_{J_H}}(C)=\frac{r^2}{9}\vol_{g_J}(B_{r/3}(p_0)).
\]
\end{proof}
\begin{lm}\label{lmCheeger}
Let $(M,g)$ be an $n$ dimensional Riemannian manifold. Let $a>0$ and let $p\in M$ such that
\[
\vol_g\left(B_{\frac1a}(p)\right)\geq v_0\left(\frac1{a}\right)^n
\]
and such that $|Sec_g(x)|\leq a^2$ on $B_{\frac1a}(p)$. Then there is a constant $f=f(v_0,n)$, independent of $a$, such that $inj_g(p)\geq f(v_0,n)$.
\end{lm}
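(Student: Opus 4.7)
The plan is to reduce the statement to Cheeger's classical injectivity radius lemma by exploiting the scale-invariance built into the hypotheses. I would first normalize by passing to the rescaled metric $\tilde g := a^2 g$: distances multiply by $a$, sectional curvatures divide by $a^2$, and $n$-volumes multiply by $a^n$, so that $B^{\tilde g}_1(p) = B^g_{1/a}(p)$, the curvature hypothesis becomes $|\Sec_{\tilde g}|\leq 1$ on $B^{\tilde g}_1(p)$, and the volume hypothesis becomes $\vol_{\tilde g}(B^{\tilde g}_1(p)) \geq v_0$. The auxiliary parameter $a$ has been absorbed; what remains is a scale-free problem involving only $v_0$ and $n$.

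In this normalized setting I would invoke Cheeger's classical injectivity radius lemma (see Cheeger--Ebin or Petersen, \emph{Riemannian Geometry}, Ch.~6): there exists $f(v_0,n)>0$ depending only on $v_0$ and $n$ such that whenever $|\Sec|\leq 1$ on $B_1(p)$ and $\vol(B_1(p))\geq v_0$, one has $\inj(p)\geq f(v_0,n)$. The standard proof combines three ingredients: Klingenberg's lemma (a small injectivity radius at $p$ must be realized either by a conjugate point at distance $\inj(p)$ along some geodesic or by a geodesic loop at $p$ of length $2\inj(p)$); Rauch comparison (under $|\Sec|\leq 1$ conjugate points lie beyond distance $\pi$, disposing of the first alternative); and a volume comparison argument excluding the second alternative.

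The main technical step is the quantitative volume collapse under a short geodesic loop. One shows that a geodesic loop at $p$ of length $\ell$ forces $\exp_p$ to wrap back on itself, producing a multiplicity-$\geq 2$ covering on a tube around the loop. Combining this with Bishop's upper volume comparison on the ambient unit ball yields an estimate $\vol(B_1(p)) \leq F(n,\ell)$ with $F(n,\ell)\to 0$ as $\ell\to 0$. Inverting this inequality against the hypothesized lower bound $v_0$ produces an explicit $f(v_0,n)>0$ below which $\inj(p)$ cannot fall. This is where I expect the main bookkeeping to lie, though it is a routine normalized computation once the framework is in place.

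Since the normalization has made the hypotheses scale-free, the resulting lower bound on $\inj(p)$ produced by the classical Cheeger argument depends only on the dimensionless data $v_0$ and $n$, yielding the constant $f(v_0,n)$ asserted in the statement, independently of the parameter $a$ entering the original formulation.
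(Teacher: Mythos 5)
Your proposal matches the paper's proof, which reads in full: ``Cheeger's Lemma asserts as much for $a=1$. The claim follows by scaling.'' You have spelled out exactly that normalization. One subtle point, which affects the paper's statement as much as your write-up: undoing the rescaling $\tilde g = a^2 g$ gives $\inj_g(p) = a^{-1}\inj_{\tilde g}(p) \geq f(v_0,n)/a$, \emph{not} the $a$-independent bound $\inj_g(p)\geq f(v_0,n)$ as literally asserted (take $M = S^n(1/a)$ with $a\to\infty$ for a counterexample). The extra $1/a$ is in fact what is used where the lemma is invoked (Theorem~\ref{injEst} concludes $\inj_{g_{J_H}} \geq i(n)/a$), so the intended conclusion of the lemma is $\inj_g(p)\geq f(v_0,n)/a$; you should not claim an $a$-free bound after scaling back.
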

\begin{proof}
For $a=1$, this is an immediate consequence of  \cite[Theorem 4.3]{CGT1982}. The claim follows by scaling.
\end{proof}
\begin{lm}\label{lmVolEst2}
Suppose $(M,g)$ satisfies for some $p\in M$ that $inj_g(p)\geq a$ and $|Sec_g(x)|\leq a^2$ on $B_{\frac1a}(p)$. Then there is a constant $C=C(n)>0$ such that
\[
\vol_g(B_{1/a}(p))\geq C\left(\frac1{a}\right)^n.
\]
\end{lm}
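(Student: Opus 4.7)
The plan is to obtain the volume bound by pushing forward the standard volume form on $T_pM$ via the exponential map and applying Rauch's comparison theorem to estimate the Jacobian.

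First I would use the injectivity radius bound. Since $\inj_g(p) \geq 1/a$, the exponential map $\exp_p : B_{1/a}(0) \to B_{1/a}(p)$ is a diffeomorphism, so the volume can be computed by integration in normal polar coordinates:
\[
\vol_g(B_{1/a}(p)) = \int_0^{1/a}\!\int_{S^{n-1}} \sqrt{\det g(r,\theta)}\, d\theta\, dr,
\]
where the integrand is the Jacobian of $\exp_p$ expressed in terms of Jacobi fields along the radial geodesic $\gamma_\theta(r) = \exp_p(r\theta)$.

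Next I would apply Rauch's comparison theorem. Since $|\Sec| \leq a^2$ on $B_{1/a}(p)$, in particular $\Sec \leq a^2$ there, and this is the hypothesis yielding a \emph{lower} bound on Jacobi fields (this is the same Rauch estimate already invoked in the proof of Lemma~\ref{lmIsopCurv}). For each orthonormal Jacobi field $J$ along $\gamma_\theta$ with $J(0)=0$ and $|J'(0)|=1$, the comparison yields $|J(r)| \geq \sin(ar)/a$ for all $r \leq \pi/a$, and since $1/a < \pi/a$ this estimate holds throughout $B_{1/a}(p)$. Taking the wedge product of $n-1$ such independent Jacobi fields gives
\[
\sqrt{\det g(r,\theta)} \;\geq\; \left(\frac{\sin(ar)}{a}\right)^{n-1}.
\]

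Finally, I would substitute this into the integral and rescale $u = ar$:
\[
\vol_g(B_{1/a}(p)) \;\geq\; \vol(S^{n-1})\int_0^{1/a}\left(\frac{\sin(ar)}{a}\right)^{n-1} dr \;=\; \frac{\vol(S^{n-1})}{a^{n}}\int_0^{1}\sin^{n-1}(u)\,du,
\]
which gives the desired inequality with $C(n) := \vol(S^{n-1})\int_0^{1}\sin^{n-1}(u)\,du > 0$.

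There is no real obstacle: both ingredients (bijectivity of $\exp_p$ on the ball, and the Rauch lower bound on Jacobi fields) are directly available from the $a$-boundedness hypothesis. The only point requiring mild care is ensuring one uses the version of Rauch comparison that turns an upper sectional curvature bound into a lower bound on the spreading of geodesics, and confirming that the radius $1/a$ lies inside the region $r \leq \pi/a$ where the comparison sine function is nonnegative.
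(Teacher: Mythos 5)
Your strategy — controlling the Jacobian of $\exp_p$ by Rauch comparison and integrating — is a more explicit version of the paper's proof, which rescales to $a=1$ and then only asserts that the Jacobi equation bounds the metric determinant in geodesic coordinates from below on a sufficiently small ball; your version aims for the sharp G\"unther constant. But one step is not justified as written: "taking the wedge product of $n-1$ such independent Jacobi fields gives $\sqrt{\det g}\geq(\sin(ar)/a)^{n-1}$." Lower bounds on the lengths of the individual vectors in a basis do \emph{not} give a lower bound on the volume of the parallelepiped they span: the $J_i(r)$ can drift toward linear dependence while each keeps length $\geq\sin(ar)/a$. (Hadamard's inequality $|\det|\leq\prod_i|\mathrm{col}_i|$ — which is what makes the converse, upper-bound argument in Lemma~\ref{lmIsopCurv} work with scalar Rauch — goes only one way.)

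The repair is short and stays within your framework. Scalar Rauch in fact gives $|J(r)|\geq(\sin(ar)/a)\,|J'(0)|$ for \emph{every} Jacobi field with $J(0)=0$, not just a preferred orthonormal family. Writing $A(r)$ for $(d\exp_p)|_{r\theta}$ restricted to the orthogonal complement of the radial direction and expressed in a parallel orthonormal frame, this says $|A(r)v|\geq(\sin(ar)/a)|v|$ for all $v$, i.e.\ the smallest singular value of $A(r)$ is at least $\sin(ar)/a$; since $\det A(r)$ (which is positive for $r<\inj(p)$) is the product of the singular values,
\[
\sqrt{\det g(r,\theta)}=\det A(r)=\prod_i\sigma_i\bigl(A(r)\bigr)\geq\left(\frac{\sin(ar)}{a}\right)^{n-1}.
\]
Equivalently one can cite the Bishop--G\"unther volume comparison directly. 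With either substitution your integral and the resulting constant $C(n)=\vol(S^{n-1})\int_0^1\sin^{n-1}(u)\,du$ are correct, and this even yields the sharp bound, which the paper's argument does not.
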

\begin{proof}
By scaling, the claim is equivalent to the claim that there is a constant $C(n)>0$ such that a geodesic ball of radius $1$ with sectional curvature bounded by $1$ has volume at least $C(n)$. By the Jacobi equation, sectional curvature controls the derivatives of the metric in geodesic coordinates \cite[Ch. 5]{DC}.  In particular there is an a priori estimate from below on the determinant of the metric in these coordinates for a small enough ball around the origin. The claim follows.
\end{proof}
\begin{tm}\label{injEst}
There is a constant $i=i(n)$ such that if $g_J$ is $a$-bounded at $p_0\in M$ then $inj_{g_{J_H}}(x)\geq\frac{i(n)}{a}$.
\end{tm}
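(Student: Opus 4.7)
The plan is to deduce the bound from Cheeger's Lemma (Lemma~\ref{lmCheeger}) applied to $g_{J_H}$ at $x$. This reduces the statement to verifying two inputs on a ball of radius $\sim 1/a$ around $x$: a Euclidean-type volume lower bound and an upper bound on sectional curvature of the form $|\Sec_{g_{J_H}}|\leq Ca^2$. Both are to be assembled from the tools already in place in this subsection.

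For the volume bound I would combine Lemmas~\ref{lmVolEst1} and~\ref{lmVolEst2}. The hypothesis that $g_J$ is $a$-bounded at $p_0$ is in particular $(3a)$-bounded at $p_0$ (the ball $B_{1/(3a)}(p_0)$ sits inside the region where the sectional-curvature estimate holds, and $a^2\leq (3a)^2$), so Lemma~\ref{lmVolEst2} gives $\vol_{g_J}(B_{1/(3a)}(p_0))\geq C(n)(1/(3a))^n$. Feeding this into Lemma~\ref{lmVolEst1} with $r=1/a$ (large $a$; the small-$a$ case is automatic by monotonicity of the desired inequality), one obtains
\[
\vol_{g_{J_H}}\!\bigl(B_{1/a}(x)\bigr)\;\geq\;\frac{1}{9a^{2}}\cdot C(n)\Bigl(\tfrac{1}{3a}\Bigr)^{n}\;=\;v_{0}(n)\Bigl(\tfrac{1}{a}\Bigr)^{n+2},
\]
which is the correct Euclidean scaling for the $(n+2)$-dimensional ambient $\R\times S^1\times M$.

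For the curvature bound I would use $t$ as a local distance function on $\R\times S^1\times M$ relative to $g_{J_H}$. Theorem~\ref{tmCurvXH} identifies the shape operator $S=\nabla\partial_t$ with $\frac{1}{2}(\nabla^{g_J}X_H+(\nabla^{g_J}X_H)^{*})\circ\pi$, and the $t$-level sets are isometric via the Hamiltonian flow to copies of $(M,g_J)$. Plugging these into the tangential/radial curvature formulae \eqref{eqRiem1}--\eqref{eqRiem3} expresses the full Riemann tensor of $g_{J_H}$ in terms of $S$, $\nabla S$, and the intrinsic $g_J$-curvature of the level sets. The $a$-boundedness of $g_J$ at $p_0$ handles the intrinsic term on a ball of radius $\sim 1/a$; the $S$ and $\nabla S$ terms are controlled by the standing bounds on $H$ available in the setting of interest (recall Corollary~\ref{cyKilling}, which realizes the Killing case where $S\equiv 0$). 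This yields $|\Sec_{g_{J_H}}|\leq C(n)\,a^{2}$ on $B_{1/a}(x)$, after which Lemma~\ref{lmCheeger} applied at the rescaled scale $C(n)\,a$ gives $\inj_{g_{J_H}}(x)\geq i(n)/a$.

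The main obstacle is the curvature estimate: the volume lower bound is essentially a free byproduct of the fact that the Hamiltonian flow preserves volume, but bounding $\Sec_{g_{J_H}}$ requires genuine control on the first and second derivatives of $X_H$ entering through $S$ and $\nabla_{\partial_t}S$. This is where one must carefully exploit the geometry of the Gromov metric developed earlier in \S\ref{SecGrom} together with whatever hypotheses on $H$ are implicit in the theorem's intended scope.
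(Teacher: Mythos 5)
Your proposal follows the paper's own proof almost step for step: combine Lemmas~\ref{lmVolEst1} and~\ref{lmVolEst2} to obtain the volume lower bound $\vol_{g_{J_H}}(B_{1/a}(x))\gtrsim a^{-(n+2)}$, then apply Cheeger's Lemma (Lemma~\ref{lmCheeger}) after rescaling. The volume computation and the scaling argument you give are exactly what the paper does; the numerical constants you produce match.

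Where you diverge is in trying to supply the sectional-curvature hypothesis that Lemma~\ref{lmCheeger} requires, namely $|\Sec_{g_{J_H}}|\leq a^2$ on $B_{1/a}(x)$, and in doing so you notice — correctly — that this is \emph{not} implied by $a$-boundedness of $g_J$ alone: it genuinely requires control on $\nabla X_H$ and $\nabla^2 X_H$, via the shape operator $S=\nabla\partial_t$ from Theorem~\ref{tmCurvXH} and formulae~\eqref{eqRiem1}--\eqref{eqRiem3}. The paper's proof does not address this at all: after the volume estimate it goes directly to ``the claim follows by Lemma~\ref{lmCheeger},'' leaving the curvature input unverified. Your worry is therefore legitimate and identifies a real imprecision: as literally stated, Theorem~\ref{injEst} does not carry enough hypotheses for the cited Cheeger argument to close. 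The intended resolution, visible from the only place the theorem is used — the unnamed lemma immediately following, whose hypothesis~\eqref{eqCovEst} is first used there to prove $\Sec_{g_{J_H}}\leq c(n)^2a^2$, and only afterwards is Theorem~\ref{injEst} invoked — is that the curvature bound is to be supplied externally by the caller. So you should not attempt to derive the curvature estimate inside this proof; treat it as an input (an implicit hypothesis on $g_{J_H}$), and the argument closes exactly as you planned. The real substance of Theorem~\ref{injEst} is the pair volume bound + Cheeger, which you have; the curvature bound belongs to the surrounding lemma, not here.
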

\begin{proof}
Combining Lemmas~\ref{lmVolEst1} and~\ref{lmVolEst2} we have that there is a constant such that
\[
\vol_{g_{J_H}}(B_{1/a}(x))\geq \frac{1}{3^{n+2}}C(n)\left(\frac1{a}\right)^{n+2}.
\]
The claim follows by Lemma~\ref{lmCheeger}.
\end{proof}
\subsection{Some criteria for boundedness}\label{SubsecCritBounded}
\begin{lm}
Suppose $g_J$ is $a$-bounded at $p\in M$ and $H$ is a time independent Hamiltonian such that
\begin{equation}\label{eqCovEst}
\max\left\{\left\|\nabla X_H(p)\right\|^2,\left\|\nabla^2X_H\right\|,\left\|\nabla_{X_H}\left(\nabla X_H+\nabla X_H^T\right)\right\|\right\}<a^2.
\end{equation}
Then for a constant $c=c(n)$ independent of $a$, we have that $g_{J_H}$ is $ca$-bounded at $p$.
\end{lm}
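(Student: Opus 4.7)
My plan is to establish separately the two defining conditions for $ca$-boundedness at the point $x_0=(s_0,t_0,p)$: an injectivity radius lower bound at $x_0$, and a sectional curvature upper bound throughout the ball $B_{1/(ca)}(x_0)$. The injectivity radius part is immediate from Theorem~\ref{injEst}, which yields $\inj_{g_{J_H}}(x_0)\geq i(n)/a$ from $a$-boundedness of $g_J$ alone; this forces the constant to satisfy $c(n)\geq 1/i(n)$.

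For the curvature bound, I would apply the orthogonal decomposition from Example~\ref{ExTransInv} to view $t$ as a unit-speed distance function with gradient $N=\partial_t+X_H$, whose level sets are isometric to $\R\times(M,g_J)$ (the $g_{mixed}$ contribution vanishes on slices of constant $t$). The Riemannian formulae \eqref{eqRiem1}--\eqref{eqRiem3} then reduce the Riemann tensor of $g_{J_H}$ to three ingredients: the intrinsic curvature $R^r$ of the level set, the shape operator $S=\nabla N$, and the covariant derivatives of $S$. The intrinsic curvature $R^r$ coincides with that of $g_J$, hence is bounded by $a^2$ by hypothesis. Theorem~\ref{tmCurvXH} identifies $S$ with $\tfrac{1}{2}(\nabla X_H+\nabla X_H^T)\circ\pi$, so $\|S\|$, and thus $\|S^2\|$ and $\|S\wedge S\|$, are controlled by $\|\nabla X_H\|^2<a^2$. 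The tangential derivative $\nabla_X S$ for $X\in TM$ is controlled by $\|\nabla^2 X_H\|<a^2$ modulo lower-order terms of size $\|S\|^2$. Finally, $\nabla_N S$ is handled by passing to the flow coordinates $\phi\colon(t,p)\mapsto(t,\psi_t(p))$, in which $\phi^*g_{J_H}=ds^2+dt^2+\psi_t^*g_J$ and $N$ pulls back to $\partial_t$; the $t$-derivative of $S$ then converts via $\partial_t\psi_t^*=\psi_t^*\mathcal{L}_{X_H}$ into $\nabla_{X_H}(\nabla X_H+\nabla X_H^T)$, which is controlled by the third hypothesis, plus Christoffel corrections of size $\|S\|^2$.

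The main technical obstacle will be verifying that these pointwise bounds hold uniformly on $B_{1/(ca)}(x_0)$ and not merely at $x_0$. Taking $c$ sufficiently large in terms of $n$, the flow-coordinate image of $B_{1/(ca)}(x_0)$ lies inside a $g_J$-ball of radius at most $1/a$ around $p$, on which $a$-boundedness of $g_J$ still applies and on which the bound $\|\nabla^2 X_H\|<a^2$ propagates the pointwise estimate $\|\nabla X_H(p)\|<a$ into $\|\nabla X_H\|<2a$; a similar argument handles the third hypothesis. Assembling the estimates through \eqref{eqRiem1}--\eqref{eqRiem3} then gives $|\Sec_{g_{J_H}}|\leq K(n)a^2$ on the full ball for some dimensional constant $K(n)$, and choosing $c(n):=\max\{1/i(n),\sqrt{K(n)}\}$ realizes both bounds simultaneously.
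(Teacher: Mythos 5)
Your proposal matches the paper's proof in all essential respects: it uses Theorem~\ref{injEst} for the injectivity radius, the radial curvature formulae~\eqref{eqRiem1}--\eqref{eqRiem3} with $t$ as the distance function, Theorem~\ref{tmCurvXH} to identify $S$, and a flow-based computation (the paper phrases it via the identity $-R_N=\mathcal{L}_N S - S^2$ and then $\mathcal{L}_{X_H+\partial_t}(SV)=\mathcal{L}_{X_H}(SV)$, which is exactly what your flow-coordinate reduction of $\nabla_N S$ to $\nabla_{X_H}(\nabla X_H+\nabla X_H^T)$ amounts to). You are somewhat more explicit than the paper about propagating the pointwise estimate $\|\nabla X_H(p)\|<a$ to the entire ball $B_{1/(ca)}(x_0)$, a step the paper leaves implicit but which is indeed required by the definition of $ca$-boundedness.
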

\begin{proof}
We need to estimate the sectional curvature and radius of injectivity of $g_{J_H}$. Up to multiplication by a constant dependent on $n$, estimating sectional curvature is the same as estimating the coefficients of the curvature tensor in an orthonormal basis. Since $J$ is $a$-bounded, it remains to estimate only coefficients involving the direction $\partial_t+X_H$ at least once. In light of formulae~\eqref{eqRiem1}-\eqref{eqRiem3} we need to estimate $\nabla S$ and $S^2$ where $S=\nabla t$. Theorem~\ref{tmCurvXH} provides us with an estimate on $S^2$ and the tangential restriction of $\nabla S$ in terms of $\nabla X_H$ and $\nabla^2 X_H$. It remains to estimate the right hand side of \eqref{eqRiem1}. For this it is preferable to use the formula
\[
-R_{N}=\mathcal{L}_{ N}S-S^2.
\]
See \cite{pe}. Each summand vanishes on $N$. So it remains to estimate $\mathcal{L}_{ N}S$ applied to a tangential vector. Let $V$ be a tangential vector field which commutes with $N.$ Then
\begin{align}
\mathcal{L}_{X_H+\partial_t}(SV)&=\mathcal{L}_{X_H}(SV)\notag\\
&=\nabla^{g_J}_{X_H}({S}V)-\nabla^{g_J}_{SV}(X_H)\notag\\
&=(\nabla^{g_J}_{X_H}{S})V+S(\nabla^{g_J}_V{X_H})-\nabla^{g_J}_{SV}(X_H)\notag.
\end{align}
This shows that estimate~\eqref{eqCovEst} implies $Sec_{g_{J_H}}(p)\leq c^2a^2$ for an appropriate $c=c(n)$. Theorem~\ref{injEst} provides us with the estimate on $inj_{g_{J_H}}$ in terms of $inj_{g_J}(p)$. The claim follows.
\end{proof}

\begin{ex}\label{ExLiouvTame}
Let $(\Sigma,\alpha)$ be a contact manifold and let
\[
(M=\R_+\times\Sigma,\omega=e^r(d\alpha+dr\wedge\alpha ))
\]
be the convex end of its symplectization. Denote the Reeb vector field on $\Sigma$ by $R$. Fix an $\omega$-compatible translation invariant almost complex structure $J$ satisfying $JR=\partial r$. Then
\begin{equation}\label{eqMetConvSymp}
g_J=e^r(dr^2+g_\Sigma),
\end{equation}
for some metric $g_\Sigma$ on $\Sigma$. Since the metric $g_J$ scales up, the radius of injectivity of $g_J$ is bounded away from $0$, and in fact goes to $\infty$ with $r$. Pick local coordinates on $\Sigma$ and use the function $r$ as the coordinate on the $\R_+$ factor. Then  the Christoffel symbols of the metric \eqref{eqMetConvSymp} are $O(1)$ in these coordinates. Therefore
\[
\langle \nabla_{\partial_i}\partial j,\partial_k\rangle\sim e^r.
\]
Since $\|\partial_i\|^2\sim e^r$, for some constant $C$ we have $\|\nabla \partial_i\|\leq C$. Similarly,
\[
\|\nabla^2_{ij}\partial_k\|^2\sim e^r,
\]
allowing us to deduce that
\[
\|\nabla^2\partial_k\|\sim e^{-r/2}.
\]
Suppose $H$ is a function on the symplectization which is given outside of a compact set by $H=h(e^r)$ then there are some constants $a_i$  such that
\[
X_H=h'(e^r)\sum a_i\partial_i.
\]
First suppose $h'(e^r)$ is constant. Then by the reasoning above, we conclude that \textit{the induced metric $g_{J_H}$ is uniformly bounded for Hamiltonians which are linear at infinity with a bound that is proportional to the slope $h'(e^r)$.}

\end{ex}

\begin{ex}\label{wxAdmWT}
Continuing with the previous example, Assume now $h'(e^r)$ is at most linear in the distance from $\Sigma$. \textit{Caution: this means it is at most linear in $e^{r/2}$.} Then there is a bound on the geometry of $g_{J_H}$ which is also linear in the distance. To see this note that for a point $p$ which is a distance $d$ from $\Sigma$, the metric $g_{J_H}$ is uniformly equivalent on a neighborhood of size $1$ to the metric associated with the slope $h'(e^{r(p)})$. It follows by example \ref{exIboundTam} that the metric associated to $H$ is $i$-bounded. Note that while this allows superlinear Hamiltonians, it does not include quadratic Hamiltonians $h'\sim e^r$.

\end{ex}
\begin{ex}\label{exSasaki}
Consider the case of the cotangent bundle $T^*M$ of a compact manifold $M$, let $g$ be a Riemannian metric on $M$ and let $J$ be the Sasaki almost complex structure on $T^*M$. It is defined as follows: the Levy-Civita connection on $T^*M$ induces a splitting $TT^*M=V\oplus H$ into horizontal and vertical vectors. Moreover, we take  $J:V\simeq H$ to be the natural isomorphism identifying an element of $V$ with an element of $T^*M$, then via $\omega$ with an element of $TM$ and finally with an element of $H$ via horizontal lifting. Identifying $TM=T^*M$, in standard local Darboux coordinates $\{q_i,p_i=dq_i\}$, where $q_i$ are geodesic coordinates centered at a point $q$, $J$ is given in the fiber over $q$ by
\[
J\frac{\partial}{\partial p_i}=\frac{\partial}{\partial q_i}.
\]
Then it is easy to show that the metric $g_J$ is $\|p\|$-bounded at the point $(p,q)$. In particular, $J$ is i-bounded (but not bounded). Consider a Hamiltonian of the form  $H=\sqrt{a|p|^2+V\circ\pi}$, where $\pi:T^*M\to M$ is the standard projection and $V:M\to\R$ is smooth, $J_H$ is i-bounded. Indeed, denoting by $M$ the maximum of $\sqrt{|V|}$ over $M$ we have in local coordinates as above
\[
\|X_H\|=aM\frac1{2\|p\|}\left\|\sum_ip_i\frac{\partial}{\partial q_i}\right\|\leq aM.
\]
So, the claim follows from Lemma~\ref{lmQuasHam}. Note that mechanical Hamiltonians of the form $|p|^2+V\circ\pi$ are not i-bounded with respect to the Sasaki metric.

\end{ex} 
\section{Loopwise dissipativity}\label{Sec5}
\subsection{Diameter control of Floer trajectories}
Suppose $(H,J)$ is i-bounded, let $u$ be a Floer trajectory, and let $\tilde{u}$ be its graph. Suppose that for some precompact $U\subset \R\times \R/\Z$, we have control over $u(\partial U)$. Theorem~\ref{tmDiamEst} above then provides with control over $u(U)$ in terms of
\[
E(\tilde{u};U)=E(u;U)+\area(U).
\]
This indicates that the only source of non-compactness in the moduli space of finite energy Floer trajectories comes from the potential existence of finite energy solutions with one end converging to infinity. This motivates the following definition.

We refer hence to a Floer solution on a possibly finite cylinder $[a,b]\times \R/\Z$ as a \textit{partial Floer trajectory}. For $H:\R/\Z\times M\to\R$ proper and an $\omega$-compatible almost complex structure define a function $\g_{H,J}(r_1,r_2)$ as the infimum over all $E$ for which there is a partial Floer trajectory $u$ of energy $E$  with one end of $u\times t$ contained in $H^{-1}([-r_1,r_1])$ and the other end in $H^{-1}(\R\setminus (-r_2,r_2))$. Note that $\g_{H,J}(r_1,r_2)$ may take the value of infinity.
\begin{df}
We say that $(H,J)$ is \textbf{loopwise dissipative (LD)} if for any fixed $r_1$ we have $\g_{H,J}(r_1,r)\to\infty$ as $r\to\infty$. If this holds for some function $\g:\R_+\times\R_+\to\R_+\cup \{\infty\}$ satisfying $\g_{H,J}\geq \g$ we say that $(H,J)$ is $\g$-LD. We say that $(H,J)$ is \textbf{robustly loopwise dissipative (RLD)} if there is a function $\g:\R\times\R\to\R$ and an open neighborhood of $(H,J)$ in $C^1\times C^0$\footnote{Here and hereafter the topology can be taken to be the uniform topology with respect to $g_J$. However what is truly necessary is that open sets are sufficiently thick to allow perturbations for achieving regularity.} all elements of which are $\Gamma$-LD.
\end{df}

\begin{df}
Denote by $\mathcal{F}_d^{(0)}(M)$ the set of i-bounded Floer data $(H,J)$ which are RLD. Elements of $\mathcal{F}_d^{(0)}(M)$ are referred to as \textbf{dissipative Floer data}.
\end{df}

Our next Theorem shows that dissipativity is all we need for diameter control. In the ensuing sections we  show both that on a geometrically bounded manifold there is always a sufficient supply of dissipative Floer data and that this property can be verified directly in various settings.

In the following theorem recall Definitions \ref{dfFloDataWbounded} and \ref{dfFloDataWboundedFamily} of an i-bounded Floer datum and family of Floer data.
\begin{tm}\label{tmFloSolDiamEst}
Let $(\mathcal{S},F_{s\in S}=(\mathfrak{H}_{s},J_{s}))$  be a uniformly i-bounded family of connected (broken) Riemann surfaces decorated with Floer data and equipped with a thick thin decomposition as in Definition \ref{dfFloDataWboundedFamily}. Let $(H_i,J_i)\in\mathcal{F}_d^{(0)}$ be Floer data such that on the $i$th component of $Thin_\cS$, we have that $F_s$  coincides with $(H_i,J_i)$ for all $s\in\mathcal{S}$. Then for any compact $K\subset M$ and any real number $E$, there is an $R=R(E,K)$  such that for any $s\in\mathcal{S}$, any $F_s$-Floer solution $(\Sigma_s,u)$  with $E(u)\leq E$ and intersecting $\partial K$ is contained in $B_R(K)$. Moreover, if $K$ is a level set of $H$ with no degenerate periodic orbits in a neighborhood of $\partial K$, we can take $R(E,K)\to 0$ as $E\to 0$.
\end{tm}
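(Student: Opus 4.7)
The plan is to propagate diameter control through the thick--thin decomposition of $\overline{\mathcal{S}}$ supplied by Definition \ref{dfFloDataWboundedFamily}, alternating between the monotonicity bound of Theorem \ref{tmDiamEst} on thick pieces and the LDOS property on thin cylindrical pieces. Compactness of $\mathcal{S}$ allows me to take the cover by thick and thin charts to be finite with combinatorics uniform in $s$, and by slightly shrinking each chart I can arrange that it sits in the interior of a concentric, larger chart of the same type.

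On a thick chart $U = V \times W$, I would apply Gromov's trick. Lemma \ref{lmAssocSymp} produces a symplectic form $\omega_{\mathfrak{H}_s}$ on $V \times M$ compatible with $J_{F_s}$, and uniform i-boundedness of the family supplies taming data for $J_{F_s}$ on $V \times M$ that are constant in $s \in W$. The graph $\tilde u = (\mathrm{id}, u) \colon V \to V \times M$ is $J_{F_s}$-holomorphic with energy at most $E + \mathrm{area}(V)$, which is uniformly bounded in $s$. Applying Theorem \ref{tmDiamEst}\ref{tmDiamEstc2} inside the concentric larger chart then bounds $\tilde u(V)$ in a fixed $V \times B_R(K')$ whenever $\tilde u$ meets a compact set $V \times K' \subset V \times M$.

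On a thin chart, a (possibly long or degenerating) cylinder carrying translation invariant data $(H_\ast, J_\ast) \in \mathcal{F}_d^{(0)}$, the LDOS property applies directly: if $u$ maps some circle $\{s\} \times S^1$ into $B_{r_1}(p)$, then since $g_{H_\ast, J_\ast}(r_1, r) \to \infty$ as $r \to \infty$, the bound $E(u) \leq E$ forces the entire image on the cylinder to lie in $B_{r_2}(p)$ for some $r_2 = r_2(r_1, E)$ depending only on $r_1$, $E$ and the Floer datum. Starting from a piece containing the preimage of a point in $\partial K$ and iterating the two bounds over the (uniformly finite) dual graph of the cover gives $u(\Sigma) \subset B_R(K)$ with $R = R(E, K)$ depending only on $E$, $K$ and the uniform taming data of the family.

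I expect the main obstacle to be the simultaneous uniformity of the two estimates across the degeneration locus of $\mathcal{S}$, where thin cylinders may pinch to nodes or elongate without bound. This is what the uniformity clauses of Definition \ref{dfFloDataWboundedFamily} are designed to control: on a thin piece the datum is translation invariant so LDOS is insensitive to the length of the cylinder, while on a thick piece the taming data and the area of the base are locally constant in $s$. For the moreover statement, I would first apply the bound just established to confine $u$ to a fixed compact region $B_{R_0}(K)$ once $E$ is below a fixed threshold, and then apply standard Floer--Gromov compactness inside that compact region: a sequence of Floer solutions $u_n$ with $E(u_n) \to 0$ meeting $\partial K$ subconverges to a configuration whose underlying periodic orbits lie in a neighborhood of $\partial K = H^{-1}(0)$, and the absence of degenerate orbits there isolates this configuration and forces $C^0$-convergence, yielding $R(E_n, K) \to 0$.
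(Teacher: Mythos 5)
Your argument for the main estimate follows the paper's route through the thick--thin decomposition of Definition~\ref{dfFloDataWboundedFamily}, alternating Gromov's trick with Theorem~\ref{tmDiamEst} on thick pieces and loopwise dissipativity on thin pieces, and iterating over the finitely many charts. However, there is a genuine gap in your treatment of the thin pieces. You assert that LDOS alone ``forces the entire image on the cylinder to lie in $B_{r_2}(p)$.'' Unwinding the definition of $g_{H_\ast,J_\ast}$, what LDOS actually rules out is a trajectory of energy $\leq E$ with one end circle entirely inside $B_{r_1}(p)$ and another end circle entirely outside $B_{r_2}(p)$; applied to sub-cylinders, this shows only that every intermediate loop $u(\{s\}\times S^1)$ \emph{intersects} $B_{r_2}(p)$, not that it is contained there. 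A single loop could still have arbitrarily large diameter. To close the argument on a thin chart one must, for each $s$, restrict to the short sub-cylinder $(s-1,s+1)\times S^1$, pass to the graph $\tilde u$, and invoke Theorem~\ref{tmDiamEst} once more using the i-boundedness of the translation-invariant datum (which is part of $(H_\ast,J_\ast)\in\mathcal{F}_d^{(0)}$, but you do not use it) to bound the diameter of $u$ on that sub-cylinder. This extra monotonicity step is exactly what the paper's proof carries out, and without it the propagation across the dual graph does not yield the claimed containment.

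For the ``moreover'' clause your route differs from the paper's. You invoke Floer--Gromov compactness: a sequence $u_n$ with $E(u_n)\to 0$ meeting $\partial K$, after confinement to a fixed compact region, subconverges to a constant configuration, and non-degeneracy isolates the limit orbit. The paper instead uses the quantitative pointwise elliptic estimate from Salamon's lectures, taking $r=E(u)^{1/4}$ to obtain $\|\partial_s u\|^2 < C E(u)^{1/2}$ pointwise once $u$ is confined to an a priori compact set, which directly forces $u$ into a shrinking neighborhood of a periodic orbit. Both approaches ultimately rest on the same inputs (a priori compactness from the first part, plus non-degeneracy near $\partial K$), but the paper's is more direct and yields an explicit rate, whereas the soft compactness argument needs some care to upgrade $C^\infty_{loc}$-convergence on the cylinder to the uniform containment $u_n\subset B_{R_n}(K)$ with $R_n\to 0$ (this is where exponential decay near non-degenerate asymptotics enters). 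The compactness route is plausible but less self-contained than the paper's.
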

\begin{df}\label{dfDissFam}
We refer to families $(F_{s\in\mathcal{S}}=(\mathfrak{H}_{s},J_{s}))$ satisfying the hypotheses of Theorem~\ref{tmFloSolDiamEst} as \textbf{dissipative families}. When $\mathcal{S}$ consists of a single element, we refer to it as a \textbf{dissipative Floer datum}.
\end{df}
\begin{proof}[Proof of Theorem \ref{tmFloSolDiamEst}]
By assumption, we can decompose $\Sigma_s$  into a thick part consisting of components $A_i$ with  area bounded by a constant $C_{\cS}$, independently of $s\in\cS$, and a thin part consisting of components $B_i$ on which the Floer data are translation invariant and given by $(H_i,J_i)$. Moreover, there is a number $N_{\cS}$ which bounds the number of components independently of $s\in\cS$. For each $A_i$, the graph $\tilde{u}=id\times u :A_i\to A_i\times M$ is $J_F$-holomorphic and satisfies $E(\tilde{u})\leq E+Area(A_i)$. Furthermore we may assume for some $\epsilon>0$ we have $J_F|_{{B}_{\epsilon}(A_i)\times M}\in\mathcal{J}_{i.b.}({B}_{\epsilon}(A_i)\times M, \omega_\mathfrak{H})$.

We construct an $R_0=R_0(E,K)$ such that if for some component $A_i$ we have $u(A_i)\cap K\neq\emptyset$ then $u(A_i)\subset B_{R_0}(K)$ and moreover for all components $B_j$ which  share a boundary with $A_i$, we have $u(B_j)\subset B_{R_0}(K)$. A similar $R_0$ can be constructed starting with a component $B_i$. Since the total number of components is bounded by $N_{\cS}$ this will inductively give rise to an $R$ as in the statement of the present theorem.

To construct $R_0$, suppose $u(A_i)$ intersects some compact set $K$. Then, since $\tilde{u}$ extends to a neighborhood of $A_i$, by Lemma \ref{lmDiamEstSec} we have that $\tilde{u}(A_i)$ is contained in a ball $B_{\tilde{R}}(K\times A_i)$  for some $\tilde{R}=\tilde{R}(K,E+2C_\cS)$ depending additionally on the taming data associated with $F_s$ and thus on $\cS$. From this we deduce the same for $u(A_i)$ with perhaps a different radius $R$. It follows that each of the components $B_j$ whose closure intersects $A_i$ has a boundary component contained in $B_R(K)$. Let $a_j$ such that $B_R(K)\subset H_j^{-1}([-a_j,a_j])$. By loopwise dissipativity there is a $b_j>a_j$ such that $\Gamma_{H_j,J_j}(a_j,b_j)>E$. Writing $B_j=I\times \R/\Z$ for some interval $I$, we have, by definition of $\Gamma_{H_j,J_j},$  that $u(\{s\}\times \R/\Z)$ intersects $H^{-1}(-b_j,b_j)$ for each $s\in I$.  Restricting $u$ to $(s-1,s+1)\times \R/\Z$ and invoking Lemma~\ref{lmDiamEstSec} again, we obtain an $R'$ such that for any $s\in I$, we have $u((s-1,s+1)\times \R/\Z)\subset B_{R'}(H^{-1}(-b_j,b_j))$.  It follows that the same holds for $u(B_j)$. Now take $R_0$ such that $B_{R'}(H^{-1}(-b_j,b_j))\subset B_{R_0}(K)$ for each $j$.

For the last statement of the Theorem we rely on the following property of Floer trajectories which is stated in \cite{Salamon1999}. There are constants $c$ and $\hbar$ such that
\[
\int_{B_{r}(s,t)}\|\partial_su\|^2<\hbar\quad\Rightarrow\quad\|\partial_su\|^2(s,t)<\frac8{\pi r^2}\int_{B_{r}(s,t)}\|\partial_su\|^2+cr^2.
\]
Once we know that a solution is contained in an a-priori compact set, we can take all the constants to be fixed by that compact set. By taking $r=E(u)^{1/4}$ we deduce that for an appropriate constant
\[
\|\partial_tu-X_H\|^2=\|\partial_su\|^2<CE(u)^{1/2},
\]
once $E(u)$ is small enough. It follows that making $E(u)$ arbitrarily small, $u$ will be contained in an arbitrarily small neighborhood of some periodic orbit.
\end{proof}

We conclude this subsection with a counterexample showing that geometric boundedness alone does not guarantee loopwise dissipativity.
\begin{ex}\label{ExDegAtInfty}
Consider $(M,\omega)=(\R\times \R/\Z,ds\wedge dt)$. Let $H$ be a smoothing of the function
\[
(s,t)\mapsto s-\ln (|s|+1),
\]
and let $J$ be multiplication by $i$. Then $\|X_H\|$ is bounded, so $(H,J)$ is i-bounded by Lemma \ref{lmQuasHam}. But it is not LD. Indeed, the map $u:\R_+\times \R/\Z\to M$ defined by $u(s,t)=(\ln (s+1),t)$ is an $(H,J)$-partial Floer trajectory of finite energy and infinite diameter.
\end{ex}

\subsection{Hamiltonians with small Lipschitz constant}
\begin{tm}\label{tmBoundGradDis}
Let $J$ be a geometrically bounded almost complex structure compatible with $\omega$.
There is an $\epsilon>0$ such that for any  Hamiltonian $H:M\to\R$ which is proper and satisfies with respect to $g_J$ that $\|X_{H}\|<\epsilon$  outside of some compact set, the datum $(H,J)$ is dissipative. The claim remains true when $H$ is $C^0$ close to a time independent Hamiltonian.
\end{tm}

The proof of Theorem \ref{tmBoundGradDis} is carried out in the end of this section.

\begin{lm}\label{lmModBd}
Let $u:[a,b]\times \R/\Z\to M$ be a differentiable map. Then we have
\[
(b-a)\geq\frac{\int_{t\in \R/\Z}d^2(u(a,t),u(b,t))}{\int_{[a,b]\times \R/\Z}\|\partial_su\|^2}.
\]
\end{lm}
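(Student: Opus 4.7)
The plan is to prove the inequality pointwise in $t$ and then integrate over $S^1$. For each fixed $t\in S^1$, the map $s\mapsto u(s,t)$ is a differentiable path in $M$ from $u(a,t)$ to $u(b,t)$, so its length in $g_J$ (or whichever metric is used to define $d$) bounds the distance:
\[
d(u(a,t),u(b,t)) \leq \int_a^b \|\partial_s u(s,t)\|\, ds.
\]

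Next I would apply the Cauchy--Schwarz inequality to the right hand side, viewing $\|\partial_s u(s,t)\|$ as the product of $1$ and $\|\partial_s u(s,t)\|$ on the interval $[a,b]$. This yields
\[
\left(\int_a^b \|\partial_s u(s,t)\|\, ds\right)^2 \leq (b-a)\int_a^b \|\partial_s u(s,t)\|^2\, ds,
\]
so combining with the previous inequality,
\[
d^2(u(a,t),u(b,t)) \leq (b-a)\int_a^b \|\partial_s u(s,t)\|^2\, ds.
\]

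Finally, I would integrate both sides over $t\in S^1$ and use Fubini on the right hand side to obtain
\[
\int_{S^1} d^2(u(a,t),u(b,t))\, dt \leq (b-a)\int_{[a,b]\times S^1} \|\partial_s u\|^2,
\]
which rearranges to the stated inequality (when the denominator is positive; otherwise $u$ is independent of $s$ and both sides of the original inequality are trivial or the claim reads $(b-a)\geq 0/0$ and is vacuous). There is no real obstacle here: the only subtlety worth noting is that the distance is realized (or approximated) by the length of the canonical path $s\mapsto u(s,t)$ itself, so no minimizing geodesic needs to be invoked.
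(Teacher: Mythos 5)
Your proof is correct and is essentially identical to the one in the paper: both bound the distance $d(u(a,t),u(b,t))$ by the length of the path $s\mapsto u(s,t)$, apply Cauchy--Schwarz to turn the length into an $L^2$ bound, and integrate over $t$. The only difference is presentational (you state the two inequalities in the opposite order).
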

\begin{proof}
By the Cauchy Schwartz inequality we have
\begin{align}
(b-a)\int_{[a,b]\times \R/\Z}\|\partial_su\|^2&\geq \int_{t\in \R/\Z}\ell^2(u(t\times [a,b]))dt\notag\\
&\geq \int_{t\in \R/\Z}d^2(u(a,t),u(b,t))\notag.
\end{align}
\end{proof}

\begin{lm}\label{lmLoopDisQuant}
Let $H:\R/\Z\times M\to\R$ be a proper smooth function. Suppose $J$ is a compatible almost complex structure. Suppose $H$ factors as $H=f\circ k$ where  $k:\R/\Z\times M\to\R$ is proper and has uniformly bounded gradient with respect to $g_J$ and $f:\R\to\R$ is monotone. Then $(H,J)$ is LD if and only if there exists a sequence $h_i\to\infty$ and a constant $\delta>0$ such that
\begin{equation}
\Gamma_{H,J}(h_{2i},h_{2i+1})>\delta.
\end{equation}
If $h_i,\delta$ can be fixed for an open neighborhood of $(H,J)$, it is RLD.
\end{lm}
\begin{proof}
The forward implication is obvious from the definition. For the other direction we use the following characterization of  loopwise dissipativity.  \textit{Let $K_i:=H^{-1}[-h_{2i},h_{2i}]\subset \R/\Z\times M$. For any $E\geq 0$ and for any natural $i$ there is an $i'(i,E)$ such that if $u$ is a partial solution with one end of $t\times u$ contained in $K_i$ and  satisfying $E(u)<E$ then the other end intersects $K_{i+i'}$.} For ease of exposition we assume for the rest of the proof that $H$ is time independent, the general case being similar. We prove loopwise dissipativity  by induction on the smallest integer $n$ bounding $E(u)/\delta$.

When $n=1$, this is just reformulating the assumption. Suppose we have proven the statement for all solutions $u$ satisfying $E(u)\leq n\delta$. Let $u$ be a solution with one end in $K_i$ and $E(u)\leq (n+1)\delta$. Without loss of generality we assume $u_a\subset K_i$. Here and henceforth $u_a:=u(a,\cdot)$. Let
\[
s_1=\inf \{s\in[a,b]:u_s\subset M\setminus K_{i+1}\}
\]
If this set is empty there is nothing to prove. Otherwise, let
\[
s_2=\inf\left(\left\{s\in[s_1,b]:\|\partial_su\|<1\right\}\cup\{b\}\right).
\]
Finally, take
\[
s_0=\sup\left(\{s\in[a,s_1]:\|\partial_su\|<1\}\cup\{a\}\right).
\]
We have
\[
E(u)\geq\int_{s_0}^{s_2}\|\partial_su\|^2ds>\int_{s_0}^{s_2}ds=s_2-s_0.
\]
So, by Lemma~\ref{lmModBd}, there is a $t\in \R/\Z$ such that
\begin{equation}\label{eqLoopDisQuant}
d(u_{s_0}(t),u_{s_2}(t))<E.
\end{equation}
We find an $i_0(i)$ such that $u_{s_0}\subset K_{i_0}$. Indeed, If $a=s_0$ there is nothing to prove.  Otherwise, we have $\|\nabla\mathcal{A}_H(u_{s_0})\|\leq 1$.  Since $s_0<s_1$ we have that $u_{s_0}$ intersects $K_{i+1}$. Factor  $H$ as $H=f\circ k$ as in the formulation of the present Lemma. Since $f$ is monotone,
\begin{equation}\label{eqHhEst}
\min_tk_t(u_{s_0}(t))<f^{-1}(h_{2(i+1)}),\quad \max_tk_t(u_{s_0}(t))>f^{-1}(-h_{2(i+1)}).
\end{equation}
From Lemma \ref{lmStrPrpEst}  we get an a priori estimate $c$ on the oscillation of $k$ on $u_{s_0}$ for the time independent case. Here $c$ depends only on the bound on $|\nabla k|$. In the time dependent case we appeal to  \eqref{eqTimeDepOsc} for this a priori estimate. Let $i_0$ satisfy
\[
h_{i_0}\geq\left\lceil\max\{f(f^{-1}(h_{2(i+1)})+c),-f(f^{-1}(-h_{2(i+1)})-c)\}\right\rceil
\]
Combined with \eqref{eqHhEst}, this gives the a priori estimate
\[
u_{s_0}\subset K_{i_0}.
\]
By \eqref{eqLoopDisQuant} we get from this an $i_1=i_1(i,E)$ such that $u_{s_2}$ meets $K_{i_1}$. If $s_2=b$ this  concludes the proof. Otherwise, as for $s_1$, we find an $i_2$ such that $u_{s_2}\subset K_{i_2}$. We have $E(u|_{[s_2,b]\times \R/\Z})\leq n\delta$ since $s_2>s_1$ and by the hypothesis of the Lemma $E(u|_{[a,s_1\times \R/\Z]})>\delta$. So, by the inductive hypothesis, there is an $i_3$ depending on $i$ and $n$ such that $u_b$ meets $K_{i_3}$. The first part of the claim now follows. The second part is clear since $i'(i,E)$ is constructed using only the data of $\{K_i\}$ and $\delta$.
\end{proof}
\begin{lm}\label{lmBoundGradDis}
Let $J$ be a geometrically bounded almost complex structure compatible with $\omega$.  There are constants $R,\epsilon,\delta$, depending on the bounds on the geometry of $g_J$, such that the following holds. Let $H:
M\to\R$ be a proper Hamiltonian satisfying for some $h\in\R$
\begin{equation}\label{eqGlobalGadBo}
\|X_{H}\|<\epsilon,\qquad \forall x\in H^{-1}([h,h+R]) .
\end{equation}
Then $\Gamma_{H,J}(h,h+R)>\delta$. This remains true if $H$ is merely assumed to be $C^0$ close to a time independent Hamiltonian. Moreover, the estimate is unaffected if $H$ is arbitrarily time dependent away from $ H^{-1}([h,h+R])$.

\end{lm}
\begin{proof}
We first prove the claim when the left hand of \eqref{eqGlobalGadBo} is taken to hold for all $x\in M$. We consider the strictly time independent case, leaving the adjustments for the slightly more general case to the reader. For some $R>0$ let $u:[a,b]\times \R/\Z\to M$ be a solution to Floer's equation with one end in $H^{-1}(-h,h)$ and the other in $H^{-1}(\R\setminus [-h-R,h+R])$. Write $A=[a,b]\times \R/\Z$. 
Then by positivity of energy
\begin{equation}\label{eqEnGradFlow}
 E(u;A)=\int_A u^*\omega+\int_{\partial A}u^*Hdt\geq \left| \left|\int_A u^*\omega\right|-\left|\int_{\partial A}u^*Hdt\right|\right|.
\end{equation}
We will show that if we take $\epsilon$ small enough, there are constants $\delta_1,\delta_2$ such that
\begin{equation}\label{eqSmallEn}
E(u;A)<\delta_1\Rightarrow \left|\int u^*\omega\right|<\delta_2.
\end{equation}
Since
\[
\left|\int_{\partial A}u^*Hdt\right|\geq R
\]
it will then follow from~\eqref{eqEnGradFlow} that if $R>2\delta_2$ then $E(u;A)>\min\{\delta_1,\delta_2\}$. This will prove the claim.

Let $\delta>0$ be so small that any loop of length $2\delta$ has diameter less than a tenth of the radius of injectivity of $M$ with respect to $g_J$. The isoperimetric inequality of Lemma \ref{lmIsopCurv} guarantees that any loop of length $<2\delta$ is fillable by a disk $v:D\to M$ such that
\[
 Area(v)<2\delta^2.
\]
We take $\epsilon=\delta$.
Given $u$ as above, and denoting by $\ell( u(s,\cdot))$ the length of the loop $t\mapsto u(s,t),$ let
\[
I=\{s\in[a,b]|\ell( u(s,\cdot))>2\delta\}.
\]
For any interval $(c,d)\subset I$ we have the estimate
\begin{equation}\label{eqIAE}
\left|\int_{(c,d)\times \R/\Z}u^*\omega\right|\leq Area\left(u|_{(c,d)\times \R/\Z}\right)\leq 3 E(u;{(c,d)\times \R/\Z}).
\end{equation}
The first of these is Wirtinger's inequality which says that for a compatible metric the symplectic area is dominated by the Riemannian area. Note that the Riemannian area is not sensitive to orientation while the symplectic area is. For the second, note that
\[
Area(u)\leq\int_{(c,d)\times \R/\Z}\|\partial_su\|\|\partial_tu\|\leq \frac12\int_{(c,d)\times \R/\Z}(\|\partial_su\|^2+\|\partial_tu\|^2),
\]
But,
\begin{align}\label{eqIAE2}
4\delta^2\leq\int_{\R/\Z}\|\partial_tu\|^2&\leq \int_{\R/\Z}\|\partial_tu-X_H\|^2+\int_{\R/\Z}\|X_H\|^2\\
&\leq \int_{\R/\Z}\|\partial_su\|^2+\epsilon^2 \notag\\
&=\int_{\R/\Z}\|\partial_su\|^2+\delta^2\notag\\
&\leq \int_{\R/\Z}\|\partial_su\|^2+\frac14\int_{\R/\Z}\|\partial_tu\|^2\notag.
\end{align}
So,
\[
\int_{\R/\Z}\|\partial_tu\|^2\leq2\int_{\R/\Z}\|\partial_su\|^2
\]
which implies the desired inequality.

Suppose now that
\[
\left|\int_{(a,b)\times \R/\Z}u^*\omega\right|>20 E(u;{(a,b)\times \R/\Z}),
\]
and that for some constant $c_2$ to be determined shortly, $E(u)<c_2\epsilon^2$. Denote by $\mu$ the Lebesgue measure on $\R$. Then by these hypotheses and  by equations \eqref{eqIAE} and \eqref{eqIAE2} we have $\mu(I)<\min\{\frac1{4}(a-b),c_2\}$. Let  $a'=\inf[a,b]\setminus I,b'=\sup [a,b]\setminus I$.
We will show that if $c_2$ is assumed small enough then
\begin{equation}\label{eqContrTh}
\left|\int_{(a',b')\times \R/\Z}u^*\omega\right|<4\delta^2.
\end{equation}
We then have
\begin{align}
\left|\int_{(a,b)\times \R/\Z}u^*\omega\right|&<4\delta^2+\left|\int_I u^*\omega\right|\notag\\
&<4\delta^2+\frac13E(u)\notag\\
&<4\delta^2+\frac1{60}\left|\int_{(a,b)\times \R/\Z}u^*\omega\right|.\notag
\end{align}
By picking $\delta_1=c_2\epsilon^2$ and $\delta_2=\min\{5\delta^2,20\delta_1\}$ we get that with these values \eqref{eqSmallEn} holds in any case.

It remains to prove \eqref{eqContrTh}.
Let
$[s_0,s_1]\subset [a,b]$ be any interval such that $s_0,s_1\not\in \{a,b\}$ and $s_1-s_0\leq 2\mu(I)$. Call such an interval \textit{admissible}. Denote by $u_{[s_0,s_1]}$ the restriction $u|_{[s_0,s_1]\times\R/\Z}$. Each component of the boundary of $u_{[s_0,s_1]}$ is contained in a geodesic ball $B_{\delta}(x_i)\subset M$. We claim that if $c_2$ is taken small enough, then
\begin{equation}\label{eqContrTh2}
u_{[s_0,s_1]}\subset B_{2\delta}(x_0)\cup B_{2\delta}(x_1).
\end{equation}
Indeed, otherwise there is a point $(s,t)\in [s_0,s_1]\times \R/\Z$ such that writing $x_2=u(s,t)$ we have $d(x_2,\{x_0,x_1\})>2\delta$. That is, the ball $B_{\delta}(x_2)\subset M$ does not meet the boundary of $u_{[s_0,s_1]}$. As in Lemma \ref{lmQuasHam}, the metric $g_{J_H}$ is quasi-isometric to the product metric of $g_J$ on $M$ with the flat metric on the cylinder, where we can take the quasi-isometry constant to equal $2$ if $\epsilon<2$. Thus we can apply the monotonicity inequality of Theorem \ref{tmMonontonicity} to obtain for an appropriate constant $c'$ which is independent of $\epsilon$,
\[
E(\tilde{u};[s_0,s_1]\times\R/\Z)=s_1-s_0+E(u;[s_0,s_1]\times\R/\Z)\geq c'\delta^2
\]
where $\tilde{u}$ is the graph of $u$.
This implies
\[
E(u)\geq c'\delta_2-c_2.
\]
Thus, if we take $c_2\leq \frac14c'\delta_2$ \eqref{eqContrTh2} follows.

Denote by $u^*$ a filling of $u_{[s_0,s_1]}$ by  discs contained in $B_{\delta}(x_i)$. Then  $u^*\subset B_{5\delta}(x_0)$ and so is contractible. In particular, the integral of $\omega$ over $u_{[s_0,s_1]}$ can be replaced by the integral of $\omega$ over these filling discs. Since $[a',b']$ can be subdivided into admissible intervals, and the integrals over the filling discs cancel in pairs for all but two, \eqref{eqContrTh} follows.

This proves the Theorem for the case  when the left hand side of \eqref{eqGlobalGadBo} is taken to hold for all $x\in M$.

For the more general case we argue as follows. Introduce the notation $K_x:=H^{-1}([-x,x])$. For some $R>0$ let $u:[a,b]\times \R/\Z\to M$ be a solution to Floer's equation with one end in $K_h$ and the other in $M\setminus K_{h+R}$. Let $[a',b']\subset [a,b]$ such that $u([a',b']\times \R/\Z)$ has one end in $K_{h+R/4}$ and the other in $M\setminus K_{h+3R/4}$. In each case assume the relevant boundary of $u([a',b']\times \R/\Z)$ meets the boundary of the region $K_x$. We separate into two cases. If $u([a',b']\times \R/\Z)\subset K_R$, we have the estimate $\|X_H\|<\epsilon$ for $u|_{[a',b']\times \R/\Z}$ and the entire argument goes through with no change. By taking $R$ big enough, the claim  follows since $\Gamma(h,h+R)\geq \Gamma(h+R/4,h+3R/4)$. Otherwise, for some $c\in\{a',b'\}$ we have that the oscillation of $H$ along  $u_c$ is at least $R/4$. Moreover, by the bound on $\nabla H$ inside $K_R$, a similar estimate applies to the diameter of $u_c$ with respect to the metric $g_{J_H}$. By the argument of Theorem \ref{tmDiamEst} this implies a lower bound on the energy $E(u;[a,b]\times \R/\Z)$. We spell out the details since the present case doesn't fit precisely into the stipulations of Theorem \ref{tmDiamEst}.

As above, denote by
\[
\tilde{u}: [a,b]\times \R/\Z\to\R\times \R/\Z\times M,
\]
the graph of $u$. Since $H$ has Lipschitz constant $\epsilon$ on $K$, Lemma \ref{lmCompCrit} implies the metric $g_{J_H}$ is equivalent on $K$ to the product metric with quasi-isometry constant depending only on $\epsilon$. Thus, by Theorem \ref{tmMonontonicity} there are constants $\delta_0,r_0,$ depending only on $J$ and $\epsilon$, such that for any point $x$ in the domain of $u$ for which
\[
(*)\quad A_x:=\tilde{u}^{-1}(B_{r_0}(x,u(x))\subset (a,b)\times \R/\Z,
\]
we have
\[
E(u;A_x)+Area(A_x)\geq \delta_0.
\]
Here we consider the ball $B_{r_0}(x,u(x)\subset\tilde{M}$ with respect to the metric $g_{J_H}$. Call a point for which the hypothesis $(*)$ holds a good point. Since the ends of the $u$ map entirely outside of $K_{h+R}$ it follows that any $x$ for which $u(x)\in K_{h+R-(1+\epsilon)r_0}$ we have that $A_x$ is a good point and moreover $A_x\subset B_{r_0}(x).$. Indeed, for any $x,x'$ in the domain we have
\[
d_{g_{J_H}}((x,u(x)),(x',u(x')))\geq (1-\epsilon)d_{g_J}(u(x),u(x')).
\]
For any $N$, by assuming $R$ is large enough, we can find $N$ good points $x_i\in \{c\}\times \R/\Z$  such that $d_{g_H}(\tilde{u}(x_i),\tilde{u}(x_j)>2r_0$. That is,  $A_{x_i}\cap A_{x_j}=\emptyset$ whenever $i\neq j$. We then have
\[
E(u)+2r_0\geq E(u;\cup A_{x_i})+Area(\cup A_{x_i}) \geq N\delta_0.
\]
By taking $N$ large enough so that $N\delta_0-2r_0>\delta$ for some chosen $\delta>0$ the claim follows.
\end{proof}

\begin{proof}[Proof of Theorem \ref{tmBoundGradDis}]
Lemmas \ref{lmBoundGradDis} and \ref{lmLoopDisQuant} imply that $(H,J)$ is RLD if $\|X_H\|<\epsilon$ for $\epsilon$ small enough.
To establish dissipativity, we need to prove, in addition, i-boundedness. This follows immediately from Lemma~\ref{lmQuasHam}.

\end{proof}
\subsection{Bidirectedness}
\begin{tm}\label{tmDiaFinCofin}
For \textit{any} smooth exhaustion function $H:M\times/\R/\Z\to\R$ and any geometrically bounded $\omega$-compatible almost complex structure $J$ there are exhaustion functions $H_+,H_-$ such that $(H_\pm,J)$ are dissipative Floer data and $H_-\leq H\leq H_+$ pointwise. In other words, the set of Hamiltonians which taken together with $J$ are dissipative Floer data is both final and cofinal in the set of all exhaustion functions.
\end{tm}
\begin{proof}
According to \cite{GreeneWu}  there exists an exhaustion function $f:M\to \R$ such that $\|\nabla f\|=\|X_f\|<\epsilon_0$ with respect to the metric $g_J$. Moreover, we may find a constant $R_0$ such that $d(f^{-1}(x),f^{-1}(x+R_0))$ is bounded away from $0$ for $x\in\R$. Indeed, $f$ can be taken to $C^0$ close to a multiple of the distance function to some point. So, $(f,J)$ is dissipative by Theorem~\ref{tmBoundGradDis}.  Let $h:\R\to\R$ be any monotone function such that $h'(x)=1$ on any of the intervals $[2nR,(2n+1)R)$ and arbitrary otherwise. Here $R$ is a constant as in Lemma \ref{lmBoundGradDis}, and without loss of generality $R>R_0$. Then the set of functions of the form $h\circ f$ is cofinal in the set of all exhaustion functions. On the other hand, $(h\circ f,J)$ is dissipative. Indeed, $h\circ f$ is clearly $J$-proper, since $f$ is. The metric $g_{X_{h\circ f}}$ is uniformly bounded on each of the regions $f^{-1}(h,h+R)$ by Lemma \ref{lmQuasHam}. So this metric is i-bounded.
Lemmas \ref{lmBoundGradDis} and \ref{lmLoopDisQuant} imply that $h\circ f$ is RLD. This complete the proof of cofinality.

By Theorem \ref{tmBoundGradDis} to prove finality it suffices to exhibit an exhaustion function $H_-\leq H$ which has sufficiently small gradient. Fix a point $p\in M$ and let $R_i$ be monotone increasing sequence such that $B_{R_i}$ contains $H^{-1}((-\infty,i])$. Denote by $h:M\to\R$ the distance function $h(x)=d(x,p)$. Define $a_i$ inductively by $a_0=0$ and
\[
a_i=\min\{i-1,a_{i-1}+R_i-R_{i-1}\}
\]
for $i\geq 1$. Let $f:\R_+\to\R$ be the piecewise linear function which is smooth at non-integer points and satisfies $f_i=a_i$ for $i\geq 1$. Note that $f$ is monotone increasing, proper and has slope at most $1$ wherever the slope is defined. So, the function $g=f\circ h$ is Lipschitz with Lipschitz constant $1$. Moreover, $g\leq H$ everywhere. The function $g$ can be $C^0$-approximated by a smooth function $k$ with $\|\nabla k\|\leq 2$ \cite{GreeneWu}. Then $k$ is an exhaustion function, so taking $H_-:=k/C$ for $C$ sufficiently large gives a function as required.
\end{proof}
\subsection{Dissipativity on exact symplectic manifolds}\label{SubSec52}
Let $(M,\omega=d\alpha)$ be an exact symplectic manifold. In this subsection we prove Theorem \ref{tmPalaiSmaleDiss} which is variant of Theorem \ref{tmFloSolDiamEst} that works on exact symplectic manifolds under slightly weaker hypotheses.  Fix an $\omega$-compatible almost complex structure and let $H:\R/\Z\times M\to\R$. The pair $(H,J)$ is said to be \textbf{Palais-Smale} if any sequence of loops $\gamma_n$ with $\mathcal{A}_H(\gamma_n)<c<\infty$ and $\|\nabla\mathcal{A}_H(\gamma_n)\|_{L^2}\to 0$ has a subsequence converging to a periodic orbit of $H$. If $J_0,J_1$ are almost complex structures which are quasi-isometric and $H_0,H_1$ are Hamiltonians such that $\|\nabla (H_0-H_1)\|$ converges to $0$ with respect to either then $(H_0,J_0)$ is Palais-Smale if and only if $(H_1,J_1)$ is.

\begin{lm}\label{lmPalSmDis}
Suppose $(H,J)$ is i-bounded and Palais-Smale. Then for any $c,d$  there is a real number $\ell$ and a compact set $K$ with the following significance. For any segment $[a,b]$ of length at least $\ell$ and any solution $u:[a-1,b+1]\times\R/\Z\to M$ to Floer's equation such that $\mathcal{A}_H(u(s,\cdot))\in[c,d]$ for $s\in [a-1,b+1]$, we have $u([a,b]\times\R/\Z)\subset K$.
\end{lm}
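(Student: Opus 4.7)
The plan is to argue by contradiction: assuming no such $K$ exists, extract a sequence of Floer trajectories $u_n:[a_n-1,b_n+1]\times S^1\to M$ with $\mathcal{A}_H(u_n(s,\cdot))\in[c,d]$ everywhere, together with points $(s_n,t_n)\in[a_n,b_n]\times S^1$ for which $u_n(s_n,t_n)$ leaves every compact subset of $M$. Since $\partial_s\mathcal{A}_H(u_n(s,\cdot))=-\|\partial_su_n(s,\cdot)\|_{L^2(S^1)}^2$, the total energies are uniformly bounded: $E(u_n)\le d-c$.

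The first step will be to extract a quantitative form of Palais--Smale. By the PS hypothesis the set $\mathcal{P}_{[c,d]}$ of $1$-periodic orbits with action in $[c,d]$ is compact, and for any open neighborhood $\mathcal{U}\subset\mathcal{L}M$ of $\mathcal{P}_{[c,d]}$ whose loops all lie in some fixed compact $K_0\subset M$, there exists $\epsilon>0$ such that every loop $\gamma\notin\mathcal{U}$ with $\mathcal{A}_H(\gamma)\in[c,d]$ satisfies $\|\nabla\mathcal{A}_H(\gamma)\|_{L^2}\ge\epsilon$; otherwise one would extract a PS sequence outside $\mathcal{U}$ whose limit nevertheless lies in $\mathcal{P}_{[c,d]}\subset\mathcal{U}$.

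Applied to the slices of $u_n$, this confines the ``bad'' set $B_n:=\{s:u_n(s,\cdot)\notin\mathcal{U}\}$ to Lebesgue measure at most $N_0:=(d-c)/\epsilon^2$. In the principal case $b_n-a_n+2>N_0$, ``good'' slices (where $u_n(s,\cdot)\subset K_0$) exist and are spaced at most $N_0$ apart. The i-boundedness hypothesis now enters via the graph trick: $\tilde u_n=\operatorname{id}\times u_n$ is $J_H$-holomorphic, and on any sub-cylinder of length $\le N_0$ bounded by two good slices its energy is at most $(d-c)+2\pi N_0$ and its boundary maps into a fixed compact set. Theorem~\ref{tmDiamEst}\ref{tmDiamEstc1} then bounds $u_n$ there inside a uniform $B_R(K_0)$; concatenating over consecutive good slices covers $[a_n,b_n]\times S^1$ and contradicts the escape. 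The short-domain case $b_n-a_n\le N_0-2$ will be handled analogously: one uses Lemma~\ref{lmModBd} (which forces nearby slices to stay $L^2$-close) together with PS to anchor at least one slice of $u_n$ to $K_0$, and then applies Theorem~\ref{tmDiamEst}\ref{tmDiamEstc2} to the whole cylinder, whose area is uniformly bounded.

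The main obstacle I anticipate is ensuring that the radius $R$ is genuinely uniform in $n$, independent of how the bad intervals are arranged along $[a_n-1,b_n+1]$. This is secured by the uniformity clause built into i-boundedness, which keeps the taming data fixed on the relevant neighborhoods and lets us apply Theorem~\ref{tmDiamEst} with constants depending only on $K_0$, $N_0$, and the ambient geometry.
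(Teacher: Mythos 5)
Your proposal follows essentially the same path as the paper's proof: extract a uniform $\epsilon$ and compact set from Palais--Smale, use the energy budget $d-c$ to bound the measure of ``bad'' slices, and then feed the graph trick into Theorem~\ref{tmDiamEst}. Two fine points are worth flagging. First, your quantitative PS reformulation asserts that small-gradient loops of action in $[c,d]$ are \emph{entirely contained} in a compact $K_0$; the paper derives only that such loops \emph{meet} a compact set $K'$, and correspondingly applies part~\ref{tmDiamEstc2} of Theorem~\ref{tmDiamEst} (interior intersection plus controlled domain) rather than part~\ref{tmDiamEstc1} (boundary control). The weaker ``meets'' conclusion is more robust, since PS gives a convergent subsequence whose mode of convergence (e.g.\ $L^2$ vs.\ $C^0$) is not pinned down by the definition; your argument should be recast with part~\ref{tmDiamEstc2} applied to unit-length sub-cylinders centered at good slices, which in fact merges your principal and short-domain cases into one. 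Second, your bound $N_0=(d-c)/\epsilon^2$ on the measure of the bad set is the right one (integrate $\|\partial_s u\|^2\ge\epsilon^2$); the paper writes $(d-c)/\epsilon$, which appears to be a typo.
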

\begin{proof}
First, by the Palais-Smale condition, there are an $\epsilon>0$  and a compact $K'\subset M$ such that any loop $\alpha$ with $\|\nabla\mathcal{A}_H(\alpha)\|_{L^2}<\epsilon$ and $\mathcal{A}_H(\alpha)<d$ is contained in $K'$. Indeed the negation of this statement would allow us to produce a sequence of loops $\gamma_n$ satisfying the hypotheses of the Palais-Smale condition which nevertheless has no convergent subsequence. Suppose $b-a>(d-c)/\epsilon$. Then for all but a subset $I\subset[a,b]$ of total measure $(d-c)/\epsilon$ we have that $u(s,\cdot)$ is contained in $K'$. This follows  by the energy estimate
\[
d-c\geq\mathcal{A}_H(u(b+1))-\mathcal{A}_H(u(a-1))=\int_{a-1}^{b+1}\|\nabla\mathcal{A}_H(\alpha)\|_{L^2}.
\]
Indeed, taking $I$ to be the set of $s$ for which $u(s,\cdot)$ is not contained in $K'$, the right hand side of the last equation dominates 
\[
\int_I\|\nabla\mathcal{A}_H(\alpha)\|_{L^2}\geq \epsilon\int_Idt.
\]
It remains to control  $u(s,\cdot)$ for $s\in I$. Each connected component $I'$ of $I$  has at least one boundary point $s$ for which $u(s,\cdot)\subset K$. Moreover, $I'\subset I$ has a-priori bounded measure. Thus  applying part \ref{tmDiamEstc2} of Theorem~\ref{tmDiamEst} to the graph $\tilde{u}|_{I'\times \R/\Z}$, we deduce the image of $I'\cap [a,b]\times \R/\Z$ is contained  in some larger compact set $K$ depending only on $K'$ and $d-c$.
\end{proof}

Note that the Palais-Smale condition produces by Lemma \ref{lmPalSmDis} an estimate which is slightly weaker than loopwise dissipativity because it depends not only on energy but also on action. Nevertheless, this is sufficient for proving the following variant of  Theorem \ref{tmFloSolDiamEst}.
\begin{tm}\label{tmPalaiSmaleDiss}
Let $(M,\omega=d\alpha)$ be an exact symplectic manifold. Let $(\mathcal{S},F_{s\in S}=(\mathfrak{H}_{s},J_{s}))$  be a uniformly i-bounded family of connected (broken) Riemann surfaces with a thick thin decomposition as in Definition \ref{dfFloDataWboundedFamily}. Let $(H_i,J_i)$ be Palais-Smale Floer data such that on the $i$th component of $Thin_\cS$, we have that $F_s$  coincides with $(H_i,J_i)$ for all $s\in\mathcal{S}$ . Then for any interval $[c,d]$ there is a compact set $K$  such that for any $s\in\mathcal{S}$ and any  solution $(\Sigma,u)$ associated with $F_s$, such that the actions of the periodic orbits on the ends all occur in the interval $[c,d]$, is contained in $K$.
\end{tm}

\begin{proof}[Proof of Theorem \ref{tmPalaiSmaleDiss}]
First observe that without loss of generality we may assume the all the components of $Thin_\cS$ are of the form $I\times \R/\Z$ for $I$ an interval of length at least $\ell$ for $\ell$ as in Lemma \ref{lmPalSmDis}. Namely, with this assumption, the areas of the elements of thick are bounded a-priori in terms of $c,d$. Under such identification it is a consequence of Lemma \ref{lmTopGeoEnEst}  that for any $s\in I$ we have $\cA_{H_i}(u_s)\in[c,d]$. It thus follows from Lemma \ref{lmPalSmDis} that there is a compact set $K$ depending on $c,d$ only such that the images of  the components of $Thin_\cS$ are all contained in $K$. As a consequence, the image of each component $A$ of $Thick_\cS$ meets $K$. Since the energy of $A$ is at most $d-c$, we can  as in the proof of Theorem \ref{tmFloSolDiamEst} apply Lemma \ref{tmDiamEst}\ref{tmDiamEstc2} to the graph of $u|_A$ to obtain an $R=R(d-c)$ such that $u(A)\subset B_R(K)$.
\end{proof}
\begin{ex}\label{exAdmPS}
Let $\alpha$ be a primitive of $\omega$ and let $Z$ be the $\omega$-dual of $\alpha$. For any time independent Hamiltonian $H,$ define the function $f:M\to\R$ by
\begin{equation}\label{eqAcnFor}
f(x)=\omega(Z(x),X_H(x))-H(x).
\end{equation}
Suppose $f$ is proper and bounded below and $J$ is such that for some constant $C$, we have
\[
\|Z(x)\|^2<C{f(x)},
\]
outside a compact set. Then $H$ is Palais-Smale.
\begin{proof}
We have
\begin{align}\label{eqAcnEst}
\mathcal{A}_{H}(\gamma)&=\int_{\R/\Z}f(\gamma(t))dt+\int_{\R/\Z}\omega(Z(\gamma(t)),\gamma'(t)-X_H(t))dt\\
&\geq \int_{\R/\Z}f(\gamma(t))dt-\|\nabla\mathcal{A}_H(\gamma)\|\sqrt{C\int_{\R/\Z}f(\gamma(t))}dt.\notag
\end{align}
Suppose $\mathcal{A}_H(\gamma)\leq c$ and $\|\nabla\mathcal{A}_H(\gamma)\|\leq 1$. Since $f$ is proper, estimate~\eqref{eqAcnEst} implies
that there is a compact set $K$ depending on $c$ only such that $\gamma$ intersects $K$. Given a sequence $\gamma_n$ of loops intersecting $K$ such that
\[
\|\nabla\mathcal{A}_H(\gamma)\|_{L^2}\geq \int_{\R/\Z}\|X_H(t)-\gamma_n'(t)\|\to 0
\]
it is a standard fact that the sequence converges to an integral loop of $X_H$.
\end{proof}

In particular, consider the convex end of a symplectization $\R_+\times\Sigma$ as in Example~\ref{ExLiouvTame}. Denote by $r$ the coordinate on $\R_+$ and by $\sigma$ the coordinate on $\Sigma$. If $H$ satisfies
\[
\lim_{r\to\infty}e^r(\partial_rH)(e^r,\sigma)-H(e^r,\sigma)\to\infty,
\]
and $J$ is any almost complex structure satisfying
\[
e^r\alpha(J\partial r)\leq C(e^r\partial_rH(e^r,\sigma)-H(e^r,\sigma))
\]
for some $C$, then $H$  is Palais-Smale. This holds in particular for contact type $J$, i.e., satisfying $J\partial_r=R$ where $R$ is the Reeb flow of $\alpha$ on $\Sigma$.  After a $C^2$-small perturbation $(H,J)$ will satisfy the same estimates, so it will remain Palais-Smale. In addition, it will be non-degenerate.
\end{ex}

\begin{ex}\label{ConvEndDiss}
Continuing with the convex end of a symplectization, any function which is of the form $h(e^r)$ such that   $e^rh'(e^r)-h(e^r)\geq Ce^r$ for some constant $C$ is Palais-Smale. This holds, e.g., for $h(x)=x^{\alpha}$ with $\alpha>1$. If we have $h'(e^r)\leq e^{r/2}$ then by Example \ref{wxAdmWT} $H$ is dissipative. The cutoff appears to be $\alpha=3/2$ which unfortunately excludes quadratic Hamiltonians which are central in classical mechanics. See also the discussion in Example \ref{exSasaki}. Nevertheless, as we shall see below,  Floer cohomology can be defined by approximation by slow Hamiltonians. Moreover, similarly to the proof of Theorem \ref{rmTrivNonTriv}, it can be shown that for an arbitrary convex Hamiltonian the resulting Floer cohomology coincides with the Floer cohomology defined using contact type $J$ and relying on maximum principles.

When $e^rh'(e^r)-h(e^r)\to c<\infty$ for some $c$ which is not in the period spectrum, $H$ is still Palais-Smale even though this is not covered by the previous example, and in particular, it is dissipative. A proof of this fact is given below in Example \ref{exLinLiou}.

\end{ex}

\subsection{Some not-necessarily exact examples}\label{SubsecNonex}
Let $(M,g)$ be a Riemannian manifold, let $V$ be a time dependent vector field on $M$. For $p$ in $M$ define
\[
f(p,V,g):=\inf_{\{\gamma:[0,1]\to M|\gamma(0)=\gamma(1)=p\}}\left\{\int_0^1\|\gamma'(t)-V_t\circ\gamma(t)\|^2dt\right\}.
\]
Clearly, $f$ is continuous with respect to all variables in the $C^0$ norm. We drop $g$ from the notation when there is no ambiguity.

\begin{lm}\label{lmGeomFBound}
Let $(H,J)$ be such that $g_{J_H}$  has uniformly bounded geometry. Suppose there is a compact $K\subset M$ and a $\delta>0$ such that for all $p\in M\setminus K$ we have $f(p,X_H,g_J)\geq\delta$. Then $(H,J)$ is RLD.
\end{lm}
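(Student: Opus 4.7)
The plan is to directly verify the definition of loopwise dissipativity, choosing the parameter $\epsilon'$ in that definition to be $\sqrt{\epsilon}/2$. Given an arbitrary compact $K'\subset M$ and $E>0$, I need to produce $R=R(K',E)$ so that every $\epsilon'$-gradient $\gamma:[a,b]\to\mathcal{L}M$ of energy at most $E$ with $\gamma_a\subset K'$ satisfies $\gamma_b\cap B_R(K')\neq\emptyset$.

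The first step converts the hypothesis $f(p,X_H,g_J)\geq\epsilon$ for $p\notin K$ into an $L^2$ lower bound on $\partial_s\gamma$. Since $\gamma_s$ is a loop based at $\gamma_s(0)$ and $J$ is an isometry of $g_J$,
\[
\|\nabla\mathcal{A}_H(\gamma_s)\|^2_{L^2}=\int_0^1\|\dot\gamma_s-X_H\circ\gamma_s\|^2\,dt\ \geq\ f(\gamma_s(0),X_H,g_J).
\]
Whenever $\gamma_s(0)\notin K$ this is at least $\epsilon$, and together with the $\epsilon'$-gradient condition this forces $\|\partial_s\gamma_s\|_{L^2}\geq\sqrt{\epsilon}/2$. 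Integrating and using $\int_a^b\|\partial_s\gamma_s\|^2_{L^2}\,ds\leq E$, the set $S^c:=\{s\in[a,b]:\gamma_s(0)\notin K\}$ has measure at most $4E/\epsilon$.

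Next, I locate a good base time. If $b-a\leq 4E/\epsilon+1$, Lemma~\ref{lmModBd} yields some $t_0\in S^1$ with $d(\gamma_b(t_0),\gamma_a(t_0))\leq\sqrt{(b-a)E}$, placing $\gamma_b$ in an explicit bounded neighborhood of $K'$. Otherwise the window $I:=[b-1-4E/\epsilon,\,b]$ has length exceeding $|S^c|$, so $|S\cap I|\geq 1$; applying Markov's inequality to $s\mapsto\|\partial_s\gamma_s\|^2_{L^2}$ on $I$, I can choose $s_*\in S\cap I$ with $\|\partial_s\gamma_{s_*}\|_{L^2}\leq\sqrt{2E}$, hence $\|\nabla\mathcal{A}_H(\gamma_{s_*})\|_{L^2}\leq\sqrt{2E}+\epsilon'$. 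Writing $H=f\circ h$ with $h$ being $1$-Lipschitz and proper, Lemma~\ref{lmStrPrpEst} applied to the loop $\gamma_{s_*}$ confines its $h$-variation by $\|\nabla\mathcal{A}_H(\gamma_{s_*})\|_{L^2}$; since $\gamma_{s_*}(0)\in K$, the whole loop $\gamma_{s_*}$ lies in the compact sublevel set $\{h\leq\sup_K h+\sqrt{2E}+\epsilon'\}$. A final application of Lemma~\ref{lmModBd} to the restriction $\gamma|_{[s_*,b]}$ gives some $t_0$ with $d(\gamma_b(t_0),\gamma_{s_*}(t_0))\leq\sqrt{(1+4E/\epsilon)E}$, placing $\gamma_b$ in a bounded neighborhood of $K'$ whose radius depends only on $K'$ and $E$ (and the fixed data $K,\epsilon,\epsilon'$).

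The main obstacle is that both the $\epsilon'$-gradient assumption and the energy bound give only $L^2$-type control on $\partial_s\gamma$, so neither $\|\partial_s\gamma_s\|_{L^2(S^1)}$ nor $\|\nabla\mathcal{A}_H(\gamma_s)\|_{L^2(S^1)}$ can be bounded pointwise in $s$. The workaround is to avoid estimating $\gamma_b$ directly and instead use Markov on a window of explicit length near $b$ to locate a single auxiliary base time $s_*$ at which the loop is simultaneously based in $K$ and approximately a periodic orbit. From $s_*$ one confines the loop using $J$-properness, and $\gamma_b$ is then controlled by energy-distance estimates over the short interval $[s_*,b]$.
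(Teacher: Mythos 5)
Your proof is correct and follows essentially the same strategy as the paper's. Both arguments combine a Markov/pigeonhole argument on a window near $b$ with Lemma~\ref{lmModBd} (to convert distance estimates into length bounds) and Lemma~\ref{lmStrPrpEst} (to confine a loop via $J$-properness once its gradient norm is controlled), and both exploit the key observation that $\|\nabla\mathcal{A}_H(\gamma_s)\|_{L^2}^2\geq f(\gamma_s(0),X_H,g_J)$ forces $\gamma_s(0)\in K$ whenever the gradient is small; the paper phrases this as a contradiction (assume $\gamma_b$ is far, find a small-gradient time $s$, show $\gamma_s$ lies entirely outside $K$, contradicting $f\geq\epsilon$), while you argue directly by first bounding the measure of $\{s:\gamma_s(0)\notin K\}$ and then intersecting with a window near $b$ of slightly larger length. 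One minor point worth tightening: when you "apply Markov on $I$" to produce $s_*\in S\cap I$ with small derivative, you should note explicitly that the exceptional set $\{s\in I:\|\partial_s\gamma_s\|^2_{L^2}>2E\}$ has measure $\leq 1/2$, which together with $|S^c\cap I|\leq 4E/\epsilon$ and $|I|=1+4E/\epsilon$ leaves a set of measure $\geq 1/2$ from which to choose $s_*$; as written the sentence does not make clear why a point satisfying both conditions simultaneously exists.
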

\begin{proof}
Let $u:[a,b]\times \R/\Z$ be a partial solution with one boundary in a compact set $K_0\supset K$  and with energy $E(u)\leq E$ for some $E$. Without loss of generality $u_a\subset K_0$. Suppose $u_b\subset M\setminus B_{R_0}(K_0)$ for some $R_0$. Then considering the graph of $u$ as a $J_H$-holomorphic map, it has energy $E+(b-a)$. Theorem \ref{tmDiamEst}\ref{tmDiamEstc1} applied to the compact set $\partial(B_{R_0}(K_0)\setminus K_0)$ then implies that for some constant $C$ depending on the bound on the geometry we have $R_0\leq C(E+(b-a))$. The assumption  on $f(p,X_H,g_J)$ implies $b-a\leq E/\delta$. Taken together we obtain the estimate $R_0\leq CE(1+1/\delta)$. 

\end{proof}

The quantity $f(p,V,g)$ can sometimes be estimated from below by the following procedure. We say that the pair $(V,g)$ is of \emph{Lyapunov type} if there exists a constant $\lambda\geq 0$ such that for any $x,y\in M$ and any $t\geq 0$ we have, denoting by $\phi_t$ the time $t$ flow of $V$, 
\begin{equation}\label{eqLyapunovCond}
d_g(\phi_t(x),\phi_t(y))\leq e^{\lambda t}d_g(x,y).
\end{equation}
We refer to $\lambda$ as a Lyapunov constant for $V$.

\begin{lm}\label{lmLyapEstCovBo}
If $V$ is time independent and $\|\nabla V\|\leq\lambda$ then $\lambda$ is a Lyapunov constant for $V$.
\end{lm}
\begin{proof}
 For $x\neq y$ close enough and sufficiently short times, the function $h(t)=d(\phi_t(x),\phi_t(y))$ is differentiable. Moreover, for each $t$ there is a unique geodesic $s\mapsto \alpha_t(s)$ realizing the distance between $d(\phi_t(x),\phi_t(y))$. Denote by $\overline{V}_{\phi_t(x)}$ the parallel transport of $V_{\phi_t(x)}$ along $\alpha_t$.  Considering that the gradient of the distance function  $d(x,y)$ for, say, $x$ fixed  is the tangent vector to the unit speed geodesic  from $x$ to $y$ it follows that 
\begin{equation}
\frac{dh}{dt}=\langle\alpha'_t(1),V_{\phi_t(y)}\rangle-\langle\alpha'_t(0),V_{\phi_t(x)}\rangle. 
\end{equation}
From this we obtain the differential inequality
\begin{equation} 
\frac{dh}{dt}\leq |V_{\phi_t(y)}-\overline{V}_{\phi_t(x)}|\leq\lambda h.
\end{equation}

The claim for $x, y$ sufficiently close and for sufficiently short times now follows by Gr\"onwall's inequality. The claim for arbitrary $x,y$ and sufficiently short times  follows by the triangle inequality. The claim for arbitrary long time follows since the flow $\phi_t$ is autonomous. 
\end{proof}

\begin{lm}\label{lmLyap}
Let $V$ be a time independent vector field of Lyapunov type with Lyapunov constant $\lambda\geq 0$. Then
\begin{equation}\label{eqBoundL2L1}
d_g(p,\phi_1(p))^2\leq \frac{e^{2\lambda}-1}{2\lambda} f(p,V,g)\footnote{When $\lambda=0$ the coefficient on the right hand side tends to $1$.}.
\end{equation}
\end{lm}
\begin{rem}
For $V=X_H$ with $H$ time independent and uniformly Lipschitz we can replace the global requirement that \eqref{eqLyapunovCond} hold everywhere with the requirement that it hold for points $x,y\in U:=H^{-1}([H(p)-\epsilon,H(p)+\epsilon]$ for some $\epsilon>0$. We then get an estimate from below on $f(p,V,g)$ by combining the present Lemma to estimate the energy of loops which map into $U$ with Lemma \ref{lmStrPrpEst} to estimate the energy of the loops at $p$ which do not remain within $U$. Moreover, this estimate depends only on the Lipschitz constant of the restriction $H|_U$ and remains valid if $H$ is arbitrarily time dependent outside of $U$. 

\end{rem}
\begin{proof}[Proof of Lemma \ref{lmLyap}]
Fix some $\epsilon>0$ which will be later taken to be arbitrarily small. Let $\gamma:[0,1]\to M$ be a loop based at $p$. Let $r\in\R$ be small enough so that for each point $q\in \gamma([0,1])$ there is a chart $\left(U_q\subset M,\psi_q:B_{2r}(0)\to U_q\right)$ with coordinate map $\psi_q$  which is bi-Lipschitz with Lipschitz constant $1+\epsilon$. 
By compactness of $\gamma([0,1])$, there is a constant $K$ such that for any $q$ the vector field $d\psi_q^{-1} V$ considered as a map $B_{2r}(0)\to \R^{2n}$ is Lipschitz with constant $K$.

Write
\[
g(t):=\|\gamma'(t)-V_{\gamma(t)}\|
\]
and, 
\[
f(t)=\int_0^tg(s)ds. 
\]
Let
\[
\Delta t\ll r\min\left\{\frac1{K},\frac1{\sup\|\gamma(t)\|},\frac1{\sup\|V_{\gamma(t)}\|},\frac1{\max g(t)}\right\}
\]
Without loss of generality suppose $N:=\frac1{\Delta t}$ is an integer. Suppose $\Delta t$ is made smaller still so that $f(t)$ has an approximation by a piecewise linear function $h(t)$ such that 
\begin{equation}\label{eqHtagg}
(1-\epsilon)h'(t)<g(t)\leq h'(t),
\end{equation}
and such that $h$ is linear of slope $\epsilon_i$ on the intervals $[\frac{i}{N},\frac{i+1}{N}]$. Let $t_i=\frac{i}{N}$. Let $\gamma_i(t):=\phi_{t-t_i}(\gamma(t_i))$ and let $x_i=\gamma_i(1)$. Writing $\Delta x_i:=d(x_{i},x_{i-1})$ for $i=1,\dots N$, we have by the Lyapunov condition,
\[
\Delta x_i\leq e^{\lambda (1-t_i)}d_g\left(\gamma_i(t_i), \gamma_{i-1}(t_i)\right).
\]
On the other hand we have an  estimate
\begin{equation}\label{eqestODE}
d_g\left(\gamma_i(t_i), \gamma_{i-1}(t_i)\right)\leq (1+\epsilon)\frac{\epsilon_i}{K}\left(e^{K\Delta t}-1\right).
\end{equation}
To see this note that both the path $\gamma$ and the path $\gamma_{i-1}$ map the interval $[t_{i-1},t_i]$ into the coordinate chart $U_{\gamma(t_{i-1})}$. Let 
\begin{equation}
k(t)=d_0(\gamma(t),\gamma_{i-1}(t)),\quad t\in [t_{i-1},t_i]
\end{equation}
be the Euclidean distance. Then $k(t)$ satisfies the differential inequality
\[
\frac{dk}{dt}\leq |\gamma'(t)-V_{\gamma(t)}|+|V_{\gamma(t)}-V_{\gamma_{i-1}(t)}|\leq g(t)+Kk(t),
\]
with initial condition $k(t_{i-1})=0$. 
By Gr\"onwall's inequality we get 
\begin{align}
d_g(\gamma(t),\gamma_{i-1}(t)))&\leq (1+\epsilon)k(t)\notag\\
&\leq (1+\epsilon)e^{K(t-t_{i-1})}\int_{t_{i-1}}^t e^{-Ks}g(s)ds\notag\\
& \leq  (1+\epsilon) \frac{\epsilon_i}{K} \left(e^{K(t-t_{i-1})}-1\right),\quad t\in[t_{i-1},t_i]\notag
\end{align}
implying  \eqref{eqestODE}. 
The right hand side of \eqref{eqestODE}  is $\leq (1+\epsilon)^2\epsilon_i\Delta t$ since $\Delta t\ll \frac1K$.

We have  $\phi_1(\gamma(0))=x_0$ and $\gamma(1)=\gamma(0)=x_N$. Thus,
\[
d(x_0,x_N)\leq\sum\Delta x_i\leq\sum_{i=1}^N (1+\epsilon)^2\epsilon_ie^{\lambda(1- t_i)}\Delta t
\]
The last expression approximates the integral
\[
(1+\epsilon)^2\int_0^1 h'(t)e^{\lambda(1-t)}dt\leq \sqrt{\int_0^1(h'(t))^2dt}\sqrt{\frac{e^{2\lambda}-1}{2\lambda}}.
\]
Combining the last two inequalities gives the estimate
\[
d_g(p,\phi_1(p))^2=d_g(x_0,x_N)^2\leq \frac{(1+\epsilon)^2}{(1-\epsilon)^2} \frac{e^{2\lambda}-1}{2\lambda}\|\gamma'-V_{\gamma}\|^2_{L^2}.
\]
Since $\epsilon$ is arbitrary this proves the claim.

\end{proof}

We say that a Floer datum $(H,J)$ is of Lyapunov type if $(X_H,g_J)$ is of Lyapunov type.
\begin{cy}\label{cyMinDisImpDis}
Suppose $J$ is geometrically bounded, $(H,J)$ is of Lyapunov type,  and that outside of a compact set the quantity $d(p,\psi_1(p))$ is bounded away from $0$. 
Then $(H,J)$ is RLD.
\end{cy}
\begin{proof}
This is an immediate consequence of Lemmas \ref{lmLyap} and \ref{lmGeomFBound}. \end{proof}
\begin{ex}\label{exLinLiou}
Using the notation of Example~\ref{ExLiouvTame}, let $M$ have an end modeled on $\Sigma\times\R_+$ and let $H_0$ be a function which is linear at infinity with slope $a$ not in the period spectrum. Then $H$ is of Lyapunov type. Indeed the flow on any level set of $H$ is of Lyapunov type by Lemma \ref{lmLyapEstCovBo} and compactness. Since the flows on different level sets are conjugate, the existence of a Lyapunov estimate follows also for $x,y$ not on the same level set.  Let $X=X_{H_0}|_{\Sigma\times\{1\}}$. Let $J$ be a translation invariant almost complex structure. Then from \eqref{eqBoundL2L1} it follows that $f(p, X_{H_0})$ is bounded away from $0$, and so $H_0$ is LD.

Let $\delta$ be the distance of $c$ to the period spectrum of $\Sigma_{\alpha}$ and let $H_1$ be any Hamiltonian such that
\[
\frac{\|X_{H_1}-X_{H_0}\|}{\|X_{H_0}\|}\ll\delta/2.
\]
For example, this inequality will hold for our choice of $J$ whenever $\|X_{H_1-H_0}\|$ is bounded.  Then $f(p,X_{H_1})$ is bounded away from $0$ at infinity. So, $H_1$ is also LD.
\end{ex}

\begin{ex}
Let $M_1$ be as in the previous example and let $M_2$ be a compact symplectic manifold. Let $a$ be a real number not in the period spectrum of $M_1$ and let $f:\R/\Z\times M_1\times M_2$ be any function which tends to $1$ at infinity with derivatives dominated by $o(e^{-r/2})$. Then reasoning as in the previous example, the function $H:=afe^r$ is LD.
\end{ex}
\begin{lm}\label{lmLinDisCrit}
Let the end of $M$ be diffeomorphic to $\Sigma\times\R_+$ with $\Sigma$ a compact hypersurface. Suppose the projection $\pi:\Sigma\times\R_+\to\Sigma$ satisfies $\|\pi_*v\|\leq\|v\|$ for any tangent vector $v\in T(\Sigma\times\R_+)$. Let $X$ be vector field on $\Sigma$ with no $1$-periodic orbits and let $H$ be such that $\pi_*X_H$ converges uniformly to $X$. Then for some $\delta>0$ we have $f(p,X_H)>\delta>0 $ and in particular $X_H$ is LD.
\end{lm}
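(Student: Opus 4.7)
The plan is to argue by contradiction, reducing a hypothetical sequence of near-integral loops of $X_H$ escaping to infinity to a genuine $1$-periodic orbit of $X$ on the compact hypersurface $\Sigma$, which is ruled out by hypothesis. The conclusion that $(H,J)$ is LD then follows immediately from the preceding lemma.

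First I would suppose no such $\delta$ exists. Then there is a sequence $p_n$ escaping every compact set together with loops $\gamma_n:[0,1]\to M$ with $\gamma_n(0)=\gamma_n(1)=p_n$ and $\int_0^1\|\gamma_n'-X_H\circ\gamma_n\|^2\,dt =: \epsilon_n\to 0$. For $n$ large each $p_n$ lies in the cylindrical end with $r(p_n)\to\infty$. The heart of the argument is to project: set $\tilde\gamma_n:=\pi\circ\gamma_n$, a loop in the compact manifold $\Sigma$ based at $\pi(p_n)$. Because $\pi$ is $1$-Lipschitz,
\[
\int_0^1\|\tilde\gamma_n'-\pi_*X_H\circ\gamma_n\|^2\,dt \;\leq\; \int_0^1\|\gamma_n'-X_H\circ\gamma_n\|^2\,dt \;=\;\epsilon_n.
\]
Assuming (by the obstacle below) that the image of $\gamma_n$ lies in arbitrarily far parts of the end, the uniform convergence $\pi_*X_H\to X$ then gives
\[
\int_0^1\|\tilde\gamma_n'-X\circ\tilde\gamma_n\|^2\,dt\longrightarrow 0,
\]
so the $\tilde\gamma_n$ are approximate $1$-periodic orbits of $X$ on $\Sigma$.

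Since $\Sigma$ is compact and $\|X\|$ is bounded, the loops $\tilde\gamma_n$ have uniformly bounded $W^{1,2}$ norm, and Arzel\`a--Ascoli (or Rellich) extracts a subsequence converging in $C^0$ to some $\tilde\gamma:S^1\to\Sigma$. A standard ODE argument — passing to the integrated form $\tilde\gamma(t)=\tilde\gamma(0)+\int_0^t X(\tilde\gamma(s))\,ds+o(1)$ and using continuity of $X$ — upgrades $\tilde\gamma$ to a $C^1$ solution of $\tilde\gamma'=X\circ\tilde\gamma$, hence a genuine $1$-periodic orbit of $X$. This contradicts the hypothesis, and the preceding lemma then yields that $(H,J)$ is LD.

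The main obstacle is the confinement step I glossed over: namely, that $\gamma_n$ remains in $\{r\geq R\}$ for every preassigned $R$ once $n$ is large, so that the asymptotic information $\pi_*X_H\to X$ can actually be used along its image. I would attack this through the loop constraint $\int_0^1 dr(\gamma_n')\,dt=0$, which combined with Cauchy--Schwartz yields $\bigl|\int_0^1 dr(X_H\circ\gamma_n)\,dt\bigr|\leq \|dr\|_\infty\sqrt{\epsilon_n}$, and then combine this with the $L^\infty$ bound on $\pi_*X_H$ inherited from the uniform convergence to the bounded $X$ to extract a uniform length bound on $\gamma_n$; since $r(p_n)\to\infty$ the image cannot escape below $r=R$ for $n$ large. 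This argument implicitly uses tameness of the metric near infinity (boundedness of $\|dr\|$ and a nearly orthogonal splitting $T(\Sigma\times\R_+)=T\Sigma\oplus\R\partial_r$), which is automatic in the product-like settings relevant to Floer theory such as Example~\ref{ExLiouvTame} and the applications in Example~\ref{exLinLiou}.
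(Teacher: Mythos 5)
Your reduction to the compact factor $\Sigma$ via the $1$-Lipschitz projection, and the use of ``no $1$-periodic orbit of $X$'' to produce a positive lower bound, is the same skeleton as the paper's proof. The paper dispatches the first step faster, though: the $C^0$-continuity of $f$ (noted right after its definition) together with compactness of $\Sigma$ give $f(\cdot,X)\geq\epsilon>0$ directly, with no Arzel\`a--Ascoli extraction; continuity of $f$ plus the uniform convergence $\pi_*X_H\to X$ then give $f(\cdot,\pi_*X_H)>\epsilon/2$ at large $r$, and the non-increasing property of $\pi_*$ yields $f(p,X_H)\geq f(p,\pi_*X_H)$. Your contradiction/limit-orbit argument re-proves the continuity-and-compactness input rather than citing it; that is legitimate, just longer.

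The confinement step you flag as the ``main obstacle'' is the right thing to worry about, but your proposed fix does not close. Cauchy--Schwartz gives $\bigl|\int_0^1 dr(X_H\circ\gamma_n)\,dt\bigr|\leq\|dr\|_\infty\sqrt{\epsilon_n}$, which controls only the \emph{signed} average of the radial component of $X_H$ along $\gamma_n$; it bounds neither $\int_0^1|dr(X_H\circ\gamma_n)|\,dt$ nor $\|X_H\|$ pointwise. The $L^\infty$ bound you invoke applies to the $\Sigma$-tangential part $\pi_*X_H$; the hypotheses say nothing about the $\partial_r$-component of $X_H$, so $\int_0^1\|X_H\circ\gamma_n\|\,dt$ need not be uniformly bounded, and the claimed length bound on $\gamma_n$ does not follow. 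Consequently the loops could still make large radial excursions even though their basepoints go to infinity. In the situations where the paper actually invokes this lemma, $X_H$ is tangent to the level sets of $r$ near infinity, so $|(r\circ\gamma_n)'|\leq\|\gamma_n'-X_H\|$ pointwise and the total variation of $r\circ\gamma_n$ is $\leq\sqrt{\epsilon_n}$, giving confinement immediately; but that hypothesis is not part of the lemma as stated and your inequalities do not supply a substitute for it.
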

\begin{proof}
Let $\epsilon$ such that $f(p,X)>\epsilon$. For $r$ large enough, the convergence assumption implies
\[
f(p,\pi_*X_H)>\epsilon/2.
\]
The assumption of non-increasing implies $f(p,X_H)>f(p,\pi_*X_H)$.
\end{proof}
\begin{ex}
Let $M,\Sigma,\alpha, H_0$ and $H_1$ be as in Example~\ref{exLinLiou}. Let $\sigma$ be a  closed two form on $\Sigma$. Suppose $\sigma$ extends to a closed form on $M$ which is invariant under the Liouville flow near $\Sigma$. Then $\sigma$ can be extended in a translation invariant way to a closed two form on the completion of $M$, still denoted by $\sigma$. For $t$ small enough, the form $\omega_{t\sigma}=-d\alpha +t\sigma$ defines a symplectic form on the completion of $M$.  By rescaling $\sigma$ assume this holds for $t=1$. Then $H_0$ and $H_1$ are LD for the symplectic form $\omega_\sigma$. Indeed, write $X'_{H_0}$ for the Hamiltonian vector field with respect to $\omega_{\sigma}$. Let $X$ as in Example~\ref{exLinLiou}. Then all the requirements of Lemma~\ref{lmLinDisCrit} are satisfied for the pair $X'_{H_0},X$. The claim for $H_1$ now follows by comparison to $H_0$.
\end{ex}

\begin{ex}
In Example~\ref{exLinLiou} assume $\Sigma,\alpha$ is not necessarily contact but stable Hamiltonian for the restriction $\omega_1:=\omega|_{\Sigma\times 1}$ with stabilizing form $\alpha$. Namely, $\alpha$ satisfies $\ker\omega\subset \ker d\alpha$ and $\alpha\wedge\omega^{n-1}>0$. Assume $\omega$ is of the form $\omega_{\alpha}:=\omega+d(e^r\alpha)$ on $\Sigma\times\R_{\geq0}$ and is symplectic for all $r\geq 1$. Assume further that there exists a translation invariant $\omega_{\alpha}$-compatible almost complex structure $J$ on $\Sigma\times\R_{\geq 0}$. Then the forms $\omega(\cdot,J)$ and $d\alpha(\cdot,J)$ are separately non-negative. So, the projection $\pi_*$ is norm non-increasing. So if $H_0$ is linear at infinity with slope not in the period spectrum then $f(p,X_{H_0})$ is bounded away from $0$ and $H_0$ is dissipative. The same will hold under a sufficiently small deformation of $\omega$ or a sufficiently small Hamiltonian perturbation of $X_{H_0}$.
\end{ex}

In all the examples of this section we have considered Hamiltonians which are roughly linear at infinity. It is easy to use these examples to construct super-linear Hamiltonians which are LD. It is an interesting question what Hamiltonians can be perturbed to satisfy LD. The property of being LD is clearly related the behavior of the function $f(p,X_H,g_J)$. Namely, if one can find an exhaustion for which this function is appropriately bounded away from $0$ near the boundaries, the Floer datum will be LD.
\section{Proof of Theorem~\ref{mainTmA}}\label{Sec6}
\subsection{Floer systems}
For a symplectic manifold $(M,\omega)$, denote by $\mathcal{J}(M,\omega)$ the set of $\omega$-compatible almost complex structures on $M$. Let
\[
    \mathcal{F} \subset C^{\infty}(\R/\Z\times M)\times C^{\infty}(\R/\Z,\mathcal{J}(M,\omega))
\]
denote the set of Floer data $(H,J)$ such that $H$ is proper and bounded from below, the  Hamiltonian flow of $H$ is defined for all time, and the metric $g_{J_t}:=\omega(\cdot,J_t\cdot)$ is complete for any $t\in \R/\Z$. Denoting by $\Delta^i$ the standard simplex let
\[
\mathcal{F}^{i}\subset C^{\infty}(\Delta^i,\mathcal{F}),
\]
be the subset consisting of elements which are constant in a neighborhood of the vertices.  Furthermore, we require that for any $F\in\mathcal{F}^{(1)}$, $\partial_sF\geq 0$. Denote by $\Delta^{i,o}$ the interior of the simplex. Fix once and for all diffeomorphisms
\[
\sigma:\R\to \Delta^{1,o},
\]
\[
\psi:\R\times(0,1)\to\Delta^{2,o} ,
\]
and an increasing diffeomorphism
\[
\rho:(0,1)\to(0,\infty),
\]
for which
\begin{align}
\lim_{t\to 1}\psi(s+\rho(t),t) &=f^0\circ\sigma(s),\notag\\
\lim_{t\to 1}\psi(s-\rho(t),t)&=f^2\circ\sigma(s),\notag\\
\lim_{t\to 0}\psi(s\pm\rho(t),t)&=f^1\circ\sigma(s),\notag
\end{align}
uniformly on compact subsets of $\R$. Here $f^i:\Delta^1\to\partial\Delta^2$ is the standard embedding of the face missing the $i$th vertex. We extend the maps $\psi^{\pm}:=\psi(\cdot\pm\rho(\cdot),\cdot)$ to the closure $\R\times[-1,1]$ in the obvious way. See Figure \ref{figStBr}.
\begin{figure}
\includegraphics[scale=0.35, trim=70 245 0 340,clip]{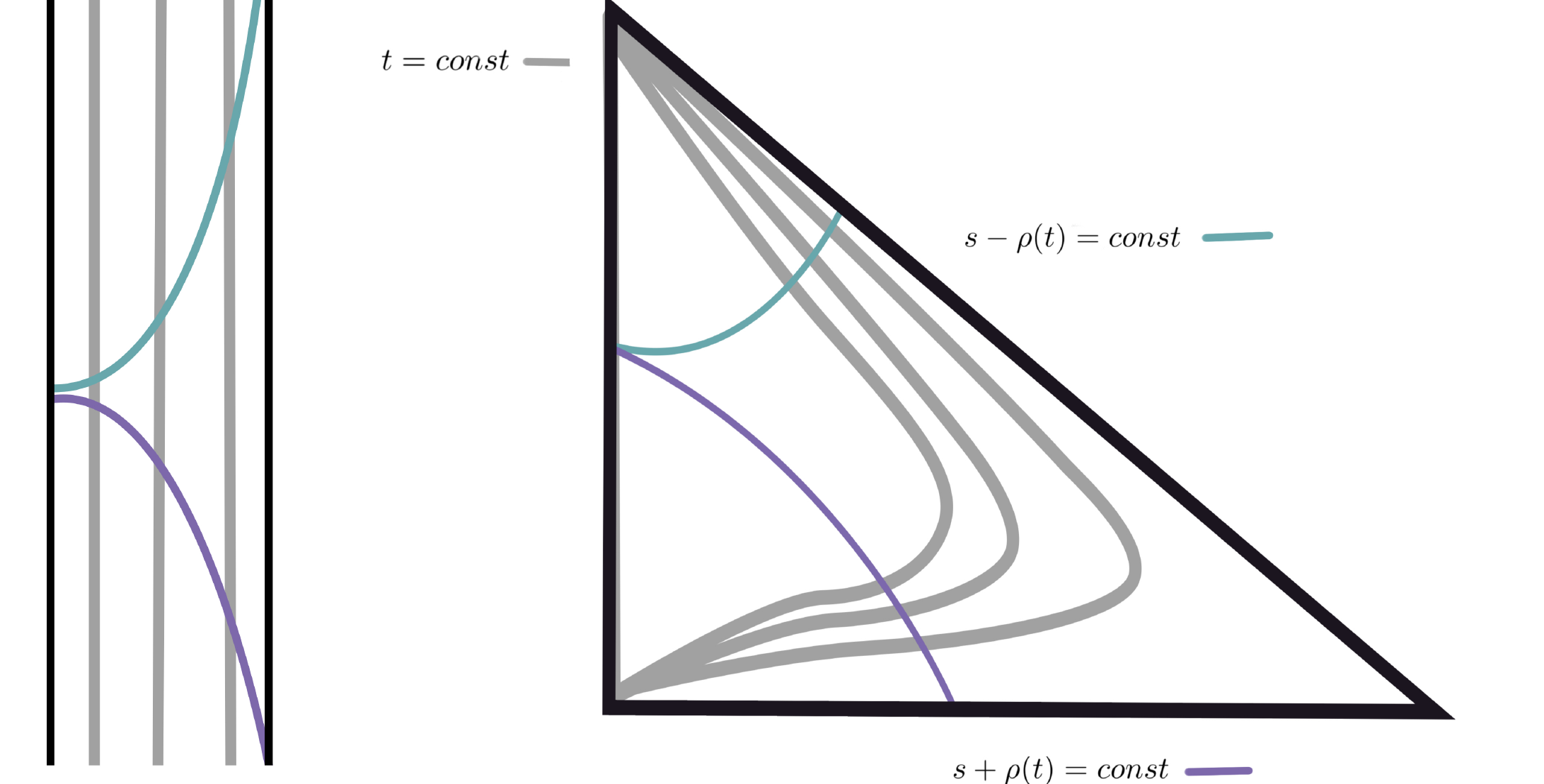}
\caption{The source and target of the map $\psi$}\label{figStBr}
\end{figure}

\begin{df}\label{dfWellBehaved}
A Floer datum $(H,J)\in\mathcal{F}^{(0)}$ is called \textbf{well behaved} if for any $E>0$ and any compact $K\subset M$ there is an $R=R(E,K)>0$ such that any solution $u$ to Floer's equation
\[
\partial_su+J(\partial_tu-X_{H})=0
\]
satisfying
\[
E(u):= \frac1{2}\int\|\partial_su\|^2\leq E,\qquad u(\R\times S^1)\cap K\neq\emptyset,
\]
is contained in the ball $B_R(K).$

A homotopy $F=(H_s,J_s)\in\mathcal{F}^{(1)}$ is called well behaved if the corresponding condition holds for the solutions to
\[
\partial_su+J_{\sigma(s)}(t)\left(\partial_tu-X_{H_{\sigma(s)}}(t,u(s,t))\right)=0.
\]
Finally, an element $\{F_p\}_{p\in\Delta^2}\in\mathcal{F}^{(2)}$ is called well behaved if the corresponding condition holds for the set of solutions  to
\[
\partial_su+J_{\psi^{\pm}(s,\tau)}(t)\left(\partial_tu-X_{H_{\psi^{\pm}(s,\tau)}}(t,u(s,t))\right)=0,\quad\tau\in[0,1],
\]
with $R(E,K)$ independent of $\tau$.  Denote by $\mathcal{F}^{(i)}_{w.b.}\subset\mathcal{F}$ the subset consisting of well behaved elements.

 \end{df}
 \begin{df}\label{dfFloerSys}
 A \textbf{Floer system} $\mathcal{D}$ on $M$ consists of the data of subsets $\mathcal{D}^{(i)}\subset\mathcal{F}^{(i)}_{w.b.}$ for $i=0,1,2$ such that the following hold:
 \begin{enumerate}
 \item\label{dfFloerSys:it0} For any element $F\in\mathcal{D}^{(i)}$ there is an open neighborhood $F\in V\subset C^1\times C^0$ such that $V\subset \mathcal{D}^{(i)}$.
 \item\label{dfFloerSys:it1} A face of an element of $\mathcal{D}^{(i)}$ is an element of $\mathcal{D}^{(i-1)}.$
 \item\label{dfFloerSys:it2} For any pair $F_i=(H_i,J_i)\in\mathcal{D}^{(0)}$, $i=0,1$ such that $H_1\geq H_0$, there is a homotopy $\{F_s\}_{s\in[0,1]}\in\mathcal{D}^{(1)}$ with endpoints $F_0$ and $F_1$.
 \item\label{dfFloerSys:it3} Given a pair $F',F''\in\mathcal{D}^{(1)}$ such that $F'_1=F''_0$ there is a $G\in\mathcal{D}^{(2)}$ whose restriction to the $\{0,1\}$ and $\{1,2\}$ faces coincides with $F'$ and $F''$ respectively.
 \item\label{dfFloerSys:it4} Given homotopies $F_{01},F_{12},F_{02}\in\mathcal{D}^{(1)}$, such that the endpoints of $F_{ij}$ are $F_i$ and $F_j$ respectively, there is a $G\in\mathcal{D}^2$ whose face $ij$  coincides with  $F_{ij}$.
 \end{enumerate}
  A Floer  system $\mathcal{D}$ is said to be \textbf{invariant} if it is invariant under the action of the symplectomorphism group given by
  \[
    \phi\cdot(H,J)=(H\circ \phi,\phi^*J).
  \]
  Elements of $\mathcal{D}^0$ will be referred to as $\mathcal{D}$-admissible. A function $H\in C^{\infty}(M)$ is said to be $\mathcal{D}$-admissible if there is an almost complex structure $J$ such that $(H,J)\in\mathcal{D}$.
  A \textbf{bi-directed} Floer system is one in which for any admissible $H_1$ and $H_2$  there are admissible $H_3$ and $H_0$ such that
        \[
        H_3\geq\max\{H_1,H_2\},
        \]
  and
  \[
  H_0\leq\min\{H_1,H_2\}.
  \]
\end{df}
In Theorem~\ref{TmMain} below we show that on any geometrically bounded manifold there is a canonically defined invariant bidirected Floer system.

\begin{df}\label{dfAdmSiAb}
Define the \textit{dissipative Floer system} $\cF_d$ to consist of the following data. Let $\mathcal{F}_d^{(0)}(M)$ be the set of i-bounded Floer data $(H,J)$ which are RLD. Let $\mathcal{F}_d^{(1)}$ be the set of monotone paths $(H_s,J_s)_{s\in[0,1]}$ in $\mathcal{F}^{(0)}$ with endpoints in $\mathcal{F}_d^{(0)}$ such that the domain dependent Floer datum $(s,t)\mapsto (H(t,\cdot)_{\sigma(s)}dt,J(t,\cdot)_{\sigma(s)})$ is i-bounded as in Definition~\ref{dfFloDataWbounded}. Finally, $\mathcal{F}_d^{(2)}$ is defined as follows. Let $F_{p\in \Delta^2}=(H_p,J_p)_{p\in \Delta^2}\in\mathcal{F}^{(2)}$ with edges in $\mathcal{D}^{(1)}$. Associate to $\Delta^2$ and the map $\psi:\R\times (0,1)\to\Delta^2$ a family $C_{ \tau\in[0,1]}$ of cylinders over the unit interval degenerating to a broken cylinder in the obvious way. Let the domain dependent Floer datum on $C_\tau$ be defined by $F_{\tau}(s,t)=(H_{\psi(s,\tau)}(t,\cdot)dt,J_{\psi(s,\tau)}(t,\cdot))$. Then $F\in\mathcal{F}_d^{(2)}$ iff the family $C$ with this choice of domain dependent Floer data is uniformly i-bounded as in Definition~\ref{dfFloDataWboundedFamily}.
\end{df}
\begin{rem}
The set of all well behaved Floer data is not necessarily connected. Thus it is possible that there exist other Floer systems perhaps giving rise to inequivalent theories.  However, any Floer system  for which the well behavedness property of Definition \ref{dfWellBehaved} holds in a sufficiently domain local manner is equivalent to the dissipative system by an argument similar to the proof of Theorem \ref{tmWeakTameCont}.
\end{rem}
\begin{tm}\label{TmMain}
Let $(M,\omega)$ be a monotone or Calabi-Yau geometrically bounded symplectic manifold. Then $\mathcal{F}_d(M)$ is an invariant bi-directed Floer system on $M$.
\end{tm}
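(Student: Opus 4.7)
The plan is to verify each axiom of Definition \ref{dfFloerSys} for $\mathcal{D}^{(i)}:=\mathcal{F}_d^{(i)}$ in turn, together with invariance and bi-directedness. The tameness inclusions $\mathcal{F}_d^{(i)} \subset \mathcal{F}_{tame}^{(i)}$ are already essentially established: for $i=0$ this is the content of Theorem~\ref{tmFloSolDiamEst} applied to a trivial one-end family; for $i=1,2$ the same theorem applies to the domain-dependent families $C_\tau$ built from the charts $\sigma$ and $\psi$, since by Definition~\ref{dfAdmSiAb} these families are uniformly i-bounded in the sense of Definition~\ref{dfFloDataWboundedFamily} and their cylindrical ends coincide with elements of $\mathcal{F}_d^{(0)}$ that are strongly LDOS. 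Note that because the $s$-dependence of a monotone homotopy from $F_0$ to $F_1$ is compactly supported outside the ends, strong LDOS is only required there, as pointed out in the overview of \S\ref{secOvrview}.

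Axiom \ref{dfFloerSys:it0} (openness) combines openness of i-boundedness under $g_J$-uniform perturbations of $J$ (Remark after Definition~\ref{dfIntBounded}) with openness of strong LDOS, which is built into the definition of the latter together with Lemma~\ref{lmWeakDisOp}. Axiom \ref{dfFloerSys:it1} (faces) is immediate from Definition~\ref{dfAdmSiAb}: a face of a uniformly i-bounded family with strongly LDOS ends is itself such a family of one lower dimension.

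The main content is in axioms \ref{dfFloerSys:it2}--\ref{dfFloerSys:it4} (filling). For \ref{dfFloerSys:it2}, given $F_0\leq F_1$ in $\mathcal{F}_d^{(0)}$, I would first use Theorem~\ref{tmWeakTameCont} to connect the $S^1$-families of i-bounded almost complex structures $J_0,J_1$ by a u.i.b.\ homotopy $J_s$; the zig-zag construction in the proof of Theorem~\ref{tmWeakTameCont} shows this is possible without any additional constraint beyond i-boundedness. On top of this I interpolate monotonically between $H_0$ and $H_1$ via $H_s := (1-\beta(s))H_0 + \beta(s)H_1$ for a monotone cutoff $\beta$, arranged so that $F_s=F_i$ for $s$ outside a compact interval. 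I then verify uniform i-boundedness of the domain-dependent datum $\mathfrak{H}_s = H_sdt$, $J_s$ on the mapping cylinder using Lemma~\ref{lmAssocSymp} together with the curvature and injectivity estimates of \S\ref{SecGrom} (which control the Gromov metric in terms of the underlying $g_{J_s}$, reducing i-boundedness of the pair to that of the family $J_s$). Conditions \eqref{EqPoisson} and \eqref{EqMonHom} hold here because the Hamiltonian part is independent of $s$ and monotone along $s$. Axioms \ref{dfFloerSys:it3} and \ref{dfFloerSys:it4} are handled by the same technique applied to the $k$-parameter version of Theorem~\ref{tmWeakTameCont} with $k=1$: the chart $\psi:\mathbb{R}\times(0,1)\to\Delta^{2,o}$ organizes the datum on the $2$-simplex into a one-parameter family of cylinders degenerating to a broken cylinder, and on each such cylinder the construction of a \ref{dfFloerSys:it2}-type filling can be carried out with the taming data varying uniformly in the parameter $\tau\in[0,1]$. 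The ends are fixed throughout so no new LDOS verification is needed. I expect this geometric bookkeeping, specifically choosing the zig-zag data consistently as the cylinders break, to be the main technical obstacle.

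Invariance is automatic: i-boundedness and LDOS are defined purely in terms of $\omega$-compatible data, so the $\psi$-action $(H,J)\mapsto (H\circ\psi,\psi^*J)$ preserves $\mathcal{F}_d^{(i)}$. For bi-directedness, given two $\mathcal{D}$-admissible Hamiltonians $H_1,H_2$ with witnesses $J_1,J_2$, I apply Theorem~\ref{tmDregCofinal} to an exhaustion function dominating $\max\{H_1,H_2\}$ to produce an admissible $H_3\geq\max\{H_1,H_2\}$; symmetrically, Theorem~\ref{lmExDisDom} applied below $\min\{H_1,H_2\}$ produces an admissible $H_0\leq\min\{H_1,H_2\}$. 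Both theorems furnish dissipative Floer data for the geometrically bounded $J$ provided by the standing tameness hypothesis on $(M,\omega)$, completing the verification.
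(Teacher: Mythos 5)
The overall skeleton of your verification — tameness via Theorem~\ref{tmFloSolDiamEst}, openness via Lemma~\ref{lmWeakDisOp} and quasi-isometry invariance of i-boundedness, faces by inspection of Definition~\ref{dfAdmSiAb}, invariance by inspection, and bi-directedness via Theorems~\ref{tmDregCofinal} and~\ref{lmExDisDom} — is correct and matches the structure of the paper's proof. The gap is in your treatment of the filling axiom~\ref{dfFloerSys:it2}, which is the technical heart of the theorem, and your handling of~\ref{dfFloerSys:it3}--\ref{dfFloerSys:it4} inherits it.

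You propose to zig-zag only the almost complex structure, producing a u.i.b.\ family $J_s$, and then superimpose a \emph{global} linear interpolation $H_s=(1-\beta(s))H_0+\beta(s)H_1$. The assertion that the estimates of \S\ref{SecGrom} then reduce i-boundedness of $(H_s\,dt,J_s)$ to i-boundedness of $\{J_s\}$ alone is false: those estimates (Theorem~\ref{tmCurvXH}, inequality~\eqref{eqCovEst}, Theorem~\ref{injEst}) bound the geometry of the Gromov metric in terms of both $g_J$ \emph{and} the covariant derivatives of $X_H$, and these two inputs do not decouple. Concretely, the zig-zag freezes $J_s=J_0$ on the shells $V^0_n$ for small $s$, and to obtain taming data there you need the full Gromov metric $g_{(J_0)_{H_s}}$ to be $a$-bounded on those shells; this requires control of $X_{H_s}$ on $V^0_n$. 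But $H_s$ involves $H_1$, and all you know about $(H_1,J_1)$ is i-boundedness of its \emph{own} Gromov metric, which gives you control of $X_{H_1}$ only on the $V^1_n$ shells, not on $V^0_n$. Nothing in the hypotheses constrains $\|\nabla X_{H_1}\|_{g_{J_0}}$ on $V^0_n$, so the interpolated datum can fail i-boundedness there. The resolution — which is exactly what the paper's Lemma~\ref{lmAdmConn} does — is to zig-zag the \emph{pair} $(H_s,J_s)$, holding the entire datum equal to $(H_0,J_0)$ on $V^0$ for small $s$ and to $(H_1,J_1)$ on $V^1$ for large $s$, and to interpolate only on the regions where the taming data is not supported. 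Then the required shell bounds are inherited verbatim from the two endpoints. Once~\ref{dfFloerSys:it2} is corrected this way, the filling of~\ref{dfFloerSys:it3} is done more directly than you suggest, by an explicit gluing $F'\#_R F''$ (concatenate, hold constant for length $2R$, reparameterize by $\psi$), and~\ref{dfFloerSys:it4} follows by merging and then invoking the contractibility built into the corrected Lemma~\ref{lmAdmConn}; the $k$-parameter version of Theorem~\ref{tmWeakTameCont} is not the mechanism actually needed here.
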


Before proving Theorem \ref{TmMain} we need the following lemma.
\begin{lm}\label{lmAdmConn}
Let $(H_i,J_i)\in\mathcal{F}_d^{(0)}(M)$ such that $H_0\leq H_1$. There exists an i-bounded monotone Floer datum on $\R\times S^1$ which coincides with $(H_0,J_0)$ on $\{s\ll0\}$ and with $(H_1,J_1)$ on $\{s\gg0\}$. Moreover, the set of such Floer data is contractible in the same sense as in Theorem~\ref{tmWeakTameCont}.
\end{lm}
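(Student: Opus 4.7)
The plan is to combine a convex interpolation of Hamiltonians with a fiberwise zig-zag homotopy of almost complex structures modelled on the proof of Theorem~\ref{tmWeakTameCont}, and then verify that the resulting domain-dependent Floer datum satisfies Definition~\ref{dfFloDataWbounded}. Pick a smooth non-decreasing cutoff $\chi:\R\to[0,1]$ with $\chi\equiv 0$ on $(-\infty,-1]$ and $\chi\equiv 1$ on $[1,\infty)$, and set
\[
H_{(s,t,x)} := (1-\chi(s))H_0(t,x) + \chi(s)H_1(t,x),\qquad \mathfrak{H} := H\,dt.
\]
Then $\partial_s H = \chi'(s)(H_1-H_0)\geq 0$, which gives the monotonicity condition~\eqref{EqMonHom}, while the Poisson condition~\eqref{EqPoisson} is automatic because $\mathfrak{H}$ has a single $dt$-factor. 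On the ends $\{s\leq-1\}$ and $\{s\geq 1\}$ the datum is translation-invariant and equals $(H_0,J_0)$ resp.\ $(H_1,J_1)$, hence i-bounded by hypothesis.

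For $J$, extract taming data for the Gromov metrics $g_{J_{H_i}}$ (whose existence is encoded in the i-boundedness of the endpoint Floer data) and, as in the inductive selection in the proof of Theorem~\ref{tmWeakTameCont}, pass to subsequences so that the resulting shells $V_n^0, V_n^1\subset M$ are pairwise disjoint and each family still contributes divergently in the sense of~\eqref{Eqtame}. Then choose a smooth homotopy $\{J_s\}_{s\in\R}$ in $\mathcal{J}(M,\omega)$ with $J_s=J_0$ for $s\leq -1$, $J_s=J_1$ for $s\geq 1$, and additionally $J_s\equiv J_0$ on $\bigcup_n V_n^0$ for every $s\leq 1/3$ and $J_s\equiv J_1$ on $\bigcup_n V_n^1$ for every $s\geq -1/3$. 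Such a homotopy is written down directly using bump functions in $s$ together with cutoffs supported near each $V_n^i$.

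Third, verify i-boundedness of $F=(\mathfrak{H},J)$. Cover $\R\times S^1$ by $U_-=(-\infty,2/3)\times S^1$ and $U_+=(-2/3,\infty)\times S^1$. On $U_-$, the family $J_s$ agrees with $J_0$ on each $V_n^0$ and only the Hamiltonian part varies with $s$; since $X_{H_{(s,\cdot)}}$ is a pointwise convex combination of $X_{H_0}$ and $X_{H_1}$, its norm is a priori bounded on the relatively compact set $V_n^0$. Lemma~\ref{lmQuasHam} then shows that the Gromov metric $g_{J_F}$ on $\overline{U_-}\times V_n^0$ is quasi-isometric to the product of the flat cylinder metric with $g_{J_{H_0}}|_{V_n^0}$, and the taming data of $g_{J_{H_0}}$ transports to taming data for $g_{J_F}$ on $\overline{U_-}\times M$. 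The argument on $U_+$ is symmetric, so $F$ is i-bounded in the sense of Definition~\ref{dfFloDataWbounded}.

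Contractibility, together with the parametric uniform-i-boundedness strengthening, is obtained by literally repeating the parametric zig-zag construction at the end of the proof of Theorem~\ref{tmWeakTameCont}: given two admissible $k$-parameter families of homotopies, cover the parameter cube by small open cubes on which the taming data of the endpoint Floer data can be chosen fixed, produce disjoint shells $V_n^{i,c}\subset M$ indexed by $(i,c,n)$, and interpolate by a $(k+1)$-parameter family whose $J$-part remains constant on the appropriate shells over the appropriate half of the $(k+1)$-cube. The main technical obstacle, as in Theorem~\ref{tmWeakTameCont}, is checking that the zig-zag preserves i-boundedness of the Gromov metric rather than only of the fiber metric $g_{J_i}$; this is exactly what Lemma~\ref{lmQuasHam} together with the pointwise boundedness of $\|X_{H_{(s,t)}}\|$ on each relatively compact shell buys us.
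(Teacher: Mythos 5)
The critical gap is in your third step. You use the global convex interpolation $H_s = (1-\chi(s))H_0 + \chi(s)H_1$ and then zig-zag only the almost complex structure $J$; consequently, on the shells $V_n^0$ (for $s$ in the left half of the parameter interval) the Gromov metric you actually have is $g_{J_0,H_s}$ with $\chi(s)\in(0,1)$, not $g_{J_0,H_0}$. You invoke Lemma~\ref{lmQuasHam} to relate the two, but the quasi-isometry constant it produces on the shell $V_n^0$ is governed by $\sup_{V_n^0}\|X_{H_1-H_0}\|_{g_{J_0}}$, which is finite by compactness but is \emph{not} controlled by the i-boundedness of either endpoint datum: the $V_n^0$ are chosen disjoint from the $V_n^1$ and lie in regions far out where neither taming datum constrains the size of $X_{H_1-H_0}$. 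Definition~\ref{dfIntBounded} does permit quasi-isometry constants $C_n$ that grow, but only subject to the divergence condition $\sum_n 1/(C_n a_n)^2=\infty$; your argument neither tracks these constants nor verifies this sum, and for arbitrary dissipative endpoints $(H_0,J_0)\leq(H_1,J_1)$ the $C_n$ can a priori grow fast enough to make the series converge. The remark at the end of your proposal concedes this is ``the main technical obstacle'' and offers Lemma~\ref{lmQuasHam} as the resolution, but that lemma gives a per-shell bound with no control as $n\to\infty$, which is exactly what the sum condition requires.

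The paper sidesteps this by zig-zagging the Hamiltonian together with $J$: the interpolating pair $(H_s,J_s)$ is required to equal $(H_0,J_0)$ \emph{exactly} (not merely $J_s=J_0$) on the shells $V_n^0$ for $s\in[0,2/3]$, and to equal $(H_1,J_1)$ on $V_n^1$ for $s\in[1/3,1]$. Because the Hamiltonian only moves off the shells, the Gromov metric on each shell is literally the endpoint Gromov metric, so the taming data transfer with $C_n=1$ and there is nothing further to estimate. Monotonicity is unaffected: at a point $x\in \bigcup_n V_n^0$ the function $s\mapsto H_s(x)$ is held at $H_0(x)$ on $[0,2/3]$ and then increases to $H_1(x)$, and symmetrically on $\bigcup_n V_n^1$; this is nondecreasing in $s$ throughout. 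To repair your argument, replace the global cutoff interpolation of $H$ by a monotone homotopy that is frozen on the shells over the appropriate parameter subintervals, matching the freezing you already perform on $J$; the rest of your outline (the parametric version for contractibility included) then goes through verbatim.
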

\begin{proof}
To conform with definition \ref{dfFloDataWbounded}, it suffices to produce  an almost complex structure on $[0,1]\times \R/\Z\times M $ of the form $J_{H_s}$ for some $(H_s,J_s)$ such that the following are satisfied
\begin{itemize}
\item $\partial_s J_{H_s}$ vanishes identically near the boundary of $[0,1]\times \R/\Z\times M $, and thus extends to an almost complex structure on $\R\times \R/\Z\times M $ interpolating between $J_{H_0}$ and $J_{H_1}$. We continue to denote this extended almost complex structure by $J_{H_s}$. 
\item Denoting by $\pi: \R\times \R/\Z\times M $ the projection to $\R\times \R/\Z$, we have that the restriction of $J_{H_s}$ to each of $\pi^{-1}((1/3,\infty)\times \R/\Z)$ and to $\pi^{-1}((-\infty,2/3)\times \R/\Z)$ is intermittently bounded relative to $\pi$. 
\item $\partial_sH_s\geq 0.$
\end{itemize}
Other than the last condition, the construction would be the same as in the proof of Theorem~\ref{tmWeakTameCont}. We show that the monotonicity requirement  does not affect the proof of Theorem~\ref{tmWeakTameCont}. For simplicity, assume $H_i$ is time independent. As in the proof of Theorem~\ref{tmWeakTameCont} fix two disjoint open sets $V_1,V_2\subset M$ such that there is taming data for $J_{H_i}$ supported in $[0,1]\times \R/\Z\times V_i$ for $i=0,1$. We may assume that each of the $V_i$ is a disjoint union of pre-compact sets. Let $\chi: M\to[0,1]$ be a function which equals $0$ on $V_0$ and $1$ on $V_1$. Let $f:[0,1]\to[0,1]$ be a monotone function which is identically $0$ near $0$ and identically $1$ on $[1/3,1]$. Let $g:M\times [0,1]\to[0,1]$ be defined by
\[
g(x,s)= f(1-s)f(s)\chi(x)+1-f(1-s).
\]
Then $g$ is monotone increasing in $s$, identically $0$ for all $x$ when $s$ is near $0$ and identically $1$ for all $x$ when $s$ is near $1$. Take $H_s=g(x,s)H_1+(1-g(x,s))H_0$. Then $H_s$ is also monotone increasing in $s$. Moreover, $H$ is fixed and equal to $H_0$ on $[0,2/3]\times V_0$ and to $H_1$ on $[1/3,1]\times V_1$. Let $J_s$ be any homotopy which is fixed and equal to $J_0$ on $[0,2/3]$  and to $J_1$ on $[1/3,1]$. Then $J_{H_s}$ is i-bounded since it coincides with $J_{H_0}$ on $[0,2/3]\times V_0$ and with $J_{H_1}$ on  $[1/3,1]\times V_1$. Contractibility of the set of all such homotopies is similar.
\end{proof}

\begin{proof}[Proof of Theorem \ref{TmMain}]
Let $\mathcal{F}_d(M)$ be as in Definition~\ref{dfAdmSiAb}. We verify that $\mathcal{F}_d(M)$ has all the required properties. Namely, that it is a Floer system and that it satisfies the properties guaranteed in Theorem \ref{TmMain}.
\paragraph{\textbf{Well behavedness}}
This follows from the definition and Theorem~\ref{tmFloSolDiamEst}.
\paragraph{\textbf{Condition \ref{dfFloerSys:it0}}}We need to show that if $(H,J)$ is dissipative, so is a nearby $(H',J')$. The most involved case is when $i=2$ which we treat. Near each vertex, we have fixed Floer data so by definition we can pick an open neighborhood which maintains RLD-ness for all three of these.  The property of being u.i.b. depends on the metric only up to quasi-isometry which is preserved for any uniform open neighborhood.
\paragraph{\textbf{Condition \ref{dfFloerSys:it1}}} This follows by definition.
\paragraph{\textbf{Condition \ref{dfFloerSys:it2}}} This is just Lemma~\ref{lmAdmConn}.
\paragraph{\textbf{Condition \ref{dfFloerSys:it3}}}\label{tmMain:it3} Pick an $R_0>0$ for which $\sigma^{-1}(\supp\partial_sF')\subset [-R_0,R_0]$, and similarly for $F''$. For any $R>0$ define the homotopy  $I_R=F'\#_R F''$ by
    \[
    I_{R,s}:=\begin{cases} F'_{\sigma(0)},\qquad\qquad &s\leq -R-R_0,\\
                                    F'_{\sigma(s+R)},\qquad\qquad &s\in[-R-R_0,-R],\\
                                    F'_{\sigma(1)}=F''_{\sigma(0)},\qquad\qquad &s\in[-R,R],\\
                                    F''_{\sigma(s-R)},\qquad\qquad &s\in[R,R+R_0],\\
                                    F''_{\sigma(1)},\qquad\qquad &s\geq R+R_0.\end{cases}
    \]
    Define $G$ by $G_{\psi(s,\tau)}:=I_{\rho(\tau),s}$. It is immediate that $G\in\mathcal{F}_d^{(2)}.$
\paragraph{\textbf{Condition \ref{dfFloerSys:it4}}}First merge $F_{01}$ with $F_{12}$ as in the previous part. Then homotope to $F_{02}$ relying on contractibility in Lemma~\ref{lmAdmConn}.
\paragraph{\textbf{Invariance}} Evident from the definition.
\paragraph{\textbf{Bi-directedness}}This follows by Theorem \ref{tmDiaFinCofin}.

\end{proof}
\subsection{Transversality and control of bubbling}\label{subsecBubbCont}
\begin{df}\label{Flreg}
Denote by $\mathcal{J}_{reg}$ the set of almost complex structures for which all moduli spaces
\[
\mathcal{M}^*(A;J),
\]
of non-multiply covered $J$-holomorphic spheres representing any class $A\in H^2(M;\Z)$ are smooth manifolds of expected dimension. For $J\in\mathcal{J}_{reg},$ let $\mathcal{H}_{reg}(J)$ denote that set of all non-degenerate Hamiltonians satisfying the following conditions
\begin{enumerate}
    \item The linearization $D_u$ of Floer's equation at a Floer trajectory $u$ is surjective for all $(H,J)$-Floer trajectories.
    \item
         No Floer trajectory with index difference $\leq 2$ intersects a $J$-holomorphic sphere of Chern number $0.$
    \item No periodic orbit of $H$ intersects a $J$-holomorphic sphere of Chern number $\leq1$.
\end{enumerate}
\end{df}
Write
\[
\mathcal{F}^{(0)}_{reg}:=\cup_{J\in\mathcal{J}_{reg}}\mathcal{H}_{reg}(J)\times\{J\}.
\]
Recall, $M$ is said to be \textit{semi-positive} if for any class $A\in\pi_2(M)$ we have
 \[
    3-n\leq c_1(A)<0\qquad\Rightarrow \qquad \omega(A)\leq 0.
 \]
 Observe that if $M$ is monotone or Calabi-Yau, then it is semi-positive.
\begin{tm}\label{NondegDiss}
Suppose $M$ is semi-positive. Let $(H,J)\in\mathcal{D}^{(0)}$ and let $V\subset\mathcal{F}^{(0)}_{w.b.}$ be an open neighborhood of $(H,J)$ in $C^{\infty}\cap\mathcal{F}_{w.b.}$.  Shrinking $V$, write $V=V_1\times V_2\subset\mathcal{H}\times\mathcal{J}$. The set $\mathcal{J}_{reg}$ is of second category in $V_2$ and for each $J\in\mathcal{J}_{reg}\cap V_2$, the set $\mathcal{H}_{reg}(J)$ is of second category in $V_1$.

\end{tm}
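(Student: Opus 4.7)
The plan is to reduce the claim to the standard transversality arguments of Floer-Hofer-Salamon and McDuff-Salamon, using the fact that everything in sight is compactly contained by virtue of dissipativity. Shrinking $V$ if necessary, by the openness property \ref{dfFloerSys:it0} of Floer systems established in the proof of Theorem~\ref{TmMain}, every element of $V$ is dissipative, so Theorems~\ref{tmDiamEst} and~\ref{tmFloSolDiamEst} confine all relevant moduli spaces of bounded energy to a priori compact subsets of $M$ (depending only on the energy, the asymptotic orbits, and the taming data). This means the obstacle peculiar to the open manifold setting---possible escape to infinity under perturbation---is absent in $V$, and one can work as in the closed case after restricting to a sufficiently large compact set.

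For $\mathcal{J}_{reg}$, I would fix a class $A\in H_2(M;\Z)$ and consider the universal moduli space of somewhere injective $J$-holomorphic spheres in class $A$ with $J\in V_2$. Since $\omega(A)$ bounds the energy, the image of any such sphere lies in a fixed compact $K(A)$, and on $K(A)$ the standard argument shows the universal moduli space is a smooth Banach manifold; Sard-Smale produces a second-category subset of $V_2$ of regular $J$ for this class. Taking the countable intersection over $A$ gives $\mathcal{J}_{reg}\cap V_2$.

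Fixing $J\in\mathcal{J}_{reg}\cap V_2$, I handle $\mathcal{H}_{reg}(J)$ condition by condition. Non-degeneracy of the $1$-periodic orbits is a $C^2$-dense open condition on $H$ and obviously preserved under small perturbations, so by the openness of dissipativity it is a dense $G_\delta$ in $V_1$. Surjectivity of $D_u$ at every Floer trajectory is obtained via the universal moduli space of somewhere injective $(H,J)$-Floer trajectories connecting a given pair of orbits, exactly as in Floer-Hofer-Salamon: Theorem~\ref{tmFloSolDiamEst} places all such trajectories with bounded energy in a fixed compact set, and perturbing $H$ with compactly supported variations suffices to achieve surjectivity there; Sard-Smale applied stratum by stratum over a countable exhaustion by energy windows and orbit pairs yields another dense $G_\delta$. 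The two avoidance conditions follow from the semi-positivity hypothesis by standard dimension counting: the stratum of Floer trajectories (respectively, periodic orbits) intersecting the image of a simple $J$-holomorphic sphere of Chern number $0$ (respectively, Chern number $\leq 1$) has negative virtual dimension when the Floer index difference is $\leq 2$ (respectively, at a single orbit), so generic $H$ avoids these strata; multiply covered spheres are parametrized by their underlying simples and contribute the same image, so the count remains valid. Intersecting all these $G_\delta$ sets finishes the argument.

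The main obstacle will be bookkeeping the various countable intersections and verifying that the Floer-Hofer-Salamon universal moduli space argument transports faithfully to the present setting. This is guaranteed by dissipativity of $V$: the perturbations needed can be chosen compactly supported inside the compact sets provided by Theorem~\ref{tmFloSolDiamEst}, so tameness is automatically preserved and no genuinely new analytic difficulty arises beyond the closed case.
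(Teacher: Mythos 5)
Your argument is correct and follows essentially the same route as the paper: the paper's proof is a one-liner asserting that since tameness confines every moduli space of bounded energy meeting a compact $K$ to an a priori ball $B_R(K)$, the transversality and avoidance results reduce to the closed case of Hofer--Salamon; you have simply unpacked what that reduction entails. The only point worth making explicit, which your write-up leaves implicit for the $\mathcal{J}$-perturbations, is why a second-category condition depending only on the restriction of $J$ to a compact set $B_R(K)$ remains second category in the open set $V_2$ of almost complex structures on all of $M$: one checks that the countable family of open dense sets produced by Sard--Smale can be taken to be ``saturated'' under modification of $J$ outside $B_R(K)$, after which density and openness in $V_2$ follow. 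This is exactly the mechanism the paper relies on, and you invoke it correctly (if tersely) when you say the perturbations can be chosen compactly supported inside the sets provided by Theorem~\ref{tmFloSolDiamEst}.
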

\begin{rem}
Theorem \ref{NondegDiss} is formulated for time independent almost complex structures following \cite{HoferSalamon}. The same claim holds for time dependent Hamiltonians after appropriately modifying the regularity requirement. However, if we wish to construct homotopies of such we need to restrict to the case where $M$ is monotone or Calabi-Yau. See remark \ref{rmSPVSSM} below. One reason why one would wish to work with time dependent $J$ is that once $H$ has non degenerate periodic orbits, for generic Floer data of the form $(H,J)$ with $J$ time dependent the moduli space of smooth Floer trajectories is a smooth manifold of the expected dimension. See  Theorem 5.1 in \cite{FloerHoferSalamon}. It then follows easily that generic such $(H,J)$ are regular.
\end{rem}
\begin{proof}
Since all the moduli spaces for all the Floer data in $V$ intersecting a compact set $K$ and possessing energy $E$ are contained, for some $R<\infty$, in $B_R(K)$, this follows from the compact case. For the compact case see, e.g., \cite{HoferSalamon}.
\end{proof}

Suppose for $i=0,1,$ we have well behaved elements $F_i\in\mathcal{F}^{(0)}_{reg}$ and let
\[
    F_{01}:=\{F_s=(H_s,J_s)\}_{s\in\Delta^1},
\]
be a well behaved homotopy between them.
\begin{df}\label{horeg}
Call such a homotopy regular if the following hold.
\begin{enumerate}
    \item For any $A\in H^2(M;\Z)$ write
    \[
    \mathcal{M}^*(A;\{J_{s}\}):=\{(s,u)|u\in\mathcal{M}^*(A;J_{s,t})\}.
    \]
   Then $\mathcal{M}^*(A;\{J_{s}\})$ is smooth and of the expected dimension.
   \item For any $\tilde{\gamma}_1$ and $\tilde{\gamma}_2$ the moduli spaces
    \[
    \mathcal{M}(\tilde{\gamma}_1,\tilde{\gamma}_2,F=\{H_{s},J_{s}\})
    \]
of nontrivial continuation trajectories are smooth and of the expected dimension.
    \item There is no continuation trajectory $u$ of index $0$ or $1$ for which there is a point $(s,t)$ such that $u(s,t)$ is in the image of a $J_{s}$-holomorphic sphere of Chern number $0$.
\end{enumerate}
Similarly, let $F\in\mathcal{F}^{(2)}$ with edges corresponding to regular homotopies. For such an $F$, $\Delta^{(2)}$ parameterizes a family of time dependent Floer data $(H,J)$. Write $(H_{s,\lambda},J_{s,\lambda}):=F_{\psi(s,\lambda)}$.

We say that $F$ is regular if
\begin{enumerate}
\item The moduli space  $\mathcal{M}^*(A;\{J_{s,\lambda}\})$ is smooth of the expected dimension.
\item The corresponding family $\{u_\lambda\}$ of Floer solutions is smooth of the expected dimension.
\item There is no $\lambda$ for which there is a point $(s,t)$ and a continuation trajectory $u_\lambda$ of index $-1$ or $0$ such that $u_{\lambda}(s,t)$ intersects a $J_{s,\lambda}$-holomorphic sphere of Chern number $0$.
\end{enumerate}
Denote by $\mathcal{F}^{(i)}_{reg}$, $i=1,2,$ respectively, the regular 1- and 2-simplices of Floer data.
\end{df}
For $i=1,2$ let $F^i\in\mathcal{F}^{(i)}_{w.b.}$ and let $V(F^i)\subset\mathcal{F}^{(i)}_{w.b.}$ be an open neighborhood. Let $V_2(F^i)\subset V(F^i)$ be the set of elements whose $H$ component coincides with that of $F^i$.

To achieve transversality in the definition of continuation maps, we wish to avoid perturbing $H_{01}$ since it is required to satisfy a monotonicity condition. Thus we will perturb $J_{01}$ in an $s$ dependent manner.

\begin{tm}\label{tmHoreg}
 Suppose $M$ is monotone or Calabi-Yau. Then $\mathcal{F}^{(i)}_{reg}\cap V_2(F^i)$ is of second category in $V_2(F^i)$ for $i=1,2$.
\end{tm}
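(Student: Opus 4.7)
The plan is to reduce the statement to a standard Sard-Smale argument in the compact setting by exploiting the a priori compactness granted by tameness. Fix $F^i\in\mathcal{F}^{(i)}_{tame}$ and shrink $V_2(F^i)$ if necessary to an open $C^\infty$-neighborhood $V'$ on which tameness holds uniformly; this is possible because tameness is $C^1\times C^0$-open (clause~\ref{dfFloerSys:it0} of Definition~\ref{dfFloerSys}). By Theorem~\ref{tmFloSolDiamEst} combined with the robustness clause of Theorem~\ref{tmDiamEst}, for every compact $K\subset M$ and every energy bound $E>0$ there is a compact $K'\subset M$ such that, for every element of $V'$, every parametrized Floer trajectory and every relevant holomorphic sphere of energy at most $E$ meeting $K$ lies in $K'$, and this remains true under modifications of $J$ outside $K'$. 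Thus perturbations of $J$ supported in $K'$ control the moduli spaces of interest without disturbing tameness, and we may work as in the compact case.

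With this reduction in hand, I would first address condition~(a) of Definition~\ref{horeg}. The universal moduli space of simple $J_{s,t}$-holomorphic (respectively $J_{s,t,\lambda}$-holomorphic) spheres of class $A$ whose images meet $K$ is cut out by a Fredholm section of a Banach bundle over the space of $(s,t)$- (respectively $(s,t,\lambda)$-)dependent perturbations of $J$ supported in $K'$; projection to the parameter space is Fredholm, so Sard-Smale yields a residual set of regular values. A countable intersection over an exhaustion of $M$ by compacts, over energy bounds, and over spherical classes $A$ produces a residual subset of $V_2(F^i)$ on which~(a) holds. Condition~(b), regularity of nontrivial parametrized Floer trajectories, is proved identically, using the somewhere-injectivity of nontrivial Floer trajectories away from their stationary pieces: after localization to $K'$ the argument is the usual one from \cite{HoferSalamon}.

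Condition~(c), the avoidance of intersections with Chern-zero bubbles, is a codimension argument built on the previous two. Under semi-positivity, simple Chern-zero spheres sweep a locus of codimension at least two in $M$. The fibre product of (Floer trajectory with a marked point) with (sphere with a marked point) under the evaluation map to $M\times M$ is made transverse to the diagonal by a further generic perturbation of $J$ supported in $K'$, so the expected dimension of this fibre product is negative for Floer trajectories of index $\leq 1$ in the $i=1$ case; in the $i=2$ case the extra parameter $\lambda$ contributes one dimension, which is exactly offset by the stronger bound (index $\leq 0$) built into Definition~\ref{horeg}. Intersecting with the residual sets coming from~(a) and~(b) gives the desired residual subset.

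The main technical point to verify is that we may vary only $J$ and not $H$, because $H$ is constrained by the monotonicity $\partial_s H\geq 0$ built into $\mathcal{F}^{(1)}$. This restriction turns out not to be binding: on any nontrivial parametrized trajectory $u$, the vector $\partial_t u-X_H(u)$ is nonzero on a dense set, and so $(s,t)$-dependent (respectively $(s,t,\lambda)$-dependent) variations of $J$ already surject onto the cokernel of the linearization, exactly as in the classical proofs \cite{HoferSalamon,Salamon1999}. The only genuinely new input compared with the closed case is thus packaged in the opening reduction via tameness; everything that follows is a parametric adaptation of the standard machinery.
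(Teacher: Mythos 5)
Your proposal is correct and follows essentially the same strategy as the paper: reduce to the compact case by using tameness to confine all relevant Floer trajectories and holomorphic spheres to an a priori compact set (as in the proof of Theorem~\ref{NondegDiss}), then run a standard parametric Sard--Smale argument perturbing only $J$, with the key point being that $\partial_s u = -J(\partial_t u - X_H) \neq 0$ on a dense set so that $(s,t)$- (or $(s,t,\lambda)$-) dependent perturbations of $J$ surject onto the cokernel. The paper's published proof is terser — it outsources the smoothness of the parametrized moduli spaces to \S 16 of \cite{Ritter13} and packages the Chern-zero non-intersection claim as the statement that the evaluation map from the universal moduli space is a submersion (citing Lemma 3.4.3 of \cite{MS2}) — but this is exactly your fibre-product-transverse-to-the-diagonal formulation in different notation, so there is no substantive difference. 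One small inaccuracy: you cite clause~(a) of Definition~\ref{dfFloerSys} for the openness of tameness in the $C^1\times C^0$ topology, but that clause is a requirement imposed on a Floer system $\mathcal{D}$, not a proved property of $\mathcal{F}^{(i)}_{tame}$ itself; the uniform compactness you need is instead the content of the observation made in the proof of Theorem~\ref{NondegDiss}, which is what lets you choose $K'$ uniformly over $V'$.
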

\begin{rem}\label{rmSPVSSM}
The strengthening of the assumption relative to Theorem \ref{NondegDiss} is required in the case $i=2$. Indeed, in this case, the assumption of semi-positivity does not rule out the possibility that for an isolated $(s,\lambda)$ there is a $J_{s,\lambda}$-holomorphic sphere with negative Chern number. Once such a sphere is present, its multiple covers interact with Floer trajectories in a non transverse way. Invariance of Floer cohomology under homotopies of $J$ can still be established for the semi-positive case by construction chain homotopies for truncated Floer homologies since regularity for that case is easily seen to be an open condition. We do not pursue this here.
\end{rem}
\begin{proof}
We need to verify that we can achieve regularity even though we avoid perturbing $H$. For the generic smoothness of the moduli spaces see \S16 in \cite{Ritter13}. The non-intersection property is a variation of the corresponding claim in \cite{HoferSalamon}. Namely, for $i=1$  to show the is to show that the universal moduli spaces
\[
\mathcal{N}:=\{(s,z ,F=(H_{s},J_{s}),u_1,u_2|F\in V_2, u_1(z)=u_2(s,t)\}
\]
is a smooth separable Banach space of the expected codimension. Here $u_1\in\mathcal{M}^*(J_{s,t})$ and $u_2$ is an $F$ continuation trajectory. For this it suffices that the evaluation map
\[
\R\times S^1\times S^2 \times\{u\in\mathcal{M}^*(J_{s})|J_{s}\in V_2\}\to \R\times S^1\times M
\]
defined as
\[
(s,t,z,u)\mapsto (s,t,u(z)),
\]
is a submersion. For this, apply Lemma 3.4.3. from \cite{MS2}. For $i=2$ the argument is similar.
\end{proof}

\subsection{Floer's Theorem}\label{SecDefHam}
 \textit{In this section we assume throughout that $M$ is monotone or Calabi-Yau. Moreover, we assume $M$ is connected.}

Denote by $\mathcal{L}M$ the free loop space $C^{\infty}(\R/\Z,M)$.  Let $I_{\omega},I_c:\pi_1(\mathcal{L}M)\to\R$ be given by integrating $\omega$ and the Chern class respectively. Denote by   $\widetilde{\mathcal{L}M}$ the \textbf{Floer-Novikov covering} of $\mathcal{L}M$. That is, the Abelian covering space of $\mathcal{L}M$ for which $i_*\pi_1(\widetilde{\mathcal{L}M})=\ker I_\omega\cap \ker I_c$ where $i_*:\pi_1(\widetilde{\mathcal{L}M})\hookrightarrow\pi_1(\mathcal{L}M)$ is the natural inclusion. Explicitly, the space $\widetilde{\mathcal{L}M}$ is constructed as follows. For each component $\mathcal{L}M_a$ of $\mathcal{L}M$ choose a base loop $\gamma_a$. Then $\widetilde{\mathcal{L}M_a}$  consists of equivalence classes of pairs $(\gamma,A)$ such that $\gamma\in\mathcal{L}M_a$, $A$ is a homotopy class of paths in $\mathcal{L}M_a$ starting at $\gamma_a$ and ending at $\gamma$, and the equivalence relation is $(\gamma,A_1)\sim(\gamma,A_2)$ if and only if $\omega(A_1)=\omega(A_2)$ and $c_1(A_1)=c_1(A_2).$

For a smooth function $H\in C^{\infty}(\R/\Z\times M)$ and for any $t\in \R/\Z$ denote by $X_{H_t}$ its Hamiltonian vector field. This is the unique vector field satisfying $dH_t(\cdot)=\omega(X_{H_t},\cdot).$
Define a functional $\mathcal{A}_H:\widetilde{\mathcal{L}M}\to \R$ by
\[
\mathcal{A}_{H}([\gamma,A]):=-\omega(A)-\int_0^{2\pi}H(\gamma(t))dt.
\]
Note that this functional depends on the choice of base loops $\gamma_a$ for the connected component $a\in\pi_0(\mathcal{L}M).$

Denote by $\mathcal{P}(H)\subset\mathcal{L}M$ the set of $1$-periodic orbits of $X_H$. Denoting by
\[
\pi:\widetilde{\mathcal{L}M}\to\mathcal{L}M,
\]
the covering map, let
\[
\widetilde{\mathcal{P}(H)}=\pi^{-1}(\mathcal{P}(H)).
\]
This is the same as the critical point set of $\mathcal{A}_H$.

We define an index
\[
i_{RS}:\widetilde{\mathcal{P}(H)}\to\mathbb{Z},
\]
as follows. For each homotopy class $a\in \pi_0(\mathcal{L}M)$ fix  a trivialization of $\gamma_a^*TM$. Then if $\tilde{\gamma}=(\gamma,A), $ trivialize $\gamma^*TM$ along $A$ by extending the existing trivialization from $\gamma_a$. With respect to this trivialization, the linearization $t\mapsto D\psi_{t,\gamma(t)}$ of the flow along $\gamma$ is a path of symplectic matrices to which is associated its Robbin-Salamon index \cite{RobbinSalamon93}. We take $i_{RS}(\tilde{\gamma})$ to be the Robbin-Salamon index in this trivialization.  Note that $i_{RS}$ is independent of choices up to an integer shift $n_a$ for each $a\in \pi_0(\mathcal{L}M).$


For each homotopy class $a\in\pi_1(M)$  let $\Gamma_a\subset \R\times 2\Z$ be the image of $\pi_1(\mathcal{L}M_a)$ under $I_\omega\times I_c$.  We identify elements of $\Gamma_a$ with equivalence classes in $\pi_1(\mathcal{L}M_a)$  modulo $\ker I_\omega\cap\ker I_c$. For any ring $R$, define the Novikov ring $\Lambda_{R,\Gamma_a}$ by the set of formal sums
\[
\sum_{A\in\Gamma_a}\lambda_AT^{I_\omega(A)}e^{2I_c(A)},\quad \lambda_A\in R\]
which satisfy for each constant $c$ that
\[
\#\{A\in\Gamma_a|\lambda_A\neq 0,\quad\omega(A)<c\}<\infty.
\]
We have an action of $\Gamma_a$ on $\widetilde{\mathcal{L}M}_a$ by
\[
A\cdot[x,B]:=[x,A\#B].
\]
This is a covering action, so it restricts to an action on $\widetilde{\mathcal{P}H}$.

Fix a Floer system $\mathcal{D}$. Write $\mathcal{D}_{reg}:=\mathcal{F}_{reg}\cap\mathcal{D}$ and let $F=(H,J)\in\mathcal{D}^{(0)}_{reg}$. We define the Floer chain complex $CF^*(H,J;R)$ as the set of formal sums
\[
\sum_{\tilde{x}\in\widetilde{\mathcal{P}(H)}}\lambda_{\tilde{x}}\langle\tilde{x}\rangle, \quad \lambda_{\tilde{x}}\in R
\]
satisfying for each constant $c$ that
\begin{equation}\label{eqFinCond}
\#\{\tilde{x}\in\widetilde{\mathcal{P}(H)}|\lambda_{\tilde{x}}\neq 0,\quad\mathcal{A}_H(\tilde{x})>c\}<\infty.
\end{equation}
$CF^*(H,J;R)$ is a graded vector space over $R$ with grading given by
\begin{equation}\label{eqCZGrading}
i(\tilde{x}):=i_{RS}(\tilde{x})+n.
\end{equation}
 Here $n=\frac12\dim M$. $CF^*(H,J;R)$ can be considered as a non-Archimedean Banach space over $R$ with its trivial valuation. The norm on  $CF^*(H,J;R)$ for a linear combination of generators is given by
\begin{equation}\label{NonArchNormDef}
\left\|\sum_ia_i\tilde{\gamma}_i\right\|:=\max_{\{i|a_i\neq 0\}}e^{\mathcal{A}_H(\tilde{\gamma}_i)}.
\end{equation}
For each homotopy class $a$, the vector space $CF^{*,a}(H,J;R)$ generated by $\widetilde{\mathcal{P}_a(H)}$ is a graded Banach module over the Novikov ring $\Lambda_{R,\Gamma_a}$  via the action of $\Gamma_a$ on $\widetilde{\mathcal{P}_a(H)}$. The set $\mathcal{P}_a(H)$ non-canonically defines a basis of $CF^{*,a}(H,J)$ over $\Lambda_{R,\Gamma_a}$ by choosing a lift to $\widetilde{\mathcal{P}_a(H)}$.

Let $\Gamma\subset\R\times\Z$ be a subgroup. Denote by $\Lambda_{R,\Gamma}$ the ring
\[
\Lambda_{R,\Gamma}:=\left\{\sum_i a_iT^{\lambda_i}e^{2n_i}|(\lambda_i,n_i)\in\Gamma, a_i\in R,\lim_{i\to\infty}\lambda_i=\infty\right\}.
\]
Let $\Gamma_{\omega}\subset\R\times\Z$ be the subgroup generated by $\cup_{a\in\pi_1(M)}\Gamma_a$. Write $\Lambda_{R,\omega}:=\Lambda_{R,\Gamma_{\omega}}$ and
$\Lambda_R:=\Lambda_{R,\R\times\Z}$. Assume $R$ is a field. $\Lambda_R$ is referred to as the \textbf{universal Novikov field over $R$}. Strictly speaking $\Lambda_{R,\R\times\Z}$ is only a graded field. That is, only homogeneous elements with respect to the grading induced by projection $\R\times\Z\to\Z$ are invertible. Henceforth let $\K$ be either $\Lambda_R$ or $\Lambda_{R,\omega}$. Note that $\K$ carries a non-Archimedean norm induced from $\|T^\lambda\|:=e^{-\lambda}$. That is, $\val(T^{\lambda})=-\lambda$ \footnote{Caution: in many texts the convention is $\|\cdot\|=e^{-\val(\cdot)}$.}.

Let
\[
CF^*(H,J;\K):=\oplus_{a\in\pi_1(M)} CF^{*,a}(H,J;\Lambda_{R,\Gamma_a})\hat{\otimes}_{\Lambda_{R,\Gamma_a}}\K,
\]
where the hat denotes completion with respect to the induced valuation. 
 \begin{rem}\label{remAltApp}
The approach we follow here to Floer theory over the Novikov ring is the one originally introduced by  \cite{HoferSalamon}. In the literature (compare \cite{Ritter10,RitterSmith,Varolgunes2018}) there is a slightly different construction of the Floer chain complexes over the Novikov ring where one tensors the space generated by $\mathcal{P}(H)$ with $\Lambda_R$ instead of passing to a covering space. In that version, the chain complexes do not have an action filtration nor a grading, but they do have a Novikov filtration over $\Lambda_R$.  We do not pursue the latter approach here.
\end{rem}
We define a linear operator $d$ on $CF^*(H,J;R)$ by counting Floer trajectories in the usual way. Namely, for any two elements
\[
\tilde{x}_1,\tilde{x}_2\in\widetilde{\mathcal{P}(H)}
\]
of index difference $1$ denote by $\mathcal{M}(\tilde{x}_1,\tilde{x}_2;J)$ the moduli space of Floer trajectories which at $-\infty$ are asymptotic to $\tilde{x}_1$ and at $+\infty$ to $\tilde{x}_2$ divided by the action of $\R$. By the inclusion $\mathcal{D}^{0}\subset\cF_{w.b.}^{(0)}$ and Gromov-Floer compactness, $\mathcal{M}(\tilde{x}_1,\tilde{x}_2;J)$ is compact. Since $F\in\mathcal{F}^{(0)}_{reg}$ we get that when the virtual dimension is $0$,
\[
\#\mathcal{M}(\tilde{x}_1,\tilde{x}_2;J)<\infty.
\]
We thus define
\[
d\tilde{x}_1\qquad=\sum_{\tilde{x}_2|i_{RS}(\tilde{x}_2)=i_{RS}(\tilde{x}_1)+1}\#\mathcal{M}(\tilde{x}_1,\tilde{x}_2;J)\langle \tilde{x}_2\rangle.
\]

\begin{tm}\label{tmDifWD}
The Floer boundary map $d$ is well defined and satisfies $d^2=0$.
\end{tm}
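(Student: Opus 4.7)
The plan follows the classical template for $d^2=0$ going back to Floer, but with the two novel inputs supplied by the preceding sections: tameness in the sense of Theorem~\ref{tmFloSolDiamEst} replaces the usual maximum principle or compact support assumptions, and semi-positivity together with the regularity in Definition~\ref{Flreg} keeps sphere bubbles out of codimension $\leq 2$.

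First I would verify that, for fixed $\tilde{x}_1\in\widetilde{\mathcal{P}(H)}$, the formal sum defining $d\tilde{x}_1$ lies in $CF^*(H,J;R)$, i.e.\ satisfies~\eqref{eqFinCond}. Since an $(H,J)$-Floer trajectory $u$ has geometric energy $E(u)=\mathcal{A}_H(\tilde{x}_1)-\mathcal{A}_H(\tilde{x}_2)$, requiring $\mathcal{A}_H(\tilde{x}_2)>c$ caps the energy by $\mathcal{A}_H(\tilde{x}_1)-c$. All such trajectories meet any fixed compact neighborhood $K$ of the finitely many periodic orbits of $H$ with action in $[c,\mathcal{A}_H(\tilde{x}_1)]$, so by Theorem~\ref{tmFloSolDiamEst} they are contained in a compact set $B_R(K)\subset M$ with $R=R(\mathcal{A}_H(\tilde{x}_1)-c,K)$. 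Non-degeneracy of $H$ then gives finitely many underlying orbits $x_2\in B_R(K)$, and for each such $x_2$ the lifts $\tilde{x}_2$ with nonzero coefficient correspond to a $\Gamma$-torsor of bounded $I_\omega$, hence form a finite set. Together with regularity this shows that each coefficient $\#\mathcal{M}(\tilde{x}_1,\tilde{x}_2;J)$ is a finite integer and that only finitely many contribute above action $c$.

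Next I would establish $d^2=0$ by the standard cobordism argument. For a pair $\tilde{x}_1,\tilde{x}_3$ of Robbin--Salamon index difference $2$, consider the one-dimensional moduli space $\mathcal{M}(\tilde{x}_1,\tilde{x}_3;J)/\R$. Regularity of $(H,J)\in\mathcal{F}^{(0)}_{reg}$ makes it a smooth $1$-manifold. For compactness of its natural compactification I would invoke Theorem~\ref{tmFloSolDiamEst} to confine every sequence to a fixed compact set in $M$, then apply standard elliptic bootstrapping and Gromov convergence inside that compact set. Semi-positivity combined with the three non-intersection clauses of Definition~\ref{Flreg} rules out sphere bubbling in codimension $\leq 1$: a bubble would produce a Floer trajectory of index $\leq 1$ meeting a $J$-holomorphic sphere of Chern number $\leq 1$, contradicting the assumption. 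Hence the only boundary phenomena are broken trajectories $(u_1,u_2)$ through an intermediate $\tilde{x}_2$ of intermediate index, and a standard gluing argument identifies the ends of $\mathcal{M}(\tilde{x}_1,\tilde{x}_3;J)/\R$ with such broken configurations. Counting boundary points of a compact $1$-manifold as zero in $R$ then gives $\langle d^2\tilde{x}_1,\tilde{x}_3\rangle=\sum_{\tilde{x}_2}\#\mathcal{M}(\tilde{x}_1,\tilde{x}_2)\cdot\#\mathcal{M}(\tilde{x}_2,\tilde{x}_3)=0$ for every $\tilde{x}_3$; extending this by the Novikov-type continuity established in the previous paragraph yields $d^2=0$ as operators on the completion.

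The main obstacle is the first step: in this open setting one cannot invoke a maximum principle or compactness of $M$, and the usual Novikov-type arguments rely on confining Floer trajectories by the Hamiltonian itself. The substitute here is the dissipativity package of \S\ref{Sec5}: the graph construction of \S\ref{SecGrom} turns a Floer trajectory with bounded energy into a $J_H$-holomorphic curve in $\R\times S^1\times M$ to which the $i$-bounded monotonicity estimate of Theorem~\ref{tmDiamEst} applies, and the loopwise dissipativity built into $\mathcal{F}_d^{(0)}$ rules out the only remaining escape mechanism. Once this compactness is in hand, the rest of the proof is the standard semi-positive argument, adapted to the Floer--Novikov covering so as to keep track of the action filtration.
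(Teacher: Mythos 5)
Your proposal is correct and follows essentially the same approach as the paper: tameness (which for dissipative data is supplied by Theorem~\ref{tmFloSolDiamEst}) confines Floer trajectories of bounded energy emanating from a fixed periodic orbit to an a priori compact set, which gives the finiteness condition~\eqref{eqFinCond}; then $d^2=0$ reduces to the compact case, with sphere bubbling handled as usual by semi-positivity and the regularity clauses of Definition~\ref{Flreg}. The paper states these steps more tersely (``Tameness of $F$ thus implies\ldots'' and ``follows from the compact case''), whereas you unpack them; no substantive difference.
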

\begin{proof}
We need to show that for any $\tilde{x}_1\in\widetilde{\mathcal{P}(H)}$ we have that $d\tilde{x}_2$ satisfies the finiteness condition~\eqref{eqFinCond}. For any $c,$ let
\[
A_c:=\left\{\tilde{x}_2\in\widetilde{\mathcal{P}(H)}|\mathcal{M}(\tilde{x}_1,\tilde{x}_2;J)\neq\emptyset,\quad\mathcal{A}_H(\tilde{x}_2)>c\right\}.
\]
For any $\tilde{x}\in A_c$ there is a Floer trajectory of energy at most $\mathcal{A}_H(\tilde{x}_1)-c$ connecting $\tilde{x}_1$ and $\tilde{x}_2$. Well behavedness of $F$ thus implies that there is a compact set $K\subset M$ such that any $\tilde{x}\in A_c$ is contained in $K$. The claim now follows by Gromov-Floer compactness. Thus $d$ is well defined. That $d^2=0$ follows from the compact case \cite{Salamon1999,MS2} since all Floer trajectories under a given energy level are contained in an a priori compact set.
\end{proof}

By its definition $d$ commutes with the action of $\Lambda_{R,\Gamma_a}$ and thus induces a well defined operator on $CF^*(H,J;\K)$.

\begin{tm}[\textbf{Floer's Theorem}]\label{tmFloerFunc}
Let $M$ be a monotone or Calabi-Yau symplectic manifold. Let $\mathcal{D}$ be a Floer system. Then there exists a dense subsystem $\mathcal{D}_{reg}$ such that
\begin{enumerate}
\item
for any $F=(H,J)\in \mathcal{D}^{(0)}_{reg}$ the graded filtered complex
\[
        (CF^*(H,J;\K),d),
\]
is well defined.
\item For any pair of elements $F_1\leq F_2\in  \mathcal{D}^{(0)}_{reg}$ we have that
        \[
            \mathcal{D}_{reg}^{(1)}(F_1,F_2)\neq\emptyset.
         \]
         Associated with any homotopy $F^{12}\in\mathcal{D}^{(1)}_{reg}(F_1,F_2)$  is a chain map
    \[
        f_{F^{12}}:CF^*(F^1)\to CF^*(F^2),
    \]
    defined by counting the corresponding rigid Floer solutions. If  $F_1-F_2=c$ and $F^{12}$ is of the form $F^{12}_s=(H+f(s),J)$, then $f_{F^{12}}$ is the identity.
    \item
    For any triple $F_0\leq F_1\leq F_2\in\mathcal{D}_{reg}^{(0)}$, and elements $F_{ij}\in\mathcal{D}_{reg}^{(1)}(F_i,F_j)$, the set $\mathcal{D}_{reg}^{(2)}(F_{01},F_{12},F_{02})$ is non-empty.
    Any element
    \[
    F\in\mathcal{D}_{reg}^{(2)}(F_{01},F_{12},F_{02})
    \]
    defines a chain homotopy between the map
    $f_{F_{02}}$ and the composition $f_{F_{12}}\circ f_{F_{01}}$
    by counting the corresponding rigid Floer solutions.

\end{enumerate}
\end{tm}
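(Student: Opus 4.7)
The plan is to define $\mathcal{D}_{reg}$ level by level as the intersection of $\mathcal{D}^{(i)}$ with $\mathcal{F}^{(i)}_{reg}$ of Definitions~\ref{Flreg} and~\ref{horeg}, and to verify the three assertions by combining the openness built into a Floer system (Definition~\ref{dfFloerSys}\ref{dfFloerSys:it0}), the Baire-category transversality results of Theorems~\ref{NondegDiss} and~\ref{tmHoreg}, and the a-priori diameter control of Theorem~\ref{tmFloSolDiamEst}. Density of $\mathcal{D}_{reg}$ in $\mathcal{D}$ at each level follows because a $C^1\times C^0$-open neighborhood $V$ of any $F\in\mathcal{D}^{(i)}$ lies in $\mathcal{F}_{tame}^{(i)}$, and inside $V$ the regular data form a set of second category by Theorems~\ref{NondegDiss} and~\ref{tmHoreg}. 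Non-emptiness of $\mathcal{D}^{(1)}_{reg}(F_1,F_2)$ and $\mathcal{D}^{(2)}_{reg}(F_{01},F_{12},F_{02})$ is then obtained by first producing some element of $\mathcal{D}^{(1)}(F_1,F_2)$, resp.\ $\mathcal{D}^{(2)}(\dots)$, using Definition~\ref{dfFloerSys}\ref{dfFloerSys:it2}--\ref{dfFloerSys:it4}, and then perturbing it, keeping the already-regular faces fixed, to reach $\mathcal{F}^{(i)}_{reg}$; Theorem~\ref{tmHoreg} shows such a perturbation of the almost complex structure alone suffices.

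For part~(1), that $(CF^*(H,J;\K),d)$ is a well defined graded filtered complex is precisely Theorem~\ref{tmDifWD}, the only addition being that $d$ is manifestly $\Gamma_a$-equivariant (so descends to $CF^*(H,J;\K)$) and strictly decreases the action filtration (by positivity of energy). For part~(2), given $F^{12}\in\mathcal{D}^{(1)}_{reg}(F_1,F_2)$, I define $f_{F^{12}}$ on a generator $\tilde{x}_1\in\widetilde{\mathcal{P}(H_1)}$ by
\[
f_{F^{12}}(\tilde{x}_1)=\sum_{\tilde{x}_2}\#\mathcal{M}_0(\tilde{x}_1,\tilde{x}_2;F^{12})\,\langle\tilde{x}_2\rangle,
\]
where $\mathcal{M}_0$ denotes the $0$-dimensional moduli space of continuation trajectories. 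These spaces are finite by tameness of $F^{12}$ (diameter control via Theorem~\ref{tmFloSolDiamEst}) combined with regularity; the standard monotonicity of the topological energy for a monotone homotopy (Lemma~\ref{lmTopGeoEnEst} applied to $\mathfrak{H}=H_s\,dt$ with $\partial_sH\geq 0$) gives an action inequality $\mathcal{A}_{H_2}(\tilde{x}_2)\leq\mathcal{A}_{H_1}(\tilde{x}_1)$ for every continuation trajectory, which ensures both the Novikov finiteness condition~\eqref{eqFinCond} for $f_{F^{12}}\tilde{x}_1$ and norm-decreasing behavior. The chain map identity $d\circ f_{F^{12}}=f_{F^{12}}\circ d$ follows from the standard Floer-theoretic analysis of the boundary of the $1$-dimensional component of the moduli space, whose ends are either broken continuation trajectories (giving the two sides of the chain map equation) or bubbles which are excluded by semi-positivity together with the non-intersection conditions in Definition~\ref{horeg}. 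If $F^{12}$ is of the form $(H+f(s),J)$ with $f'(s)\geq 0$ and the same $J$, then Floer's equation becomes $s$-independent up to the elementary shift $f(s)$; the only finite-energy solutions are $s$-independent at periodic orbits, which are rigid and contribute the identity.

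Part~(3) is carried out in the same spirit: for $F\in\mathcal{D}^{(2)}_{reg}(F_{01},F_{12},F_{02})$, I define
\[
K_F(\tilde{x}_0)=\sum_{\tilde{x}_2}\#\mathcal{M}_{-1}(\tilde{x}_0,\tilde{x}_2;F)\,\langle\tilde{x}_2\rangle,
\]
where $\mathcal{M}_{-1}$ is the parametrized moduli space of $F_{\psi(\cdot,\tau)}$-continuation trajectories of virtual dimension $-1$ for $\tau\in[0,1]$. Uniform i-boundedness of the family in Definition~\ref{dfAdmSiAb} yields via Theorem~\ref{tmFloSolDiamEst} a uniform diameter bound, and together with the monotone-homotopy action bound this makes $K_F$ well defined. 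The defining identity
\[
dK_F+K_Fd=f_{F_{02}}-f_{F_{12}}\circ f_{F_{01}}
\]
is read off the boundary of the $0$-dimensional parametrized moduli space: the endpoints $\tau=0$ and $\tau=1$ contribute the two sides of the equation (using the neck-stretching parameters in the construction of Definition~\ref{dfFloerSys}\ref{dfFloerSys:it3}, whose $\tau\to 1$ limit breaks into a concatenation of $F_{01}$- and $F_{12}$-trajectories and whose $\tau\to 0$ limit is an $F_{02}$-trajectory), while the interior broken ends contribute $dK_F+K_Fd$. Sphere bubbling in $1$-parameter families is again ruled out by the index condition on $J_{s,t,\lambda}$ in Definition~\ref{horeg}.

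The main obstacle throughout is compactness of the moduli spaces, which on an open tame manifold is not automatic and is the entire raison d'\^etre of the preceding machinery: the diameter estimates for dissipative Floer data (Theorem~\ref{tmFloSolDiamEst}) replace the classical maximum-principle or symplectization arguments, and regularity of the admissible Floer data is ensured without sacrificing this control by perturbing only $J$ (Theorem~\ref{tmHoreg}) in $1$- and $2$-parameter families, preserving monotonicity of $H$ that is crucial for the action bounds.
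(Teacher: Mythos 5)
Your proposal is correct and follows the paper's approach: set $\mathcal{D}_{reg}:=\mathcal{D}\cap\mathcal{F}_{reg}$, get density from Theorems~\ref{NondegDiss} and~\ref{tmHoreg}, and then reduce each assertion to the compact case by a priori diameter control. One small misattribution worth flagging: you invoke Theorem~\ref{tmFloSolDiamEst} as the source of the diameter bound, but that theorem is specific to the dissipative system $\mathcal{F}_d$; for a \emph{general} Floer system $\mathcal{D}$ as in the statement of Theorem~\ref{tmFloerFunc}, compactness follows directly from the requirement $\mathcal{D}^{(i)}\subset\mathcal{F}^{(i)}_{tame}$, i.e.\ from tameness being built into Definition~\ref{dfFloerSys} rather than proved anew — which is exactly how the paper phrases it.
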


\begin{proof}
We take as above $\mathcal{D}_{reg}:=\mathcal{D}\cap\mathcal{F}_{reg}$. By the theorems in the previous subsection, $\mathcal{D}_{reg}$ is dense in $\mathcal{D}$.
\begin{enumerate}
\item This is just Theorem~\ref{tmDifWD}.
\item
Given $\tilde{x}_i\in\widetilde{\mathcal{P}(H_i)}$ for $i=1,2$, let $\mathcal{M}(\tilde{x}_1,\tilde{x}_2;\{H_s,J_s\})$ be the moduli space of Floer solutions for the Floer data $\{H_s,J_s\}$ with $\tilde{x}_1$ and $\tilde{x}_2$ as asymptotes. For any element
\[
u\in\mathcal{M}(\tilde{x}_1,\tilde{x}_2;\{H_s,J_s\})
\]
 we have the a priori estimate
\[
E(u)\leq \mathcal{A}_H(\tilde{x}_1)-\mathcal{A}_H(\tilde{x}_2),
\]
by Lemma~\ref{lmTopGeoEnEst} with $F=H_sdt$. By the assumption that $\cD$ consists of Floer data that are well behaved as in Definition \ref{dfWellBehaved} it follows that
\[
\mathcal{M}(\tilde{x}_1,\tilde{x}_2;\{H_s,J_s\})
\]
is compact. We define the continuation map by counting the $0$-dimensional moduli space. Since action decreases along continuation maps, the finiteness condition is the same as the case of the differential. The fact that $f_{I^{12}}$ is a chain map is the same as in the compact case \cite{Salamon1999,MS2}.
\item It follows again from well behavedness that all Floer solutions under a given energy level for all the elements of the family are contained in an a priori compact set.  The claim thus follows again from the compact case.

\end{enumerate}

\end{proof}

\begin{proof}[Proof of Theorem~\ref{mainTmA}]
This follows by Definition~\ref{dfAdmSiAb} from  Theorems~\ref{TmMain}, \ref{tmBoundGradDis} and~\ref{tmFloerFunc} by passing to homology.
\end{proof}

\section{Hamiltonian Floer cohomology by approximation}\label{SecHamFloer}
\subsection{Reduced cohomology}
\begin{df}
We refer to a chain complex which is a topological vector space with continuous differential as a \textbf{topological complex}. For a topological complex $(C,d)$ the differential is in general not closed. To stay within the realm of complete Hausdorff topological vector spaces we define the \textbf{reduced cohomology} of  complete Hausdorff topological complexes$(C,d)$ by
\[
\overline{H}(C,d):=\frac{\ker d}{\overline{\im d}},
\]
where the over-line denotes the closure. For general $C^*$  we first take the Hausdorff completion and then take its reduced cohomology. Note that if $C^*$ is a Banach space then $\overline{H}(C,d)$ is itself a Banach space with the induced quotient norm which is defined by
\[
\|[a]\|:=\inf_{b\in[a]}\|b\|.
\]
\end{df}
For the ensuing discussion, we consider the vector space $CF^*(H,J;\K)$ as a vector space over $R$, forgetting the action of the Novikov parameter. For $a>0$ define $CF^*_{(-\infty,a)}(H,J;\K)$ to be the $R$-sub-complex of $CF^*(H,J;\K)$ generated by periodic orbits of action less than $a$. Define by $CF^*_{[a,b)}(H;\K)$ the quotient complex
\begin{equation}\label{eqFiltCF}
CF^*_{[a,b)}(H,J;\K):=CF^*_{(-\infty,b)}(H,J;\K)/CF^*_{(-\infty,a)}(H,J;\K).
\end{equation}
Denote by $HF^*_{[a,b)}(H,J;\K)$  the corresponding cohomology groups. These are vector spaces over $R$.
\begin{tm}
\begin{enumerate}
\item A continuous chain map between topological complexes induces a well defined map on the reduced cohomologies.
\item A nullhomotopic map induces the zero map on reduced cohomology.
\end{enumerate}
\end{tm}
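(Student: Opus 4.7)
The plan is straightforward given the definitions, since reduced cohomology is built to be functorial with respect to continuous chain maps.

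For part (a), let $f \colon (C_1^*,d_1) \to (C_2^*,d_2)$ be a continuous chain map. First I would reduce to the case where both complexes are complete and Hausdorff: a continuous map between topological vector spaces extends uniquely to a continuous map between their Hausdorff completions, and the chain map identity $f d_1 = d_2 f$ persists under the extension by continuity and density. Working with complete Hausdorff complexes, the identity $fd_1 = d_2 f$ gives $f(\ker d_1) \subset \ker d_2$. Likewise $f(\im d_1) \subset \im d_2$, and since $f$ is continuous we have
\[
f\bigl(\overline{\im d_1}\bigr) \subset \overline{f(\im d_1)} \subset \overline{\im d_2}.
\]
Therefore $f$ descends to a well-defined $R$-linear map $\overline{H}^*(C_1,d_1) \to \overline{H}^*(C_2,d_2)$.

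For part (b), suppose $f \colon C_1^* \to C_2^*$ is nullhomotopic, meaning there is a continuous map $h \colon C_1^* \to C_2^{*-1}$ with $f = d_2 h + h d_1$ (again, after extending to completions if necessary). For any cycle $x \in \ker d_1$ we have $h d_1(x) = 0$, so
\[
f(x) = d_2 h(x) \in \im d_2 \subset \overline{\im d_2}.
\]
Hence the class of $f(x)$ in $\overline{H}^*(C_2,d_2)$ vanishes, proving that $f$ induces the zero map on reduced cohomology.

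The only genuinely non-trivial point, and the one I would handle carefully, is the extension of $f$ (and $h$) to Hausdorff completions together with the persistence of the chain map and chain homotopy relations; once that is in place both statements are immediate from continuity of $f$ (respectively $h$) and the fact that continuous linear maps carry the closure of a set into the closure of its image. No analytic input beyond continuity is required.
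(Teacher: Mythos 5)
Your proof is correct and follows essentially the same route as the paper's (very terse) argument: continuity carries $\overline{\im d_1}$ into $\overline{\im d_2}$, and a chain homotopy sends cycles into $\im d_2 \subset \overline{\im d_2}$. You are just more explicit about the preliminary reduction to Hausdorff completions, which is indeed the intended reading of the paper's definition.
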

\begin{proof}
For the first assertion, continuity implies that $\overline{\im d}$ is mapped into $\overline{\im d}$. For the second assertion, note that $f$ maps all cycles into $\im d\subset\overline{\im d}$.
\end{proof}
\begin{rem}
A short exact sequence of topological complexes with continuous maps induces a long sequence of reduced cohomologies. However, exactness of this sequence only holds under special assumptions. A reference in the case of Hilbert complexes is \cite{Luck2013}.
\end{rem}
Let $C^*$ be a topological complex whose topology is induced by a filtration by sub-complexes $\{C^*_t\}_{t\in\R}$ so that $C^*_t\subset C^*_{t'}$ whenever $t<t'$. For any element $a\in C^*$ let $\val(a):=\inf\{t|a\in C^*_t\}$. Then $\val$ naturally induces a filtration on $\overline{H}^*(C^*)$ defined by $\val([a])=\inf_{c\in[a]}\val c$. Define a filtration on the vector space $\varprojlim_tH^*(C^*/C^*_t)$ by
\[
\val(x)=\inf\left\{t_0\Big|x\in\ker\left(\varprojlim_tH^*(C^*/C^*_t)\to H^*(C^*/C^*_{t_0})\right)\right\},
\]
for $x\in\varprojlim_tH^*(C^*/C^*_t).$ Observe that the spaces $\varprojlim_tH^*(C^*/C^*_t)$ and $H^*(C^*/C^*_{t_0})$ are both complete with respect to the norm $\|a\|:=e^{\val(a)}$, and are thus Banach spaces.
\begin{tm}\label{lmReduceHomInv}
We have
\[
\overline{H}^*(C^*)=\varprojlim_tH^*(C^*/C^*_t),
\]
as Banach spaces. 
\end{tm}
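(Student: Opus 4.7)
The plan is to construct a natural map
\[
\overline{\Phi}: \overline{H}^*(C^*) \longrightarrow \varprojlim_t H^*(C^*/C^*_t),
\]
sending the class of a cycle $a$ to the compatible system $([a]_t)_t$ of its images in the successive quotients, and to verify that $\overline{\Phi}$ is an isometric bijection.

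First I would check that $\Phi$ is well defined on cycles and lands in the limit, which is immediate from functoriality of the projections. To see that $\Phi$ kills $\overline{\im d}$: if $a = \lim_n db_n$, then for each fixed $t$ we have $a - db_n \in C^*_t$ for $n$ large, so $[a]_t = [db_n]_t = d[b_n]_t$ is a coboundary in $C^*/C^*_t$; hence $[a]_t = 0$ and $\Phi$ descends to $\overline{\Phi}$. Injectivity is by the same argument run backwards: if $\overline{\Phi}([a]) = 0$ then for every $t$ there exists $b_t \in C^*$ with $a - db_t \in C^*_t$, and letting $t \to -\infty$ exhibits $a$ as a limit of coboundaries.

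The main obstacle will be surjectivity: given a compatible family $x = (x_t) \in \varprojlim_t H^*(C^*/C^*_t)$, I want to produce an actual cycle $a \in C^*$ with $[a]_t = x_t$ for all $t$. My approach is an inductive Cauchy construction along a sequence $t_n = -n \to -\infty$. Start by picking a lift $b_1 \in C^*$ of $x_{t_1}$, so $db_1 \in C^*_{t_1}$. Inductively, given $b_n$ with $[b_n]_{t_n} = x_{t_n}$, pick any lift $\tilde b \in C^*$ of $x_{t_{n+1}}$; by compatibility of the family, $[\tilde b]_{t_n} = [b_n]_{t_n}$, hence $\tilde b - b_n = dc + e$ with $c \in C^*$ and $e \in C^*_{t_n}$. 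Setting $b_{n+1} := \tilde b - dc$ gives $b_{n+1} - b_n = e \in C^*_{t_n}$, so $\val(b_{n+1} - b_n) \leq t_n$. Since $t_n \to -\infty$, $\{b_n\}$ is Cauchy in $C^*$, and by completeness it converges to some $a \in C^*$. Filteredness of $d$ and continuity combined with $db_n \in C^*_{t_n} \to 0$ force $da = 0$. Since each $C^*_t = \{c : \val(c) \leq t\}$ is closed, and $b_n - b_m \in C^*_{t_m}$ for $n \geq m$ by the ultrametric triangle inequality, passing to the limit yields $a - b_m \in C^*_{t_m}$, hence $[a]_{t_m} = [b_m]_{t_m} = x_{t_m}$. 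Compatibility of the family then upgrades this from $t = t_m$ to all $t \in \R$.

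Finally, for the isometry, both valuations unfold to the same quantity $\inf_b \val(a - db)$. On the left this is by definition the induced valuation on $\overline{H}^*$, while on the right
\[
\val\bigl(\overline{\Phi}[a]\bigr) = \inf\{t_0 : [a]_{t_0} = 0\} = \inf\{t_0 : \exists\, b,\ a - db \in C^*_{t_0}\} = \inf_b \val(a - db).
\]
Apart from the lifting step in the inductive construction, which is the only nontrivial ingredient, every other part of the argument is essentially formal, relying only on completeness of $C^*$, closedness of the valuation balls, and continuity of $d$.
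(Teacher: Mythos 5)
Your proof is correct and follows essentially the same route as the paper's: define the natural map to the inverse limit, check it kills $\overline{\im d}$, argue injectivity from density of boundaries, and prove surjectivity by the same inductive choice of chain-level representatives $b_n$ with $b_{n+1}-b_n\in C^*_{t_n}$ along a cofinal discrete sequence $t_n\to-\infty$, then pass to the Cauchy limit. Your treatment of the isometry is a bit more explicit than the paper's (which just says ``unwinding definitions''), but the identification of both valuations with $\inf_b\val(a-db)$ is exactly what that unwinding yields.
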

\begin{proof}
Any two cycles in the Hausdorff completion of $C^*$ representing the same element in $\overline{H}^*(C^*)$, represent the same element of $H^*(C^*/C^*_t)$ for each $t$. We thus get a well defined morphism
\[
f:\overline{H}^*(C^*)\to\varprojlim_tH^*(C^*/C^*_t).
\]
If $c\in C^*$ is a cycle and $f(c)=0$ then $c$ is a boundary$\mod C^*_t$ for each $t$. In particular it is in the closure of the space of boundaries of $C^*$, so $[c]=0$ in the reduced homology. Thus $f$ is injective. We now show that $f$ is surjective. Let $a\in \varprojlim_tH^*(C^*/C^*_t)$. We can compute the inverse limit by taking subset $\{t_i\}$ of the index set $\R$ which is discrete, bounded above and unbounded below. In this presentation, $a$ is a sequence
\[
(\dots,[a_i],[a_{i+1}],\dots,[a_0]),\quad [a_i]\in H^*(C^*/C^*_{t_i})
\]
where $[a_i]$ maps to $[a_{i+1}]$ under the natural map induced on homology. We consider the representatives $a_i$ as living in $C^*$ and claim that  they can be chosen so that already at the chain level, $a_i$ maps to $a_{i+1}\mod C^*_{t_{i+1}}$. Inductively, suppose this holds for all $ i_0<i\leq 0$. We have that there is a $b_{i_0}\in C^*$ such that $a_{i_0+1}-a_{i_0}=db_{i_0}\mod C^*_{t_{i_0+1}}$. Replace $a_{i_0}$ by $a_{i_0}+db_{i_0}$ to get the claim for $i_0$. The sequence $\{a_i\}$ converges as $i\to-\infty$ to an element $\hat{a}$ in the completion of $\hat{C}^*$. By construction, $d\hat{a}=0$ and $f([\hat{a}])=a$. Unwinding definitions one verifies that $f$ preserves the valuations and is thus a Banach space isometry.
\end{proof}
\begin{ex}
In this example we illustrate how reduced and unreduced cohomology may differ from one another. 
Fix a field $R$ and consider the vector space
\[
C^*=\oplus_{i=1}^{\infty}R\langle x^i,y^i\rangle[q]/q^2,
\]
where the $x^i,y^i$ are formal symbols of degree $0$ and $1$ respectively for all $i$, and $q$ is a formal symbol of degree $-1$. Define a non-Archimedean valuation on $C^*$ by taking $\val(x^i)=0=\val(q)$, and $\val(y^i)=-i$.  Define a differential by
\[
dx^i:=y^i,\quad dy^i:=0,\quad d(qx^i)=qy^i+x^{i+1}-x^i,\quad d(qy^i)=y^i-y^{i+1}.
\]
Suppose $\gamma\in C^*$ is a \textit{finite} sum of generators satisfying $d\gamma=0$. Then $\gamma$ is a linear combination of the $y^i$ and elements of the form $qy^i-x^i+x^{i+1}$. That is, a linear combination of $dx^i$ and $d(qx^i)$. Thus the homology of $C^*$ vanishes. Consider now the complex $\widehat{C}^*$ obtained by completing $C^*$ with respect to the valuation, and let
\[
\gamma:=x^1+\sum_{i=1}^{\infty}(-1)^iqy^i.
\]
Then $\gamma$ is well defined in $\widehat{C}^*$, and $d\gamma=0$. We show $\gamma$ is not a coboundary and is not even approximated by a sequence of coboundaries. For this consider its image in $C^*/C^*_t$. It is straightforward to verify that the class of $\gamma$ is equivalent mod $C^*_t$ to the class of $x^i$  for any $i>-t$ and that $\val([x^i])=0$. In this case one verifies that the image of the differential is closed. It follows that the reduced homology of the completion $\widehat{C}^*$ (which in this case equals the ordinary homology) is non-zero. 

Consider now the sub-complex $\widehat{R\langle x^i,y^i\rangle}$ of $\widehat{C}^*$. Let
\[
\gamma=\sum y^i.
\]
Then $\gamma$ is again a convergent sum. Moreover, for any $t$ we have that $\gamma$ is a boundary mod $C^*_t$. Indeed, for any $N>t$ we have
\[
\gamma=d\sum_{i=1}^Nx^i\mod C^*_t.
\]
But the sum on the right hand side does not converge as $i\to\infty$. One verifies in this case that the reduced cohomology $\overline{H}^*(\widehat{R\langle x^i,y^i\rangle})$ vanishes while the unreduced one does not.
\end{ex}
\subsection{Floer cohomology of lower semi-continuous exhaustion functions}\label{SubsecLscFC}
\begin{tm}\label{tmFloerInvBdd}
Let $(H_0, J_0)$ and $(H_1, J_1)$ be dissipative. Suppose
\begin{equation}
H_1-c\leq H_0\leq H_1.
\end{equation}
Then the canonical continuation map $HF^*(H_0,J_0)\to HF^*(H_1,J_1)$  is an isomorphism which decreases norms by a factor of at most $e^{-c}$. In particular, when $H_0=H_1$ the continuation map is an isometry.
\end{tm}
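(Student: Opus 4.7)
The plan is to sandwich the continuation map $f_{01}:\overline{HF}^*(H_0,J_0)\to\overline{HF}^*(H_1,J_1)$ between two further continuation maps, running from $(H_0,J_0)$ to $(H_0+c,J_0)$ and from $(H_1-c,J_1)$ to $(H_1,J_1)$, and to identify the resulting compositions via functoriality with the canonical ``shift-by-$c$'' map of Theorem~\ref{tmFloerFunc}(2). The key preliminary observation is that dissipativity is preserved under addition of a real constant to $H$ (the Gromov metric, the i-boundedness condition, and the LDOS condition all depend only on $X_H$ and $J$), so $(H_0+c,J_0)$ and $(H_1-c,J_1)$ are dissipative and fit into the monotone chain $H_1-c\le H_0\le H_1\le H_0+c$. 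Theorem~\ref{mainTmA}(\ref{MainTmAPartE}) then supplies continuation maps
\[
g_+:\overline{HF}^*(H_1,J_1)\to\overline{HF}^*(H_0+c,J_0),\qquad g_-:\overline{HF}^*(H_1-c,J_1)\to\overline{HF}^*(H_0,J_0).
\]

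First I would establish the standard action-energy identity for a monotone continuation $(H_s,J_s)$ with asymptotes $\gamma_\pm$,
\[
\mathcal{A}_{H_-}(\gamma_-)-\mathcal{A}_{H_+}(\gamma_+)=E(u)+\int_{\R}\!\int_{S^1}\partial_sH_s(u_s(t))\,dt\,ds\ge 0,
\]
a minor variant of Lemma~\ref{lmTopGeoEnEst}. It shows that $f_{01}$, $g_+$ and $g_-$ are all norm-non-increasing on the Banach chain complexes, hence on their reduced cohomology. By functoriality (Theorem~\ref{tmFloerFunc}(3)), the composition $g_+\circ f_{01}$ agrees with the continuation map from $(H_0,J_0)$ to $(H_0+c,J_0)$ induced by any monotone homotopy. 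Choosing the homotopy $(H_0+\beta(s)c,J_0)$ with $\beta$ monotone from $0$ to $1$, Theorem~\ref{tmFloerFunc}(2) identifies this continuation map at the chain level with the identity on generators. Since $\mathcal{A}_{H_0+c}(\tilde\gamma)=\mathcal{A}_{H_0}(\tilde\gamma)-c$, that chain-level identity is precisely rescaling of norms by the invertible scalar $e^{-c}$. Thus for every $x$,
\[
e^{-c}\|x\|=\|g_+(f_{01}(x))\|\le\|f_{01}(x)\|\le\|x\|,
\]
which is the claimed bound on how much $f_{01}$ can decrease norms.

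For invertibility I would run the symmetric argument to identify $f_{01}\circ g_-$ with the $e^{-c}$-rescaling $\overline{HF}^*(H_1-c,J_1)\to\overline{HF}^*(H_1,J_1)$, itself a Banach isomorphism. Consequently $f_{01}$ admits both a left and a right inverse (obtained by composing $g_\pm$ with the inverse of the corresponding rescaling), so it is a topological isomorphism. When $H_0=H_1$ we take $c=0$, the two bounds coalesce to $\|f_{01}(x)\|=\|x\|$, and $f_{01}$ is an isometry. The only delicate point I anticipate is verifying that the chain-level identity of Theorem~\ref{tmFloerFunc}(2) descends to the scalar $e^{-c}$-rescaling on reduced cohomology; this is essentially tautological, since the chain-level map commutes with the differentials and is a bijective isometric rescaling, hence descends to a bijective rescaling of the relevant kernels modulo closures of images.
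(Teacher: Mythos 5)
Your proposal is correct and follows essentially the same route as the paper: sandwich the continuation map between shift-by-constant continuations, identify those with the $e^{-c}$-rescaling, and deduce the norm bound and (two-sided) invertibility. Your write-up is somewhat more detailed — spelling out that dissipativity is preserved under adding a constant, the action-energy estimate, and the descent from chains to reduced cohomology — but the underlying argument is the same as the paper's three-line proof.
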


\begin{proof}
Recall that for a monotone homotopy, the induced continuation map is valuation decreasing. Consider the composition of continuation maps
\[
HF^*(H_0,J_0)\to HF^*(H_1,J_1)\to HF^* (H_0+c,J_0).
\]
It coincides with the continuation map
\[
HF^*(H_0,J_0)\to HF^* (H_0+c,J_0).
\]
The latter stems from a naive identification of the underlying complexes with the norm scaled by $e^{-c}$. This shows that the map $HF^*(H_0,J_0)\to HF^*(H_1,J_1)$ is right invertible and decreases norm by at most $e^{-c}$. Left invertibility is shown similarly.
\end{proof}
Henceforth we drop $J$ from the notation and talk about $HF^*(H)$. Abusing notation we will also drop $J$ from the chain level notation. Accordingly, we refer to a Hamiltonian $H$ as dissipative if there exists a compatible almost complex structure $J$ such that $(H,J)$ is dissipative.

As a consequence of Theorem~\ref{tmFloerInvBdd} we may extend the definition of Floer cohomology to some Hamiltonians which are degenerate or even non-smooth.

\begin{lm}\label{lmdfFloCohUn}
Let $H_i$ and $F_i$ pointwise monotone increasing sequences of non-degenerate dissipative Hamiltonians both converging uniformly in $C^0$ to the same continuous function $H$. Then there is an isomorphism
\[
\varinjlim_iHF^*(H_i)\to\varinjlim_iHF^*(F_i),
\]
which is natural in that it commutes with all continuation maps involving dissipative and non-degenerate Hamiltonians. We may thus define
\begin{equation}\label{eqdfFloCohUn}
HF^*(H):=\varinjlim_n HF^*(H_n).
\end{equation}
Define a semi-norm on $HF^*(H)$  by
\begin{equation}
\|a\|:=\inf_i\|a_i\|
\end{equation}
where $a_i\in HF^*(H_i)$ maps to $a$ under the natural map. Then $\|\cdot\|$ is a non-Archimedean semi-norm on $HF^*(H)$ which is independent of the choice of $H_i$.  Moreover, when $H$ is smooth, dissipative and non-degenerate, the two definitions of $HF^*(H)$ as a semi-normed space coincide.
\end{lm}

\begin{proof}
Call a sequence $H_n$ as in the hypothesis \textit{admissible} if for each $n$ there is a constant $c_n>0$ for which $H-H_n\geq c_n$.  Given any two admissible sequences we can squeeze a subsequence of one between a subsequence of the other. The first part of the statement then follows by the universal property of the direct limit.
Given a not necessarily admissible monotone sequence $H_n$ converging to $H$, the sequence $H_{nk}:=H_n-\frac1{k},$ is admissible, monotone, and converges uniformly to $H_n.$ We thus have a natural isomorphism
  \[
  \varinjlim_n HF^*(H_n)=\varinjlim_n\varinjlim_k HF^*(H_{nk}).
  \]
But the sequence $H_{nn}$ is admissible and cofinal in the doubly indexed sequence $H_{nk}$ and so we have a natural isomorphism
\[
\varinjlim_n\varinjlim_k HF^*(H_{nk})=\varinjlim_n HF^*(H_{nn}).
\]
We have a similar relation for $F_{nk}:=F_n-1/k$ and $F_{nn}$. The sequences $H_{nn}$ and $F_{nn}$ are admissible, monotone and converge uniformly to $H$. Combined with the isomorphisms we just deduced, we obtain the isomorphism
\[
\varinjlim_n HF^*(H_n)=\varinjlim_n HF^*(H_{nn})=\varinjlim_n HF^*(F_{nn})=\varinjlim HF^*(F_n),
\]
where all the isomorphisms are natural.

To see that $\|\cdot\|$  defines a non-Archimedean semi-norm note that by Theorem~\ref{tmFloerInvBdd}, the sequence $\|a_i\|$ is bounded below. Since it is monotone decreasing, it is convergent. So,
 \[
 \|a+b\|=\lim_i\|a_i+b_i\|\leq \lim_i\max\left\{\|a_i\|,\|b_i\|\right\}= \max\{\|a\|,\|b\|\}.
 \]
 The homogeneity of $\|\cdot\|$ is obvious. In light of Theorem \ref{tmFloerInvBdd}, the argument for the independence of $\val$ on the choice of sequence is similar to the claim concerning the natural isomorphism. Finally, the last part of the claim take $F_n$ to be the constant sequence $F_n=H$.
\end{proof}

The definition of action truncated Floer homology groups also extends.
\begin{lm}\label{lmAcTruncCont}
Let $H$ and $H_n$ be  smooth non-degenerate and dissipative, and suppose the sequence $H_n$ is monotone and converges uniformly to $H$. Then the natural map
\begin{equation}\label{eqDfFlAcCo}
\varinjlim_n HF^*_{[a,b)}(H_n)\to{HF}^*_{[a,b)}(H)
\end{equation}
is an isomorphism. If we drop the assumption that $H$ is dissipative and define ${HF}^*_{[a,b)}(H)$ by \eqref{eqDfFlAcCo}, the right hand side is independent of the choice of $H_n$.
\end{lm}
\textit{Caution}: The claim does not necessarily hold if we consider other segments such as $(a,b]$.

\begin{proof}
As in the proof of Lemma \ref{lmdfFloCohUn}, the first part is proven by squeezing a sequence of the form  $H_n-c_{nk}$ into a sequence $H-c_n$. The second part is proven by a similar squeezing. The argument is spelled out in the proof of Lemma \ref{lmdfFloCohUn}.
\end{proof}
The next theorem is key for what follows. It shows that truncated Floer homology is continuous with respect to convergence on compact sets.
\begin{tm}\label{lmConstApp}
Let $\{H_n\}$ be a monotone increasing sequence of dissipative Hamiltonians converging pointwise to a dissipative Hamiltonian $H$.  Then for any real $a<b$ we have that the natural map
\[
f:\varinjlim_{i}HF^*_{[a,b)}(H_n)\to HF^*_{[a,b)}(H),
\]
is an isomorphism.
\end{tm}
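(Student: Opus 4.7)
The plan is to reduce, via standard approximation and Lemma~\ref{lmAcTruncCont}, to the case where every $H_n$ and $H$ is smooth, non-degenerate, and dissipative, and then to show that for $n$ sufficiently large the continuation map $HF^*_{[a,b)}(H_n) \to HF^*_{[a,b)}(H)$ is already an isomorphism. By functoriality of continuation, this implies that the directed system $\{HF^*_{[a,b)}(H_n)\}$ stabilizes and maps isomorphically to $HF^*_{[a,b)}(H)$, giving the theorem.

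The starting point is a compactness statement for the objects visible to the truncated complex. Fix a small $\delta > 0$. Properness of $H$ together with its uniform lower bound translates the action constraint $\mathcal{A}_H \in [a-\delta, b+\delta]$ into a uniform bound on the temporal average of $H$ along the orbit, and loopwise dissipativity (Lemma~\ref{lmLoopDisQuant}) then confines each such orbit to a compact $K_0 \subset M$. Since $H_n \leq H$ and $H_n \to H$ uniformly on compact subsets by Dini's theorem, the same reasoning applied to $H_n$, combined with the uniform dissipativity constants available for $n$ large, confines the periodic orbits of $H_n$ with action in $[a-\delta, b+\delta]$ to a common compact $K_0' \supset K_0$. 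Applying Theorem~\ref{tmFloSolDiamEst} to Floer trajectories and monotone continuation trajectories with asymptotes in $K_0 \cup K_0'$ and energy bounded by $b - a + 2\delta$ yields a larger compact $K$ containing all such trajectories.

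The main remaining step is a locality principle. For each large $n$, I construct an auxiliary dissipative Hamiltonian $\tilde{H}_n$ with $H_n \leq \tilde{H}_n \leq H$, equal to $H$ on a neighborhood of $K$ and to $H_n$ outside a slightly larger compact $K'$. By Dini, $H - H_n$ is arbitrarily small on $K'$, and the compactly supported modification preserves i-boundedness and loopwise dissipativity. Since $\tilde{H}_n \equiv H$ on a neighborhood of $K$, the periodic orbits of $\tilde{H}_n$ in $K$ coincide with those of $H$; choosing a monotone homotopy that is $s$-constant on $K$, the associated continuation trajectories are confined to $K$ by Lemma~\ref{lmTopGeoEnEst} combined with Theorem~\ref{tmFloSolDiamEst}, yielding $HF^*_{[a,b)}(\tilde{H}_n) \cong HF^*_{[a,b)}(H)$. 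The companion isomorphism $HF^*_{[a,b)}(H_n) \cong HF^*_{[a,b)}(\tilde{H}_n)$ is obtained by the same scheme: the two Hamiltonians coincide outside $K'$, and on $K$ they differ by a $C^0$-small perturbation not exceeding the minimal action gap of $H$ on $K$, so that persistence of non-degenerate orbits under small perturbations together with the energy-based confinement of Floer trajectories applies again. The principal technical obstacle is controlling the continuation trajectories for the homotopy from $H_n$ to $\tilde{H}_n$ without a priori knowledge of orbits of $H_n$ outside $K$: this is handled by arranging the homotopy to be $s$-independent and equal to $H_n$ outside $K'$, so that any trajectory leaving $K'$ is locally a pure $H_n$-Floer trajectory whose ends must both lie on the same $H_n$-orbit; the topological-energy identity of Lemma~\ref{lmTopGeoEnEst} then forces the net action difference contributed by such an excursion to vanish, ruling it out and closing the locality argument.
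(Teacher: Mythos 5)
Your central claim --- that for $n$ sufficiently large the continuation map $HF^*_{[a,b)}(H_n) \to HF^*_{[a,b)}(H)$ is already an isomorphism, so the directed system stabilizes at a finite stage --- is strictly stronger than what the theorem asserts and does not hold in general. The supporting compactness step is where it breaks: there is no reason the periodic orbits of $H_n$ with action in $[a-\delta,b+\delta]$ should lie in a single compact set $K_0'$ uniformly in $n$. In the non-exact case the generators of $CF^*_{[a,b)}$ live in the Floer--Novikov covering, and a geometric orbit arbitrarily far out in $M$ always admits lifts whose $\omega(A)$-term compensates the large $\int H$-term, landing the action in the window. Even in the exact case, $H_n$ and $H$ only agree on a compact set $K_n$, and the periodic orbits of $H_n$ in $M\setminus K_n$ --- which change with $n$ --- are not controlled by the action window nor by loopwise dissipativity: a constant path $\gamma_s\equiv\gamma$ is a $0$-energy $\epsilon$-gradient, so Lemma~\ref{lmLoopDisQuant} gives no constraint on where a periodic orbit sits. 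Your locality construction of $\tilde H_n$ inherits this problem: the orbits of $H_n$ outside $K'$ sit in a chain complex you cannot compare to $CF^*_{[a,b)}(H)$, and the closing energy argument --- that an excursion out of $K'$ has vanishing ``net action difference'' by Lemma~\ref{lmTopGeoEnEst} --- does not go through, because the piece of the trajectory outside $K'$ has boundary on $\partial K'$ rather than being asymptotic to periodic orbits; its topological energy is not an action difference and there is no reason for it to vanish.

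The paper's proof circumvents all of this by arguing cycle-by-cycle rather than uniformly. A fixed cycle $\gamma \in CF^*_{[a,b)}(H_i)$ involves, modulo $CF^*_{(-\infty,a)}$, only finitely many generators (the Novikov finiteness condition~\eqref{eqFinCond}); these lie in some compact set depending on $\gamma$, and for $i'$ sufficiently large all relevant differential and continuation trajectories of energy $\leq b-a$ stay inside $K_{i'}$, where $H_{i'}=H-c_{i'}$, so the maps act as the identity on the support of $\gamma$. Both the confining compact set and the stabilization index depend on the cycle, and no claim of a finite-stage isomorphism of the groups is ever made --- surjectivity and injectivity of the colimit map are checked element by element. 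To rescue your approach you would have to fix a finite collection of generators at each stage and argue only about those, which effectively reproduces the paper's argument.
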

\begin{proof}
Fix an almost complex structure $J$ for which $(H,J)$ and $(H_i,J)$ are dissipative. Without loss of generality we may assume that all the involved Floer data are regular and non-degenerate. As in Lemma \ref{lmAcTruncCont} we reduce to the case where $H-H_n\geq c_n>0$ for some $c_n>0$. By Dini's Theorem, the $H_i$ converge to $H$ uniformly on compact sets. By squeezing in an appropriate sequence we may assume that there is an exhaustion of $M$ by compact sets $K_n$ such that  $H_n=H-c_n$ on $K_n$.  For such a sequence we have that for fixed real number $E>0$ and compact set $K$, the numbers $R(E,K)$ of Theorem~\ref{tmFloSolDiamEst}, defined for each of the $H_n$, stabilize as $n\to\infty$. So, given an $i$ and a cocycle $\gamma\in CF^*_{[a,b)}(H_i)$  there is a compact set $K$ and an $i_0>i$  such that any continuation trajectory $f_{i,i'}(\gamma)$ or $f_i(\gamma)$  is contained mod $CF^*_{(-\infty,a)}$ in $K$. Here $f_{i}:CF^*_{[a,b)}(H_{i},J)\to CF^*_{[a,b)}(H,J)$ and $f_{i,i'}:CF^*_{[a,b)}(H_{i},J)\to CF^*_{[a,b)}(H_{i'},J)$  are the natural continuation maps. Indeed, since we are considering only trajectories of energy less than $b-a,$  Theorem~\ref{tmFloSolDiamEst} provides an estimate on the diameter as required. The same claim holds for composite trajectories of the form $d\circ f_{i,i'}$ and $d\circ f_i$ etc.

Since $H_i|_{K_i}=H|_{K_i}-c_i$, we may identify those periodic orbits of $H_i$ which are inside $K_i$ with the periodic orbits of $H$ in the same region. For each periodic orbit $\gamma$ of $H$ the corresponding periodic orbit of $H_i$ is mapped mod $a$ by the continuation map to $\gamma$.  Indeed, for $i$  large enough, any continuation trajectory emanating from $\gamma$ and having energy at most $b-a$ is contained in $K_i$ and so satisfies a translation invariant equation. To be rigid it must be trivial.  Moreover, taking $i$ still larger, the same claim is true for the periodic orbits appearing in the expansion of $d\gamma\mod i$. This shows that $f$ is surjective. Injectivity follows in the same way. Namely, suppose there is an $i$ and a $\delta\in CF^*_{[a,b]}(H)$ such that $f_i(\gamma)=d\delta\bmod a$ in $CF^*(H)$. For $i$ large enough the same relation will hold in $CF^*(H_i)$.
%
\end{proof}

We use the notation $\overline{HF}^*(H)$ for the reduced cohomology of $CF^*(H,J)$.
\begin{cy}\label{CySeqDefHF}
We have for any dissipative Hamiltonian $H$ and any sequence $H_i$ of dissipative Hamiltonians converging to $H$ uniformly on compact sets, the natural map
\begin{equation}\label{EqIsoRes}
\varprojlim_a\varinjlim_{b,i}HF^*_{[a,b)}(H_i)\to\overline{HF}^*(H)
\end{equation}
is an isomorphism. Moreover, we have for any $\alpha\in \overline{HF}^*(H)$ that
\begin{equation}\label{eqInducedValInf}
\val(\alpha)=\inf_a\left\{\alpha\in\ker\left(\overline{HF}^*(H)\to\varinjlim_{b,i}HF^*_{[a,b)}(H_i)\right)\right\}
\end{equation}
upgrading the isomorphism~\eqref{EqIsoRes} to an isometry of Banach spaces.
\end{cy}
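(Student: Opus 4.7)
The plan is to chain together the two preceding results: the algebraic identification of reduced cohomology with an inverse limit from Theorem~\ref{lmReduceHomInv}, and the continuity statement for action-truncated Floer cohomology under pointwise monotone convergence from Theorem~\ref{lmConstApp}. First I would apply Theorem~\ref{lmReduceHomInv} to the topological complex $(CF^*(H),d)$ equipped with its action filtration $CF^*_{(-\infty,a)}(H)$. This gives, as Banach spaces,
\[
\overline{HF}^*(H)=\varprojlim_{a}H^*\bigl(CF^*(H)/CF^*_{(-\infty,a)}(H)\bigr)=\varprojlim_{a}HF^*_{[a,\infty)}(H),
\]
with the valuation on the right hand side defined by the filtration as in \eqref{eqInducedValInf}.

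Next I would unpack $HF^*_{[a,\infty)}(H)$ as the directed colimit $\varinjlim_{b}HF^*_{[a,b)}(H)$, which is immediate from the definition of $CF^*_{(-\infty,b)}(H)$ as the union of its finitely-generated-in-action subcomplexes and the fact that filtered colimits commute with cohomology. Combining this with Theorem~\ref{lmConstApp}, which provides the isomorphism $\varinjlim_{i}HF^*_{[a,b)}(H_i)=HF^*_{[a,b)}(H)$, and using that a colimit over a product of filtered categories can be computed iteratively, I obtain
\[
\varinjlim_{b}HF^*_{[a,b)}(H)=\varinjlim_{b}\varinjlim_{i}HF^*_{[a,b)}(H_i)=\varinjlim_{b,i}HF^*_{[a,b)}(H_i).
\]
Substituting back yields the purely algebraic identification \eqref{EqIsoRes}.

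To upgrade this to an isometry of Banach spaces, I would argue as follows. The valuation on $\overline{HF}^*(H)$ was defined in the discussion following Theorem~\ref{tmFloerInvBdd} by $\val(\alpha)=\inf_i\val(\alpha_i)$ over preimages $\alpha_i\in\overline{HF}^*(H_i)$. For each dissipative $H_i$ the inverse limit presentation of Theorem~\ref{lmReduceHomInv} shows that $\val(\alpha_i)$ equals the infimum of those $a$ for which the image of $\alpha_i$ in $HF^*_{[a,\infty)}(H_i)=\varinjlim_b HF^*_{[a,b)}(H_i)$ vanishes. Under the isomorphism~\eqref{EqIsoRes}, this image corresponds to the image of $\alpha$ in $\varinjlim_{b,i}HF^*_{[a,b)}(H_i)$. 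Passing to the infimum over $i$ then gives \eqref{eqInducedValInf}. Completeness on both sides is automatic from the inverse limit presentation.

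The main obstacle I anticipate is bookkeeping around the order of limits and making sure that the valuation defined ``externally'' through the sequence $H_i$ coincides with the valuation defined ``internally'' through the action filtration on $CF^*(H)$. The key technical input is Theorem~\ref{lmConstApp}, which forces the isomorphism at each truncation window $[a,b)$ to be compatible with the continuation maps; once this compatibility is in place, the identification of valuations is formal, but it requires one to keep careful track of the fact that the continuation maps from $HF^*_{[a,b)}(H_i)$ to $HF^*_{[a,b)}(H)$ are action-preserving modulo the $\mod a$ identifications used in the proof of Theorem~\ref{lmConstApp}.
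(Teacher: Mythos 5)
Your proof follows the same two-step strategy as the paper: apply Theorem~\ref{lmReduceHomInv} to $(CF^*(H),d)$ with the action filtration to identify $\overline{HF}^*(H)$ with $\varprojlim_a HF^*_{[a,\infty)}(H)$ as Banach spaces, then use Theorem~\ref{lmConstApp} together with exactness/commutation of filtered colimits to rewrite $HF^*_{[a,\infty)}(H)$ as $\varinjlim_{b,i}HF^*_{[a,b)}(H_i)$. The detour through the external valuation $\inf_i\val(\alpha_i)$ in the isometry discussion is not needed — the isometry is already delivered by Theorem~\ref{lmReduceHomInv} and the identification $HF^*_{[a,\infty)}(H)=\varinjlim_{b,i}HF^*_{[a,b)}(H_i)$ — but it is harmless and the argument is correct.
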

\begin{proof}
We have by Theorem~\ref{lmConstApp} and by exactness of the direct limit
\begin{equation}\label{eqSeqDefHF}
\varinjlim_{b,i}HF^*_{[a,b)}(H_i)=\varinjlim_{b}HF^*_{[a,b)}(H)=HF^*_{[a,\infty)}(H).
\end{equation}
So, by Theorem~\ref{lmReduceHomInv} we obtain the isomorphism of Banach spaces. 
\end{proof}

The last corollary will allow to extend the definition of reduced Floer homology to arbitrary lower semi-continuous exhaustion functions. That is, a lower semi-continuous function $H:\R/\Z\times M\to\R$ which is proper and bounded from below. But first we need to formulate an approximation lemma for such functions.

\begin{lm}\label{tmLoSCAppCo}
Let $H:\R/\Z\times M\to\R\cup\{\infty\}$ be a lower semi-continuous function which is proper and bounded below. Let $F:\R/\Z\times M\to\R$ be a smooth proper exhaustion function  such that $F< H-\epsilon$ pointwise for some $\epsilon>0$. Then there is a pointwise monotone sequence of smooth exhaustion functions  $H_n:\R/\Z\times M\to\R$ such that $H_n$ converges pointwise to $H$ everywhere and such that for each $n$ there is a compact set $K_n$ so that $H_n|_{M\setminus K_n}=F|_{M\setminus K_n}$.
\end{lm}
\begin{proof}
It is  a standard fact that $H$ is the supremum of a monotone increasing sequence $H'_n$ of smooth functions. For each $n$ take $H_n$ to coincide with $H'_n$ on $H^{-1}((-\infty,n))$, to equal $n$ on the set
\[
\{(t,x):H_t(x)\geq n\geq F_{t}(x)\},
\]
and to equal $F$ on $F^{-1}(n,\infty)$. Then $H_n$ is well defined and continuous. After a slight perturbation it is smooth and satisfies all the requirements.
\end{proof}
By Theorem \ref{tmBoundGradDis} we can take $F$ in the previous Lemma to be a function with sufficiently small Lipschitz constant outside of a compact set so as to be dissipative. Then all the functions in the sequence $H_n$ are also dissipative as they coincide with $F$ outside of a compact set. Thus \textit{any lower semi-continuous exhaustion function is  the pointwise limit of a monotone sequence of dissipative Hamiltonians}.
\begin{lm}\label{lmHFMonIndep}
Let $H$ be a lower semi-continuous exhaustion function. Let $H_n\leq F_n\leq H$ be a pair of monotone sequences of dissipative Hamiltonians each converging pointwise to $H$. Then for any $a<b\in\R$ the natural continuation map
\begin{equation}\label{eqLimSmSC}
\varinjlim_{i}HF^*_{[a,b)}(H_i)\to\varinjlim_{i}HF^*_{[a,b)}(F_i)
\end{equation}
is an isomorphism.
\end{lm}

\begin{proof}
For each $n$ choose monotone sequences $H_{kn},F_{kn}$ such that the following properties hold. 
First, for fixed $n$, they converge on compact sets to $H_n, F_n$ respectively as $k\to\infty$.  Secondly, there is an exhaustion of $M$ by pre-compact sets $U_k$ such that $H_{kn}$ and $F_{kn}$ coincide with with $H_1-\frac1{k}$ on the complement of $U_k$. Such sequences exist by Lemma \ref{tmLoSCAppCo}. 
By Lemma \ref{lmConstApp} we have natural isomorphisms
\[
\lim_{k\to\infty}HF^*_{[a,b)}(H_{kn})=HF^*_{[a,b)}(H_{n}),
\]
and a similar isomorphism relating the Floer cohomologies of $F_{kn}$ and $F_n$. The map appearing in equation \eqref{eqLimSmSC} corresponds under this isomorphism to the natural map
\[
\lim_{k,n\to\infty}HF^*_{[a,b)}(H_{kn})\to \lim_{k,n\to\infty}HF^*_{[a,b)}(F_{kn}).
\]
For each $k$ we have that $H-H_{kn}$ and $H-F_{kn}$ are bounded away from $0$. Moreover, for each compact set $K$ we can make $H_{kn},F_{kn}$ arbitrarily close to $H$ on $K$ by adjusting $k,n$.  It follows that for each $k,n$ we can find numbers $k',n',k'',n''$ such that $F_{k''n''}> H_{k'n'}> F_{kn}$. Thus we  can squeeze a cofinal subsequence of $H_{kn}$ into a cofinal subsequence of $F_{kn}$. The claim follows by the same argument as in Lemma \ref{lmdfFloCohUn}.
\end{proof}

\begin{lm}\label{lmRoofLowSeq}
Let $H$ be a lower semi-continuous exhaustion function. Let $F_n,G_n$ be a pair of monotone sequences of dissipative Hamiltonians each converging pointwise to $H$. Then there exists a monotone sequence of dissipative Hamiltonian $H_n$ such that $H_n\leq\min\{F_n,G_n\}$ and $H_n$ converges pointwise to $H$.
\end{lm}
\begin{proof}
The function $H'_n=\min\{F_n,G_n\}$ is continuous. As in  Lemma \ref{tmLoSCAppCo}, let $H''_n$ be a continuous function which coincides with $H'_n$ on $H^{'-1}_n(-\infty,n)$ and with some fixed smooth dissipative function $F\leq H_1$ everywhere else. Then $H''_n$ is continuous, the sequence $H''_n$ is monotone, and it still converges to $H$. Finally,  replace $H''_n$ by a smooth $H_n$ satisfying $H''_n-1/n\leq H_n\leq H''_n$ and which equals $F$ outside some compact set. Then all the requirements are satisfied.
\end{proof}

\begin{proof}[Proof of Theorem~\ref{tmUniExtFl}]
The surjectivity statement follows from Lemma \ref{tmLoSCAppCo} as stated in the paragraph right after the proof of Lemma \ref{tmLoSCAppCo}.

For a pointwise monotone sequence $\{H_i\}$ of regular dissipative Hamiltonians, define
\begin{equation}\label{eqDfFlRedSec}
\overline{HF}^*(\{H_i\}):=\varprojlim_a\varinjlim_{b,i}HF^*_{[a,b)}(H_i).
\end{equation}
with the norm given by the right hand side of ~\eqref{eqInducedValInf}.

Let $\sup(\{H^1_i\})=\sup(\{H^2_i\})=H$ for some $H\in\cH_{s.c.}$. By Lemma \ref{lmRoofLowSeq} we can find a third sequence $\{H_i\}$ of regular dissipative Hamiltonians such that $H_i\leq\min\{H^1_i,H^2_i\}$. By Lemma \ref{lmHFMonIndep} and Equation \eqref{eqDfFlRedSec} it follows that we have natural isomorphisms
\[
\overline{HF}^*(H^1_i)=\overline{HF}^*(H_i)=\overline{HF}^*(H^2_i).
\]
For an arbitrary $H\in\cH_{s.c.}$ we define $\overline{HF}^*(H)$ as the pushout over all approximating sequences $\{H_i\}$ of $\overline{HF}^*(\{H_i\})$ under the natural isomorphisms just described. By naturality we get an induced functorial continuation map for $H_1\leq H_2$. This defines the functor $\overline{HF}^*$ on $(\cH_{s.c.},\leq)$. To see that the restriction to $(\cH_{d,reg},\leq)$  agrees with the previous definition, note that any element $H\in\cH_{d,reg}$ can be considered as a constant sequence $\{H_i\}$ with $H_i=H$.
\end{proof}

In fact we have proven the following stronger Lemma which is used below.
\begin{lm}\label{lmOverHFindep}
Let $H$ be a lower semi-continuous exhaustion function. Let $H_n, F_n$ be a pair of monotone sequences of dissipative Hamiltonians each converging pointwise to $H$. Such sequences are guaranteed to exist by Theorem \ref{tmLoSCAppCo}.
Then for any segment $[a,b)$ there exists an isomorphism
\[
\varinjlim_{i}HF^*_{[a,b)}(H_i)\to\varinjlim_{i}HF^*_{[a,b)}(F_i).
\]
This isomorphism is natural in the sense that it commutes with all induced continuation maps.
We thus define
\begin{equation}\label{eqDfFiltLSC}
HF^*_{[a,b)}(H):=\varinjlim_{i}HF^*_{[a,b)}(H_i).
\end{equation}
 In other words, we have \begin{equation}\label{eqDfFiltLSC1}
\overline{HF}^*(H)=\varprojlim_a\varinjlim_bHF^*_{[a,b)}(H).
\end{equation}
\end{lm}
In the next section we will see that it is actually possible to define $\overline{HF}^*$ as the reduced cohomology of an appropriate chain complex which is well defined up to filtered quasi-isomorphism.
\subsection{The chain level construction}\label{subsecChainLev}  We apply the telescope construction appearing in \cite{abouzaidSeidel2010} to define $HF^*(H)$, for general lower semi-continuous $H$, as the cohomology of a certain complex. Namely, let $(H_i,J_i)$ be a sequence of dissipative Floer data. Let $q$ be a formal variable of degree $-1$ satisfying $q^2=0$. Write\footnote{As usual we abuse notation, omitting mention of the almost complex structures.}
\[
SC^*(\{H_i\}):=\oplus_{i=1}^{\infty}CF^*(H_i)[q],
\]
and equip it with the differential
\[
\delta(a+qb):=(-1)^{\deg a}da+(-1)^{\deg b}(qdb+\kappa(b)-b),
\]
where $\kappa$ denotes the continuation map $CF^*(H_i)\to CF^*(H_{i+1})$ for each $i$. Let $\widehat{SC}^*(\{H_i\})$ denote the completion with respect to the action filtration. It is shown in \cite{abouzaidSeidel2010} that, ignoring topology, there is a natural isomorphism
\begin{equation}\label{eqTelVarinjlim}
\varinjlim_i HF^*(H_i)=H^*(SC^*(\{H_i\}),\delta).
\end{equation}
This isomorphism arises as follows. Consider the underlying complexes $CF^*(H_i)$ with differential $\delta(a):=(-1)^{\deg a}d(a)$. This change does not affect anything at the cohomology level, and continuation maps remain chain maps. The obvious embeddings $(CF^*(H_i),\delta)\hookrightarrow SC^*(\{H_i\})$ commute up to homotopy with the continuation maps thus giving rise to the map in \eqref{eqTelVarinjlim}. For more details see \cite{abouzaidSeidel2010}. 

\begin{df}\label{dfFiltQuasIs}
Let $(C_i^*,d)$ for $i=1,2$ be complexes filtered by a valuation. We say that a valuation decreasing chain map $f:C_1^*\to C_2^*$ is a \textbf{filtered quasi-isomorphism} if it induces a an isomorphism on filtered homologies $H^*_{[a,b)}$ for $a>-\infty$.
We say that $(C_1^*,d)$ is filtered quasi-isomorphic to $(C_2^*,d)$ if there is a zig-zag of filtered quasi-isomorphisms starting at one and ending at the other.
\end{df}
\begin{tm}\label{tmOverHFChainLv}
Let $H$ be a lower semi-continuous exhaustion function and let $(H^1_i,J^1_i)$ and $(H^2_i,J^2_i)$ be monotone increasing sequences of dissipative Floer data such that for $j=1,2$, $H^j_i$ converges to $H$ pointwise. Then $\widehat{SC}^*(\{H_i^1\})$ is filtered quasi-isomorphic to $\widehat{SC}^*(\{H_i^2\})$. If $H$ is itself dissipative, they are both filtered quasi-isomorphic to $\widehat{CF}^*(H)$.
\end{tm}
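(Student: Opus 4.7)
My plan is to exhibit the filtered quasi-isomorphism via a zig-zag through a common refinement. Specifically, I will construct a third monotone sequence $\{K_n\}$ of dissipative Floer data, pointwise convergent to $H$, with $K_n \geq H_n^1$ and $K_n \geq H_n^2$. To build it, fix a dissipative $G \leq H$ (available by Theorem~\ref{lmExDisDom}) and an exhaustion $\{M_n\}$ of $M$. After passing to squeezed subsequences so that $H - H_n^j$ is bounded below by a positive constant on each $M_n$ (cf.\ the cofinality argument in Lemma~\ref{lmOverHFindep}), the continuous function $\max(H_n^1, H_n^2, G)$ lies strictly below $H$ on $M_n$, and I will smooth it from above into a dissipative $K_n$ which agrees with $G$ outside $M_n$ and still dominates $\max(H_n^1, H_n^2)$. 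Dissipativity of $K_n$ can be arranged using Corollary~\ref{cwWeakDisCofinal} together with the openness of $\mathcal{F}_d$ in the $C^1 \times C^0$ topology.

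The monotonicity $H_n^j \leq K_n$ gives chain-level continuation maps $\kappa_n^j : CF^*(H_n^j) \to CF^*(K_n)$, which assemble into chain maps of telescope complexes
\[
\Phi^j : \widehat{SC}^*(\{H_n^j\}) \to \widehat{SC}^*(\{K_n\}), \qquad j = 1, 2.
\]
To see that each $\Phi^j$ is a filtered quasi-isomorphism, I will apply the Abouzaid--Seidel identity~\eqref{eqTelVarinjlim} to the action-truncated subquotient complexes: for $-\infty < a < b \leq \infty$,
\[
H^*_{[a,b)}\!\left(\widehat{SC}^*(\{H_n^j\})\right) \;=\; \varinjlim_n HF^*_{[a,b)}(H_n^j),
\]
and likewise for $\{K_n\}$. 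By Theorem~\ref{lmConstApp}, extended to the semi-continuous setting through Lemma~\ref{lmAcTruncCont} and Corollary~\ref{CySeqDefHF}, all three direct limits coincide with $HF^*_{[a,b)}(H)$, and $\Phi^j$ induces the canonical identification. The zig-zag $\widehat{SC}^*(\{H_i^1\}) \xrightarrow{\Phi^1} \widehat{SC}^*(\{K_n\}) \xleftarrow{\Phi^2} \widehat{SC}^*(\{H_i^2\})$ therefore provides the required filtered quasi-isomorphism between the two telescopes.

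When $H$ itself is dissipative, the same argument applies with $\{K_n\}$ replaced by the constant sequence $K_n = H$ (or equivalently by appending $H$ as a terminal element to $\{H_n^j\}$). The resulting map $\widehat{SC}^*(\{H_n^j\}) \to \widehat{CF}^*(H)$ is a filtered quasi-isomorphism because, for smooth non-degenerate dissipative $H$, one has $HF^*_{[a,b)}(H) = H^*_{[a,b)}(\widehat{CF}^*(H))$ directly, and this matches the telescope's truncated cohomology under the limit identification above.

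The main obstacle I foresee is the technical construction of the dissipative common refinement $\{K_n\}$: producing a smooth dissipative Hamiltonian sandwiched between $\max(H_n^1, H_n^2)$ and $H$, while simultaneously preserving monotonicity $K_n \leq K_{n+1}$, pointwise convergence $K_n \to H$, and dissipativity. The subtlety is that $\max(H_n^1, H_n^2)$ is only continuous and that the two sequences may equal their respective ``dissipative tails'' outside different compact sets. If the direct construction proves too rigid, a robust alternative is to avoid the constraint $K_n \leq H$ and allow $K_n$ to slightly exceed $H$ on a small set, then appeal to the action-truncated continuity results (Lemma~\ref{lmAcTruncCont} and Corollary~\ref{CySeqDefHF}) to still match cohomologies. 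Either route reduces the chain-level statement to the already established cohomological continuity of $HF^*_{[a,b)}$.
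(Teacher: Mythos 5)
Your high-level strategy — zig-zag through a common comparison sequence, then verify each leg is a filtered quasi-isomorphism via the Abouzaid--Seidel identity~\eqref{eqTelVarinjlim} and the truncated-cohomology continuity of Theorem~\ref{lmConstApp} — is correct and matches the paper's structure. But the direction in which you interpolate creates a genuine gap that you yourself flag: you want a third sequence $K_n$ with $K_n \geq \max(H_n^1,H_n^2)$, $K_n \leq H$, monotone, dissipative, and $K_n \to H$ pointwise. Outside the compact set $M_n$ you let $K_n$ equal a fixed dissipative $G \leq H$, but nothing stops $\max(H_n^1,H_n^2)$ from exceeding $G$ there (the $H_n^j$ converge to $H$, and $G$ is just some dissipative function below $H$), so the bound $K_n \geq \max(H_n^1,H_n^2)$ fails off $M_n$. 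There is no lemma in the paper producing a dissipative function sandwiched \emph{between} a given pair, and your proposed fallback of allowing $K_n$ to exceed $H$ is not covered by Lemma~\ref{lmAcTruncCont} or Theorem~\ref{lmConstApp}, both of which require monotone convergence from below.

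The paper avoids this by going \emph{below} rather than above. Set $G_i := \min\{H_i^1, H_i^2\}$ and apply Lemma~\ref{lmExDisDom} to obtain a dissipative $H_i^3 \leq G_i$ with $H_i^3 \geq G_i - 1/i$ on a compact exhaustion piece $K_i$; then $\{H_i^3\}$ is a monotone dissipative sequence converging pointwise to $H$. The zig-zag becomes
\[
\widehat{SC}^*(\{H_i^1\}) \xleftarrow{\;\Phi^1\;} \widehat{SC}^*(\{H_i^3\}) \xrightarrow{\;\Phi^2\;} \widehat{SC}^*(\{H_i^2\}),
\]
using $H_i^3 \leq H_i^j$, with each $\Phi^j$ a filtered quasi-isomorphism by exactly the continuity argument you outline. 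Lemma~\ref{lmExDisDom} is the tool that makes ``go below'' easy — it produces a dissipative function under any exhaustion, arbitrarily close to it on a compact set — whereas there is no analogous tool for squeezing from above. Your treatment of the dissipative case (constant sequence $K_n = H$, compare inclusions) is fine and is what the paper does. So: flip $\max$ to $\min$, reverse the arrows, and invoke Lemma~\ref{lmExDisDom} in place of the unproven sandwiching step, and the argument goes through.
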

The proof of Theorem~\ref{tmOverHFChainLv} is carried out after establishing the following few lemmas which are of interest in their own right.
\begin{lm}\label{lmchTruncIsom}
We have for any interval $-\infty<a<b\leq\infty$,
\[
H^*_{[a,b)}\left({\widehat{SC}^*(\{H_i\})},\delta\right)=\varinjlim_iHF^*_{[a,b)}(H_i).
\]
Further, we have an isometry of Banach spaces
\[
\overline{H}^*\left(\widehat{SC}^*(\{H_i\}),\delta\right)=\varprojlim_a\varinjlim_iHF^*_{[a,\infty)}(H_i).
\]
where on the right hand side we define the norm by Equation~\eqref{eqInducedValInf} and on the left hand side we take the norm induced from the $CF^*(H_i)$.
\end{lm}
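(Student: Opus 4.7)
The plan is to derive both identities from a single windowed telescope computation combined with Theorem~\ref{lmReduceHomInv}. First I will verify that $\delta$ is action non-increasing on $SC^*(\{H_i\})$: the internal Floer differentials decrease action on each summand, and the continuation maps $\kappa:CF^*(H_i)\to CF^*(H_{i+1})$ are action non-increasing because they are induced by monotone homotopies (the non-negativity of the geometric energy combined with Lemma~\ref{lmTopGeoEnEst} applied to the defining homotopy yields the standard bound $\mathcal{A}_{H_{i+1}}(x_+)\leq\mathcal{A}_{H_i}(x_-)$). Hence $\delta$ extends continuously to the completion $\widehat{SC}^*$, and the action filtration provides closed subcomplexes $\widehat{SC}^*_{(-\infty,a)}$ and quotient complexes $\widehat{SC}^*_{[a,b)}:=\widehat{SC}^*_{(-\infty,b)}/\widehat{SC}^*_{(-\infty,a)}$ for every $-\infty<a<b\leq\infty$.

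For the first identity I would filter $\widehat{SC}^*_{[a,b)}$ by the finite-support subcomplexes
\[
F_n:=\bigoplus_{i\leq n}CF^*_{[a,b)}(H_i)[q],
\]
which is exactly the filtration used in the telescope argument of \cite{abouzaidSeidel2010}. The successive quotients $F_n/F_{n-1}$ are mapping cones of the windowed continuation maps $\kappa:CF^*_{[a,b)}(H_{n-1})\to CF^*_{[a,b)}(H_n)$, and the standard computation shows that $H^*(F_n)$ computes the partial telescope $\varinjlim_{i\leq n}HF^*_{[a,b)}(H_i)$. Since directed colimits are exact in $R$-modules, passing to the union yields $H^*(\varinjlim_n F_n)=\varinjlim_i HF^*_{[a,b)}(H_i)$. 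When $b<\infty$ the quotient $\widehat{SC}^*_{[a,b)}$ is discrete and equals $\varinjlim_n F_n$ on the nose; when $b=\infty$ one must separately check that the action-completion does not affect the cohomology of the union, which follows from the finiteness constraint~\eqref{eqFinCond}: every cycle and every boundary involves only finitely many generators above any given action level, so lies in some $F_n$ after truncating an arbitrarily small amount of action.

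The second identity is then essentially formal. Applying Theorem~\ref{lmReduceHomInv} to $C^*=\widehat{SC}^*(\{H_i\})$ with $C^*_t=\widehat{SC}^*_{(-\infty,t)}$ produces an isometry of Banach spaces
\[
\overline{H}^*\bigl(\widehat{SC}^*(\{H_i\}),\delta\bigr)=\varprojlim_a H^*\bigl(\widehat{SC}^*/\widehat{SC}^*_{(-\infty,a)}\bigr),
\]
and the first identity applied with $b=\infty$ identifies the right-hand side with $\varprojlim_a\varinjlim_i HF^*_{[a,\infty)}(H_i)$. The valuations match tautologically: both are characterized as $\val(\alpha)=\inf\{a:\alpha\mapsto 0\text{ in the }[a,\infty)\text{-truncation}\}$, the one on the left by Theorem~\ref{lmReduceHomInv} and the one on the right by Equation~\eqref{eqInducedValInf}. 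The main obstacle will be the $b=\infty$ case of the telescope computation, where one must control the interplay of the action-completion of $SC^*$, the colimit $\varinjlim_n F_n$, and passage to cohomology; this is where the particular finiteness constraint on Floer chains becomes essential.
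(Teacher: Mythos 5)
Your overall route — expand the telescope computation from \cite{abouzaidSeidel2010} for the first identity, then feed it into Theorem~\ref{lmReduceHomInv} for the second — is exactly the paper's, which just cites \cite{abouzaidSeidel2010} for the first half and then invokes Theorem~\ref{lmReduceHomInv}. Your treatment of the second identity and the matching of valuations is fine. However, your first-identity argument as written does not work. The subspaces $F_n:=\bigoplus_{i\leq n}CF^*_{[a,b)}(H_i)[q]$ are \emph{not} subcomplexes of $SC^*_{[a,b)}$: for $a+qb\in CF^*(H_n)[q]$, the term $\kappa(b)$ of $\delta(a+qb)$ lands in $CF^*(H_{n+1})$, which lies outside $F_n$. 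Consequently there is no induced differential on $F_n$, no short exact sequence $0\to F_{n-1}\to F_n\to F_n/F_{n-1}\to 0$, and your identification of the successive quotients as mapping cones of $\kappa$ cannot even be set up. Indeed, even at the level of graded pieces, $F_n/F_{n-1}\cong CF^*_{[a,b)}(H_n)[q]$ involves only $H_n$, whereas $\mathrm{Cone}\bigl(\kappa:CF^*_{[a,b)}(H_{n-1})\to CF^*_{[a,b)}(H_n)\bigr)$ involves both $H_{n-1}$ and $H_n$, so the claimed isomorphism is wrong on its face.

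The fix is to shift the filtration by one $q$-level, e.g.\ take
\[
F'_n:=\Bigl(\bigoplus_{i<n}CF^*_{[a,b)}(H_i)[q]\Bigr)\oplus CF^*_{[a,b)}(H_n)
\]
(omitting $q\cdot CF^*_{[a,b)}(H_n)$). One checks that $\delta(F'_n)\subset F'_n$, that $F'_n/F'_{n-1}\cong qCF^*_{[a,b)}(H_{n-1})\oplus CF^*_{[a,b)}(H_n)$ is precisely $\mathrm{Cone}(\kappa)$, and that the projection $F'_n\to CF^*_{[a,b)}(H_n)$ killing all $q$-terms is a quasi-isomorphism intertwining the inclusions $F'_{n-1}\hookrightarrow F'_n$ with the continuation maps $\kappa_*$; alternatively, present $SC^*_{[a,b)}$ globally as $\mathrm{Cone}(\mathrm{id}-\kappa)$ on $\bigoplus_i CF^*_{[a,b)}(H_i)$ and use its long exact sequence directly. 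Finally, your worry about $b=\infty$ is misplaced: what makes the completion irrelevant in the first identity is $a>-\infty$, not $b<\infty$. Once $a>-\infty$, any convergent series in $\widehat{SC}^*$ has only finitely many terms of action $\geq a$ (the finiteness condition \eqref{eqFinCond} applies summand by summand, and the Cauchy condition controls the spread over $i$), so $\widehat{SC}^*_{[a,b)}=SC^*_{[a,b)}$ carries the discrete topology for every $b\leq\infty$, exactly as the paper remarks when it says that for the first half ``no topology is involved.''
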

\begin{proof}
The first half of the claim is what is shown in \cite{abouzaidSeidel2010} since no topology is involved. For the second half, the isomorphism of topological vector spaces follows from the first half and Lemma~\ref{lmReduceHomInv}. The fact that this is an isometry also follows from the first half by unwinding definitions.

\end{proof}
\begin{lm}\label{lmChaLevCont}
Let $F^0=\{(H^0_i,J^0_i)\},F^1=\{(H^1_i,J^1_i)\}$ be two monotone sequences of dissipative Floer data such that $H^0_i\leq H^1_i$. Let $\mathfrak{H}_{i,s}$ be a monotone dissipative interpolating family. Then there is a filtration decreasing continuation map
\[
\phi_{\mathfrak{H}}:\widehat{SC}^*(\{H^0_i\})\to\widehat{SC}^*(\{H^1_i\}),
\]
inducing, for each interval $[a,b)$, the canonical continuation map
\[
\varinjlim_iHF^*_{[a,b)}(H^0_i)\to \varinjlim_iHF^*_{[a,b)}(
H^1_i).
\]
If $\mathfrak{H}^1,\mathfrak{H}^2$ are two homotopies interpolating between $F^0$ and $F^1$, there exists a filtration decreasing chain homotopy operator
\[
\mathfrak{K}:\widehat{SC}^*(\{H^0_i\})\to\widehat{SC}^{*+1}(\{H^1_i\})
\]
such that
\[
\phi_{\mathfrak{H}^1}-\phi_{\mathfrak{H}^2}=\delta\circ\mathfrak{K}+\mathfrak{K}\circ\delta. \]
\end{lm}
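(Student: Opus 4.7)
The plan is to realize $\phi_\mathfrak{H}$ as the standard mapping-telescope morphism associated to (i) level-wise chain maps $\phi^0_i : CF^*(H^0_i,J^0_i) \to CF^*(H^1_i,J^1_i)$ coming from the restriction of $\mathfrak{H}$ to the $i$-th slice, and (ii) chain homotopies $K_i : CF^*(H^0_i) \to CF^{*-1}(H^1_{i+1})$ filling the square relating $\kappa^1_i \circ \phi^0_i$ to $\phi^0_{i+1}\circ\kappa^0_i$. Explicitly, I would set $\phi_\mathfrak{H}(a) := \phi^0_i(a)$ for $a \in CF^*(H^0_i)$ and $\phi_\mathfrak{H}(qb) := q\phi^0_i(b) \pm K_i(b)$ for $qb \in q\,CF^*(H^0_i)$, then extend by continuity to $\widehat{SC}^*$. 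Expanding $\delta\phi_\mathfrak{H} = \phi_\mathfrak{H}\delta$ reduces to the chain-map identities for $\phi^0_i,\kappa^j_i$ together with
\[
\kappa^1_i\circ\phi^0_i - \phi^0_{i+1}\circ\kappa^0_i = \pm(dK_i + K_i d),
\]
so everything will follow once the $K_i$ are in hand.

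For the data in (i) and (ii) I would use the Floer-system axioms of Definition~\ref{dfFloerSys} applied to $\mathcal{F}_d$ (Theorem~\ref{TmMain}). The maps $\phi^0_i$ are obtained from Theorem~\ref{tmFloerFunc}(2) applied to $\mathfrak{H}_{i,s}$, and are filtration-decreasing by the topological energy estimate of Lemma~\ref{lmTopGeoEnEst}. For each $i$, the three monotone continuation homotopies
\[
(H^0_i,J^0_i) \rightsquigarrow (H^0_{i+1},J^0_{i+1}),\quad (H^1_i,J^1_i) \rightsquigarrow (H^1_{i+1},J^1_{i+1}),\quad \mathfrak{H}_{i,s},\ \mathfrak{H}_{i+1,s}
\]
can be concatenated along the two ways around the square; axiom~\ref{dfFloerSys:it4} supplies a dissipative $2$-simplex $G_i \in \mathcal{F}_d^{(2)}$ whose three edges are precisely the two compositions together with a diagonal homotopy. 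The map $K_i$ produced by Theorem~\ref{tmFloerFunc}(3) from $G_i$ is again filtration-decreasing by Lemma~\ref{lmTopGeoEnEst}. To identify the induced map on $\varinjlim_i HF^*_{[a,b)}$, I would appeal to Lemma~\ref{lmchTruncIsom}: under the identification $H^*_{[a,b)}(\widehat{SC}^*(\{H^j_i\})) = \varinjlim_i HF^*_{[a,b)}(H^j_i)$, the inclusion $CF^*(H^0_i) \hookrightarrow SC^*(\{H^0_i\})$ maps to the canonical image, and $\phi_\mathfrak{H}$ restricted to $CF^*(H^0_i)$ is $\phi^0_i$ up to a $q$-exact correction, so the induced arrow is the canonical continuation.

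The construction of $\mathfrak{K}$ proceeds one simplicial level higher by the same recipe. At each level $i$, the contractibility built into condition~\ref{dfFloerSys:it0} of Definition~\ref{dfFloerSys} together with Lemma~\ref{lmAdmConn} provides a dissipative $1$-parameter family between $\mathfrak{H}^1_{i,s}$ and $\mathfrak{H}^2_{i,s}$; this yields filtration-decreasing chain homotopies $L_i : CF^*(H^0_i) \to CF^{*-1}(H^1_i)$ between $\phi^{0,1}_i$ and $\phi^{0,2}_i$. These $L_i$ fail to commute with the $\kappa$'s only up to a second homotopy $M_i : CF^*(H^0_i) \to CF^{*-2}(H^1_{i+1})$ built from a dissipative ``3-simplex'' of Floer data, obtained by iterated application of axioms \ref{dfFloerSys:it3}--\ref{dfFloerSys:it4} (which allow any compatible boundary to be filled) in the bi-directed Floer system $\mathcal{F}_d$. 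Bundling $L_i$ and $M_i$ into the telescope exactly as in the construction of $\phi_\mathfrak{H}$ produces the operator $\mathfrak{K}$; the identity $\phi_{\mathfrak{H}^1} - \phi_{\mathfrak{H}^2} = \delta\mathfrak{K} + \mathfrak{K}\delta$ is then equivalent to the system of chain-homotopy relations obeyed by $L_i$ and $M_i$.

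The principal obstacle is ensuring that the choices of 2- and 3-simplex interpolating families can be made \textit{simultaneously} dissipative across all levels $i$ while respecting the prescribed face data. This is where the full strength of $\mathcal{F}_d$ being a bi-directed invariant Floer system (Theorem~\ref{TmMain})---together with the zig-zag contractibility of uniformly i-bounded families (Theorem~\ref{tmWeakTameCont}) and the openness of the dissipative condition---is essential; without each of these ingredients, one could get stuck filling a particular simplex boundary with a non-dissipative interior, breaking the telescope construction. Once compatible dissipative fillings exist, the verification of the algebraic identities is formal.
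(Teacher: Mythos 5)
Your proposal reproduces the paper's argument essentially verbatim: the telescope-level formula $\phi(a+qb)=f(a)+qf(b)+j(b)$, the chain homotopies $j_i$ (your $K_i$) coming from $2$-simplices of dissipative data, the level-wise homotopies $l_i$ (your $L_i$) between $\phi^{0,1}_i$ and $\phi^{0,2}_i$, the degree-$(-2)$ correction $m_i$ (your $M_i$), and the compatibility with $\varinjlim_i HF^*_{[a,b)}$ via Lemma~\ref{lmchTruncIsom}. This is exactly the paper's proof.

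Two small points of imprecision. First, condition~\ref{dfFloerSys:it0} in Definition~\ref{dfFloerSys} is an openness statement, not a contractibility one; the contractibility you need for $L_i$ comes from Lemma~\ref{lmAdmConn} and the zig-zag method of Theorem~\ref{tmWeakTameCont}, not from that axiom. Second, axioms~\ref{dfFloerSys:it3}--\ref{dfFloerSys:it4} only fill $2$-simplex boundaries; they do \emph{not}, even iterated, produce the two-parameter family of Floer data needed for $M_i$. The paper does not derive $m_i$ from the Floer-system axioms at all --- it appeals to ``standard Floer theoretic machinery'' for a generic two-dimensional family interpolating the four one-dimensional homotopies $j^1,\,\kappa\circ l,\,l\circ\kappa,\,j^2$, and acknowledges in a footnote that this genuinely requires chain homotopies between chain homotopies, one step beyond what $\mathcal{D}^{(2)}$ codifies. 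You correctly flagged this as the principal obstacle, but the reassurance that the bi-directed Floer system axioms ``allow any compatible boundary to be filled'' overstates what Definition~\ref{dfFloerSys} actually grants; the filling at this level must be supplied directly by the uniform-i-boundedness machinery for higher-parameter families (which the paper has, via the $k$-parameter version of Theorem~\ref{tmWeakTameCont}).
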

\begin{proof}
Let $j_i:CF^*(H_i^0)\to CF^{*+1}(H_{i+1}^1)$ be the chain homotopy operator satisfying
\[
f_{\mathfrak{H}_{i+1}}\circ \kappa-\kappa\circ f_{\mathfrak{H}_i}=d\circ j_i+j_i\circ d.
\]
Define
\[
\phi:\widehat{SC}^*(F^0)\to\widehat{SC}^*(F^1),
\]
by $\phi(a+qb):=f(a)+qf(b)+j(b)$. One verifies that this is indeed a chain map. Inspecting isomorphism \eqref{eqTelVarinjlim} one finds that the homology level square
\[
\xymatrix{
\varinjlim HF^*_{[a,b)}(H^0_i)\ar[d]\ar[r]&\varinjlim HF^*_{[a,b)}(H^1_i)\ar[d] \\
H^*_{[a,b)}(SC^*(\{H^0_i\})) \ar[r] &H^*_{[a,b)}(SC^*(\{H^1_i\}))
}
\]
commutes. This proves the first half of the claim. To define the chain homotopy operator $\mathfrak{K}$ let $l_i: CF^*(H^1_i)\to CF^{*+1}(H^2_i)$ be the chain homotopy operator associated to a family of homotopies interpolating between $\mathfrak{H}^1$ and $\mathfrak{H}^2$. Let $j^n_i$ be the homotopies between $f_{i+1}^n\circ\kappa$ and $\kappa\circ f_i^n$ for $n=1,2$. Let
\[
m_i:CF^*(H^1_i)\to CF^{*+2}(H^2_{i+1}),
\]
be a degree $2$ operator satisfying
\[
d\circ m+m\circ d=j^1+\kappa\circ l-(l\circ\kappa+j^2)
\]
We show that such an $m$ exists before proceeding. To see this note that each term on the right hand side is a chain homotopy operator from $\kappa\circ f_1$ to $f_2\circ\kappa$ coming from appropriate 1-dimensional families of interpolating homotopies. By standard Floer theoretic machinery, a generic two dimensional family interpolating these 1-dimensional homotopies gives rise to an operator $m$ as required. By energy considerations, $m$ is action decreasing.

Having established the existence of $m$ we define the chain homotopy
\[
\mathfrak{K}(a+qb):=(-)^{deg(a+1)}l(a)+(-1)^{deg(b+1)}(ql(b)+m(b)).
\]
A straightforward but somewhat tedious calculation shows that $\mathfrak{K}$ is indeed a chain homotopy operator as required.
\end{proof}
\begin{proof}[Proof of Theorem~\ref{tmOverHFChainLv}]
Use Lemma \ref{lmRoofLowSeq} to find a monotone sequence of dissipative Hamiltonians $\{H^0_i\}$ dominated by $\{H^j_i\}$ for $j=1,2$ and still converging pointwise to $H$. By Lemmas \ref{lmHFMonIndep} and ~\ref{lmChaLevCont}, the induced continuation map quasi-isomorphism for each finite truncation.
\end{proof}
\section{The product structure}\label{subsecFloerProd}

\subsection{Floer data for the pair of pants product}

For time dependent Hamiltonians $H_1,H_2$, let $H_1*H_2:\R/\Z\times M\to\R$ be the time dependent function
\[
(H_1*H_2)_t=\begin{cases} 2H_1(t),&\quad t\in[0,1/2)\\ 2H_2(2t-1), &\quad t\in[1/2,1).
\end{cases}
\]
Note that $H_1*H_2$ depends discontinuously on $t$ with jump discontinuities at $t=1/2,0\sim 1$. The operation is introduced for notational convenience. A triple $(H_0,H_1,H_2)$ is called a (strict) product triple if $H_2\geq H_1*H_0$ ($H_2> H_1*H_0$).

Denote by $\Sigma$ the pair of pants $S^2\setminus\{0,1,\infty\}$. For our convenience we pick cylindrical ends which extend globally as follows. Consider the holomorphic map $\psi:\Sigma\to\R\times \R/\Z$ given by
\[
z\mapsto \frac1{2\pi}\Log z(z-1).
\]
This defines cylindrical coordinates
\[
s=\frac1{2\pi}\log |z(z-1)|,\qquad t=\frac1{2\pi}\arg (z(z-1))
\]
in punctured neighborhoods of $0,1$,  coordinatized as inputs. For the cylindrical end at $\infty$ we take
\[
s=\frac1{2\pi}\log |z(z-1)|,\qquad t=\frac1{4\pi}\arg (z(z-1)).
\]
Thus $\infty$ is coordinatized as an output. Henceforth we write
\[
\alpha_{\Sigma}:=\frac1{2\pi}d\arg z(z-1)
\]
 and $h_{\Sigma}:\Sigma\to\R$ the function
 \[
 z\mapsto s=\frac1{2\pi}\log |z(z-1)|.
 \]
 Then $dh_{\Sigma}\wedge\alpha_{\Sigma}\geq 0$ and $h_\Sigma$ has a single critical point at $z=1/2$. Note also that at the output we have $\alpha_{\Sigma}=2dt$ while at each input we have $\alpha_{\Sigma}=dt$. We consider the coordinate $t$ to be well defined on the complement of $s=h_{\Sigma}=1/2$.

\begin{df}
A Floer datum $(H,J)$ is called super-dissipative if for any $f:\R/\Z\to\R$ we have that $(fH,J)$ is dissipative.
\end{df}
\begin{lm}
If $H$ is a Lipschitz exhaustion function and $h:\R\to\R$ is a monotone function satisfying $\lim_{t\to\infty} h'(t)\to 0$ then $h\circ H$ is super dissipative.
\end{lm}
\begin{proof}
The Lipschitz constant of $ f\cdot h\circ H$ is arbitrarily small outside of a sufficiently large compact set. The claim follows by Theorem \ref{tmBoundGradDis}.
\end{proof}
For a super dissipative $(H,J)$ denote by $\mathcal{F}(H,J)$ the set of all pairs in $C^{\infty}(\R/\Z\times M)\times\cJ$ which coincide outside of some compact set with $(fH,J)$ for some $f:\R/\Z\to\R_+$. We drop $J$ from the notation when there is no ambiguity. A $1$-form $\mathfrak{H}\in\Omega^1(\Sigma,C^{\infty}(M))$  is called \textit{$H$-admissible} if there is a function $G:\Sigma\times M \to\R$ such that $\mathfrak{H}=G\otimes\alpha_{\Sigma}$ and such that for each $x\in M$ we have
$d\mathfrak{H}(x)\geq 0$, and for each $z\in\Sigma$ we have
$G(z,\cdot)\in\cF(H)$. A Floer datum $(\mathfrak{H},J')$ is called $H$-admissible if, in addition, $J'$ is quasi-isometric to $J$. For an $H$-admissible product triple we denote by $\cP(H_0,H_1,H_2)$ the set of $H$-admissible data on the pair of pants which for $i=0,1,2$ equals $H_idt$ at the $i$th end. We refer to the set $\cP(H_0,H_1,H_2)$ as \emph{product data} for the triple $(H_0,H_1,H_2)$. Included in the set $\cP(H_0,H_1,H_2)$ are broken Floer data which are concatenations of monotone continuation data in $\cF(H)$ with $H$-admissible pairs of pants.

\begin{lm}\label{lmHAdmProdDat}
\begin{enumerate}
\item $H$-admissible 1-forms satisfy the hypotheses of Lemma \ref{lmTopGeoEnEst}.
\item\label{lmHAdmProdDatb} $H$-admissible product data are dissipative.
\item If $(H^0,H^1,H^2)\in\cF(H)^3$ is a strict  product triple then
    \[
    \cP(H^0,H^1,H^2)\neq\emptyset.
    \]
\item $\cP(H^0,H^1,H^2)$ is connected, and the path connecting any two elements is dissipative.
\end{enumerate}
\end{lm}
\begin{proof}
\begin{enumerate}
\item
Equations \eqref{EqPoisson} and \eqref{EqMonHom} hold by construction.
\item
Loopwise dissipativity follows by the assumption of super dissipativity of $H$. As for $i$-boundedness observe that the metric $g_{J_{\mathfrak{H}}}$ is uniformly equivalent to the metric $g_{J_H}$. In fact, if $H$ is Lipschitz, $g_{J_{\mathfrak{H}}}$ is uniformly equivalent to the product metric on $\Sigma\times M$.
\item
Let $F\in\cF(H)$ be a time independent Hamiltonian satisfying for all $x\in M$
\begin{align}
2\max \left\{H^0_0(x),H^1_0(x)\right\}&< F(x)\notag\\&<\min\left\{H^2_0(x),H^2_{1/2}(x)\right\}.\notag
\end{align}
Such a Hamiltonian exists by assumption.
Let
\[
G'_{t}(x):=\begin{cases} \max\{2H^0_{2t}(x),F(x)\},&\quad t\in[0,1/2]\notag\\
\max\{2H^1_{2t}(x),F(x)\},&\quad t\in[1/2,1]\notag.
\end{cases}
\]
Then $G'$ satisfies (A) $(H^0,H^1,G')$ is a product triple, (B) for each $x\in M$ there is a neighborhood $U\subset M$ and $I\subset\R/\Z$ of $\{0,1/2\}$ such that $G'_{t}(y)=F(x)$ for $(t,y)\in I\times U$ and (C) $G'<H_2$. The function $G'$ has non-smooth points away from $t\in\{0,1/2\}$ but it can be smoothed to a function $G$ such that the properties (A), (B) and (C) still hold. 
Let $G^s_t$ be a family such that for $s\gg1/2$ we have $G^s=H_2$ and for $s\leq 1/2$ we have $G^s=G$ and such that $\partial_sG^s\geq 0$. This can again be pieced together by an appropriate Urysohn function.

We define $H'_z$ to equal $G_{h_\Sigma(z),t}$ for $z$ such that $h_{\Sigma}(z)\geq1/2$. By property (B), $H'_z$ extends smoothly beyond the branchpoint at $z=1/2$. On each input end we can smoothly interpolate between the input $H_i$  and $\frac1{2}G_s$ in a monotone way by employing an Urysohn function. The result $\mathfrak{H}:=H'\otimes\alpha_\Sigma$ is an $H$-admissible product datum.
\item If $H^1\alpha$ and $H^2\alpha$ are two (unbroken) $H$-admissible product data with the same inputs and output, so is any convex combination. If $H^1\alpha$ is a broken product datum, it can connected by a  path to an unbroken one by gluing. Thus $\cP(H_0,H_1,H_2)$ is connected. Dissipativity is immediate as in  part \ref{lmHAdmProdDatb}.
    \end{enumerate}
\end{proof}

\subsection{Construction of the pair of pants product}
\begin{lm}
Fix a super dissipative $H$. Let $(H_0,H_1,H_2)\in\cF(H)^3$ consist of non-degenerate Hamiltonians. Then for generic choice of element in $\cP(H_0,H_1,H_2)$ and for a generic path in $\cP(H_0,H_1,H_2)$ the associated $0$- and $1$-dimensional moduli spaces of Floer solutions are compact, smooth and of the expected dimension.
\end{lm}
\begin{proof}
By construction, an admissible product datum satisfies the hypotheses of Lemma \ref{lmTopGeoEnEst}. Therefore,
given generators $\tilde{\gamma_i}\in CF^*(H_i)$ the pairs of pants of a fixed dissipative Floer datum with $i$th end asymptotic to $\tilde{\gamma}_i$ have energy estimated according to Lemma~\ref{lmTopGeoEnEst} by $E^{top}(U)$, which equals the action difference $\mathcal{A}_{H_3}(\tilde{\gamma}_3)-\mathcal{A}_{H_1}(\tilde{\gamma}_1)-\mathcal{A}_{H_2}(\tilde{\gamma}_2).$
The first part of Lemma \ref{lmHAdmProdDat} and Theorem ~\ref{tmFloSolDiamEst} thus imply that they
are all contained in a-priori compact set $K$ depending on the differences $b_i-a_i$. The story is now the same as the closed case which is dealt with in the aspherical case in \cite{Schwarz95}. Sphere bubbling is treated in the exact same way as for the differential, continuation maps and chain homotopy operators which was done in detail in \S\ref{subsecBubbCont}. The upshot is that in the monotone or Calabi-Yau case, whether one is working with a single Floer datum or with a family of parameterized ones, spheres occur in codimension 4. All the arguments involve at most $1$-dimensional families of Floer solutions. Generically, there is no sphere bubbling for such families.
\end{proof}
\begin{tm}\label{tmDisProd}
Fix a super dissipative $H$. For any Floer datum $(H,J)$ denote by $CF^{*,0}(H,J)$ the subcomplex generated by contractible periodic orbits. A generic choice of admissible product datum for a generic product triple $(H_0,H_1,H_2)\in\cF(H)^3$ determines a bilinear map
\[
*:CF^{*,0}(H_1,J_1)\otimes CF^{*,0}(H_2,J_2)\to CF^{*,0}(H_3,J_3)
\]
satisfying
\begin{equation}\label{eqValProdEst}
\val(x_1*x_2)\leq\val(x_1)+\val(x_2).
\end{equation}
Moreover, we have $d(x_1*x_2)=dx_1*x_2+(-1)^{\deg (x_1)}x_1*dx_2$.
The induced map on homology satisfies the following
\begin{enumerate}
    \item It is independent of the choice of admissible product datum.
    \item If, in addition, $H_3\geq H_2*H_1$, it is super-commutative.
       \item It commutes with all continuation maps in $\cF(H)$.
    \end{enumerate}
 \end{tm}
 \textit{For the remainder of this section, unless indicated otherwise, all the Floer cohomology groups are those arising from contractible orbits. We do not indicate this further in the notation.}
\begin{rem}\label{remProdContr}
The reason we restrict our discussion of the pair of pants product to contractible periodic orbits is in the formulation of the Floer complex we chose in \S\ref{SecDefHam}. In that formulation the Floer complex is generated over $R$ by appropriate equivalence classes $[\gamma,A]$ where $\gamma$ is a periodic orbit and $A$ a path from a base loop in the component of $\gamma$.  Concatenating two such data $(\gamma_0,A_0),(\gamma_1,A_1)$ with a pair of pants having output on some periodic orbit $\gamma_2$ does not give rise to an appropriate path $A_2$ unless all the involved periodic orbits $\gamma_i$ are contractible. To obtain a well defined product involving non-contractible orbits additional choices need to be made. This should not be hard, but we do not pursue the details. An alternative approach which avoids this issue altogether is indicated in Remark \ref{remAltApp}. Note also that if the symplectic form is exact or even merely aspherical and atoroidal, there is no issue. Moreover, if one of the inputs is contractible, the pair of pants product is well defined without additional choices. Thus as  a result of the present subsection we do get the module structure of the full symplectic cohomology over the contractible part without any additional work.
\end{rem}
\begin{proof}[Proof of Theorem \ref{tmDisProd}]
Fix a regular pair of pants datum $P\in\cP(H_0,H_1,H_2)$. For $\tilde{\gamma_i}\in CF^*(H_i)$ such that
\[
i_{RS}(\tilde{\gamma_2})=i_{RS}(\tilde{\gamma_0})+i_{RS}(\tilde{\gamma_1}). \]
The product $*$ is defined by counting the Floer solutions associated with $p$. The Leibnitz rule is obtained by analyzing the boundary of the $1$-dimensional moduli spaces. For details see \cite[\S 2.3.5]{Abouzaid2013}. Note that while \cite{Abouzaid2013} concerns the cotangent bundle, once we fix a regular product datum, the analysis of the moduli spaces is exactly the same.

Behavior of the valuation under $*$ follows by Lemma \ref{lmTopGeoEnEst}. Namely, by monotonicity of the product data, any solution must have non-negative energy.

Given two choices of admissible product data, Lemma \ref{lmHAdmProdDat} allows to construct a dissipative homotopy. We can perturb while maintaining dissipativity to get a sufficiently generic homotopy inducing a chain homotopy between the appropriate complexes. Commutation with continuation maps follows in the same way from Lemma \ref{lmHAdmProdDat} and a standard gluing argument.

The claim about supercommutativity follows by pulling back the product datum $P$ by a biholomorphism of $S^2$ which fixes $\infty$ and commutes $0$ and $1$. For details see \cite[Lemma 2.3.24]{Abouzaid2013}.
\end{proof}

\begin{lm}\label{lmAcTruncProd}
The pair of pants product induces a map
\[
*:HF^*_{[a_1,b_1)}(H_1)\otimes HF_{[a_2,b_2)}^*(H_2)\to HF_{[\max\{a_1+b_2,a_2+b_1\},b_1+b_2)}^*(H_3),
\]
for all $\cF(H)$ admissible triples. Moreover, the product $*$ fits into a commutative diagram
\begin{equation}\label{eqDiagFiltProd}
\xymatrix{
HF^*_{[a_1,b_1)}(H_1)\otimes HF^*_{[a_2,b_2)}(H_2)\ar[d]\ar[r]& HF^*_{[\max\{a_1+b_2,a_2+b_1\},b_1+b_2)}(H_3)\ar[d] \\
HF^*_{[a'_1,b'_1)}(H_1)\hat\otimes HF^*_{[a'_2,b'_2)}(H_2) \ar[r] &HF^*_{[\max\{a'_1+b'_2,a'_2+b'_1\},b'_1+b'_2)}(H_3)
}
\end{equation}
whenever $a'_i>a_i$ and $b_i'>b_i$.
\end{lm}
\begin{proof}
Recall the identity for any $R$-modules $A,B,C,D$,
\[
A/B\otimes C/D=A\otimes C/(B\otimes C+A\otimes D).
\]
The first part of the claim follows by definition of $CF^*_{[a,b)}$ and the estimate \eqref{eqValProdEst}. The second part is clear if one works with representatives.
\end{proof}

\begin{lm}

Suppose $H_1,H_2,H_3$ are a triple of lower semi-continuous exhaustion functions satisfying
\begin{equation}\label{eqProdTripSemCo}
H_3\geq H_1*H_2
\end{equation}
and $H_i\geq H$. Then there is a monotone sequence of admissible product triples $H_{ik}\in\cF(H),i=1,2,3$ such that $H_{ik}$ converges pointwise as $k\to\infty$ to $H_i$.
\end{lm}
\begin{proof}
Pick constants $a_1,a_2,a_3<1$ such that $a_1+a_2<a_3$. According to Lemma \ref{tmLoSCAppCo}, we can find sequences $H_{ik}, i=1,2,3$ which are monotone in $k$ converging pointwise to $H_i$ and coinciding with $a_iH$ outside of a compact set. From \eqref{eqProdTripSemCo} it follows that for each $k$ there exists an index $i_k$ such that $H_{3,i_k}>H_{1,k}*H_{2,k}$. The sequence $H_{1,k},H_{2,k},H_{3,i_k}$ is as required.
\end{proof}

\begin{lm}\label{lmfiltPaOpLSC}
The pair of pants product for Hamiltonians in $\cF(H)$ induces a map
\[
*_H: HF^*_{[a_1,b_1)}(H_1)\otimes HF_{[a_2,b_2)}^*(H_2)\to HF_{[\max\{a_1+b_2,a_2+b_1\},b_1+b_2)}^*(H_3),
\]
for all triples $(H_1,H_2,H_3)\in\cH_{s.c.}^3$ that satisfy $H_i\geq H$. Moreover the product $*_H$ fits into a commutative diagram as in \eqref{eqDiagFiltProd}.
\end{lm}
\begin{proof}
Pick a monotone sequence $(H_{0k},H_{1k},H_{2k})$ of $H$-admissible triples converging to $(H_0,H_1,H_2)$. As in \eqref{eqDfFiltLSC},
\[
HF^*_{[a,b)}(H_i):=\varinjlim_kHF^*_{[a,b)}(H_{ik}).
\]
Since tensor product commutes with direct limits we get an induced product as in the statement of the lemma. Moreover, since the pair of pants product commutes with all continuation maps in $\cF(H)$, the product $*_H$ is independent of the choice of approximating sequence.
\end{proof}

\begin{lm}\label{dfProdConsApp}
Fix a superdissipative $H$. The pair of pants product on $\cF(H)$ induces a canonical product
\[
*_H:\overline{HF}^*(H_0)\hat{\otimes}\overline{HF}^*(H_1)\to\overline{HF}^*(H_2),
\]
for all product triples $(H_0,H_1,H_2)\in\cH_{s.c.}^3$. The operation $*_H$ commutes with all continuation maps and is supercommutative.
\end{lm}
\begin{proof}
For $i=0,1$, let $\gamma_i\in\overline{HF}^*(H_i)$. By Lemma \ref{lmchTruncIsom}, $\overline{HF}^*(H_i)$ is the reduced cohomology of an appropriate chain complex $\widehat{CF}^*(H_i)$ well defined up to filtered quasi-isomorphism. Pick such chain complexes for $H_0$ and $H_1$. Let $\tilde{\gamma}_0,\tilde{\gamma}_1$ be representatives of $\gamma_0,\gamma_1$ respectively. We construct an element $\gamma_2=[\tilde{\gamma}_1]*_H[\tilde{\gamma_2}]\in\overline{HF}^*(H_2)$ as follows. By \eqref{eqDfFiltLSC1}, to give an element in $\overline{HF}^*(H_2)$ it suffices to fix some $c$ and give for each $a<c<b$  an element $\gamma^{ab}_2\in HF^*_{[a,b)}(H_2)$ so that the $\gamma^{ab}_2$ agree under the natural maps
\begin{equation}\label{eqNatMap}
HF^*_{[a,b)}(H_2)\to HF^*_{[a',b')}(H_2),
\end{equation}
defined whenever $a<a'$ and $b<b'$. For some $\epsilon>0$ write $b_i=\val(\tilde{\gamma}_i)+\epsilon$ for $i=0,1$. Fix some $b>b_0+b_1$ and for any $a<b$ let $a_0=a-b_1$ and $a_1=a-b_0$.
Then by applying the operation $*_H$ of Lemma \ref{lmfiltPaOpLSC} to the classes of $\tilde{\gamma}_i$ in $HF^*_{[a_i,b_i)}(H_i)$
 we obtain an element $\gamma_2^{ab}\in HF^*_{[a,b)}$. Moreover, $\gamma_2^{ab}$ agrees with $\gamma_2^{a'b'}$ under the natural maps \eqref{eqNatMap}. We thus obtain an element $\gamma_2\in \overline{HF}^*(H_2)$ which is well defined after fixing representatives $\tilde{\gamma}_i$.
We need to verify that $\gamma_2$ is independent of the choice of representatives. For this it suffices to show that if $\tilde{\gamma}_i$ is in the closure of the image of the boundary for either $i=0$ or $i=1$, then $\gamma_2=0$. This amounts to showing that for each $a$ there exists a $b$ such that $\gamma_2^{ab}=0$.
For definiteness assume $[\tilde{\gamma}_0]=0\in\overline{HF}^*(H_0)$.
By \eqref{eqDfFiltLSC1} we need to show that for each $a$ there is a $b$ such that
\[
\gamma_2^{ab}=0\in HF_{[a,b)}(H_2).
\]
For this it suffices to show that there is a $b$  so that we can find numbers $a_i,b_i$ for $i=0,1$ such that
\[
\max\{a_0+b_1,a_1+b_0\}\leq a<b_0+b_1\leq b, \quad b_i>\val(\tilde{\gamma}_i),
\]
and such that $[\tilde{\gamma}_0]=0\in HF^*_{[a_0,b_0)}(H_0)$. We choose $b_1=\val(\tilde{\gamma}_1)+\epsilon$ and $a_0=a-b_1$. Since $\tilde{\gamma}_0$ vanishes in reduced cohomology, there exists a $b_0=b_0(a_0)>\val(\tilde{\gamma}_1)$ such that it vanishes in $HF^*_{[a_0,b_0)}(H_0)$. Pick $a_1=a-b_0$. Then all the requirements are satisfied. It is clear that changing the underlying complexes for $\overline{HF}^*(H_i)$ up to filtered quasi-isomorphism does not affect the definition of $*$.
\end{proof}
\subsubsection{Independence of the choice of $H$ at infinity}

\begin{lm}\label{lmCondSupAdmProd}
Let $F_0\leq F_1$ be superdissipative. For $i=0,1$ let $(H^i_0,H^i_1,H^i_2)\in\cF(F_i)$, and suppose $H_j^0\leq H_j^1$ for $j=0,1,2$. Then the operation $*$ commutes with the continuation maps $H_j^0\to H_j^1$. In particular, the definition of the pair of products is independent at the homology level on the choice of $H$ in the approximating scheme.
\end{lm}
\begin{proof}
Fix dissipative homotopies $H^0_j\to H^1_j$. Gluing either the first two homotopies to the input of the pair of pants in $\cF(F_0)$ or the last one to the pair of pants in $\cF(F_1)$ gives rise to two pairs of pants from $H^0_0,H^0_1$ to $H^1_2$ with $1$-forms of the form $\mathfrak{H}_i=G_i\alpha_\Sigma$ such that $d\mathfrak{H}_i\wedge dh_\Sigma\geq 0$. We need a family $(G_s,J_s)$ such that $(\mathfrak{H}_s:=G_s\alpha_\Sigma,J_s)$ form a dissipative interpolating family still satisfying
\[
d\mathfrak{H}_s\wedge dh_\Sigma\geq 0.
\]
The proof of existence of such a $(G_s,H_s)$ is exactly as in Lemma \ref{lmAdmConn}.
\end{proof}

\subsubsection{Associativity}
\begin{lm}\label{lmAssociativity}

Let $H^i$ for $i=1,2, 3$ and $H^{1,2},H^{2,3}$ be elements of $\cH_{s.c.}$. Suppose $(H^1,H^2,H^{1,2})$ and $(H^2,H^3,H^{2,3})$ are product triples. Let $H^4$ be such that
\[
H^4\geq 2\max_{t\in\R/\Z}\{H^{1,2}_t,H^{2,3}_t\}.
\]
Then the maps
\[
\overline{HF}^*(H^1)\otimes\overline{HF}^*(H^2)\otimes\overline{HF}^*(H^3)\to\overline{HF}^*(H^4)
\]
coming from the two compositions in Figure \ref{figAssociativity} coincide.
\end{lm}

\begin{figure}
\includegraphics[scale=0.6, trim=0 350 0 100,clip]{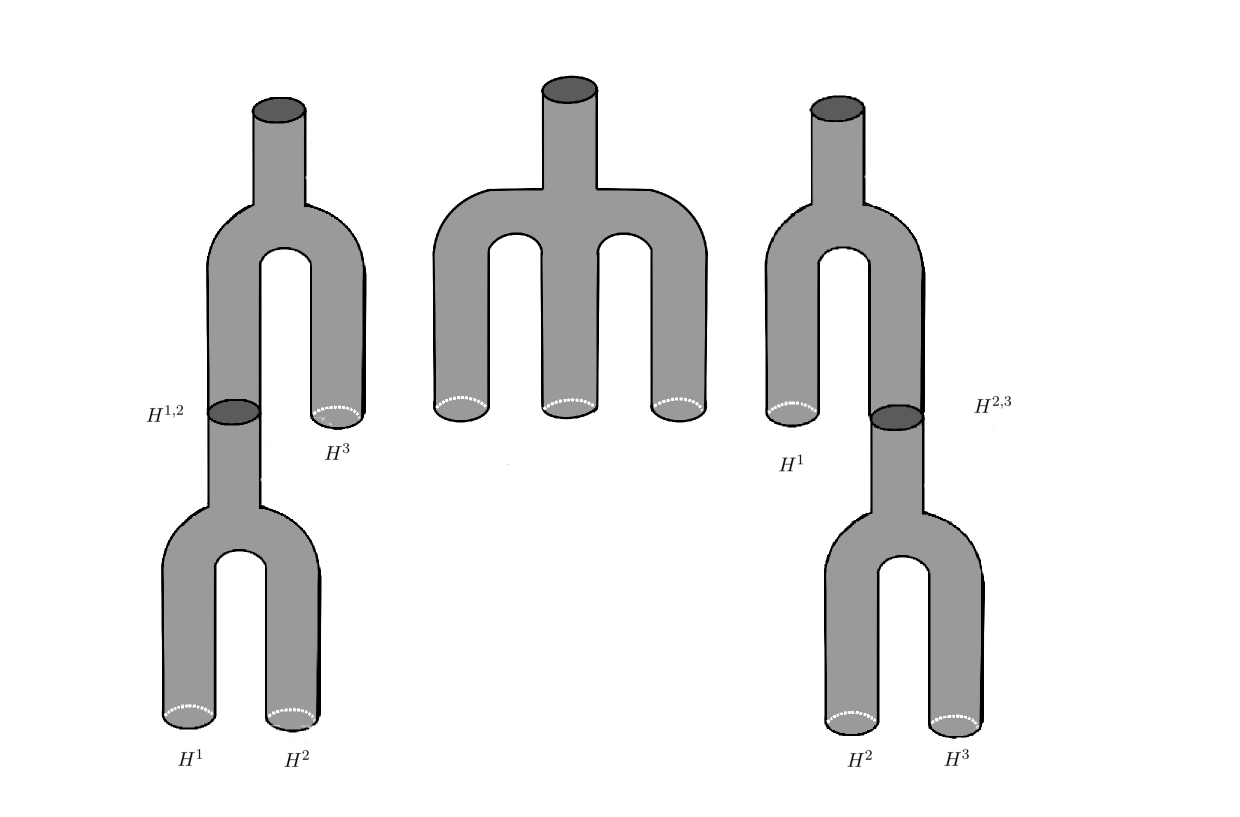}
\caption{Homotopy for associativity}
\label{figAssociativity}
\end{figure}
\begin{proof}
It suffices to prove the claim under the assumption that  $H^i,H^{i,j}\in\cF(H)$ for some super dissipative $H$ since we can replace all the involved Hamiltonians by approximating sequences in $\cF(H)$. Moreover, we may assume all inequalities are strict. The assumption implies there is a time independent $H$ such that $H^4> H> 2\max\{H^{1,2},H^{2,3}\}$. Thus if we prove associativity for the case where all the functions are positive multiples of a single function, the general claim will follow by naturality of the pair of pants product with respect to continuation maps. For this case the proof is standard in the literature (e.g., \cite{abouzaidSeidel2010,Abouzaid2013}) but we spell out the details.

Consider $H^1=H^2=H^3=H$, $H^{1,2}=H^{2,3}=2H$ and $H^4=4H$. Let $f^{1,2}, f^{2,1}:\Sigma\to\R$ be functions such that $d(f^{i,j}\alpha_\Sigma)\geq 0,$ $df^{i,j}$ is compactly supported, and $f^{i,j}$   equals $i$ at the input $z_0$, $j$ at the input $z_1$ and $4$ at the output.  We consider the $1$-forms $\mathfrak{H}^{1,1,2}:=H\alpha_\Sigma$ and $\mathfrak{H}^{i,j,4}:=f^{i,j}H\alpha$. The two possible compositions correspond to the gluing of $\mathfrak{H}^{1,1,2}$ to either $\mathfrak{H}^{2,1,4}$ at the first input, or to $\mathfrak{H}^{1,2,4}$ at the second input.
We must show that there exists a homotopy between the two compositions. We write these glued $1$-forms as $H\alpha_0,H\alpha_1$ where $\alpha_i$ are $1$-forms on $S^2\setminus \{0,1,z',\infty\}$ where $z'$ is a point $z_0$ near $0$ for the first composition and $z_1$ near $1$ for the second composition. Note that $d\alpha_i\geq 0$. We pick a smooth path $z_{\sigma\in[0,1]}$  in the Riemann sphere. We show that we can lift this to a path $\alpha_\sigma$ of $1$-forms satisfying $d\alpha_\sigma\geq0$ and connecting $\alpha_0$ to $\alpha_1$. Smooth four times punctured spheres are diffeomorphic by a diffeomorphism which preserves the cylindrical ends. Thus the claim reduces to finding such a homotopy on a fixed surface. But the condition $d\alpha\geq 0$ is convex. Thus we can find a path $\alpha_\sigma$ as required. The family $\mathfrak{H}_\sigma:=H\alpha_\sigma$ gives the required homotopy.
\end{proof}

\subsection{The PSS map}
\begin{tm}
Let $M$ be geometrically bounded. Then the small quantum product on $H^*(M;\K)$ is well defined.
\end{tm}
\begin{proof}
We take as our model of $H^*(M)$ the homology of the Morse complex $CM^*$ arising from considering the positive gradient flow of some proper exhaustion function $f:M\to\R$ with non-degenerate critical points, together with a geometrically bounded $J$ such that the pair $(f,g_J)$ is Morse-Smale. For this to compute cohomology (and, indeed, for the Morse differential to be well defined) we take $CM^*:=\K^{crit(f)}$. Indeed, $CM^*$ thus defined is the dual of $CM_*$ which is consists of finite formal sums of critical points with the differential defined by counting negative gradient lines. Since $g$ is proper and bounded below, the subcomplexes $CM_a^*\subset CM^*$ generated by critical points in the sublevel set $M_a:=f^{-1}(-\infty,a)$ compute the singular homology of $M_a$ by standard Morse theory \cite{Milnor}. By taking colimits it follows that $CM_*$ computes singular homology of $M$, and therefore, $CM^*$ computes singular cohomology. The small quantum product is defined by counting the $J$-holomorphic spheres with three marked points intersecting the unstable manifolds of some input critical points $p,q$ and the stable manifold of an output critical point $r$. Since $f$ is proper and bounded below, the stable manifold of any critical point is pre-compact. Thus by Theorem \ref{tmDiamEst} all the spheres are contained in a priori compact sets. The fact that the operation thus defined is indeed an associative product is now standard \cite[\S12.2]{MS2}\cite{PSS}.

\end{proof}
\begin{rem}
An alternative way to think of the construction of the small quantum product is to observe that cohomology of a non-compact manifold is Poincare dual to Borel-Moore homology. That is, a homology where one allows locally finite sums of singular chains. Given a triple $\gamma_1,\gamma_2,\gamma_3$ of Borel-Moore homology classes, the coefficient of $\gamma_3$ in the quantum product $\gamma_1*_{QH}\gamma_2$ is the three pointed Gromov-Witten associated with the triple $\gamma_1,\gamma_2,\gamma_3^*$ where $\gamma^*_3\in H_*(M)$ be the Poincare dual of $\gamma_3$. $\gamma_3^*$ is a cycle in ordinary homology and thus a finite chain. Therefore, by geometric boundedness, the number of $J$-holomorphic spheres intersecting $\gamma_3^*$ and representing a given homology class is finite.
\end{rem}
\begin{tm}\label{tmPSS}
For any $H\in\cH_{s.c.}$ there is a natural map
\[
f^{PSS}_H:H^*(M;\K)\to \overline{HF}^*(H).
\]
Denote by $*$ the product in Floer cohomology and by $*_{QH}$ the small quantum product. Then $f^{PSS}$ satisfies for any product triple $H_0,H_1,H_2$ and for any pair of classes $a,b\in H^*(M;\K)$,
\[
f^{PSS}_{H_0}(a)*f^{PSS}_{H_1}(b)=f^{PSS}_{H_2}(a*_{QH}b).
\]
In addition, for any $x\in \overline{HF}(H_1)$,
\[
f^{PSS}_{H_0}(1)*x=f_{H_1,H_2}(x),
\]
where
\[
f_{H_1,H_2}:\overline{HF}^*(H_1)\to\overline{HF}^*(H_2)
\]
is the natural continuation map.
\end{tm}
\begin{proof}
In the compact case for smooth non-degenerate Hamiltonians this is \cite{PSS}. In the non-compact case, we achieve $C^0$ estimates for smooth dissipative non-degenerate Hamiltonians by considering appropriate dissipative data on the plane. Namely, we pick a geometrically bounded $J$ and monotone homotopy $H_z$ going from $0$ for $z$ in a neighborhood of the origin and to $H$ for $z$ near $\infty$, and such that the associated Gromov metric $g_{J_H}$ is i-bounded. This is done as in Lemma \ref{lmAdmConn}. Alternatively, we can restrict the direct definition to the superdissipative case, and just take $H_z=f(|z|)H$ for some monotone increasing function. The definition for arbitrary $H$ is by an approximation scheme as in the definition of the pair of pants product. Write $dt:=d\arg z$. Then the Floer datum $(J,H_zdt)$ on the complex plane is dissipative. Moreover, it is monotone, so by Lemma \ref{lmTopGeoEnEst} and Theorem \ref{tmFloSolDiamEst} the solutions emanating from any critical point of $f$ to any critical point of $\cA_H$ are confined to an a priori compact set. This reduces the claims to the compact case.
\end{proof}
\subsection{Proof of Theorems~\ref{tmFloerProduct} and \ref{tmLocFlHoProp}}\label{SecSHuniv}
Let $\cH\subset\cH_{s.c.}$ be a subset consisting of time-independent Hamiltonians such that for any $H_1,H_2\in\cH$ we have that $2\max\{H_1,H_2\}\in\cH$. We call $\cH$ a \textit{monoidal indexing set}. For each monoidal  indexing set $\cH$ we define a group
\[
SH^*(M;\cH):=\varinjlim_{H\in\cH}\overline{HF}^*(M).
\]
We denote by $\cH_{sm}$ the monoidal indexing set consisting of all smooth functions which are proper and bounded from below and define
\[
SH^*_{univ}(M):=SH^*(M;\cH_{sm}).
\]
We now prove Theorem~\ref{tmFloerProduct} from the introduction which states that $SH^*(M;\cH)$ is a unital algebra over $QH^*(M;\K)$.
\begin{proof}[Proof of Theorem~\ref{tmFloerProduct}]
\begin{enumerate}[wide, labelwidth=!, labelindent=10pt]
\item
Given $\gamma_0,\gamma_1\in SH^*(M;\cH)$ we can find $H_0,H_1\in\cH$ such that $\gamma_i$ lifts to an element still denoted by $\gamma_i\in \overline{HF}^*(H_i)$ for $i=0,1$. Since $\cH$ is a monoidal indexing set we can find an $H_2\in\cH$ such that $(H_0,H_1,H_2)$ form a product triple. Pick a super dissipative Hamiltonian $H\leq H_i,i=0,1,2$ and let $\gamma_2:=\gamma_0*_H\gamma_1\in\overline{HF}^*(H_2)$ using the induced product $*_H$ from Lemma \ref{dfProdConsApp}. By Lemma \ref{lmCondSupAdmProd}, $\gamma_2$ is independent of the choice of $H$. We define $\gamma_0*\gamma_1\in SH^*(M;\cH)$ to be the image of $\gamma_2$ under the natural map $\overline{HF}^*(H_2)\to SH^*(M;\cH)$. Since $*_H$ commutes with all continuation maps, $\gamma_1*\gamma_2$ is independent of the choice of product triple $(H_0,H_1,H_2)\in\cH^3$. Associativity and supercommutativity hold up to continuation maps in $\cH$ by Lemmas \ref{lmAssociativity} and \ref{dfProdConsApp}. Indeed, if $H_1,H_2,H_3\in\cH$ so is $4\max\{H_1,H_2,H_3\}$.
\item
This is an immediate consequence of Theorem \ref{tmPSS}.
\item
This is an immediate consequence of the naturality of the pair of pants product with respect to the continuation maps.
\end{enumerate}
\end{proof}

We next turn to the proof of Theorem \ref{tmLocFlHoProp} concerning local symplectic cohomology, but first we recall some definitions. Let $K\subset M$ be a compact set. Let
\[
H_K(x):=\begin{cases} 0, & x\in K,\\ \infty, & x\in M\setminus K.\end{cases}
\]
The \textit{local symplectic cohomology} at $K$ is defined by
\[
SH^*(M|K;\K):=\overline{HF}^*(H_K;\K).
\]

\begin{proof}[Proof of Theorem \ref{tmLocFlHoProp}]
\begin{enumerate}[wide, labelwidth=!, labelindent=0pt]
\item We have
    \[
    K_1\subset K_2\iff H_{K_2}\leq H_{K_1}.
    \]
    Thus there is a continuation map
    \[
    SH^*(M|K_2)=\overline{HF}^*(H_{K_2})\to SH^*(M|K_1)=\overline{HF}^*(H_{K_1})
    \]
\item
    This is the symplectic invariance of the construction of $\overline{HF}^*$.
\item We have that $\cH_K:=\{H_K\}$ forms a monoidal indexing set. So $SH^*(\cH_K)=\overline{HF}^*(H_K)$, and the claim follows from Theorem \ref{tmFloerProduct}.
\item This is an immediate consequence of Theorem~\ref{tmFloerProduct} and the functoriality of the continuation maps.
\item We have $H-\sup_KH\leq H_K$. On the other hand the map corresponding to $H\to H+c$ is a conformal isomorphism decreasing valuation by $c$.
\end{enumerate}

\end{proof}

\subsection{Symplectic cohomology as a topological vector space}\label{SecSHTopCof}
There is more than one natural way to put a topology on $SH^*(\cH)$ depending on the purpose one has in mind. In the rest of the paper we shall consider the \textit{final topology} on $SH^*(M;\cH)$. That is, the strongest topology for which all the continuation maps $\overline{HF}^*(H)\to SH^*(M;\cH)$ for $H\in\cH$ are continuous. Note that this topology is not necessarily Hausdorff. We also do not address the continuity of the pair of pants product. For applications later in this paper we will only need the following lemma.
\begin{lm}\label{lmSHHyoSHKCont}
Let $\cH$ be a monoidal indexing set consisting of continuous Hamiltonians. Let $K\subset M$ be a compact set. Then the natural map
\[
SH^*(\cH)\to SH^*(M|K)
\]
is continuous.
\end{lm}
\begin{proof}
By Theorem \ref{tmLocFlHoProp}\ref{tmLocFlHoPropPe} for any continuous Hamiltonian $H$, the continuation map $HF^*(H)\to SH^*(M|K)=HF^*(H_K)$ is continuous. Continuity of the induced map follows by definition of the final topology.
\end{proof}
\begin{rem}
Since the spaces $SH^*(M|K)$ are all Banach spaces, Lemma \ref{lmSHHyoSHKCont} will still hold if one considers topologies on $SH^*(\cH)$ which are weaker than the final topology. We do not pursue this further however.
\end{rem}
We illustrate the various notions of symplectic cohomology by considering the case $M=\R\times\R/\Z$. We will compare symplectic cohomologies for three different monoidal indexing systems. \textit{Since $M$ is a Liouville domain we no longer restrict the discussion of the pair of pants product to contractible orbits.}
\begin{ex}
Consider the monoidal indexing set $\cL$ consisting of Hamiltonians which outside of a compact set are of the form
\[
H(s,t)=a|s|+b.
\]
According to a theorem by Viterbo \cite{Viterbo2018} equating symplectic cohomology of a cotangent with loopspace homology of the underlying manifold, we have for any coefficient field $R$ that
\begin{equation}
SH^*(M;\cL)=R[x,x^{-1},\partial_x]/\partial_x^2.
\end{equation}
Thus, \textit{$SH^*(M;\cL)$ is the exterior algebra of polyvector fields on $R\setminus\{0\}$.} By the Kunneth formula \cite{Oancea06}, or by Viterbo's theorem again, the same holds for $M=T^*\T^n$.
\end{ex}

\begin{ex}
Let now $K\subset M=[-a,a]\times\R/\Z$. To keep track of actions choose the primitive $sdt$ of the standard symplectic form. With this choice one shows that $SH^*(M|K)$ is obtained from $SH^*(M;\cL)$ by completing with respect to the valuation $\val(x^i):=|i|a$ and $\val(\partial_x):=0$. When the underlying ring $R$ is trivially valued, this completion is of no effect. However, when working over the universal Novikov ring we obtain for example that $SH^0(M|K)$ consists of all infinite Laurent series
\[
\sum_{i=-\infty}^{\infty}b_ix^i,\quad b_i\in\Lambda_{0,nov},
\]
satisfying
\[
\lim\val(b_i)+|i|a\to-\infty.
\]
This is the same as the set of analytic functions on the rigid analytic torus $\Lambda^*$ which converge on the subtorus $\{z\in\Lambda^*|\val(z)\in[-a,a]\}$. A reference for rigid analytic geometry is \cite{Bosch}. Closer to home, \cite{Tu2014,Abouzaid2014} provide a closely related point of view from the vantage point of Lagrangian Floer homology of the $\R/\Z$ fibers.
\end{ex}

\begin{ex}\label{exSUniv}
Finally we study $SH^*_{univ}(\R\times S^1)$. We claim that \textit{over the universal Novikov ring $SH^0_{univ}(\R\times S^1)$ consists of formal Laurent series $\sum b_ix^i$ that are rapidly decreasing in the sense that there exists a superlinear convex function $g$ such that
$\val(b_n)=-g(n).$}

To see this observe that $SH^*_{univ}(M)$ is computable by  direct limit of $HF^*(H)$ over functions $H$ which outside of a compact set are of the form $H(s,t)=h(|s|)$ for $h:\R_+\to\R_+$ a convex function such that $h'(t)$ is unbounded as $t\to\infty$\footnote{Such a function is not necessarily dissipative. However, we may still talk about its Floer cohomology by approximating by linear Hamiltonians.}. Note that for any such $H$ there is a natural map
\[
SH^*(M;\cL)\to HF^*(H),
\]
by the universal property of the direct limit and the fact that for any $H'\in\cL$ we have $H'\leq H$ outside of a compact set. Each monomial $x^i$ maps to a class associated with a unique periodic orbit $\gamma^i$ of $H$. It is not hard to show that $HF^*(H)$ is in fact the completion of the algebra $SH^*(M;\cL)=R[x,x^{-1},\partial_x]$ with respect to the norm $\val(x_i)=\cA_H(\gamma_i)$. This is computed as follows. Let $s_i\in\R$ be the unique real number such that $\partial_sH(s,t)=i$. Then
\[
\val(x^i)=s_ih'(s_i)-h(|s_i|)=is_i-h(|s_i|).
\]
Note that writing $f=h'$ we have $s_i=f^{-1}(i)$ and that the right hand side the last equation is exactly $g(i):=\int^i_0f^{-1}(t)dt$. Since $f^{-1}$ is monotone and unbounded this means $g_i$ is convex and superlinear. From this it is not hard to deduce the claim.
\end{ex}
\section{Computations and applications}\label{Sec7}
\subsection{Liouville domains}\label{subSecLiouville}
Let $(\Sigma,\alpha)$ be a contact manifold with contact form $\alpha$. Let $(U,\omega=d\lambda)$ be a compact exact symplectic manifold with $\Sigma$ as boundary such that $\alpha=\lambda|_{\partial U=\Sigma}$ and such that the Liouville field $Z$, which is defined by $\iota_Z\omega =\lambda$,  points outward at the boundary. Let $\hat{U}$ be the completion of $U$ by attaching the cone $\Sigma\times\R_{\geq0}$ with the symplectic form $\omega_{\alpha}=e^r(d\alpha+dr\wedge\alpha )$. The vector field $Z$ extends to $\hat{U}$ and is given by $Z=\frac{\partial}{\partial r}$.  

Denoting by $\phi_t$ the time $t$ flow of $Z$, the skeleton of $U$ relative to $\lambda$ is defined by 
\[
\Skel(U,Z)=\cap_{t>0}\phi_{t}(U). 
\]
The map $\Sigma\times\R\to \hat{U}$ defined by $(x,r)\mapsto\phi_r(x)$ is a symplectic embedding of the symplectization of $\Sigma$ whose image is  $\hat{U}\setminus\Skel(U,\lambda)$.  A reference for these basic definitions and claims is \cite{CielEli}. In particular, the function $\phi_r(x)\mapsto e^r$ is defined and smooth on $\hat{U}\setminus\Skel(U,Z)$. Moreover, it extends to a continuous function of $\hat{U}$, still denoted by $e^r$ by defining $e^r(p)=0$ for $p\in \Skel(U,Z)$. Denote by $\mathcal{L}$ the set of Hamiltonians that outside of a compact set containing the skeleton are of the form $ae^r+b$ for $a>0$ and $b\in\R$. We refer to these Hamiltonians as being \emph{linear at infinity}. Similarly, Hamiltonians which outside of a compact set are  of the form $h(e^r)$ for $h$ convex are referred to as \emph{convex at infinity}. Let $J$ be of contact type. That is $J$ is an $\omega$-compatible translation invariant almost complex structure $J$ satisfying $JR=\partial r$ for $R$ the Reeb flow. As in \cite{Viterbo99}, define $SH^*_{Viterbo}(U)=\varinjlim_{H\in\mathcal{L}}HF^*(H,J)$.
\begin{lm}
We have that $SH^*_{Viterbo}(U)=SH^*(\hat{U};\mathcal{L}).$
\end{lm}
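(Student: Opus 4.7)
The plan is to verify three compatibility statements: (i) every pair $(H,J)$ with $H\in\mathcal{L}$ and $J$ of contact type lies in the dissipative Floer system $\mathcal{F}_d$; (ii) for such $(H,J)$ the classical Floer complex used in \cite{Viterbo99} coincides with the complex $CF^*(H,J;\Lambda_R)$ of this paper, and in particular $HF^*(H,J)=\overline{HF}^*(H)$; (iii) the direct limit over $\mathcal{L}$ taken algebraically (as in \cite{Viterbo99}) agrees with the locally convex direct limit of Definition~\ref{dfSympCoh}.

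For (i), write $H=h(e^r)$ outside a compact set with $h(x)=ax+c$ for $a$ not in the action spectrum of $\partial U$. A direct calculation on $\Sigma\times\R_+$ with $\omega_\alpha=e^r(d\alpha+dr\wedge\alpha)$ gives $\|X_H\|\le|a|$ in the contact-type metric $g_J$, so by Lemma~\ref{lmQuasHam} the metric $g_{J_H}$ is quasi-isometric to $g_J$; since $g_J$ has bounded geometry on the symplectization as in Example~\ref{ExLiouvTame}, $J_H$ is i-bounded. Loopwise dissipativity with slope outside the period spectrum is exactly Example~\ref{exLinLiou}, and Lemma~\ref{lmWLDLDOS} then upgrades this to strong LDOS. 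Hence $(H,J)\in\mathcal{F}_d^{(0)}$. A parallel argument, using the fact that a monotone homotopy through linear-at-infinity Hamiltonians is again i-bounded and remains LD on its endpoints, shows that Viterbo's continuation homotopies lie in $\mathcal{F}_d^{(1)}$.

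For (ii), the generators of the two complexes are identical (Novikov lifts of periodic orbits of $H$), and Theorem~\ref{tmFloSolDiamEst} supplies the same compactness of moduli spaces that the classical maximum principle provides for contact-type $J$; thus the differentials and continuation maps count the same rigid trajectories. Because $H$ has finitely many periodic orbits (Reeb orbits of period less than the slope plus constants in $U$, all non-degenerate after a small perturbation), $CF^*(H,J;\Lambda_R)$ is finitely generated over $\Lambda_R$. In particular the differential is closed and $\overline{HF}^*(H)=HF^*(H,J)$. By the homotopy invariance statement in Theorem~\ref{mainTmA}\ref{MainTmAPartE}, the continuation map induced on cohomology depends only on the endpoints, so Viterbo's continuation maps and the paper's agree at the level of $HF^*$.

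For (iii), both limits are taken over the same poset $\mathcal{L}$, and on objects they coincide by (ii). Since each $\overline{HF}^*(H;\Lambda_R)$ is finite dimensional over $\Lambda_R$ and $\mathcal{L}$ admits a $\preceq$-cofinal sequence (e.g.\ any sequence of slopes $a_n\to\infty$ avoiding the period spectrum), Lemma~\ref{lmFinDimLim} identifies the locally convex direct limit with the ordinary algebraic direct limit, yielding $SH^*_{Viterbo}(U)=SH^*(M;\mathcal{L})$. The main subtlety I anticipate is purely bookkeeping: carefully matching Viterbo's a priori action and $C^0$ bounds, which come from the maximum principle, with the dissipative a priori estimates of \S\ref{Sec5} and \S\ref{Sec6}, so that the two chain-level constructions are literally the same and not merely quasi-isomorphic.
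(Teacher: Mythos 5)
Your plan (compare directed systems generator by generator, show each $\overline{HF}^*(H)$ agrees with the classical $HF^*(H,J)$, then match the limits) is exactly the shape of the paper's argument, and your use of Example~\ref{exLinLiou} for loopwise dissipativity is a perfectly good alternative to the paper's invocation of Example~\ref{ConvEndDiss}. But step (i) contains a genuine error: the estimate $\|X_H\|_{g_J}\le|a|$ is false. With $g_J=e^r(dr^2+g_\Sigma)$ as in Example~\ref{ExLiouvTame} and $H=h(e^r)$ linear of slope $a$, one has $X_H=-h'(e^r)R=-aR$ where $R$ is the Reeb field, and $\|R\|_{g_J}^2=e^r g_\Sigma(R,R)$, so $\|X_H\|_{g_J}\sim |a|\,e^{r/2}$ is unbounded on the cone. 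Consequently Lemma~\ref{lmQuasHam} does not apply: the Gromov metric $g_{J_H}$ is not uniformly quasi-isometric to $g_j+g_J$, and one cannot get i-boundedness this way. (Trying to repair this with a sequence of quasi-isometry constants $C_i\sim e^{r_i/2}$ fails the summability constraint $\sum (C_i a_i)^{-2}=\infty$ once the separation condition on the $K_i$ is enforced.)

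The correct route, and the one the paper takes, is Example~\ref{wxAdmWT}, which works directly with the geometry of $g_{J_H}$ via the curvature computations of Example~\ref{ExLiouvTame}: for $h$ linear, $h'\equiv a$ and $h''=h^{(3)}=0$, so one may take the taming constants $a_i\equiv|a|$ (and the hypersurfaces $t_i$ spaced so that $\sqrt{t_{i+1}}-\sqrt{t_i}\ge 2/|a|$), giving $\sum a_i^{-2}=\infty$ and hence i-boundedness. You should replace your invocation of Lemma~\ref{lmQuasHam} with this. One further small point: your appeal to Lemma~\ref{lmFinDimLim} in (iii) is about identifying $SH^*$ with its Hausdorff completion $\widehat{SH}^*$, not about matching the algebraic and locally convex direct limits — those already have the same underlying vector space by construction (Definition~\ref{dfSympCoh}), so (iii) needs no input beyond (ii); the paper simply remarks that the directed systems coincide.
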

\begin{proof}
 By Example~\ref{wxAdmWT}, when paired with a contact type $J$, the elements of $\mathcal{L}$ are i-bounded. Any $H\in\mathcal{L}$ with slope at infinity not in the period spectrum of $\Sigma$ is dissipative by Example~\ref{ConvEndDiss}. It follows that $\overline{HF}^*(H)=HF^*(H)$. So the directed systems computing each side coincide.
 \end{proof}
 In particular, we have a natural map
\[
f:SH^*_{Viterbo}(U;R)=SH^*(\hat{U};\mathcal{L},R)\to SH^*_{univ}(\hat{U};R).
\]
\begin{tm}\label{VitUnivEmb}
The map $f$ is an isomorphism.
\end{tm}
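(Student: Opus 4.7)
The strategy is to exhibit $\overline{HF}^*(H)$ for any $H \in \mathcal{H}$ as the reduced cohomology of a telescope built entirely out of linear Hamiltonians, so that the assembled continuation maps recover $f$ and realize it as an isomorphism. The main input is Theorem~\ref{tmOverHFChainLv}, which computes $\overline{HF}^*(H)$ via the telescope $\widehat{SC}^*(\{K_n\})$ for \emph{any} monotone sequence of dissipative Floer data converging pointwise to $H$.

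First, given $H \in \mathcal{H}$, I would construct a monotone increasing sequence $\{K_n\}_{n \geq 1} \subset \mathcal{L}$ of dissipative Hamiltonians satisfying $K_n \leq H$ and $K_n \to H$ uniformly on compact subsets. This is done by replacing $H$ outside progressively larger radii on the convex end $\Sigma \times \R_+$ by a function linear in $e^r$ of slope $c_n \to \infty$, with slopes chosen to avoid the period spectrum of $(\Sigma, \alpha)$; a smooth cut-off using the lower-boundedness and continuity of $H$ ensures $K_n \leq H$ globally. By Examples~\ref{wxAdmWT} and~\ref{ConvEndDiss}, paired with a contact-type $J$, each $K_n$ is i-bounded and Palais--Smale, hence dissipative. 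Theorem~\ref{tmOverHFChainLv} then exhibits $\widehat{SC}^*(\{K_n\})$ as filtered quasi-isomorphic to the complex computing $\overline{HF}^*(H)$, with all generators living in $CF^*(K_n)$ for $K_n \in \mathcal{L}$.

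For surjectivity of $f$, any class $\alpha \in \widehat{SH}^*_{univ}(M;R)$ is represented by $[\gamma] \in \overline{HF}^*(H)$ for some $H \in \mathcal{H}$, and by the telescope representation $[\gamma]$ arises from elements of $HF^*(K_n)$ for $K_n \in \mathcal{L}$; hence $\alpha$ lies in the image of $f$. Over $R$ coefficients (with $\Lambda_{R,\omega} = R$ since $\omega$ is exact), each $HF^*(K_n;R)$ is finite-dimensional, so Lemma~\ref{lmFinDimLim} renders $SH^*_{Viterbo}(U;R)$ already Fr\'echet and surjectivity passes cleanly through the completion. For injectivity, suppose $\beta \in SH^*_{Viterbo}$ with representative in $HF^*(K_0)$ maps to $0$ in $\widehat{SH}^*_{univ}$. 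By Lemma~\ref{lmCharVanHaus} applied to $\widehat{SH}^*(M;\mathcal{H})$, there exist $H_j \in \mathcal{H}$ with $H_j \geq K_0$ and $\beta_j \to \beta$ in $HF^*(K_0)$ such that $\beta_j$ dies in $\overline{HF}^*(H_j)$. Applying the telescope construction of Step~1 to each $H_j$ and invoking the continuity of truncated Floer cohomology (Theorem~\ref{lmConstApp}), each $\beta_j$ is already a boundary in $HF^*(K'_j)$ for some $K'_j \in \mathcal{L}$ with $K'_j \geq K_0$ approximating $H_j$, so $\beta_j \to 0$ in $SH^*_{Viterbo}$ and $\beta = 0$.

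The main obstacle is the injectivity step: one must trace a null-homologous witness in the completed limit $\widehat{SH}^*_{univ}$ back to a single linear Hamiltonian in $\mathcal{L}$. The restriction to the trivially valued coefficient ring $R$ (rather than $\Lambda_R$), as flagged in Remark~\ref{rmTrivNonTriv}, is essential here: it guarantees discreteness of the action spectrum, finite dimensionality of the truncated complexes, and hence compatibility of vanishing with direct limits over $\mathcal{L}$. Secondary technical care is needed in the cutoff construction to keep the slopes $c_n$ outside the period spectrum while simultaneously maintaining monotonicity $K_n \leq H$ and convergence $K_n \to H$ on compact sets; this is routine given continuity and lower-boundedness of $H$, but must be set up in advance to make the telescope argument go through.
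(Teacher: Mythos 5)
There are two genuine gaps in your argument, and they are both located exactly at the point you yourself flagged as "the main obstacle."

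First, you never establish the single geometric fact on which the paper's proof hinges: for convex Hamiltonians $H = h(e^r)$, \emph{every} periodic orbit has positive action, so $HF^*_{[k,\infty)}(H_i) = HF^*(H_i)$ for all $k \leq 0$. This is what collapses $\overline{HF}^*(H) = \varprojlim_a \varinjlim_i HF^*_{[a,\infty)}(H_i)$ to the plain direct limit $\varinjlim_i HF^*(H_i)$, with no interchange-of-limits problem to worry about. You write that the restriction to $R$-coefficients "guarantees discreteness of the action spectrum," but the trivial valuation does nothing of the sort; it is the convexity of $h$ that bounds the action from below. Your cutoff Hamiltonians $K_n$ are only required to be linear at infinity and $\leq H$, not convex, so they may well carry orbits of arbitrarily negative action as $n \to \infty$; then $\varprojlim_a$ and $\varinjlim_n$ really do fail to commute, and the telescope's reduced cohomology is not $\varinjlim_n HF^*(K_n)$. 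The paper first restricts to the $\preceq$-cofinal family of \emph{convex} $h(e^r)$, then approximates each by convex $h_i$ linear at infinity — that convexity is not decoration, it is the whole point. You would need to build convexity into your cutoff, and then note that this forces you to restrict the domain of $H$ to (cofinal) convex exhaustions from the start, at which point you have essentially reconstructed the paper's proof.

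Second, your injectivity argument invokes Lemma~\ref{lmCharVanHaus} for $\widehat{SH}^*(M;\mathcal{H})$, but that lemma requires $\mathcal{H}_0$ to admit a $\preceq$-cofinal \emph{sequence}. The full set $\mathcal{H}$ of continuous exhaustions on a Liouville manifold admits no such sequence — given any countable family $\{H_n\}$ one can always build an exhaustion growing faster than every $H_n + n$ — and the paper itself warns in Remark~\ref{rmLocConvDet} that $\widehat{SH}^*(M;\mathcal{H}_0)$ may then be non-metrizable. The paper never separates injectivity from surjectivity and never invokes Lemma~\ref{lmCharVanHaus} here; instead it identifies $\overline{HF}^*(H)$ with $SH^*_{Viterbo}$ for each member of a cofinal family of convex Hamiltonians, checks that the continuation maps intertwine these identifications, and concludes the whole (completed) colimit is the fixed space $SH^*_{Viterbo}$. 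Your surjectivity step also asserts that every class in the completion $\widehat{SH}^*_{univ}$ lifts to a single $\overline{HF}^*(H)$; that is exactly what one is trying to prove, not something one may assume. Both gaps disappear once you make the paper's positivity-of-action observation and let the identification of $\overline{HF}^*(H)$ with $SH^*_{Viterbo}$ do the work.
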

\begin{rem}\label{rmTrivNonTriv}
Note that the proof below of Theorem \ref{VitUnivEmb} relies crucially on the fact that for Hamiltonians which are convex at infinity the action spectrum is bounded from, below rendering the topology of $HF^*(H)$ discrete. This fails when working over a non-trivially valued field. To see what sets trivially valued fields apart, consider the following. Given two Hamiltonians $H_1\leq H_2$ such that $H_i=h_i(e^r)$, the continuation map
\[
f_{12}:HF^*(H_1;R)\to HF^*(H_2;R)
\]
can be shown two be an isomorphism of vector spaces, and thus, since the topology is discrete, of topological vector spaces. However the inverse of $f_{12}$ will generally not be bounded. Thus when working over $\Lambda_R$, the map $f_{12}$ will no longer be homeomorphism.
\end{rem}

\begin{proof}[Proof of Theorem \ref{VitUnivEmb}]
We consider the set $\mathcal{C}$ of smooth Hamiltonians $H$ for which there is a compact  $K=\{e^r<\epsilon\}$ for some $\epsilon>0$ so that $H$ is $C^2$ small and negative on $K$ and of the form  $H=h(e^r)$ outside of $K$. The action of any $1$-periodic orbit of such a Hamiltonian is positive. 
The set of Hamiltonians $\mathcal{C}$ is cofinal in the set of all smooth Hamiltonians with respect to the order relation $\preceq$ defined as in the introduction by 
\[
H_1\leq H_2\iff \forall x\in M, H_2(x)-H_1(x)\geq C>-\infty.
\]
Pick a sequence $F_i\in\mathcal{C}$ given outside of a compact set by  $F_i=h_i(e^r)$ so that the sequence $F_i$ converges to $H$ on compact subsets of $M$ and such that near infinity $h_i$ is linear of slope not in the period spectrum of $\alpha$. The action of a periodic orbit is given by the right hand side of \eqref{eqAcnFor} which in this case specializes for a non-trivial periodic orbit $\gamma$ of $F_i$ occurring at some level set  $e^r=t$  to 
\[
\mathcal{A}_{F_i}(\gamma)=th'_i(t)-h_i(t),
\]
which is positive for $h_i$ convex. Positivity also holds for the trivial periodic orbits, since they occur inside  $U$ where $F_i<0$. We thus have that $HF^*_{[k,\infty)}(F_i)=HF^*(F_i)$ for all $k\leq 0$. A similar statement holds for $H$. From this we deduce, first, that $\overline{HF}^*(H)=HF^*(H)$, and, second, that  $HF^*(H)=\varinjlim_iHF^*(F_i).$

The set $F_i$ is cofinal in $\mathcal{L}$ with respect  the order relation $\preceq$. Therefore, we obtain an isomorphism of $R$-modules
\[
HF^*(H)=\varinjlim_i{HF}^*(F_i)\to SH^*(\mathcal{L}_{reg};R).
\]
Moreover, given two convex functions $H_1\leq H_2$, the continuation maps from $H_1$ to $H_2$ will commute with the above isomorphisms since they are all defined via continuation maps between functions which are linear near infinity. The claim follows.
\end{proof}
We similarly prove
\begin{tm}
We have
\begin{gather}
SH^*(\hat{U}|\Skel(U,Z);R)=SH^*(\hat{U}|U;R)\notag\\
=SH^*_{Viterbo}(U;R)=SH^*_{univ}(\hat{U};R)\notag.
\end{gather}
\end{tm}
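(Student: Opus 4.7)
The identity $SH^*_{Viterbo}(U;R)=SH^*_{univ}(\hat{U};R)$ is precisely Theorem~\ref{VitUnivEmb}. The remaining two equalities $SH^*(\hat{U}|U;R)=SH^*_{Viterbo}(U;R)$ and $SH^*(\hat{U}|\Skel(U);R)=SH^*(\hat{U}|U;R)$ will both be derived from the continuity extension of reduced Floer cohomology (Theorem~\ref{tmUniExtFl}) applied to suitable monotone approximations of the indicator Hamiltonians $H_U$ and $H_{\Skel(U)}$.

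For the first of these, the plan is to choose a monotone sequence of convex Hamiltonians $H_n = h_n(e^r)$ on the cone, extended by $h_n(0)=0$ on $U$, which are dissipative by Examples~\ref{wxAdmWT} and~\ref{ConvEndDiss} and which increase pointwise to $H_U$. Theorem~\ref{tmUniExtFl} then gives $SH^*(\hat{U}|U;R)=\varinjlim_n\overline{HF}^*(H_n;R)$, and the argument from the proof of Theorem~\ref{VitUnivEmb} identifies the right-hand side with $SH^*_{Viterbo}(U;R)$: each $\overline{HF}^*(h(e^r);R)$ equals a direct limit of Floer cohomologies of linear-at-infinity approximants, and these form a cofinal subfamily of $\mathcal{L}$.

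For the equality $SH^*(\hat{U}|\Skel(U);R)=SH^*(\hat{U}|U;R)$, the plan is to exploit the Liouville flow $\psi_s$ on $\hat{U}$, which satisfies $\psi_s^*\omega=e^s\omega$ and contracts $U$ onto $\Skel(U)$ as $s\to-\infty$. Pulling back the sequence $\{H_n\}$ via $\psi_s$ for increasingly negative $s$ produces a dissipative bi-indexed family $\{\psi_s^* H_n\}$ which converges pointwise from below to $H_{\Skel(U)}$. The map $\psi_s$ realises $(\hat{U},\omega)$ as a symplectomorphic copy of $(\hat{U}, e^{-s}\omega)$; over the trivially valued field $R$, the rescaling $\omega\mapsto e^{-s}\omega$ changes the action filtration but leaves Floer cohomology invariant as an $R$-module, yielding natural isomorphisms $\overline{HF}^*(\psi_s^* H_n;R)\cong\overline{HF}^*(H_n;R)$ compatible with the continuation maps in both indices. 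Applying Theorem~\ref{tmUniExtFl} once more will then give
\[
SH^*(\hat{U}|\Skel(U);R)=\varinjlim_{(n,s)}\overline{HF}^*(\psi_s^* H_n;R)\cong\varinjlim_n\overline{HF}^*(H_n;R)=SH^*(\hat{U}|U;R).
\]

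The central technical point will be verifying the compatibility of the conformal Liouville-flow identifications with the continuation maps that define the direct limits, so that passage to the colimits genuinely realises the natural map $SH^*(\hat{U}|U;R)\to SH^*(\hat{U}|\Skel(U);R)$ coming from $\Skel(U)\subset U$ as an isomorphism. The triviality of the valuation on $R$ is essential here: without it the action rescaling induced by $\psi_s$ would genuinely alter the topology on each Floer complex, consistent with Remark~\ref{rmTrivNonTriv} and with the fact that over $\Lambda_R$ one has instead only the excision-style identification~\eqref{eqLocLiouvDomNT}.
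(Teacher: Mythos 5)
Your reduction of $SH^*_{Viterbo}(U;R)=SH^*_{univ}(\hat{U};R)$ to Theorem~\ref{VitUnivEmb}, and the identification $SH^*(\hat{U}|U;R)=SH^*_{Viterbo}(U;R)$ via monotone convex approximants $h_n(e^r)\nearrow H_U$, follow the line of argument the paper leaves implicit. The third equality, however, is based on an incorrect rescaling claim.

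You assert that the rescaling $\omega\mapsto e^{-s}\omega$ ``changes the action filtration but leaves Floer cohomology invariant as an $R$-module,'' and hence that $\overline{HF}^*(\psi_s^*H_n;R)\cong\overline{HF}^*(H_n;R)$. This is not what the conformal symplectomorphism gives you. Rescaling $\omega$ by $c$ rescales the Hamiltonian vector field: $X_H^{(c\omega)}=c^{-1}X_H^{(\omega)}$, so the one-periodic orbits, the Floer equation, and the differential all change, not merely the action filtration. The correct identification is $\overline{HF}^*(H;c\omega)\cong\overline{HF}^*(H/c;\omega)$ (with action rescaled by $c$, which is indeed irrelevant over trivially valued $R$). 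Combining this with the symplectomorphism $\psi_s\colon(\hat U,\omega)\to(\hat U,e^{-s}\omega)$ yields
\[
\overline{HF}^*(\psi_s^*H_n;\omega)\cong\overline{HF}^*(H_n;e^{-s}\omega)\cong\overline{HF}^*(e^sH_n;\omega),
\]
which is not $\overline{HF}^*(H_n;\omega)$. In the Liouville setting these are generically computed by counting Reeb orbits up to different thresholds ($a_ne^s$ versus $a_n$ if $a_n$ is the slope of $h_n$), so the claimed naturality fails; it would have to, since the same reasoning over a nontrivially valued field contradicts Remark~\ref{rmTrivNonTriv}.

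The two sides do turn out to be abstractly isomorphic, but only because both are $SH^*_{Viterbo}(U;R)$ by the proof of Theorem~\ref{VitUnivEmb} applied to the convex functions $\psi_s^*H_n=h_n(e^se^r)$ and $e^sH_n$ of $e^r$. Once you invoke this, the Liouville flow is superfluous: you should simply reuse the second equality's argument with a cofinal sequence of convex $h(e^r)$'s, such as $h_n(t)=nt$, which increase pointwise to $H_{\Skel(U)}$; the proof of Theorem~\ref{VitUnivEmb} already shows these all have reduced Floer cohomology $SH^*_{Viterbo}(U;R)$ with the continuation maps being isomorphisms. That is the paper's (implicit) route. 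Finally, note a sign slip: with $\iota_Z\omega=\lambda$ one has $\psi_s^*\omega=e^s\omega$, and the zero set $\{\psi_s^*H_n=0\}=\psi_{-s}(U)$ shrinks to $\Skel(U)$ as $s\to+\infty$, not $s\to-\infty$.
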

\begin{proof}
Consider a monotone sequence $H_i$ belonging to the set of convex Hamiltonians $\mathcal{C}$ defined in the proof of Theorem \ref{VitUnivEmb} so that 
\[
\lim_{x\to\infty}H_i(x)=\begin{cases}
					0,&\quad x\in U,\\
					\infty,&\quad x\in\hat{U}\setminus U.
				\end{cases}
\]
Then, by positivity of the action spectrum,
\[
SH^*(\hat{U}|U;R)=\varprojlim_k\varinjlim_iHF^*_{[k,\infty)}(H_i)=\varinjlim_iHF^*(H_i).
\]
The right hand side equals $SH^*_{Viterbo}(U;R)$ by the same argument as Theorem \ref{VitUnivEmb}. In a similar way $SH^*(\hat{U}|\Skel(U,Z);R)=SH^*_{Viterbo}(U;R)$ by considering a sequence $H_i\in\mathcal{C}$ such that
\[
\lim_{x\to\infty}H_i(x)=\begin{cases}
					0,&\quad x\in \Skel(U,Z),\\
					\infty,&\quad x\in\hat{U}\setminus \Skel(U,Z).
				\end{cases}
\]
Finally, The equality $SH^*_{Viterbo}(U;R)=SH^*_{univ}(\hat{U};R)$ is Theorem \ref{VitUnivEmb}. 
\end{proof}

\begin{proof}[Proof of Theorem \ref{tmSkellVitFunc}]
We consider the radial coordinate $t=e^r$ on $U$ which we may assume surjects onto $(0,1)$, with $\Skel(U)$ corresponding to $t=0$. We use the notation $U(t_0):=\{p\in U|t(p)\leq t_0\}$. We will consider a family of dissipative $S$-shaped Hamiltonians $H_{c,\epsilon}$ which are defined as follows. $H$ is equal to $0$ on $U(\epsilon)$, to $ct-c\epsilon$ on $U(1/2)\setminus U(\epsilon)$ and has small gradient and Hessian outside $U(1/2)$. Here it is understood that we perturb slightly to get a smooth Hamiltonian which is transversely non-degenerate on $U(1/2)$. By part \ref{mainTmA:itC} of Theorem \ref{mainTmA} the Hamiltonians $H_{c,\epsilon}$ are dissipative. We construct a monotone increasing sequence $c_i$ going to $\infty$ and a monotone decreasing sequence $\epsilon_i$ going to $0$ and such that the distance of $c_i$ to the period spectrum of $\partial U$ is more than $2\epsilon_i$. We take $\epsilon_i$ even smaller so that the energy required according to Theorem~\ref{tmFloSolDiamEst} for a Floer trajectory to meet both sides of $U(1)\setminus U(1/2)$ is more than $\epsilon_i c_i$. Observe now that by our assumption, the action functional on $M$ restricted to loops in $U$ which are contractible in $M$ coincides up to a constant with the action functional defined using the Liouville form. Moreover, the periodic orbits outside of $U(1/2)$ are constants with large value of $H$. Thus, the set of periodic orbits of $H_{c_i,\epsilon_i}$ having non-negative action are the constants inside $U(\epsilon_i)$ as well as the periodic orbits appearing as the slope goes from $0$ to $c_i$.  Their actions are all at most $c_i\delta_i$. Thus, the Floer trajectories connecting orbits of non-negative action all remain inside $U(1)$. So, $SH^{*,0}_{[0,\infty)}(M|\Skel(U);R)=SH^{*,0}_{Viterbo}(U;R)$.

It remains to show that the negative action periodic orbits form an acyclic complex. Consider an increasing $1$-parameter family of Hamiltonians $H_t=H_{c(t),\epsilon(t)}$ with $c(t)\to\infty$ as $t\to\infty$, and fix an action window $[a,0)$.  We cannot show that for an arbitrary $a$ there is a fixed $t$ such that $HF^*_{[a,0)}(H_t)=0$ since as we increase the slope, new negative periodic orbits keep appearing with action not far from $0$. However, we claim that for each $t_0$ there is a $t_1>t_0$ such that, denoting by $f_{t_0,t_1}$ the continuation map from $t_0$ to $t_1$, we have
\[
f_{t_0,t_1}(HF^*_{[a,0)}(H_t))=\{0\}.
\]

Indeed, let $\gamma$ be a periodic orbit of $\partial U$ with period $T\leq c(0)$. It will appear as a periodic orbit of $H_{c(t),\epsilon(t)}$ of action $\frac{T-c(t)}{2}-\epsilon(t)$. Consider the cohomology class $\alpha(t)=f_{0,t}(\alpha)$. By functoriality of the continuation maps, its action is a monotone decreasing function of $t$. Moreover, since for any $t$ the complex $CF^*_{[a,0)}(H_t)$ is finitely generated, there is a discrete set of points $\{t_i\in[0,\infty)\}$ such that on the interval $(t_i,t_{i+1})$, the cocycle $\alpha$ is represented by the action minimizing cycle $\sum\gamma^i_j$. Let $T^i_j$ be the period of $\gamma^i_j$ as a Reeb orbit of $\partial U$ and let $T^i$ be the maximal of these. Along the interval $(t_i,t_{i+1})$ the action of $\alpha(t)$ will be given by $\frac{T^i-c(t)}{2}-\epsilon(t)$. Since the action of $\alpha(t)$ is non-increasing, we must have that $T^i$ is nonincreasing. Since $c(t)\to\infty$ it follows that the negative periodic orbits in $U(1)$ eventually fall out of any action window under the continuation maps. The periodic orbits outside of $U(1)$ are constants with action going to negative infinity. This means all the periodic orbits which lie outside $U(1)$ are in the closure of the boundary operator in $SC^*(\{H_i\})$. Upon tensoring $SC^*(\{H_i\})$ with $\Lambda_R$ the same remains true.
\end{proof}

\begin{proof}[Proof of Theorem \ref{tmLiouSubNondisp}]
Let $K$ be a compactly supported displacing Hamiltonian for $\Skel(U)$. Then $K$ displaces an open neighborhood of $\Skel(U)$ which we may take to be $U$ itself. Let $F$ be a Hamiltonian which vanishes on a neighborhood of $U\cup \supp K$ and has small enough gradient and Hessian to be dissipative and have only critical points as periodic orbits.  Let  $H_i$ be a sequence of Hamiltonians which vanishes on $\Skel(U)$, increases on $U(\epsilon_i)$ for some $\epsilon_i\to 0$, and becomes a constant $C_i$ outside the $u(\epsilon_i)$ with $C_i\to\infty$. The sequence $H_i+F$ is monotone and converges to $H_{\Skel(U)}$ and so computes $SH^{*}(M|\Skel(U);\mathbb{K})$.

Recall the notation $H_1\#H_2:=H_1+H_2\circ\psi_{H_1}$ where $\psi_{H}$ is the time 1 Hamiltonian flow of $H$. We have $\psi_{H_1\#H_2}=\psi_{H_2}\circ\psi_{H_1}$. Observe that the sequence $(H_i+F)\# K$ computes $SH^{*}(M|\Skel(U);\mathbb{K})$ as a topological vector space. To see this, note there is a constant $C$ such that $|H_i\# K-H_i|<C$ so we can factor
\[
SC^*(\{H_i+F-C\})\to SC^*(\{(H_i+F)\# K\}\to SC^*(\{H_i+F+C\}),
\]
and vice versa.

Note that $F+K=F\#K$ is also a displacing Hamiltonian which after slightly perturbing we can take to be non-degenerate. Moreover, all the fixed points of $(H_i+F)\# K$ coincide with those of $F\#K$. By a standard argument\cite{Usher2010}, adding $H_i$ has the effect of shifting the action spectrum by $-C_i$. The action spectrum of $F\#K$ is bounded from above since all the positive action orbits lie in the compact set defined by $F=0$. Thus the whole action spectrum of $(H_i+F)\# K$ moves to negative infinity. So, $SH^*(M|\Skel(U))=0$. By Theorem \ref{tmSkellVitFunc}, this implies $SH^*_{Viterbo}(U)=0$.
\end{proof}

\begin{proof}[Proof of Theorem \ref{tmEpsLiouEmb}]
We consider the family of Hamiltonians $H_{c,\epsilon}$ as in the proof of Theorem~\ref{tmSkellVitFunc}. The periodic orbits of $U$ that are contractible in $U$ embed in an action preserving manner in $\mathcal{L}M$. We take $\delta>0$ such that any Floer trajectory of energy at most $\delta$ which meets $U(1/2)$ is contained in $U(1)$. The classes $[x,A]$ with $x$ a contractible periodic orbit in $U(1/2)$ and $A$ a path in $\mathcal{L}U(1)\subset\mathcal{L}(M)$ thus form a direct summand of $SC_{[0,\delta)}^*(\{H_{c_i,\epsilon_o}\})$. Moreover, the proof of Theorem~\ref{tmSkellVitFunc} shows that the differential applied to contractible periodic orbits in $U(1/2)$ coincides $\mod\delta$ with the differential computing $SH^*(\hat{U}|U)$. This proves the claim.
\end{proof}

\begin{proof}[Proof of Theorem~\ref{tmPosDis}]
Consider Hamiltonians as in the proof of Theorem~\ref{tmLiouSubNondisp} and denote by
\[
f:SC^*(\{H_i+F\})\to SC^*(\{(H_i+F)\#K\})
\]
a continuation map induced by an appropriate homotopy, by $g$ the continuation map in the other direction, and let $\mathfrak{H}$ be the chain homotopy operator between the identity and $g\circ f$. Fix an action value $a$. By taking $i_0$ large enough we have as in the proof of Theorem~\ref{tmLiouSubNondisp} $f_i$ vanishes $\mod a$ for all $i\geq i_0$. Therefore, starting our sequences at $i_0$, we have $\id=\mathfrak{H}\circ d+d\circ \mathfrak{H}\mod c$ for all $c\geq a$.  But $\mathfrak{H}$ can increase the valuation by at most the Hofer norm of $K$. It follows that if $\val\alpha=c>a$ and $d\alpha=0\mod c$ the largest possible window $[c,d)$ for which $\alpha\neq0\in HF^*_{[c,d)}(H_U)$ has $d-c< d(U)\leq\|H\|_{Hofer}$. So taking $a<0$ and $\delta$ as in Theorem~\ref{tmEpsLiouEmb} we get $d(e)>\delta$.
\end{proof}

\subsection{Mapping Tori}\label{subSecMappingTorus}

Let $(M,\omega)$ be a compact symplectic manifold and let $\psi:M\to M$ be a symplectomorphism. Denote by $M_\psi$ the mapping torus
 \[
M_{\psi}:=[0,1]\times M/(0,p)\simeq (1,\psi(p)).
\]
Let $\tilde{\omega} $ be the $2$-form on $M_{\psi_i}$ obtained by pulling back $\omega$ via projection to $M$, and let
\[
\tilde{M}_{\psi}:=\R\times M_{\psi},
\]
with the symplectic structure $\tilde{\omega}+ds\wedge dt$. Denote by $HF^*(M,\psi)$ the fixed point Floer homology of $\psi$ as introduced in \cite{DS94}. The closed $1$-form $dt$ induces a grading of the Floer homologies by integrating over periodic orbits.

Denote by $S:\tilde{M}_\psi\to\R$ the natural coordinate $(s,t,p)\mapsto s$. Let $f:\R\to\R$ be a proper convex function which is linear at infinity of slope at greater than $k$ for some integer $k$.  Let $J$ be an almost complex structure for which the map $\pi:\tilde{M}_\psi\to \R\times S^1$ defined by $(s,t,p)\mapsto (s,t)$ is $J$-holomorphic. Let $H=f\circ S$. The following Theorem is due to M. Abouzaid.
\begin{tm}
\begin{equation}\label{EqFixFloH}
\overline{HF}^{*,k}(H;\Lambda_R)=HF^{*,k}(H;\Lambda_R)\simeq HF^*(M,\psi^k;\Lambda_R),
\end{equation}
where the $\simeq$ denotes a conformal isomorphism.
\end{tm}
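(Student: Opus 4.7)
The plan is to identify the $1$-periodic orbits of $(H,J)$ in homotopy class $k$ with fixed points of $\psi^k$, and then to identify the Floer differential with the fixed-point Floer differential. First I would verify that $(H,J)$ is dissipative, so that $\overline{HF}^*(H;\Lambda_R)$ is well-defined directly from the construction of $\mathcal{F}_d$ rather than through the limiting procedure of Theorem~\ref{tmUniExtFl}. The Gromov metric $g_{J_H}$ is quasi-isometric to the product metric away from a compact set (since $\|X_H\|$ depends only on $f'(S)$), and i-boundedness on the ends reduces to i-boundedness of $J$ on $\R\times S^1$. Dissipativity at infinity follows because $f(S)$ is convex with proper Legendre transform and slope at infinity exceeding $k$, so the variant of Example~\ref{exAdmPS} applied to the exact piece $ds\wedge dt$ shows the Palais--Smale condition holds in the $S$ direction.

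Next I would compute the $1$-periodic orbits in class $k$. Since $X_H=-f'(S)\partial_t$, a $1$-periodic orbit winding $k$ times in the $S^1$ factor must lie entirely in the slice $\{S=s_0\}$ with $f'(s_0)=k$, and, after unwrapping the mapping torus, it is of the form $t\mapsto(s_0,kt\!\mod 1,\phi_{kt}(p))$ where $\phi$ is a lift of the flow; the closing condition is precisely $\psi^k(p)=p$. Thus, the critical set of $\mathcal{A}_H$ in class $k$ is Morse--Bott, modelled on $\mathrm{Fix}(\psi^k)$, and sits at a single level $\{S=s_0\}$. A small time-dependent perturbation $h_t$ supported in a neighborhood of this slice, chosen so that $\psi^k\circ \phi_1^{h}$ has non-degenerate fixed points, makes $(H+h,J)$ non-degenerate. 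The resulting generators in class $k$ are in canonical bijection with generators of a model for $HF^*(M,\psi^k)$, and the Robbin--Salamon grading matches the Conley--Zehnder grading of \cite{DS94} by a direct inspection of the linearized flow. The action values all shift by the same constant $ks_0-f(s_0)$ (plus the closed $\psi^k$-twisted symplectic area of a capping), which accounts for the conformal factor.

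For the differential, the plan is to argue that Floer cylinders connecting two orbits in class $k$ are contained in a fixed neighborhood of the slice $\{S=s_0\}$, and that in this neighborhood the Floer equation reduces tautologically to the $\psi^k$-twisted Floer equation on $M$. Confinement comes from the maximum principle / convexity of $f$: if $u=(S\circ u,\tilde{u})$ is a Floer cylinder, then $S\circ u$ satisfies an equation of the form $\Delta(S\circ u)=f''(S\circ u)|\partial_s u|^2$ on regions where $h_t\equiv 0$, together with a-priori bounds at the asymptotes, forcing $S\circ u$ to remain near $s_0$. For the identification of equations, lift a cylinder $u:\R\times S^1\to\tilde{M}_\psi$ in class $k$ to $\bar u:\R\times\R\to \R\times M$ with the $\psi^k$-twisted boundary condition $\bar u(s,t+k)=(\mathrm{id}\times\psi^k)\bar u(s,t)$; choosing $J$ so that $\pi:\tilde M_\psi\to\R\times S^1$ is holomorphic and the fibrewise structure is split, Floer's equation for $u$ becomes exactly the fixed-point Floer equation for $\psi^k$ with perturbation $h$ on the $M$-factor, plus a trivial equation in $S$. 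The moduli spaces and orientations then match verbatim, yielding an isomorphism of chain complexes up to the uniform action shift.

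The hard part will be making the confinement precise enough to identify moduli spaces exactly rather than only up to filtered quasi-isomorphism, since one must rule out Floer trajectories that stray far in the $S$-direction even while their energy stays bounded. A related subtlety is that the conformal isomorphism requires the Novikov covering structure on both sides to match: the group $\Gamma_a$ for a class-$k$ loop is generated by the $\psi^k$-invariant symplectic and Chern data on $M$, which should agree with the Novikov ring implicitly used in defining $HF^*(M,\psi^k;\Lambda_R)$, and verifying this requires some bookkeeping about cappings and trivializations of $TM$ along $\psi^k$-invariant loops.
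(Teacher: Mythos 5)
Your overall plan — identify class-$k$ orbits with fixed points of $\psi^k$, confine Floer cylinders to the fiber $\{S=s_0\}$, then match the equations — is the same as the paper's. The identification of generators is correct, and noticing that the differential respects free homotopy class (so only connects orbits at a single level $s_0$ with $f'(s_0)=k$) is exactly what the paper uses.

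The gap is in your confinement step, and it is real. Your claimed PDE
\[
\Delta(S\circ u)=f''(S\circ u)\,|\partial_s u|^2
\]
is incorrect. With the hypothesis that $\pi:\tilde M_\psi\to\R\times S^1$ is $J$-holomorphic, applying $d\pi$ to Floer's equation gives that $(S,T)=\pi\circ u$ satisfies an inhomogeneous Cauchy--Riemann system of the form $\partial_s S-\partial_t T=\pm f'(S)$, $\partial_s T+\partial_t S=0$; differentiating and cancelling the mixed partials yields $\Delta S=f''(S)\,\partial_s S$, which has no sign, so $S\circ u$ is neither sub- nor superharmonic, and a convexity-style maximum principle does not apply. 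Even if the right-hand side were signed, that would only give a one-sided bound and hence confinement to a half-space, not the exact equality $S\equiv s_0$ needed to identify the moduli spaces (you explicitly flag this, calling it ``the hard part,'' but do not resolve it).

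The paper's argument is cleaner and stronger. Since the inhomogeneity is constant (equal to $k$) along a Floer cylinder between class-$k$ orbits once you subtract the winding, the function $S+i(T-kt)$ is \emph{holomorphic} on $\R\times S^1$; it has finite limits at both ends, and a bounded holomorphic function on the cylinder is constant. Hence $S\equiv s_0$ and $T\equiv kt+\mathrm{const}$ exactly, not just approximately, and the $(H,J)$-Floer equation restricted to the fiber is literally the $\psi^k$-twisted Floer equation, giving the strict bijection of moduli spaces. This is what upgrades the statement to a conformal isomorphism rather than a filtered quasi-isomorphism, and it bypasses entirely the need for any a-priori energy or diameter estimates at this step. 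Replacing your subharmonicity/max-principle step with this holomorphic projection argument (and discarding the perturbation $h_t$, which the paper also avoids at this point) would close the gap.
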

\begin{proof}
The Hamiltonian vector field of $S$ is $\partial_t$. So, the periodic orbits of $H=f\circ S$ are contained in fibers of $H$  for which $f'$ is an integer. The periodic orbits corresponding to an integer $k$ are the fixed points of $\psi^k$.  The periodic orbits corresponding to different values of $k$ have different homotopy classes. Thus the Floer differential only connects orbits within a fiber. The $(H,J)$-Floer trajectories in $\tilde{M}_\psi$ project under $\pi$ to maps satisfying the inhomogeneous Cauchy Riemann equation
\[
\partial_sS=\partial_tT-1,\qquad \partial_sT=\partial_tS.
\]
Thus the function $s+it\mapsto S+i(T-t)$ is holomorphic. By the maximum principle, $S$ must be constant. In particular, Floer trajectories connecting orbits within a fiber of $H$ must stay within that fiber. Also, we have $T=t$.

Recall the definition of the differential in fixed point Floer homology. Namely, for fixed points $x,y$ of $\psi^k$  it counts $J$-holomorphic strips asymptotic to $x,y$ satisfying the boundary condition $u(s,1)=\psi(u(s,0))$. Given such a $u$ we obtain a Floer cylinder $\tilde{u}$ in the mapping torus by $\tilde{u}(s,t):=(t,u(s,t))$. This sets up a bijection between the
Floer trajectories connecting periodic orbits in a fiber and fixed point holomorphic strips. The rightmost equality in \eqref{EqFixFloH} follows. For the other equality note that since $\psi^k$ has a finite number of fixed points, $CF^{*,k}(H;\Lambda_R)$ is finite dimensional and thus the differential has closed image.
\end{proof}
\begin{proof}[Proof of Theorem~\ref{tmMappFloFixed}]
Let $f$ be any proper convex function and let $H=f\circ S$. Consider a monotone sequence of convex functions $f_n$ which are linear of slope larger than $k$ near infinity and which converge to $f$. Write $H_n:=f_n\circ S$.

Since Floer trajectories remain in fibers of $S$, we have by the isomorphism~\eqref{EqFixFloH} that
\[
HF^{*,k}(H)=HF^{*,k}(H_n)=HF^*(M,\psi^k).
\]
By the same reasoning, given convex function $f_1\leq f_2$ and denoting $H_i=f_i\circ S,i=1,2$, we get that the natural map $HF^{*,k}(H_1)
\to HF^{*,k}(H_2)$ is just the identity under the above identification.

 It follows that
\[
SH^{*,k}_{univ}(\tilde{M}_\psi)=HF^{*,k}(M,\psi^k),
\]
Observe that $SH^{*}_{univ}$ amounts to completing the direct sum
\[
\oplus_{k\in\Z}SH^{*,k}_{univ}(\tilde{M}_\psi)
\]
by allowing certain infinite sums. The claim follows. 
\end{proof}

\subsection{The Kunneth formula for split Hamiltonians}\label{SubsecKunnethHam}
For $i=1,2$, let $M_i$  be symplectic manifolds and let $(H_i,J_i)$ be dissipative Floer data on $M_i$.  Unless $(H_i,J_i)$ are strictly bounded, the data $(H_1\circ\pi_1+H_2\circ\pi_2,\pi_1^*J_1+\pi_2^*J_2)$ will not be i-bounded. In that case we replace $CF^*(H)$ via the telescope construction by a sequence of Hamiltonians which are strictly bounded and continue to denote this by $CF^*(H)$.  We have,
\begin{equation}\label{eqKunnethChain}
\widehat{CF}^*(H_1\circ\pi_1+H_2\circ\pi_2,\pi_1^*J_1+\pi_2^*J_2)=CF^*(H_1,J_1)\hat\otimes CF^*(H_2,J_2),
\end{equation}
where the hat denotes here and later the complete tensor product. This is defined by taking the Banach norm $\|\cdot\|$ on the tensor product $X\otimes Y$ to be defined  by
\[
\|z\|:=\inf\left\{\max_i\{\|x_i\|\|y_i\|\}:z=\sum x_i\otimes y_i\right\},\qquad z\in X\otimes Y.
\]
It is straightforward to verify using \eqref{NonArchNormDef} that this is indeed the norm induced by~\eqref{eqKunnethChain}. 
\begin{tm}\label{tmCohomologyKunneth}
We have a natural isometry of Banach spaces over $\Lambda_R$,
\[
\overline{HF}^*(H_1+H_2;\Lambda_R)=\overline{HF}^*(H_1;\Lambda_R)\hat\otimes \overline{HF}^*(H_2;\Lambda_R).
\]
\end{tm}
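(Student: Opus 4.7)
The plan is to leverage the chain-level identification \eqref{eqKunnethChain} and reduce the theorem to a purely algebraic Künneth statement for reduced cohomology of completed tensor products of non-Archimedean Banach chain complexes. First I would verify that the split Floer datum $(H_1\circ\pi_1+H_2\circ\pi_2,\pi_1^*J_1+\pi_2^*J_2)$, replaced if necessary by the telescope construction indicated in the preamble to \eqref{eqKunnethChain}, computes $\overline{HF}^*(H_1+H_2;\Lambda_R)$. The key geometric observation is that the periodic orbits of $H_1+H_2$ on $M_1\times M_2$ are pairs $(\gamma_1,\gamma_2)$ with $\mathcal{A}_{H_1+H_2}(\gamma_1,\gamma_2)=\mathcal{A}_{H_1}(\gamma_1)+\mathcal{A}_{H_2}(\gamma_2)$, and that Floer trajectories for the split almost complex structure $\pi_1^*J_1+\pi_2^*J_2$ decompose as pairs of trajectories in $M_1$ and $M_2$. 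This yields the tensor-product structure on the differential and identifies \eqref{eqKunnethChain} as an isometry of filtered non-Archimedean Banach complexes, where on the right-hand side the filtration on $CF^*(H_1)\hat\otimes CF^*(H_2)$ is the one induced from the sum of the action filtrations.

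Next I would invoke Theorem~\ref{lmReduceHomInv} to compute the reduced cohomology of each side as an inverse limit of truncated cohomologies. For the split side, for each real $a$ the truncated complex $(C_1\hat\otimes C_2)/(C_1\hat\otimes C_2)_{(-\infty,a)}$ is a filtered chain complex over $R$ (with discrete topology) which, because we work over a field, decomposes algebraically via the ordinary Künneth formula
\[
H^*_{[a,\infty)}(CF^*(H_1)\hat\otimes CF^*(H_2))=\bigoplus_{a_1+a_2=a}HF^*_{[a_1,\infty)}(H_1)\otimes HF^*_{[a_2,\infty)}(H_2),
\]
where the sum is understood appropriately via the filtration. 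Passing to the inverse limit over $a$ and tracking valuations, the right-hand side becomes precisely the completed tensor product $\varprojlim_{a_1}HF^*_{[a_1,\infty)}(H_1)\,\hat\otimes\,\varprojlim_{a_2}HF^*_{[a_2,\infty)}(H_2)=\overline{HF}^*(H_1;\Lambda_R)\hat\otimes\overline{HF}^*(H_2;\Lambda_R)$, by the universal property of $\hat\otimes$ in the category of non-Archimedean Banach spaces.

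The main obstacle is the last step, namely commuting the inverse limit with the completed tensor product and verifying that the resulting norm on the Künneth side agrees with the quotient norm coming from the tensor product complex. Concretely, one must check that the canonical map
\[
\varprojlim_{a_1}HF^*_{[a_1,\infty)}(H_1)\,\hat\otimes\,\varprojlim_{a_2}HF^*_{[a_2,\infty)}(H_2)\longrightarrow \varprojlim_{a}H^*_{[a,\infty)}(CF^*(H_1)\hat\otimes CF^*(H_2))
\]
is an isometric isomorphism. Injectivity and the norm estimate $\|\cdot\|_{\rm split}\leq\|\cdot\|_{\hat\otimes}$ are formal from the universal property; the nontrivial direction is to show that every element in the inverse limit of truncations can be approximated by a sum of elementary tensors with the correct norm. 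This hinges on the non-Archimedean triangle inequality: given a Cauchy sequence of representatives on the split side, one selects a cofinal sequence of truncation levels $a_n\to-\infty$ and uses completeness of $\overline{HF}^*(H_i)$ together with the fact that the filtration on $CF^*(H_1)\hat\otimes CF^*(H_2)$ is, by definition of $\hat\otimes$, the infimum over decompositions into elementary tensors. The non-triviality of the valuation on $\Lambda_R$ (cf. Remark~\ref{rmTrivNonTriv}) is what makes this matching of norms go through cleanly, as it ensures the relevant filtration indices can be chosen on a dense subset of $\R$.
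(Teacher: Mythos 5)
Your approach diverges from the paper's and contains a genuine gap. The paper proves surjectivity of the map $\overline{HF}^*(H_1)\hat\otimes\overline{HF}^*(H_2)\to\overline{HF}^*(H_1+H_2)$ by a direct Hodge-theoretic decomposition: because the relevant non-Archimedean Banach spaces are of countable type, every closed subspace admits a closed complement, so one may write $CF^*(H_i)=C_i\oplus Z_i$ with $Z_i=\ker d$ and further $Z_i=K_i\oplus B_i$ with $B_i=\overline{\im d}$. Expanding $CF^*(H_1)\hat\otimes CF^*(H_2)$ into the corresponding five summands and noting that the differential maps them into pairwise disjoint pieces, one concludes that every cycle lies in $K_1\hat\otimes K_2$ modulo $\overline{\im d}$. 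This crucially exploits a specifically non-Archimedean functional-analytic fact (complementation) that you never invoke, and which is the substantive content of the theorem.

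Your attempt to instead reduce to action-truncated levels and pass to the inverse limit via Theorem~\ref{lmReduceHomInv} does not close the argument. First, the displayed finite-level Künneth identification is wrong as stated: the quotient $(CF^*(H_1)\hat\otimes CF^*(H_2))/(\cdots)_{(-\infty,a)}$ is not a direct sum indexed by $a_1+a_2=a$ of the truncated pieces; the filtration on the tensor product involves all pairs with $\val(x)+\val(y)\geq a$, and the truncated quotient does not factor as a tensor product of truncated quotients. One could try to repair this with the filtration spectral sequence, but its convergence for these infinite-rank Banach complexes is exactly the delicate point. Second, you yourself flag the commutation of $\varprojlim$ with $\hat\otimes$ as the main obstacle, and the paragraph you offer as resolution (selecting cofinal truncation levels and appealing to the infimum characterization of the tensor norm) does not produce the required surjection---it reproves the easy norm inequality $\|\cdot\|_{\rm split}\le\|\cdot\|_{\hat\otimes}$ and the injectivity, but gives no mechanism for writing an arbitrary reduced cocycle of the product as a limit of sums of elementary tensors of cocycles. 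That mechanism is precisely what the complementation argument supplies, and without some substitute for it the proposal does not constitute a proof.
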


\begin{proof}
We follow the proof of the finite dimensional case from \cite{griffiths2014principles}. Isomorphism~\eqref{eqKunnethChain} induces a norm preserving map
\[
\overline{HF}^*(H_1)\hat{\otimes}\overline{HF}^*(H_2)\to \overline{H}^*(CF^*(H_1) \hat{\otimes}CF^*(H_2))=\overline{HF}^*(H_1+ H_2).
\]
We show that this map is  surjective. All the spaces considered here are countably generated. In particular every closed subspace has a closed complement \cite[Proposition 10.5]{Schneider02}. We thus decompose the chain complexes $CF^*(M_i;H_i)$ into a direct sum $C_i\oplus Z_i$ of chains and cycles and then further decompose $Z_i=K_i\oplus B_i$ where $B_i=\overline{\partial C_i}$.

Any cycle $\gamma\in CF^*(H_1)\hat{\otimes}CF^*(H_2)$ is up to the closure of the boundary an element of
\[
B_1\hat{\otimes} C_2\oplus K_1\hat{\otimes} C_2\oplus C_1\hat{\otimes} K_2\oplus K_1\hat{\otimes} K_2\oplus C_1\hat{\otimes} C_2.
\]
Now note the images of the spaces under $\partial$ are contained respectively in,
\[
B_1\hat{\otimes} B_2,K_1\hat{\otimes} B_2, B_1\hat{\otimes} K_2,0,B_1\hat{\otimes} C_2\oplus B_2\hat{\otimes} C_1,
\]
which are pairwise disjoint. So, each component of the boundary must vanish separately. Thus if $\gamma$ is a cycle it must actually be an element of $K_1\hat{\otimes} K_2$ up to the closure of the boundary. In particular, the map is indeed surjective.

\end{proof}

\subsection{The Kunneth formula for universal symplectic cohomology}\label{subsecKunnethSH}

We shall need the following lemma. The author is grateful to Lev Buhovski for its proof.
\begin{lm}\label{lmSplitCofinal}
Let $M,N$ be smooth manifolds and let $P=M\times N$. The set of functions of the form $f\circ\pi_1+g\circ \pi_2$ is cofinal in $C^{\infty}(P)$.
\end{lm}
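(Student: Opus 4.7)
The plan is to establish the cofinality by constructing, for an arbitrary $h\in C^{\infty}(P)$, explicit smooth functions $f\in C^{\infty}(M)$ and $g\in C^{\infty}(N)$ with $h(x,y)\leq f(x)+g(y)$ pointwise. This is the only ordering that makes sense in the context of the preceding Kunneth discussion (exhaustion functions ordered by pointwise inequality), and it is sufficient for that application since the set of such sums is obviously closed under addition.

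First I would fix smooth proper nonnegative exhaustion functions $\phi\colon M\to\R_{\geq 0}$ and $\psi\colon N\to\R_{\geq 0}$; these exist since $M$ and $N$ are smooth (paracompact) manifolds. For each $t\geq 0$ the sublevel sets $\{\phi\leq t\}$ and $\{\psi\leq t\}$ are compact, so their product is compact in $P$, and hence the function
\[
F(t):=\sup\bigl\{h(x,y)\ :\ \phi(x)\leq t,\ \psi(y)\leq t\bigr\}
\]
is finite for each $t$. Next I would choose a smooth nondecreasing function $\chi\colon\R_{\geq 0}\to\R_{\geq 0}$ satisfying $\chi(t)\geq F(t)$ for all $t\geq 0$; such a $\chi$ exists by smoothing from above any nondecreasing majorant of $F$ (for instance by convolving a piecewise-linear upper bound with a compactly supported smoothing kernel and shifting upward).

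Finally I would set $f:=\chi\circ\phi\in C^{\infty}(M)$ and $g:=\chi\circ\psi\in C^{\infty}(N)$ and verify the domination. For any $(x,y)\in P$, let $t:=\max\{\phi(x),\psi(y)\}$. Since $\chi$ is nonnegative and nondecreasing,
\[
f(x)+g(y)=\chi(\phi(x))+\chi(\psi(y))\geq\chi(t)\geq F(t)\geq h(x,y),
\]
where the last inequality is by the definition of $F(t)$ together with $(x,y)\in\{\phi\leq t\}\times\{\psi\leq t\}$. This completes the argument.

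There is essentially no hard step here; the only mildly delicate point is ensuring smoothness of $\chi$ with $\chi\geq F$, which is a standard one-variable smoothing exercise. The construction would, of course, fail for compact factors in a vacuous way (one could simply take constants), and it relies only on the existence of smooth proper functions on each factor separately, which is guaranteed under the usual smooth manifold hypotheses.
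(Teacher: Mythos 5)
Your proof is correct and follows essentially the same route as the paper's: build a locally finite majorant by taking the sup of $h$ over ``diagonal'' compact boxes determined by exhaustions of $M$ and $N$, then dominate it by a smooth function and distribute it as a sum over the two factors. The only cosmetic difference is that you funnel the construction through scalar exhaustion functions $\phi,\psi$ and smooth a single one-variable function $\chi$, whereas the paper smooths the step-like majorants directly on $M$ and on $N$; both hinge on the same observation that $h(x,y)\le F(\max\{\phi(x),\psi(y)\})$ is bounded by a sum once $\chi\ge F$ is nonnegative and nondecreasing.
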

\begin{proof}
Take an exhaustion $ K_1 \subset K_2 \subset ... \subset M $ of $ M $ and an exhaustion $ L_1 \subset L_2 \subset ... \subset N $ of $ N $, by compact sets, and define positive locally bounded functions $ g_1 : M \rightarrow \mathbb{R} $, $ g_2 : N \rightarrow \mathbb{R} $ by
$ g_1(x) = \max_{K_i \times L_i} f $ and $ g_2(y) =  \max_{K_r \times L_r} f $, where $ i $ is the minimal positive integer such that $ x \in K_i $, and $ r $ is the minimal positive integer such that $ y \in L_r $. Then we have
$ f(x,y) \leqslant g_1(x)+g_2(y) $, for any $ (x,y) \in M \times N $. Now, since $ g_1 $ and $ g_2 $ are locally bounded, one can find smooth functions $ f_1 : M \rightarrow \mathbb{R} $ and $ f_2 : N \rightarrow \mathbb{R} $ such that
$ g_1(x) \leqslant f_1(x) $ for any $ x \in M $, and $ g_2(y) \leqslant f_2(y) $ for any $ y \in N $, and then we have
$ f(x,y) \leqslant f_1(x) f_2(y) $ for any $ (x,y) \in M \times N $.
\end{proof}

Before
\begin{proof}[proof of Theorem~\ref{tmKunneth}]
 By the discussion preceding Theorem~\ref{tmCohomologyKunneth} we have a natural map
\[
\varinjlim_{(H_1,H_2)\in \cH(M_1)\times\cH(M_2)}\overline{HF}^*(H_1;\Lambda_R)
\otimes \overline{HF}^*(H_2;\Lambda_R)\to SH_{univ}^*(M_1\times M_2;\Lambda_R).
\]
By Lemma~\ref{lmSplitCofinal} we can consider the right hand side as a direct limit over the same indexing set of split Hamiltonians. So an element of the right hand side lifts for some pair $(H_1,H_2)$ to an element of $\gamma\in HF^*(H_1\circ\pi_1+H_2\circ\pi_2)$. By Theorem~\ref{tmCohomologyKunneth} the image of the natural map
\[
HF^*(H_1)\otimes HF^*(H_2)\to HF^*(H_1\circ\pi_1+H_2\circ\pi_2)
\]
is sequentially dense. The density part of the claim follows.  Now suppose some element $x$ of the left hand side maps to $0$ in the right hand side. Then there is an $(H_1,H_2)$ such that the lift $\tilde{x}$ of $x$ to $\overline{HF}^*(H_1)\otimes \overline{HF}^*(H_2)$ maps to $0$ in $\overline{HF}^*(H_1+H_2)$. If follows from Theorem \ref{tmCohomologyKunneth} that $\tilde{x}=0$.
\end{proof}
\begin{cy}
Suppose $SH^*_{univ}(M_1)=\{0\}.$ Then $SH^*_{univ}(M_1\times M_2)=\overline{\{0\}}$.
\end{cy}

\subsection{Vanishing results}
\begin{tm}\label{tmVanishing}
Let $M$ be a geometrically bounded manifold such that $c_1(M)=0$. Suppose there exists a proper dissipative non-degenerate Hamiltonian on $M$ carrying no periodic orbits of index $0$. Then $SH^*_{univ}(M;\K)=0$.
\end{tm}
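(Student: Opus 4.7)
The plan is to exhibit the unit of the ring $\widehat{SH}^*_{univ}(M;\K)$ as the zero class; since $\widehat{SH}^*_{univ}$ is unital by Theorem~\ref{tmFloerProduct}, $1=0$ will force the entire ring to vanish.

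First I would install a canonical integer grading on Floer cohomology. Because $c_1(M) = 0$, a trivialization of $\det_\C TM$ can be chosen and used to normalize the Robbin--Salamon index $i_{rs}$ consistently on every homotopy class of loops. With this choice, $\overline{HF}^*(H;\K)$ is $\Z$-graded for every Floer datum, continuation maps and the pair-of-pants product preserve degree, and the grading passes to $\widehat{SH}^*_{univ}(M;\K)$. The Hamiltonian $H_0$ provided by hypothesis is proper, bounded below (built into dissipativity), smooth and non-degenerate, so it lies in $\mathcal{H}_d\subset\mathcal{H}$; since $H_0$ has no index-$0$ periodic orbits, the Floer chain complex $CF^*(H_0;\K)$ is empty in degree zero, and consequently $\overline{HF}^0(H_0;\K) = 0$.

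Next I would trace the ring unit through the two limits. By the isomorphism~\eqref{eqMorseHomProj}, the class $1\in H^0(M;\K)$ corresponds to a compatible system of projection classes $[1]_H\in\overline{HF}^0(H;\K)$ as $H$ ranges over $\mathcal{H}_{s.c.}$. Composing with the direct limit map $\overline{HF}^*(H)\to\widehat{SH}^*_{univ}(M;\K)$ produces a class that, by Theorem~\ref{tmFloerProduct}\textit{(b)}, is independent of $H$ and coincides with the unit of $\widehat{SH}^*_{univ}$ (compatibility of the Morse-Floer unit with the pair-of-pants product unit). Applying this to $H = H_0$ gives the unit of $\widehat{SH}^*_{univ}(M;\K)$ as the image of $[1]_{H_0}\in\overline{HF}^0(H_0;\K) = 0$, hence zero. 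Unitality then forces $\widehat{SH}^*_{univ}(M;\K) = 0$.

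The main obstacle I anticipate is the bookkeeping justifying that the composition
\[
H^*(M;\K)\;\xrightarrow{\text{proj}}\;\overline{HF}^0(H_0;\K)\;\xrightarrow{\text{incl}}\;\widehat{SH}^0_{univ}(M;\K)
\]
sends $1$ to the multiplicative unit, and that this composition is independent of the auxiliary $H_0$. This is formally implied by Theorem~\ref{tmFloerProduct} together with the standard fact that inverse limit projections commute with continuation maps, but it requires care because the inverse limit in~\eqref{eqMorseHomProj} is indexed by $\mathcal{H}_{s.c.}$ whereas the direct limit defining $\widehat{SH}^*_{univ}$ is indexed by the smaller class $\mathcal{H}$. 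Apart from that, all the ingredients---the $\Z$-grading from $c_1 = 0$, the vanishing in degree zero from the hypothesis on $H_0$, and the ring unit's compatibility with cofinal projections---are already in place in the paper.
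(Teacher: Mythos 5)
Your proposal is correct and follows essentially the same route as the paper's (very terse) proof: factor the unit map $H^*(M;\K)\to\widehat{SH}^*_{univ}(M;\K)$ through $\overline{HF}^0(H_0;\K)$, use the absence of index-$0$ orbits to conclude that this group vanishes, and deduce $1=0$. The extra bookkeeping you flag about the indexing by $\mathcal{H}_{s.c.}$ versus $\mathcal{H}$ is harmless since $\mathcal{H}\subset\mathcal{H}_{s.c.}$ and the projection/inclusion maps compose naturally, and your remarks about installing the $\Z$-grading via $c_1(M)=0$ and the unit being compatible with the product come straight from Theorem~\ref{tmFloerProduct}.
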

\begin{proof}
By definition, the natural map $H^*(M;\K)\to SH^*(M;\K)$ factors through $HF^*(M;\K)$. Since $HF^*(H;\K)=0$, we get from Theorem~\ref{tmFloerProduct}  that $SH^*_{univ}(M;\K)$ is a unital algebra in which $1=0$.
\end{proof}
\begin{lm}\label{lmCircFukTriv}
Let $M$ be a geometrically bounded manifold such that $c_1(M)=0$. The hypotheses of Theorem \ref{tmVanishing} are satisfied if $M$ carries a circle action $\psi_{\theta\in S^1}$ for which the following holds.
\begin{enumerate}
    \item It is generated by a Hamiltonian $H$ which is proper and bounded from below.
    \item There is an equivariant compatible geometrically bounded almost complex structure $J$ such that the distance $d(p,\psi_{1/2}(p))$ under $g_J$ is bounded away from $0$ outside of a compact set and such that $\|\nabla H\|_{g_J}\leq f(H)$ for some function $f:\R\to[1,\infty)$ for which the primitive of $\frac1{f}$ is unbounded from above.
    \end{enumerate}
\end{lm}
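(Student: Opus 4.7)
The plan is to build a dissipative non-degenerate Hamiltonian $K$ whose periodic orbits all avoid the paper's index $0$ (equivalently, Conley-Zehnder index $-n$), so that Theorem~\ref{tmVanishing} applies. I would take $K_0:=h\circ H$ for a smooth, strictly monotone, proper, bounded-below $h:\R\to\R$ chosen with $0<h'(x)<\min\{1/2,\,\epsilon_0/f(x)\}$, where $\epsilon_0$ is the constant of Theorem~\ref{tmBoundGradDis}. Tameness of $f$ allows $h$ to be proper subject to these bounds. The hypothesis $\|\nabla H\|_{g_J}\le f(H)$ and Lemma~\ref{lmCompCrit} give $J$-properness of $H$, hence of $K_0$; the pointwise bound yields $\|X_{K_0}\|_{g_J}\le\epsilon_0$, so $(K_0,J)$ is dissipative by Theorem~\ref{tmBoundGradDis}.

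Because the $S^1$-flow preserves $H$, the time-$1$ flow of $K_0$ sends $p$ to $\psi_{h'(H(p))}(p)$. The distance hypothesis implies that $\psi_{1/2}$ has no fixed points outside some compact $C_0$, so in particular the $S^1$-fixed locus $F=\{X_H=0\}$ lies in $C_0$ and is compact. For $p\notin C_0$ the stabilizer of $p$ in $S^1$ is either trivial or cyclic of odd order $\ge 3$; choosing $h$ generically one may arrange that $h'(H(p))\in(0,1/2)$ avoids the countable discrete set of rationals with odd denominator at every level of $H$ meeting a non-trivial stabilizer outside $C_0$. This forces all periodic orbits of $K_0$ into $C_0$, and among these the constant orbits are precisely the points of $F$.

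Next, I would add a small $S^1$-invariant Morse perturbation $\epsilon\chi G$ supported near $F$, with $G$ obtained by $S^1$-averaging. For $\epsilon$ small, $K:=K_0+\epsilon\chi G$ is dissipative, introduces no new periodic orbits, and resolves each Morse-Bott critical manifold in $F$ into isolated non-degenerate constants at the critical points of $G|_F$. At such a critical point $q$ on a component $F_0\subset F$, the Conley-Zehnder index decomposes as a tangential Morse contribution $m_q-\dim_\C F_0$ plus a normal rotation contribution $\sum_k\bigl(2\lfloor h'(H(q))w_k\rfloor+1\bigr)$, where $\{w_k\}$ are the integer $S^1$-weights on the symplectic normal of $F_0$ at $q$.

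The principal difficulty is to arrange that no resulting Conley-Zehnder index equals $-n$. The condition $c_1(M)=0$ constrains the weights on each $F_0$ through the equivariant first Chern class (in particular forcing $\sum_k w_k=0$ on each component), which restricts the normal contribution to a limited set of integers; combined with the freedom to slightly perturb $h$ near each of the finitely many critical values of $H|_F$ and to vary the auxiliary Morse function $G$, a finite-dimensional genericity argument should allow index $-n$ to be avoided at every critical point simultaneously while preserving dissipativity. Once this is achieved, Theorem~\ref{tmVanishing} applies to $K$ and yields $\widehat{SH}^*_{univ}(M;\K)=0$.
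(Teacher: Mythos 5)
Your proposal takes a genuinely different route from the paper's, and it has a fatal gap at its core. You build $K_0 = h\circ H$ with $h'$ small enough to invoke Theorem~\ref{tmBoundGradDis}, and you then try to avoid index $0$ (equivalently $i_{RS}=-n$) by genericity. But once $h'(H(q))|w_k|<1$ for all normal weights $w_k$ at a critical point $q\in F_0$, your own formula gives a normal rotation contribution $\sum_k(2\lfloor h'(H(q))w_k\rfloor +1)=\#\{w_k>0\}-\#\{w_k<0\}$, which is a fixed integer determined by the geometry and is \emph{not} movable by perturbing $h$ within the small-slope regime. The remaining freedom is the tangential Morse index of $G|_{F_0}$, which ranges over $[0,\dim F_0]$, and there is no reason this should miss the forbidden value. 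You attempt to salvage this by claiming $c_1(M)=0$ forces $\sum_k w_k=0$ on every component; that claim is simply false: take $M=\C$, $H=\pi|z|^2$, $J$ the standard complex structure. Then $c_1(\C)=0$, $H$ is proper and bounded below, $J$ is equivariant and flat, $d(z,\psi_{1/2}(z))=2|z|$, and $\|\nabla H\|_{g_J}$ is a tame function of $H$ — so all hypotheses of the Lemma hold — yet the isolated fixed point at $0$ has the single weight $-1$, hence $\sum_k w_k=-1$. In this example your construction produces a constant periodic orbit at $0$ whose Conley--Zehnder index is exactly $2\lfloor -h'\rfloor +1 = -1 = -n$, i.e.\ degree $0$ in the paper's grading; the fixed point is isolated, so no auxiliary Morse function helps. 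Thus Theorem~\ref{tmVanishing} does not apply to your $K$, and the argument breaks.

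The paper's proof sidesteps all of this by not shrinking the gradient at all. Because the flow of $H$ is Killing, Corollary~\ref{cyKilling} gives $\nabla\nabla t\equiv 0$, which combined with $J$-properness (Lemma~\ref{lmCompCrit}) yields geometric boundedness of $g_{J_H}$ for \emph{any} scaling of $H$; the hypothesis $d(p,\psi_{1/2}(p))\ge\delta>0$ outside a compact set then gives loopwise dissipativity for the half-integer scalings $(k+\tfrac12)H$. With dissipativity in hand for arbitrarily large $k$, one observes that the Robin--Salamon index at a fixed point with nonzero rotation number is additive under concatenation, hence grows linearly in $k$ and can be pushed far from $-n$, while at zero-rotation-number components the Morse--Bott perturbation produces indices in $(-n,n)$, which miss $-n$ because $\dim P<2n$. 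The scaling freedom is exactly what you give up by insisting on small gradients, and it is precisely what is needed to handle examples like $\C$.
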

\begin{proof}
Assume that the flow of $H$ has minimal period $1$. Our assumptions imply that for any integer $k$, the function $(k+1/2)H$ is dissipative. Indeed invariance of $J$ under the flow implies the flow of $H$ is Killing. Thus, by Corollary \ref{cyKilling} and Lemma \ref{lmCompCrit}  the metric $g_{J_H}$ is geometrically bounded. The estimate on $d(p,\psi_{1/2}(p))$ implies loopwise dissipativity by Lemma \ref{lmLyap} and Corollary \ref{cyMinDisImpDis}. Indeed, in this case, the Lyapunov constant vanishes. Let $P\subset M$ be a connected component of the set of critical points of $H$. Then $P$ is compact and Morse Bott. Since the flow of $H$ is $1$-periodic orbit the Robbin-Salamon index $i_{RS}(p)$ for any $p\in P$ is related to the Morse index of $H$ by $2i_{Morse}(p)+\dim P=i_{RS}(p)+2n$. Suppose first that $i_{RS}(p)\neq 0$. The Robbin-Salamon index is additive with respect to concatenation and invariant under reparametrization. Thus, the absolute value of the Robbin-Salamon index of the critical points $p\in P$ can be made arbitrarily large by multiplying $H$ by a large enough constant. Suppose now $i_{RS}(p)=0$. Then $0\leq i_{Morse}(p)=n-\frac1{2}\dim P.$ We have $\dim P< 2n$ since the action is non-trivial. So, we can perturb $P$ and obtain fixed points with  Robbin-Salamon indices lying in $[-\dim P/2,\dim P/2]\subset(-n,n)$. Since the grading defined in eq. \ref{eqCZGrading}(for which the unit has degree $0$) is by $i_{RS}(p)+n$, we get that in either case for $k$ large enough there are no periodic orbits of index $0$.
\end{proof}
\begin{rem}
Lemma~\ref{lmCircFukTriv} has the curious implication that on a closed symplectic manifold $M$ with $c_1(M)=0$ there are no Hamiltonian circle actions. This is in fact proven in \cite{Ono88}.
\end{rem}
\begin{ex}\label{exTCy}
Let $M$ be a toric Calabi Yau manifold obtained as the symplectic reduction of $\C^N$ by a torus preserving the holomorphic volume. Then $M$ has an induced almost complex structure which preserves the action of the residual torus. With the induced Kahler metric, $M$ can be shown to have bounded geometry and the circle action given by the diagonal action
\[
    \theta\cdot[z_1,...,z_N]=[e^{i\theta}z_1,...,e^{i\theta}z_N],
\]
can be shown to satisfy the conditions of Lemma~\ref{lmCircFukTriv}. Thus $SH^*_{univ}(M)=0$. This generalizes the vanishing of the symplectic cohomology of $\C^n$ as well as the more general result of \cite{Ritter14} concerning the case where $M$ is total space of a negative line bundle over projective space and $c_1(M)=0$.
\end{ex}

\subsection{Existence of periodic orbits}\label{subsecNearby}
\begin{proof}[Proof of Theorem \ref{lmNearbyEx}]
Let $H:M\to\R$ be a proper smooth function such that $H^{-1}(-\infty,0)=K$. Suppose there is a $\delta>0$ such that the flow of $H$ on $H^{-1}(0,\delta)$ has no periodic orbits representing $\alpha$ in the first part or contractible in the second. We may assume without loss of generality that $H$ has sufficiently small Hessian everywhere so that the only periodic orbits are critical points. Let $h_n:\R\to\R$ be a monotone function constructed inductively so that $-\frac1{n}<h_n(x)<0$ for $x\in(-\infty,a]$, $h_n(x)=x+n$ on $(a+\delta/n,\infty)$ and $h_n(x)\geq h_{n-1}(x)$ everywhere. Let $H_n=h_n\circ H$. Note that by our assumption the only periodic orbits of $H_n$ are critical points, or, in the first part, periodic orbits not representing $\alpha$. We have that $H_n$ converges in a monotone way to $H_K$. So, by Lemma~\ref{lmchTruncIsom},
\[
SH^*(M|K;\K)=\widehat{SC}^{*}(\{H_n\})=\varprojlim_a\varinjlim_n HF^*_{[a,\infty)}(H_n;\K).
\]
The first part of the theorem now follows since the complex $\widehat{SC}^{*,\alpha}(\{H_n\})$ computing $SH^{*,\alpha}(M|K)$ is the zero complex. We prove the second part. We claim that for any $n$ any $-\infty<a<b$, and any $x\in HF^*_{[a,b)}(H_n;\K)$ supported on critical points lying outside of $K$, there is an $n'$ such that $x\mapsto 0$ in $HF^*_{[a,b)}(H_{n'};\K)$. Indeed, if we choose sufficiently generic time independent almost complex structures we may assume that  for any triple of integers $m,n_1,n_2$ we have that any simple Floer trajectory in the differential of $CF^*(\frac1mH_{n_i})$ or in the continuation map $CF^*(\frac1m H_{n_1})\to CF^*(\frac1m H_{n_2})$ is of the expected dimension. By a standard argument in Floer theory \cite{HoferSalamon} all the solutions are time independent. Namely,  since the Floer data and the asymptotic data are all time independent, a solution $\tilde{u}$ is either time independent as well, or part of a non-trivial $S^1$ family of solutions. In the latter case $\tilde{u}$  is an $m$-fold cover of a simple time dependent solution $u$ associated a Hamiltonians $\frac1{m}H_{n_i}$ which also appears in an $S^1$ family  and thus not of the expected dimension, contradicting the assumption.  Any time independent trajectory is gradient like for $H$. So if it emanates from a critical point outside of $K$ it remains outside of $K$. Moreover, the action difference for a continuation trajectory going from a critical point $x_1$ of $H_{n_1}$ to a critical point $x_2$ of $H_{n_2}$, both lying outside of $K$, is just
\[
-H_{n_2}(x_2)+H_{n_1}(x_1)<-(n_2-n_1).
\]
Thus, if $n_2-n_1>\val(x)+a$ then $x$ will map to $0$ in $HF^*_{[a,b)}(H_{n_2};\K)$. By similar reasoning, if $x$ is supported in $K$, it will map to itself under the obvious identification of critical points of $H_{n_i}$ with those of $H$. The claim follows.
\end{proof}
\begin{proof}[Proof of Theorem~\ref{tmNearbyEx0}\ref{NearbyEx1}]
For any compact set $K$, the map $SH^*_{univ}\to SH^*(M|K)$ is unital.  Moreover, the map $SH^*_{univ}\to SH^*(M|K)$ is continuous by Lemma \ref{lmSHHyoSHKCont}. Therefore, $\overline{\{0\}}$ maps to $0$ under this map. So the hypothesis implies $SH^*(M|K)=0$ for all $K\subset M$. The claim follows from Theorem \ref{lmNearbyEx}.
\end{proof}

The proof of part \ref{NearbyEx2} of Theorem \ref{tmNearbyEx0} relies on the following lemma.
\begin{lm}\label{lmNearbyEx2}
Under the assumption of Theorem~\ref{tmNearbyEx0}\ref{NearbyEx2} we have that for any smooth $J$-proper Hamiltonian $H:M\to\R_+$ there is an $a\in\R_+$ such that the set of $x\in[a,\infty)$ for which $H^{-1}(x)$ has a periodic orbit representing $\alpha$ is dense in $[a,\infty)$.
\end{lm}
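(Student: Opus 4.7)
The proof proceeds by contradiction. Suppose the conclusion fails, so that for every $a \in \R_+$ the set
\[
S_a := \{x \in [a,\infty) : H^{-1}(x) \text{ carries a closed orbit of } X_H \text{ representing } \alpha\}
\]
is not dense in $[a,\infty)$. Then the open set $\R_+ \setminus \overline{S_0}$ intersects every $[a,\infty)$; decomposing it into its connected components $\bigsqcup_k (c_k, d_k)$, we extract a subsequence with $c_k \to \infty$. By construction, no level $H^{-1}(x)$ with $x \in \bigcup_k (c_k, d_k)$ carries a closed $X_H$-orbit in class $\alpha$, of any period. The plan is to derive the contradiction $\widehat{SH}^{*,\alpha}_{univ}(M) = 0$.

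Since $\widehat{SH}^{*,\alpha}_{univ}(M)$ is the Hausdorff completion of the direct limit $\varinjlim_{H^* \in \mathcal{H}} \overline{HF}^{*,\alpha}(H^*)$, and dissipative Hamiltonians are cofinal in $\mathcal{H}$, it suffices to produce, for each dissipative $H^*$, a dissipative $H' \geq H^*$ with $\overline{HF}^{*,\alpha}(H') = 0$: the continuation map then annihilates every class of $\overline{HF}^{*,\alpha}(H^*)$. Given $H^*$, we construct $H' = f \circ H$ for a smooth monotone nondecreasing $f : \R_+ \to \R_+$ whose derivative is supported in $\bigcup_k (c_k, d_k)$. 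Writing $\mu(c) := \sup_{\{H(p) \leq c\}} H^*(p)$ and $\beta_k := f(d_k) - f(c_k)$, the rises are chosen inductively so that $\sum_{j \leq k} \beta_j \geq \mu(c_{k+1})$, which guarantees $f \circ H \geq H^*$ pointwise. Since $H = g \circ h$ with $h$ a $1$-Lipschitz proper function (by $J$-properness), $H' = (f \circ g) \circ h$ inherits $J$-properness, so the metric $g_{J_{H'}}$ is complete by Lemma~\ref{tmGradFacComp}.

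The crucial observation is that $X_{H'} = f'(H)\, X_H$ is supported on $H^{-1}(\bigcup_k (c_k,d_k))$ and preserves the level sets of $H$. On a level $H^{-1}(c)$ with $c \in (c_k, d_k)$ the time-$1$ map of $X_{H'}$ equals the time-$f'(c)$ map of $X_H$, so every $1$-periodic orbit of $X_{H'}$ on this level is a (possibly multiply covered) closed orbit of $X_H$ on $H^{-1}(c)$; by the choice of the gaps, none of these represents $\alpha$. Outside the support of $f'(H)$, the field $X_{H'}$ vanishes and its $1$-periodic orbits are constants, which represent the trivial class $1 \neq \alpha$. After a small Morse–Bott-type perturbation to make the constant loci non-degenerate (which preserves the homotopy classes of the resulting nearby periodic orbits), $CF^{*,\alpha}(H')$ has no generators at all, so $\overline{HF}^{*,\alpha}(H') = 0$.

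The main obstacle is verifying that $H' = f \circ H$ is in fact dissipative: on $(c_k, d_k)$, $\|X_{H'}\|$ is bounded by $(\beta_k/(d_k-c_k)) \sup_{H^{-1}([c_k, d_k])}\|X_H\|$, which may be large if individual gaps are narrow while their rises $\beta_k$ are big. To handle this, one distributes the growth across as many gaps as the hypothesis supplies: density failure in every $[a,\infty)$ guarantees gaps arbitrarily far out, so each $\beta_k$ can be kept small by spreading $\mu$ over many gaps below each prescribed level; combined with Lemma~\ref{lmQuasHam} and Theorem~\ref{tmBoundGradDis}, this yields i-boundedness of $J_{H'}$ and loopwise dissipativity of $H'$. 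In the residual case where the available gaps are too sparse or narrow for this estimate to close, one instead invokes the chain-level telescope of \S\ref{subsecChainLev}: approximate $H'$ from below by a monotone sequence $\{H'_n\}$ of dissipative Hamiltonians that agree with $H'$ on an exhausting family of compact sets and have arbitrarily small Lipschitz constant on the complement (so their only periodic orbits there are critical points, which do not represent $\alpha$). Each $H'_n$ then satisfies $CF^{*,\alpha}(H'_n) = 0$, and Theorem~\ref{tmOverHFChainLv} together with the chain-level continuation machinery of Lemma~\ref{lmChaLevCont} gives $\overline{HF}^{*,\alpha}(H') = 0$ and the required vanishing under continuation, completing the contradiction.
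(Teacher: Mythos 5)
Your overall strategy matches the paper's: argue by contradiction, extract value-gaps $(c_k,d_k)$ with $c_k\to\infty$ on which $X_H$ carries no closed $\alpha$-orbits, and use these to produce a cofinal family of Hamiltonians without $\alpha$-orbits so that $\widehat{SH}^{*,\alpha}_{univ}(M)=0$, contradicting the hypothesis. Your algebraic reduction (it suffices, for each dissipative $H^*$, to find $H'\geq H^*$ with $\overline{HF}^{*,\alpha}(H')=0$) is correct.

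Where you diverge from the paper, and where a real gap opens, is in the treatment of dissipativity of $H'=f\circ H$. You identify the ``main obstacle'' as $\|X_{H'}\|=f'(H)\|X_H\|$ possibly being large on narrow gaps where $\beta_k$ is big, and you try to fix this by (a) spreading the rise over many gaps so that Lemma~\ref{lmQuasHam} and Theorem~\ref{tmBoundGradDis} apply \emph{globally}, with a fallback (b) to the chain-level telescope. Both steps are problematic. For (a): Lemma~\ref{lmQuasHam} and Theorem~\ref{tmBoundGradDis} require a uniform bound on $\|X_{H'}\|$ \emph{everywhere}, and there is no guarantee the distribution trick achieves this --- if the dominating function $\mu(c)=\sup_{\{H\le c\}}H^*$ grows fast relative to the available gap structure, some $\beta_k/(d_k-c_k)$ must be large, and you have no control on how fast $\mu$ grows. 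For (b): you assert $CF^{*,\alpha}(H'_n)=0$, but your $H'_n$ must interpolate between $H'$ on a compact set and a small-Lipschitz function outside; you do not control the transition region, which can carry spurious $\alpha$-orbits. Moreover, the logic risks circularity, since you use the $H'_n$ to \emph{define} $\overline{HF}^{*,\alpha}(H')$ and then to compute it.

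The key observation you are missing --- and which the paper exploits via Corollary~\ref{cwWeakDisCofinal} --- is that dissipativity is a \emph{shell condition}: it only needs $\|X_{H'}\|$ to be small (and the geometry controlled) on a sequence of sufficiently thick shells going to infinity. One can therefore let $f'$ be as large as one pleases inside the gaps $(c_k,d_k)$. In the paper this is packaged by choosing (WLOG, after passing to a subsequence so that the complement intervals are metrically thick: $B_R(H^{-1}(a_{2i-1}))\subset H^{-1}(a_{2i})$) the set $\mathcal{E}$ of reparametrizations $h$ with $\|\nabla(h\circ H)\|<\epsilon$ \emph{off} the gaps, and then noting: (i) this family is $\preceq$-cofinal because $h$ may grow arbitrarily fast inside the gaps; (ii) every $(h\circ H,J)$ is dissipative by Corollary~\ref{cwWeakDisCofinal}, since the complement-interval shells carry the small gradient and the thickness; (iii) no $\alpha$-orbits occur, since inside the gaps $X_{h\circ H}$ is a reparametrization of $X_H$ on levels with no $\alpha$-orbits, and on the complement shells the small Lipschitz constant forces all $1$-periodic orbits to be constants. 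Your own construction ($f'$ supported inside the gaps) is a special case and, paired with the spacing WLOG and Corollary~\ref{cwWeakDisCofinal}, would go through directly --- making both of your workarounds, and the worry motivating them, unnecessary.
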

\begin{proof}
Suppose otherwise. Then there is a monotone increasing unbounded sequence $\{a_i\}$ such that for $x\in (a_{2i},a_{2i+1})$ the flow of $H$ has no periodic orbits representing $\alpha$. Fix a geometrically bounded almost complex structure $J$. For any $R>0$ we may assume without loss of generality that $B_R(H^{-1}(a_{2i-1}))\subset H^{-1}(a_{2i})$. Fix a constant $\epsilon$ and consider the set $\mathcal{E}$ of functions $h:\R_+\to\R_+$ such that $\|\nabla h\circ H\|<\epsilon$ outside  of the segments $(a_{2i},a_{2i+1})$. Then the set $\{h\circ H|h\in\mathcal{E}\}$ is $\preceq$-cofinal in $\mathcal{H}$. Taking $R$ large enough and epsilon small enough, the Floer data $(h\circ H,J)$ will be dissipative by the proof of Theorem \ref{tmDiaFinCofin}. Moreover, these compositions  have no periodic orbits representing $\alpha$. Thus $SH^{*,\alpha}(M)=0$ contradicting the assumption.
\end{proof}

\begin{proof}[Proof of Theorem \ref{tmNearbyEx0}\ref{NearbyEx2}]
Suppose otherwise. Then for any $K$ there is a proper Hamiltonian $H$ and real numbers $0<a<b$ such that $K\subset H^{-1}([0,a])$ and there are no periodic orbits in the interval $(a,b)$. Inductively choose an exhaustion by compact sets $K_i$, and exhaustion Hamiltonians $H_i$ with gaps $(a_i,b_i)$ such that for all $i$ we have
\[
K_i\subset H^{-1}([0,a_i])\subset H^{-1}([0,b_i])\subset K_{i+1},
\]
and
\[
a_i<b_i<a_{i+1}.
\]
Let $H$ be any proper Hamiltonian  which coincides with $H_i$ on $H_i^{-1}([a_i,b_i])$ and satisfies
\[
b_i<H(x)<a_{i+1},
\]
on the region
\[
\{H_i(x)>b_i\}\cap\{H_{i+1}<a_{i+1}\}.
\]
By taking a subsequence, we can assume further that $H$ is $J$-proper. There is no $a>0$ for which $H$ satisfies nearby existence on $[a,\infty)$ in contradiction to Lemma~\ref{lmNearbyEx2}.
\end{proof}

\bibliographystyle{amsabbrvc}
\bibliography{Ref}
\vspace{.5 cm}
\noindent
Institute of Mathematics\\
Givat Ram,\\
Jerusalem, 91904\\
Israel\\
\textit{Email address:}ygroman@gmail.com

\end{document}